\newtheorem{theorem}{Theorem}[section]
\newtheorem{prop}[theorem]{Proposition}
\newtheorem{lemma}[theorem]{Lemma}
\newtheorem{cor}[theorem]{Corollary}
\newtheorem{conj}[theorem]{Conjecture}
\newtheorem{ex}[theorem]{Example}
\theoremstyle{remark}
\newtheorem{dfn}[theorem]{Definition}
\newtheorem{remark}[theorem]{Remark}
\newtheorem{notation}[theorem]{Notation}
\newtheorem{const}[theorem]{Construction}
\def\co{\colon\thinspace}
\def\ep{\epsilon}
\def\Hom{\rm{Hom}}
\def\R{\mathbb{R}}
\def\Z{\mathbb{Z}}
\def\N{\mathbb{N}}
\def\D{\mathcal{D}}
\def\C{\mathbb{C}}
\def\K{\mathcal{K}}
\def\I{\mathbb{I}}
\def\mod{\rm{mod}}
\def\sign{\rm{sign}}
\def\Span{\rm{span}}
\begin{document}

\title[p-cyclic persistent homology and Hofer distance]{p-cyclic persistent homology and\\ Hofer distance}

\author{Jun Zhang}
\email{junzhang@mail.tau.ac.il}
\address{School of Mathematical Sciences\\Tel Aviv University\\Ramat Aviv, Tel Aviv 69978, Israel}

\begin{abstract}
In this paper, we prove that the Hofer distance from time-dependent Hamiltonian diffeomorphisms to the set of $p$-th power Hamiltonian diffeomorphisms can be arbitrarily large for symplectic manifold $\Sigma_g \times M$, where $M$ is {\it any} closed symplectic manifold, $p$ is sufficiently large and $g \geq 4$. This implies that, on this product, the Hofer distance can be arbitrarily large between time-dependent Hamiltonian diffeomorphisms and autonomous Hamiltonian diffeomorphisms. This generalizes the main result from L. Polterovich and E. Shelukhin's paper \cite{PS14}. The basic tools we will use are barcode and singular value decomposition which are developed in the paper \cite{UZ15}, from which we borrow many proofs and modify them so that they can be adapted to a Floer-type complex equipped with a group action. 
\end{abstract}

\maketitle
  
\tableofcontents

\section{Introduction} 

The Hofer distance between the set of autonomous Hamiltonian diffeomorphisms and a time-dependent Hamiltonian diffeomorphism has been recently studied in \cite{PS14}. We first give several definitions. 

Recall that given any closed symplectic manifold $(X, \omega)$, a (smooth) function $H: [0,1] \times X \to \R$ will generate a Hamiltonian diffeomorphism $\phi = \phi^1_H$ which is the time-1 map of flow $\{\phi_H^t\}$. Denote by $Ham(X, \omega)$ the set of all the Hamiltonian diffeomorphisms. Not only can we prove $Ham(X, \omega)$ is a group, but also we can associate a bi-invariant metric on this group which is the well-known Hofer's metric, denoted by  $d_H$. Hofer's metric is defined by the following two steps. First, for any $\phi \in Ham(X, \omega)$, define (Hofer's norm)
\[ ||\phi||_H = \inf\left\{ \int_0^1 \left( \max_{X} H(t, \cdot) - \min_{X} H(t, \cdot)\right)dt \, \bigg| \, \phi = \phi_H^1 \right\}. \]
The integral above is sometimes denoted by $||H||_H$ for a (Hamiltonian) function $H$. Then define Hofer's metric as follows. For $\phi, \psi \in Ham(X, \omega)$, 
\begin{equation} \label{hm}
 d_H(\phi, \psi) = ||\phi^{-1} \circ \psi ||_H.
 \end{equation}

\begin{dfn} For a closed symplectic manifold $(X, \omega)$, define 
\begin{align*} 
{\rm Aut}(X) = \{& \phi \in Ham(X , \omega) \,| \\
&\phi = \phi^1_H \,\, \mbox{where $H(t,x)$ is independent of $t$}\}.
\end{align*}
\end{dfn}
\begin{dfn} For a symplectic manifold $X$, define 
\[ {\rm aut}(X) = \sup_{\phi \in Ham(X, \omega)} d_H(\phi, {\rm Aut}(X)), \]
where $d_H$ is Hofer's metric defined by (\ref{hm}). \end{dfn} 
Compared with time-dependent Hamiltonian diffeomorphisms, a special feature of time independent (or usually called autonomous) Hamiltonian diffeomorphisms comes from the following observation. If a time-dependent function $F(t,x)$, as a Hamiltonian, generates $\psi$, then 
\begin{equation} \label{k-generating}
H(t,x) = p F(pt, x) \,\,\,\mbox{generates} \,\,\, \phi = \psi^p.
\end{equation}
In particular, if $\phi$ belongs to ${\rm Aut}(X)$, then for {\it any} prime $p$, we can simply take generating function $F(x) = \frac{1}{p} H(x)$ which generates a $p$-th root of $\phi$. This motivates another more delicate definition as follows. 
\begin{dfn} \label{dfn-k} Let $p \geq 2$ be a prime number. For a symplectic manifold $X$, define 
\[ Power_p (X) = \{ \phi = \psi^p\,| \, \psi \in Ham(X, \omega) \}. \]
\end{dfn}
\begin{dfn} \label{dfn-hofer-k} Let $p \geq 2$ be a prime number. For a symplectic manifold $X$, define 
\[ power_p(X) = \sup_{\phi \in Ham(X, \omega)} d_H(\phi, Power_p(X)) \]
where $d_H$ is Hofer's metric defined by (\ref{hm}).\footnote{The original definition in \cite{PS14} is defined for any integer $k \geq 2$. But since $d_H(\phi, Power_p(X)) \leq d_H(\phi, Power_k(X))$ when $p \,| \,k$, we will only consider prime number $p$ here.} \end{dfn} 
As we have noticed earlier, ${\rm Aut}(X) \subset \bigcap_{p \, \, \mbox{\tiny{is prime}}} Power_p(X)$. With the notations above, we can state the following main theorem in \cite{PS14}, 
\begin{theorem} \label{main-thm} [Theorem 1.3 in \cite{PS14}]\,\, Let $\Sigma_g$ be a fixed closed oriented surface with genus $g \geq 4$. Then for any symplectically aspherical closed manifold $M$ and for any $p \geq 2$, we have 
\[ power_{p} (\Sigma_g \times M) = +\infty. \]
\end{theorem}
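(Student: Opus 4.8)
\emph{Proof idea.} The plan is to produce, for each $D>0$, a diffeomorphism $\phi \in Ham(\Sigma_g\x M)$ with $d_H(\phi, Power_p(\Sigma_g\x M)) \ge D$; by the footnote it suffices to treat prime $p$. The mechanism is to construct a $d_H$-Lipschitz functional $\nu\colon Ham(\Sigma_g\x M) \to \R_{\ge 0}$ that is uniformly bounded on $Power_p(\Sigma_g\x M)$ but unbounded on $Ham(\Sigma_g\x M)$; then $d_H(\phi, Power_p(\Sigma_g\x M))$ is bounded below by a fixed multiple of $\nu(\phi) - \sup_{Power_p}\nu$, which is large for suitable $\phi$. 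The functional $\nu$ will be read off from the filtered Floer chain complex $CF(\phi;\alpha)$ over $\mathbb{F}_p$ in a free homotopy class $\alpha$ of loops in $\Sigma_g\x M$ coming from a \emph{primitive} conjugacy class in the surface-group factor $\pi_1(\Sigma_g)$, so that $\alpha$ is non-trivial and not a $p$-th power in $\pi_1(\Sigma_g\x M)$. Since a $C^2$-small autonomous Hamiltonian has only its (contractible) constant one-periodic orbits, $CF(\phi;\alpha)$ is acyclic for every $\phi$; thus $\mathcal B_{\mathbb{F}_p}(\phi;\alpha)$ consists entirely of finite bars, and $\nu(\phi)$ will record the presence of a long bar whose multiplicity is coprime to $p$.

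Two ingredients drive the argument. The first is stability: the bottleneck distance between $\mathcal B_{\mathbb{F}_p}(\phi;\alpha)$ and $\mathcal B_{\mathbb{F}_p}(\psi;\alpha)$ is at most a universal multiple of $d_H(\phi,\psi)$ (a consequence of the filtered continuation estimates underlying \cite{UZ15}), so a long bar that is isolated from the rest of the barcode by a definite gap and occurs with a definite multiplicity is a $d_H$-robust feature. The second --- the technical heart, and the ``$p$-cyclic persistent homology'' of the title --- is a structure theorem obtained by pushing the singular value decomposition of \cite{UZ15} through a filtered complex carrying a $\Z/p$-action. If $\phi = \eta^p$ with $\eta = \phi_F^1$, then the generating Hamiltonian $pF(pt,\cdot)$ is $\tfrac1p$-periodic in time, so time translation by $1/p$ defines a \emph{filtration-preserving} $\Z/p$-action on $CF(\eta^p;\alpha)$; a fixed generator would be the $p$-fold iterate of a one-periodic orbit loop of $\eta$, hence would lie in a $p$-th power homotopy class, but $\alpha$ is not one, so $\Z/p$ permutes freely a homogeneous basis whose action is constant along each orbit and $CF(\eta^p;\alpha)$ is an acyclic complex of free modules over the Frobenius algebra $R = \mathbb{F}_p[\Z/p]$. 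Running the singular value decomposition $R$-equivariantly then produces a filtered $R$-linear isomorphism $CF(\eta^p;\alpha) \cong \bigoplus_j \bigl(R\langle x_j\rangle \xrightarrow{\ \cong\ } R\langle y_j\rangle\bigr)$ with $\ell(y_j) \ge \ell(x_j)$, whence $\mathcal B_{\mathbb{F}_p}(\eta^p;\alpha) = p\cdot\{[\ell(x_j),\ell(y_j))\}_j$: \emph{every} bar in the barcode of a $p$-th power has multiplicity divisible by $p$, and in particular none has multiplicity $1$. Any $d_H$-robust barcode feature that no $p$-fold barcode can exhibit is therefore bounded on $Power_p(\Sigma_g\x M)$.

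It remains to exhibit test maps realizing such a feature at an arbitrarily large scale. Following Polterovich--Shelukhin \cite{PS14}, one takes $\phi_\lambda$ to be an ``egg-beater'' diffeomorphism of $\Sigma_g$ --- a composition of two autonomous shear maps supported in a suitably linked family of annuli, a configuration that requires $g\ge4$ --- whose one-periodic orbits in the class $\alpha_0$ are enumerated combinatorially, so that $\mathcal B_{\mathbb{F}_p}(\phi_\lambda;\alpha_0)$ carries a bar of length $\asymp\lambda$ that is separated from everything shorter by a gap $\asymp\lambda$ and occurs with multiplicity coprime to $p$; one then stabilizes this construction to $\Sigma_g\x M$ so that the product barcode in the class $\alpha_0$ retains, at length $\asymp\lambda$, a bar (or tight cluster of bars) of total multiplicity coprime to $p$, again separated by a gap $\asymp\lambda$ from everything shorter. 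Granting this, if $\phi_\lambda$ (so stabilized) were within Hofer distance $\ll\lambda$ of some $\eta^p$, stability would transport this long feature into $\mathcal B_{\mathbb{F}_p}(\eta^p;\alpha_0)$ with only a small distortion in position and length and no change in the total multiplicity above the gap; but the structure theorem forces that multiplicity to be divisible by $p$, a contradiction. Hence $d_H(\phi_\lambda, Power_p(\Sigma_g\x M)) \gtrsim \lambda \to \infty$. I expect two main obstacles. The first is the $p$-cyclic singular value decomposition itself: one must carry the entire barcode/SVD package of \cite{UZ15} through a filtered complex equipped with a group action, which is the bulk of the present paper. The second is the stabilization step --- realizing the long, gap-isolated bar of multiplicity coprime to $p$ on $\Sigma_g\x M$ rather than on $\Sigma_g$ alone --- which is exactly where symplectic asphericity of $M$ is invoked in \cite{PS14} to control bubbling and the product Floer differential; the remainder of this paper replaces it, when $p$ is sufficiently large, by the observation that a positive multiplicity below $p$ is automatically indivisible by $p$, which makes the argument go through for any closed $M$.
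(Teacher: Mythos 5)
Your overall architecture --- a Hofer--Lipschitz barcode invariant, a divisibility structure theorem for $p$-th powers exploiting the $\Z/p$-symmetry of $CF(\eta^p)_{\alpha}$, and the egg-beater family stabilized to $\Sigma_g\times M$ --- is exactly the architecture of \cite{PS14} and of this paper. The genuine gap is in your central structure theorem, and it comes from the coefficient field. You work over $\mathbb{F}_p$ and assert that an ``$R$-equivariant singular value decomposition'' over $R=\mathbb{F}_p[\Z/p]$ splits the acyclic complex into filtered elementary free pieces. Since $R$ is local and not semisimple, every tool that makes such an equivariant SVD work is unavailable in characteristic $p$: Maschke's theorem (used in Proposition \ref{inv-comp} to produce a filtration-orthogonal invariant complement of $\ker\partial_{co}$), the eigenspace decomposition for a primitive $p$-th root of unity $\xi_p$, and the ``irreducible condition'' of Remark \ref{irr-cond} forcing invariant subspaces to have dimension divisible by $p$ all require $char(\mathcal K)\neq p$. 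An acyclic complex of finitely generated free $R$-modules does split into elementary pieces \emph{unfiltered}, but the filtered splitting is precisely the issue: an $R$-linear Gaussian elimination must pivot on units of $R$, and the filtration-optimal entry can lie in the maximal ideal $(g-1)$, in which case the resulting $\mathbb{F}_p$-bars at that length have multiplicity $p-1$ or $1$. Moreover the chain-level rotation is $T=C\circ R_p$ for a continuation map $C$, hence a group action only up to strictly filtration-lowering corrections; the paper repairs this by extracting $p$-th roots via the binomial series $(\I+P)^{1/p}$ in Lemma \ref{cancel-per}, whose coefficients $\binom{1/p}{k}$ again require inverting $p$. So ``$\Z/p$ permutes freely a homogeneous basis'' does not by itself yield your filtered isomorphism over $\mathbb{F}_p[\Z/p]$.

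Separately, even over a field satisfying the irreducible condition, the statement ``every bar in the barcode of a $p$-th power has multiplicity divisible by $p$'' is stronger than what is true or needed. The part of the complex on which the corrected rotation acts with eigenvalue $1$ is not constrained by the irreducibility condition (Remark \ref{fail-1}); divisibility is obtained only after passing to a derived object isolating the $\xi_p$-eigenvalue behavior --- the eigenspace persistence module $L_{\xi}$ in \cite{PS14}, or here the self-mapping cone $Cone(T-\xi_p\cdot\I)$, where the eigenvalue-$1$ part is shown to contribute only zero-length bars (Propositions \ref{0-0} and \ref{p-0-0}) and the remaining concise barcode comes in $p$-tuples (Theorem \ref{p-tuples}). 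Your invariant must therefore be read off from that object, not from $\mathcal B(\phi;\alpha)$ itself; the extra $\Z/p^2$-structure $S$ with $S^p=T$ acting on the $\xi_p$-eigenpart is what the irreducibility condition bites on. The remainder of your outline --- acyclicity in a primitive non-contractible class, the robust gap-isolated long bars from the egg-beaters, the quasiequivalence/stability estimate, and the stabilization step where asphericity of $M$ controls the product differential --- is correct and matches the paper.
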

Note that, under the same hypothesis of Theorem \ref{main-thm} and fixing any prime number $p$, Theorem \ref{main-thm} immediately implies that 
\[ aut(\Sigma_g \times M) = +\infty. \]
This is actually another theorem stated in \cite{PS14} (Theorem 1.2), where its original proof comes from a different (easier) argument than the original proof of Theorem \ref{main-thm}. Last but not least, we emphasize that the results mentioned above are within the effort to prove or understand the following general conjecture.

\begin{conj} \label{conj} For any closed symplectic manifold $X$, ${\rm aut}(X) = +\infty$. \end{conj}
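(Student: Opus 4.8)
\medskip

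The final statement is a conjecture, so what I can offer is the line of attack that the machinery of this paper points toward, together with the step at which it genuinely stalls. The organizing observation is \eqref{k-generating}: an autonomous $\phi=\phi^1_H$ admits, for every prime $p$, a $p$-th root of Hofer norm $\tfrac1p\|\phi\|_H$, so $d_H(\phi,{\rm Aut}(X))\ge\sup_p d_H(\phi,Power_p(X))$; it therefore suffices to prove the sharper assertion that $\sup_p power_p(X)=+\infty$ for \emph{every} closed symplectic $X$, and the $\Z/p$-equivariant singular value decomposition of filtered Floer complexes developed here is exactly the apparatus for this. The plan is: (1) attach to an arbitrary closed symplectic $X$ the Hamiltonian Floer persistence module — over the Novikov ring, on a fixed free homotopy class of loops, or with local coefficients, so as to accommodate sphere bubbling — together with the $\Z/p$-action coming from the iteration $\phi\mapsto\phi^p$, and extract from the equivariant SVD a numerical invariant $\beta_p(\phi)$ (a weighted count of long bars, or of bars not appearing in full $\Z/p$-orbits) that is $1$-Lipschitz in $d_H$ and bounded on $Power_p(X)$; (2) for each $N$ produce $\phi_N\in Ham(X,\omega)$ with $\beta_p(\phi_N)\ge N$.

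For step (2) the natural move is to transplant the egg-beater construction of \cite{PS14}. If $X$ admits a symplectic embedding of $\Sigma_h\times B$ for a small-area surface $\Sigma_h$ and a small ball $B$ in the complementary directions, one runs the egg-beater flow there, extends by the identity, and imports the barcode lower bounds of the present paper essentially verbatim, the new sphere-bubbling contributions being tracked by the Novikov valuation through a continuation/naturality argument. Where no such embedding exists one is forced into a construction internal to $X$ — iterated twist maps along a pair of disjoint Lagrangian or coisotropic handles, arranged so that the action spectrum acquires the required gaps — and here the Floer computation that must replace the model computation of the product case is the substantive, still-incomplete analytic input.

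The main obstacle is the passage from a \emph{local} model to the \emph{global} manifold, which is precisely the step that the product structure $\Sigma_g\times M$ renders free: on a product the egg-beater lives on one factor, the Floer data split, and the barcode is read off the $\Sigma_g$ factor; on a general $X$ there is no such splitting, and one must prove a monotonicity principle of the shape $\beta_p^X(\phi)\ge\beta_p^U(\phi)$ for any $\phi$ supported in an embedded domain $U$ — that is, that the complexity created inside $U$ cannot be cancelled by clever autonomous dynamics of a competitor elsewhere. Quantitatively local Floer invariants (relative spectral invariants, symplectic cohomology of $U$, PSS maps restricted to $U$) do exist, but fashioning one that is simultaneously $\Z/p$-equivariant, insensitive to the autonomous part of a competitor, and divergent along the egg-beater family is exactly the open point. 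A secondary difficulty is that a symplectic ball carries no topology, whereas the two braided pieces of an egg-beater genuinely exploit the $\pi_1$ (or handle structure) of $\Sigma_g$; absent an embedded high-genus surface one seems to need either extra hypotheses on $X$ (nontrivial $\pi_1$, a suitable coisotropic submanifold) or an altogether different family of test diffeomorphisms. The honest upshot is that the methods of this paper establish the conjecture for all products $\Sigma_g\times M$ with $g\ge4$ and reduce the general case to (2) together with such a locality principle.
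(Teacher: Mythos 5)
The statement you were assigned is Conjecture \ref{conj}, which the paper itself leaves open: it is established only for products $\Sigma_g\times M$ with $g\ge 4$ (Theorems \ref{main-thm-j} and \ref{aut-main-thm}), so there is no proof in the paper to compare against, and your refusal to claim one is the correct call. The strategy you outline --- reducing ${\rm aut}(X)$ to $power_p(X)$ via the $p$-th root observation \eqref{k-generating}, extracting a Hofer--Lipschitz, $\Z/p$-equivariantly constrained barcode invariant, and feeding in an egg-beater family --- is precisely the paper's route in the cases it does handle, and your identification of the local-to-global (non-product) step as the genuine obstruction accurately explains why the paper stops at $\Sigma_g\times M$, where the Floer data split across the factors.
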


Now back to Theorem \ref{main-thm}, from the title of \cite{PS14}, its proof involves persistence module theory or persistence homology (see \cite{ZC05} for a ``basic version'' of persistence module theory. However, the persistence module theory from \cite{ZC05} can only be used in Hamiltonian Floer theory when the symplectic manifold is monotone). Denote by $\mathcal L_{\alpha} X$ the component of loop space, consisting of all loops representing a homotopy class $\alpha$. For any given prime $p$, define an operator $R_p$ by a rotation, that is,  
\[ R_p (x(t)) = x \left( t + \frac{1}{p} \right). \]
Given a time-dependent Hamiltonian function $H(t,x)$ generating $\phi$, denote $H^{(p)}(t,x) = p H(pt,x)$ generating $\phi^p$ by (\ref{k-generating}). Notice that if $x(t)$ is a Hamiltonian $1$-periodic orbit of Hamiltonian $H^{(p)}$, then $R_p(x(t))$ is also a Hamiltonian $1$-periodic orbit of $H^{(p)}$ since $p(t+ 1/p) = pt + 1= pt$ in $\R/\Z$. This induces a filtered chain isomorphism between two Floer chain complexes (filtered by action functionals) for each degree $k \in \Z$, 
\begin{equation}\label{rt}
R_p: CF_k(H^{(p)}, J_t)_{\alpha} \,\rightarrow CF_k(H^{(p)}, (R_p)_* J_t)_{\alpha} = CF_k(H^{(p)}, J_{t+ \frac{1}{p}})_{\alpha}.
\end{equation}
After passing to the homology, taking advantage of the fact that Floer homology is independent of almost complex structures, we get a pair 
\begin{equation}\label{pm}
 (\mathbb H_k(\phi), T) 
\end{equation}
where
\begin{itemize}
\item{} $\mathbb H_k(\phi) = (\{ HF_k^{(-\infty, s)} (\phi^p)_{\alpha}\}_{s \in \R}; \pi_{s,t})$ is a persistence module (see Definition 3.1 in \cite{ZC05}), where transition function $\pi_{s,t}$ is induced by inclusion for any $s \leq t$;
\item{} $T = [(R_p)_*]$ is a filtered isomorphism (or 0-interleaving) giving a $\Z_p$ action on $\mathbb H_k(\phi)$, that is, $T^p ={\mathds 1}$.
\end{itemize}  

In order to successfully link Floer theory and persistence module theory, a numerical measurement $\mu_p(\phi)$ is defined  (on the top of page 40 in \cite{PS14}) by using combinatorics data (barcode) from the associated persistence module (\ref{pm}). It satisfies Lipschitz continuity with respect to Hofer's metric. Therefore, by {\it the} most important theorem in this theory --- {\it isometry theorem}, this proposition translates the combinatorial information from barcodes to the numerical information from (Floer) chain complexes which is captured partially by Hofer metric. This turns out to be the key step in the proof of Theorem \ref{main-thm}.

To generalize this process, we will make efforts in two directions. First, we will use more sophisticated and powerful persistence module theory that is developed in the paper \cite{UZ15} to rewrite the set-up of this problem in the (Floer) chain complex level. Second, following the idea above, we will also define some numerical measurement (in fact proved to be a symplectic invariant) which satisfies Lipschitz continuity with respect to Hofer's metric. We emphasize that the product structure as in the result of Theorem \ref{main-thm} also plays an important role. In other words, we are not able to solve Conjecture \ref{conj} completely (cf. Theorem \ref{aut-main-thm}).

\begin{remark} \label{irr-cond} Through out the paper \cite{PS14}, the working field $\mathcal K$, should satisfy the following important restriction. As we will also assume this condition in this paper, we state it separately here. {\bf Irreducible condition:} 
\begin{itemize}
\item{} $char(\mathcal K) =0$ and $\mathcal K$ contains all $p$-th roots of unity;
\item{} For any primitive $p$-th root of unity $\xi_p$, there is no solution of the following equation $x^p = \xi_p^q$ unless $p \,| \, q$.
\end{itemize}

Note that this condition gives a strong restriction on the dimension of invariant subspace. Explicitly, if $V$ is a $T$-invariant subspace such that $T^p = \xi_p \cdot {\mathds 1}$ for some $p$-th root of unity $\xi_p$, then $p\,| \,\dim(V)$. (cf. Lemma 4.15 in \cite{PS14}).\end{remark}

Here is our main theorem in this paper. 

\begin{theorem} \label{main-thm-j} Let $\Sigma_g$ be a fixed closed oriented surface with genus $g \geq 4$. For any closed symplectic manifold $M$ and for any prime $p \!>\! 2\!\sum_{0 \leq i \leq 2n}\! b_i(M)$,
\[ power_{p} (\Sigma_g \times M) = +\infty. \]
\end{theorem}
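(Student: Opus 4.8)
The plan is to follow the strategy of \cite{PS14} as outlined in the introduction, but to replace the persistence-module-level invariant $\mu_p$ by a chain-level construction built on the singular value decomposition machinery of \cite{UZ15}, suitably adapted to a filtered chain complex carrying a $\Z_p$-action. Concretely, I would first set up, for a time-dependent Hamiltonian $H$ generating $\phi$, the filtered Floer chain complex $CF_*(H^{(p)})_\alpha$ together with the filtered chain isomorphism $R_p$ of (\ref{rt}), which on homology produces the pair $(\mathbb{H}_k(\phi),T)$ of (\ref{pm}) with $T^p=\I$. The first task is to develop a ``$\Z_p$-equivariant'' version of the singular value decomposition / barcode theory of \cite{UZ15}: given a filtered complex equipped with a filtration-preserving order-$p$ automorphism, decompose it into $T$-invariant blocks and extract a barcode-type invariant that records the filtration levels at which $T$-invariant cycles become boundaries. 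From this one defines a numerical quantity $\mu_p(\phi)$ (the analogue of the invariant on p.~40 of \cite{PS14}).

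The second task is to establish the three properties of $\mu_p$ that drive the argument, exactly as in the sketch that was commented out in the excerpt. The \emph{vanishing property}, $\mu_p(\phi)=0$ for $\phi\in Power_p(X)$, is where the \textbf{Irreducible condition} of Remark \ref{irr-cond} enters: if $\phi=\psi^p$ then the Floer complex of $H^{(p)}$ is itself, up to filtered homotopy, the $p$-fold ``rotation'' of the complex of the generator, so $T$ acts by genuine cyclic permutation of $p$-tuples of generators, and the divisibility consequence ($p\mid\dim V$ for $T$-invariant $V$ with $T^p=\xi_p\I$) forces the relevant invariant to vanish. The \emph{Lipschitz–Hofer property}, $|\mu_p(\phi)-\mu_p(\psi)|\le p\cdot d_H(\phi,\psi)$, follows from the stability of barcodes under interleaving in \cite{UZ15} together with the standard Floer-continuation estimate, with the factor $p$ coming from $H^{(p)}(t,x)=pH(pt,x)$. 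The \emph{stabilization property}, $\mu_p(\phi)=\mu_p(\phi\times\I_M)$, is where the hypothesis $p>\sum_i b_i(M)$ is used in place of the asphericity of $M$ in Theorem \ref{main-thm}: the Floer complex of $\phi\times\I_M$ is a tensor product of the complex for $\phi$ with the Morse/Floer complex of $M$, whose total dimension is $\sum_i b_i(M)<p$, and one argues that a $T$-invariant subspace of a $\Z_p$-representation of dimension strictly less than $p$ (other than those forced by the permutation structure coming from the $\phi$-factor) cannot contribute, so the $M$-factor is invisible to $\mu_p$.

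Granting these three properties, the theorem follows verbatim from the egg-beater construction of Section 5 of \cite{PS14}: for each prime $p$ one takes the parametrized family $\phi_\lambda\in Ham(\Sigma_g)$ with $\mu_p(\phi_\lambda)\to+\infty$ as $\lambda\to\infty$, and then
\[
power_p(\Sigma_g\times M)\ \ge\ d_H(\phi_\lambda\times\I_M,\,Power_p(\Sigma_g\times M))\ \ge\ \tfrac1p\,\mu_p(\phi_\lambda\times\I_M)\ =\ \tfrac1p\,\mu_p(\phi_\lambda)\ \to\ +\infty.
\]
I expect the main obstacle to be the stabilization property together with the equivariant barcode formalism that supports it: one must show that enlarging the ground representation by a $\Z_p$-module of dimension $<p$ (coming from $H^*(M)$) does not change the extracted invariant, and making this precise requires a careful equivariant analogue of the singular value decomposition uniqueness statements of \cite{UZ15} — in particular controlling how $T$-invariant subspaces sit inside tensor products, and verifying that the Irreducible condition is exactly what is needed for $H^*(M)$ to split off trivially once $\dim H^*(M)<p$. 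The Lipschitz and vanishing properties, by contrast, should be relatively direct translations of the corresponding arguments in \cite{PS14} and \cite{UZ15}.
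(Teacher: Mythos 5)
Your outline reproduces the $(a)$--$(b)$--$(c)$ scheme of \cite{PS14}, and your items $(a)$ (vanishing) and $(b)$ (Lipschitz) are broadly in the spirit of what the paper does, modulo one issue you pass over too quickly: on the chain level the rotation $R_p$ does not act on a single complex, so one must work with $T=C\circ R_p$ for a continuation map $C$, and then $T^p\neq\I$. Recovering $T^p=\I$ by passing to homology, as you propose, is not available here: for a general closed $M$ the period group $\Gamma$ need not be discrete, which is exactly why the chain-level barcode of \cite{UZ15} is needed at all. The paper spends Sections \ref{p14}--\ref{p17} correcting $T$ (and its $p$-th root $S$) to genuine order-$p$ and order-$p^2$ actions by strictly-filtration-lowering perturbations (Lemma \ref{cancel-per}) and checking that such perturbations do not disturb orthogonality (Lemma \ref{or-per}); some version of this is unavoidable in your approach as well.

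The genuine gap is in your item $(c)$. The stabilization identity $\mu_p(\phi)=\mu_p(\phi\times\I_M)$ is precisely the step the paper says cannot be used for general $M$ (see the remark following Proposition \ref{lip3}), and your proposed mechanism --- that a $\Z_p$-representation of dimension $<p$ coming from $H^*(M)$ ``cannot contribute,'' so the $M$-factor is invisible --- is not correct. Tensoring with $CF_*(M,\I)\simeq H_*(M;\K)\otimes\Lambda^{\K,\Gamma}$ multiplies the multiplicity of each finite bar by a (quantum) Betti number and, when $c_1(M)\neq 0$, redistributes bars across degrees in steps of $2N$; the barcode of the product is therefore genuinely different from that of the $\Sigma_g$-factor, and no quantity literally equal to $\mu_p$ of the factor can be read off from it. What does survive is weaker: the total multiplicity $m_1$ of the degree-$1$ concise barcode of the product equals $\sum_k\binom{2p}{k+p-1}qb_{1-k}(M)$, and modulo $p$ this reduces (via Babbage's congruence $\binom{2p}{p}\equiv 2$) to $qb_p(M)+2qb_0(M)+qb_{-p}(M)$, which the hypothesis $p>\sum_i b_i(M)$ forces to be nonzero mod $p$ (Proposition \ref{multi-prod}). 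Correspondingly, the invariant the paper extracts, $\o_X(\phi)$ of Definition \ref{dfn-o}, is built to detect non-divisibility of bar multiplicities by $p$ (via Theorem \ref{p-tuples}: for $\phi\in Power_p(X)$ all multiplicities are divisible by $p$), not to be preserved under taking the product with $\I_M$. As written, your proof rests on a stabilization lemma that fails in the generality you need, and it misidentifies where the hypothesis $p>\sum_i b_i(M)$ enters: through an explicit mod-$p$ count of multiplicities in the tensor product, not through a representation-theoretic smallness argument.
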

This immediately implies the following result. 
\begin{theorem} \label{aut-main-thm} Let $\Sigma_g$ be a fixed closed oriented surface with genus $g \geq 4$. For any closed symplectic manifold $M$, 
\[ aut(\Sigma_g \times M)= + \infty. \]
\end{theorem}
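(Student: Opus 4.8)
The plan is to obtain Theorem~\ref{aut-main-thm} as a direct corollary of Theorem~\ref{main-thm-j} together with the elementary containment ${\rm Aut}(X) \subset \bigcap_{p \text{ prime}} Power_p(X)$ already noted in the introduction. First I would recall why this containment holds: if $\phi = \phi^1_H$ with $H(t,x)$ independent of $t$, then by the reparametrization formula~(\ref{k-generating}) the Hamiltonian $F(x) = \tfrac{1}{p}H(x)$ generates a diffeomorphism $\psi$ with $\psi^p = \phi$, so $\phi \in Power_p(\Sigma_g \times M)$ for \emph{every} prime $p$. Since the distance to a larger set can only be smaller, this gives, for each $\phi \in Ham(\Sigma_g \times M)$ and each prime $p$,
\[
 d_H\big(\phi, {\rm Aut}(\Sigma_g \times M)\big) \;\geq\; d_H\big(\phi, Power_p(\Sigma_g \times M)\big).
\]

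Taking the supremum over $\phi \in Ham(\Sigma_g \times M)$ on both sides yields $aut(\Sigma_g \times M) \geq power_p(\Sigma_g \times M)$ for every prime $p$. Now I would fix any prime $p$ with $p > \sum_{0 \leq i \leq 2n} b_i(M)$, which is possible because $M$ is closed (so its total Betti number is finite) while there are infinitely many primes. Theorem~\ref{main-thm-j}, applied to the same product $\Sigma_g \times M$ with the same genus hypothesis $g \geq 4$, gives $power_p(\Sigma_g \times M) = +\infty$, and the displayed inequality then forces $aut(\Sigma_g \times M) = +\infty$.

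There is essentially no obstacle in this deduction: all of the Floer-theoretic and persistence-module input is already packaged inside Theorem~\ref{main-thm-j}, and the only thing to check is that its hypotheses transfer verbatim, which they do since we apply it to the very same manifold. It is perhaps worth remarking that, in contrast to~\cite{PS14}, where the statement $aut = +\infty$ required a separate (and in fact simpler) argument, here it comes for free from the stronger $power_p$ statement, so no further work is needed.
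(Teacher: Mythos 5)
Your proposal is correct and is exactly the deduction the paper intends when it says Theorem~\ref{main-thm-j} ``immediately implies'' Theorem~\ref{aut-main-thm}: the containment ${\rm Aut}(\Sigma_g\times M)\subset Power_p(\Sigma_g\times M)$ gives $aut \geq power_p$, and choosing a prime $p$ exceeding the (finite) total Betti number of $M$ finishes the argument.
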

 
\subsection{Outline of the paper and summary of results.} 

Before starting to outline the proof of Theorem \ref{main-thm-j}, we will recall some definitions from \cite{UZ15} which describes a general framework that Floer theory can fit into. It will be helpful to state our main propositions. First, for each $k \in \Z$, $CF_k(M, H)$ is a finite dimensional vector space over Novikov field $\Lambda^{\mathcal K, \Gamma}$ equipped with a filtration function denoted by  $\ell$ defined by action functional. In general, for such filtered vector space denoted by  $(V, \ell)$, we can find an ordered basis $(x_1,\dots, x_m)$ such that it satisfies {\it orthogonality property} (see Definition 2.7 in \cite{UZ15}), that is, for any $\lambda_1, \dots, \lambda_n \in \Lambda^{\K, \Gamma}$, 
\[ \ell\left(\sum_{i=1}^n \lambda_i x_i\right) = \max_{1 \leq i \leq n} \ell(\lambda_i x_i).  \]
We call such basis an {\it orthogonal basis of $(V, \ell)$}, then $(V, \ell)$ is called an {\it orthogonalizable $\Lambda^{\K, \Gamma}$-space}. If, in particular, $\ell(x_i)=0$ for all $i$, then this orthogonal basis is called an {\it orthonormal basis}. Theorem 3.4 in \cite{UZ15} guarantees that, for any linear map $T: (V, \ell_1) \to (W, \ell_2)$, there exists a nice choice of orthogonal bases for $(V, \ell_1)$ and $(W, \ell_2)$ which are compatible with $T$ in the following sense.

\begin{dfn} \label{dfn-svd} For a nonzero $\Lambda^{\mathcal K, \Gamma}$-linear map $T: (V, \ell_1) \to (W, \ell_2)$ with rank $r$, we say that (ordered) orthogonal bases $(y_1, \dots, y_n)$ for $V$ and $(x_1, \dots, x_m)$ for $W$ form a {\it singular value decomposition of $T$} if 
\begin{itemize}
\item{} $\ker(T) = {\Span}_{\Lambda^{\mathcal K, \Gamma}} \left< y_{r+1}, \dots, y_n\right>$;
\item{} ${\rm{Im}}(T) = {\Span}_{\Lambda^{\mathcal K, \Gamma}} \left< x_{1}, \dots , x_r\right>$;
\item{} $T(y_i) = x_i$ for $i \in \{1, \dots , r\}$;
\item{} $\ell_1(y_1) - \ell_2(x_1) \geq \cdots  \geq \ell_1(y_r) - \ell_2(x_r)$.
\end{itemize}
\end{dfn}
Given a Floer-type complex (see Definition 4.1 in \cite{UZ15}) $(C_*, \partial_*, \ell)$ and any degree $k$, restricting to 
\[ C_{k+1} \xrightarrow{\partial_{k+1}} \ker(\partial_k),\]
we will get a singular value decomposition of $\partial_{k+1}$ denoted by  $((y_1, \dots , y_n)$, $(x_1, \dots , x_m))$. Then we define our barcode (see Definition 6.3 in \cite{UZ15}) as follows. 

\begin{dfn} \label{dfn-barcode} {\it Degree-$k$ verbose barcode of $(C_*, \partial_*, \ell)$} consists of multiset of elements of $(\R/\Gamma) \times [0, \infty]$ in the form
\begin{itemize}
\item[(1)] $(\ell(x_i) \,\mbox{mod} \, \Gamma, \ell(y_i) - \ell(x_i))$ for $i \in \{1, \dots , r\}$;
\item[(2)] $(\ell(x_i) \,\mbox{mod} \, \Gamma, \infty)$ for $i \in \{r+1, \dots , m\}$,
\end{itemize}
where $((y_1, \dots , y_n), (x_1, \dots , x_m))$ is a singular value decomposition of $\partial_{k+1}$. Moreover, define {\it degree-$k$ concise barcode} to be the submultiset of the verbose barcode consisting of those whose second element is positive. \end{dfn}

\begin{remark} Due to various purposes in this paper, there are a couple of places where we need to explicitly construct the orthogonal bases in Definition \ref{dfn-svd} and the resulting barcode in Definition \ref{dfn-barcode} (see subsection \ref{sub-p-c-svd} and subsection \ref{ssec-exe-computing-barcode}). To this end, we will closely follow Theorem 3.5 in \cite{UZ15}, which provides an algorithm for these computations. We emphasize that, to our best knowledge, this barcode invariant of a Floer-type chain complex was for the first time introduced by S. Barannikov in \cite{Bar94} in a similar set-up, and such an algorithm that produces a preferred basis was first worked out by Lemma 2 in \cite{Bar94}. \end{remark}

One quantitative measurement between $(C_*, \partial_*, \ell_C)$ and $(D_*, \partial_*, \ell_D)$ arising from the standard analysis in Floer theory is {\it quasiequivalence distance} (see Definition 1.3 in \cite{UZ15}). Here we give a slightly more general definition. 

\begin{dfn} \label{dfn-mc} Let $(C_*,\partial_C,\ell_C)$ and $(D_*,\partial_D,\ell_D)$ be two Floer-type complexes, and two fixed non-negative numbers $\delta_+$ and $\delta_-$.  A $(\delta_+, \delta_-)$-\it{quasi\-equivalence} between $C_*$ and $D_*$ is a quadruple $(\Phi,\Psi,K_C,K_D)$ where:
\begin{itemize} \item $\Phi\co C_*\to D_*$ and $\Psi\co D_*\to C_*$ are chain maps, with $\ell_D(\Phi c)\leq \ell_C(c)+\delta_+$ and $\ell_C(\Psi d)\leq \ell_D(d)+\delta_-$ for all $c\in C_*$ and $d\in D_*$.
\item $K_C\co C_*\to C_{*+1}$ and $K_D\co D_*\to D_{*+1}$ obey the homotopy equations $\Psi\circ\Phi-{\mathds 1}_{C_*}=\partial_CK_C+K_C\partial_C$ and $\Phi\circ\Psi-{\mathds 1}_{D_*}=\partial_DK_D+K_D\partial_D$, and for all $c\in C_*$ and $d\in D_*$ we have $\ell_C(K_Cc)\leq \ell_C(c)+\delta_+ + \delta_-$ and $\ell_D(K_Dd)\leq \ell_D(d)+\delta_+ + \delta_-$.
\end{itemize}
The \it{quasiequivalence distance} between   $(C_*,\partial_C,\ell_C)$ and $(D_*,\partial_D,\ell_D)$ is then defined to be 
	\[d_Q(C_*,D_*)=\inf\left\{\frac{\delta_+ +\delta_-}{2}\geq 0\left|\!\resizebox{.56\textwidth}{!}{$\begin{array}{l}\mbox{There exists a $(\delta_+, \delta_-)$-quasiequivalence between }\\(C_*,\partial_C,\ell_C)\mbox{ and }(D_*,\partial_D,\ell_D).\end{array}$}\right.\!\!\!\right\}. \]
\end{dfn}

An algebraic structure which has been effectively used in \cite{UZ15} and will also be used  frequently in this paper is a filtered mapping cone (see Definition 9.9 in \cite{UZ15}). Here, we give its construction in the following special case. 
\begin{dfn} \label{self-mc} Given any Floer-type complex $(C_*, \partial_*, \ell)$ and a filtration preserving chain map $T$ on it, we call the following chain complex \it{the self-mapping cone with respect to $T$},
\[ Cone_C(T)_* := C_* \oplus C_{*-1} \,\,\,\,\,\mbox{with} \,\,\,\,\,\,\, \partial_{co} := \left( \begin{array}{cc}
\partial & -T \\
0 & - \partial  \\
\end{array} \right). \]
Moreover, define its filtration $\ell_{co}$ by, for any $(x_1, x_2) \in Cone_C(T)_*$, 
\[ \ell_{co}((x_1, x_2)) = \max\{\ell(x_1), \ell(x_2)\}.\]
\end{dfn}
Now we summarize the outline. 
\subsubsection{Construction of obstruction} First, we give the recipe in this story to construct a numerical measurement (proved to be symplectic invariant) by using both Floer theory and Floer-Novikov persistent homology theory developed in \cite{UZ15}. 
\begin{equation} \label{recipe} 
\resizebox{.95\textwidth}{!}{\xymatrix{
\boxed{\begin{array}{cc} \mbox{Floer chain complex} \\ (CF_*(H^{(p)}, J_t)_{\alpha}, T) \end{array} } \ar[r]^-{(a)} & \boxed{\begin{array}{cc}\mbox{self-mapping cone}\\(Cone_{CF(H^{(p)},J_t)_{\alpha}}(T - \xi_p \cdot {\mathds 1})_*, \partial_{co}) \end{array}} \ar[r]^-{(b)} & \boxed{\begin{array}{cc} \mbox{barcode}\\ \{\beta_i(H)\}\end{array}} \ar[ld]^{(c)} \\
	&\boxed{\mbox{numerical measurement $\mathfrak o_X(\phi_H)$}} \ar[lu]^{(d)}  }}\hspace{-1em}
\end{equation}

We will explain each of the boxes and arrows above with an emphasize on the special case when $\phi= \phi_H \in Power_p(X)$, since eventually we will use the numerical measurement constructed above to form an obstruction to the condition that $\phi \in Power_p(X)$.\\
 
\noindent {\bf Floer chain complex}. The rotation action (\ref{rt}) on a Floer chain complex will pushforward its almost complex structure, so once we are working on the Floer chain complex, the rotation action (\ref{rt}) does not behave well on the Floer homology in the sense that in order to work on the same chain complex, we need to use some continuation map $C$, 
\begin{equation} \label{comp1}
\xymatrix{
CF_k(H^{(p)}, J_t)_{\alpha} \ar[r]^{R_p} \ar@/^2.0pc/@[black][rr]^{T = C \circ R_p} & CF_{k}(H^{(p)}, J_{t + \frac{1}{p}})_{\alpha} \ar[r]^C &CF_k(H^{(p)}, J_t)_{\alpha}}.
\end{equation}
Note that in general, $T^p \neq {\mathds 1}$, which is the source of most of the difficulty when we are working on the chain complex level. Meanwhile, recall in the proof of Theorem 4.22 in \cite{PS14}, if $\phi \in Power_p(X)$, say $\phi = \psi^p$ for some $\psi \in Ham(X,\omega)$, then there exists a well-defined chain map for each degree $k \in \Z$, 
\[ R_{p^2}: CF_k(H^{(p)}, J_t)_{\alpha}  \to CF_{k}(H^{(p)}, J_{t + \frac{1}{p^2}})_{\alpha}\]
where $H(t,x) = pF(pt,x)$ and $F$ is a Hamiltonian generating $\psi$. Again, in order to work on a space itself, we need to use some continuation map $C'$ to form the following composition
\begin{equation} \label{comp2}
\xymatrix{
CF_k(H^{(p)}, J_t)_{\alpha} \ar[r]^{R_{p^2}} \ar@/^2.0pc/@[black][rr]^{S = C' \circ R_{p^2}} & CF_{k}(H^{(p)}, J_{t + \frac{1}{p^2}})_{\alpha} \ar[r]^{C'} &CF_k(H^{(p)}, J_t)_{\alpha}}.
\end{equation}
Our first observation is 
\begin{prop} \label{cont2} For any closed symplectic manifold $(X, \omega)$, if $\phi = \phi_H \in Power_p(X)$, then for any degree $k \in \Z$, there exists a continuation map 
$$C: CF_{k}(H^{(p)}, J_{t + \frac{1}{p}})_{\alpha} \to CF_k(H^{(p)}, J_t)_{\alpha}$$
and a continuation map 
$$C': CF_{k}(H^{(p)}, J_{t + \frac{1}{p^2}})_{\alpha} \to CF_k(H^{(p)}, J_t)_{\alpha}$$
such that,  $T = S^p$, where $T$ and $S$ are compositions in (\ref{comp1}) and (\ref{comp2}). 
\end{prop}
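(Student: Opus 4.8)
The plan is to build the coarser rotation data $(R_p,C)$ out of $p$ copies of the finer rotation data $(R_{p^2},C')$, exploiting that, precisely because $\phi\in Power_p(X)$, the Hamiltonian $H^{(p)}$ is of ``$p^2$-th power'' type. Write $\phi=\psi^p$ with $\psi=\phi^1_F$ and take $H(t,x)=pF(pt,x)$ as the generating Hamiltonian of $\phi$, so that, as in the proof of Theorem 4.22 in \cite{PS14},
\[ H^{(p)}(t,x)=pH(pt,x)=p^2F(p^2t,x), \]
which is in particular $\tfrac1{p^2}$-periodic in $t$. Just as the text's identity $p(t+\tfrac1p)=pt+1\equiv pt$ on $\R/\Z$ makes $R_p$ act on the orbits of $H^{(p)}$, the identity $p^2(t+\tfrac1{p^2})=p^2t+1\equiv p^2t$ makes the reparametrization $x(t)\mapsto x(t+\tfrac1{p^2})$ permute the $1$-periodic orbits of $H^{(p)}$ in the class $\alpha$ and intertwine the Floer differentials after pushing forward the almost complex structure, yielding filtered chain isomorphisms
\[ R_{p^2}\co CF_k(H^{(p)},J_{t+\frac{j}{p^2}})_\alpha \xrightarrow{\ \cong\ } CF_k(H^{(p)},J_{t+\frac{j+1}{p^2}})_\alpha \qquad(j=0,\dots,p-1). \]
Composing $p$ of these is the reparametrization $x(t)\mapsto x(t+\tfrac1p)$, so $(R_{p^2})^p=R_p$ as chain maps $CF_k(H^{(p)},J_t)_\alpha\to CF_k(H^{(p)},J_{t+\frac1p})_\alpha$.

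Next I fix \emph{any} continuation map $C'\co CF_k(H^{(p)},J_{t+\frac1{p^2}})_\alpha\to CF_k(H^{(p)},J_t)_\alpha$, arising from counting solutions of the continuation equation for the fixed Hamiltonian $H^{(p)}$ and some homotopy $\gamma$ of almost complex structures from $J_{t+\frac1{p^2}}$ to $J_t$, and set $S=C'\circ R_{p^2}$ as in (\ref{comp2}). Since $R_{p^2}$ is an honest reparametrization of loops and $H^{(p)}$ is $\tfrac1{p^2}$-periodic in time, $R_{p^2}$ carries the continuation moduli spaces for $\gamma$ bijectively onto those for the rotated homotopy $(R_{p^2})_*\gamma$, leaving the Hamiltonian term untouched; iterating,
\[ C'_j:=(R_{p^2})^j\circ C'\circ(R_{p^2})^{-j}\co CF_k(H^{(p)},J_{t+\frac{j+1}{p^2}})_\alpha\to CF_k(H^{(p)},J_{t+\frac{j}{p^2}})_\alpha \]
is again a continuation map (for $((R_{p^2})^j)_*\gamma$), $C'_0=C'$, and $R_{p^2}\circ C'_j=C'_{j+1}\circ R_{p^2}$ for every $j$.

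I then define
\[ C:=C'_0\circ C'_1\circ\cdots\circ C'_{p-1}\co CF_k(H^{(p)},J_{t+\frac1p})_\alpha\to CF_k(H^{(p)},J_t)_\alpha, \]
which I take as the continuation map $C$ of (\ref{comp1}) — being a composite of continuation maps, it is a filtered chain map inducing the canonical continuation isomorphism on homology, which is all that is required; equivalently it is the continuation map for the concatenated homotopy. With $T=C\circ R_p$, an immediate induction on $m$ using only $R_{p^2}\circ C'_j=C'_{j+1}\circ R_{p^2}$ gives
\[ S^m=(C'\circ R_{p^2})^m=C'_0\circ C'_1\circ\cdots\circ C'_{m-1}\circ(R_{p^2})^m, \]
and taking $m=p$ together with $(R_{p^2})^p=R_p$ yields $S^p=C\circ R_p=T$, as desired.

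The routine parts — the precise form $H^{(p)}=p^2F(p^2t,\cdot)$, the $1$-periodicity conventions on $F$, the fact that $R_{p^2}$ preserves the free homotopy class $\alpha$ and intertwines $\partial$ with $(R_{p^2})_*J$ — are handled exactly as in \cite{PS14}, and the homology-level identity $[S]^p=[T]$ is automatic; the content is to upgrade it to the chain level. The one point deserving genuine care, and the main obstacle, is the naturality claim of the second paragraph: one must verify that the reparametrization operator identifies, \emph{on the nose} rather than merely up to homotopy, the continuation data for $\gamma$ with that for $(R_{p^2})_*\gamma$ (so that $C'_j$ is literally a continuation map and $R_{p^2}\circ C'_j=C'_{j+1}\circ R_{p^2}$ holds as an equality of chain maps), and that a composite of continuation maps is admissible as a continuation map in the sense of (\ref{comp1}). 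Given these, the proposition reduces to the purely formal telescoping identity above.
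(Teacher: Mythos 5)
Your proposal is correct and follows essentially the same route as the paper: your intertwining relation $R_{p^2}\circ C'_j=C'_{j+1}\circ R_{p^2}$ is exactly the paper's Lemma \ref{cont1} (proved there, as you sketch, by rotating continuation moduli spaces), and your composite $C=C'_0\circ\cdots\circ C'_{p-1}$ coincides with the paper's recursively defined $C_p=C_1\circ(R_{p^2})_*(C_{p-1})$, with the same telescoping induction giving $S^p=T$.
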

This result is proved in Section \ref{p14}. It is an analogous relation on the Floer chain complex, compared with the relation $[(R_p)_*] = [(R_{p^2})_*]^p$ on $HF_k(\phi^p)$.
\begin{notation} For the rest of the paper, whenever we use $T$, it always means the composition defined in (\ref{comp1}). If we need to emphasize the Hamiltonian $H$ of the corresponding system, we will denote it as $T^H$. Whenever we use $T_p$, it always means the resulting composition from Proposition \ref{cont2} that has $p$-th root. \end{notation}

\noindent {\bf{Self-mapping cone}}. Define the self-mapping cone of chain complex $CF_*(H^{(p)}, J_t)_{\alpha}$ with respect to map $T - \xi_p \cdot {\mathds 1}$ (see Definition \ref{self-mc}). We call it {\it self-mapping cone} of $(CF_*(H^{(p)}, J_t)_{\alpha}, \partial)$. The degree-$k$ piece is
\begin{equation} \label{cone-alg-set}
(Cone_{CF(H^{(p)}, J_t)_{\alpha}}(T - \xi_p \cdot {\mathds 1}))_k = CF_{k}(H^{(p)}, J_t)_{\alpha} \oplus CF_{k-1}(H^{(p)}, J_t)_{\alpha}
\end{equation}
and the boundary map $\partial_{co}$ is 
\begin{equation} \label{bm-mc}
\left( \begin{array}{cc}
\partial & -\mathcal (T - \xi_p \cdot {\mathds 1}) \\
0 & - \partial  \\
\end{array} \right)
\end{equation}
where $\partial$ is the Floer boundary operator of $CF_*(H^{(p)}, J_t)$. Moreover, 
\begin{dfn} \label{doublemap}
For any element $(x_1, x_2) \!\in\! (Cone_{CF(H^{(p)}, J_t)_{\alpha}}(T - \xi_p \cdot {\mathds 1}))_*$, if $CF_*(H^{(p)}, J_t)_{\alpha}$ is acted by some linear map $A$, then define its {\it double map} $\mathcal D_A$ by 
\[ \mathcal D_{A} (x_1, x_2) = (A x_1, Ax_2).\]
\end{dfn}
In particular, by (\ref{comp1}) and (\ref{comp2}), the self-mapping cone is acted by double maps 
\begin{equation} \label{DT}
 \mathcal D_{T} = \mathcal D_{R_p} + C_T, 
 \end{equation}
for some map $C_T$ which strictly lowers the filtration, and (if it exists) also 
\begin{equation} \label{DS}
\mathcal D_{S} = \mathcal D_{R_{p^2}} + C_S, 
\end{equation}
for some map $C_S$ who also strictly lowers the filtration. Moreover, by Proposition \ref{cont2}, $\mathcal D_{S}^p = \mathcal D_{T}$. The following proposition shows that our self-mapping cone is well-defined. 

\begin{prop} \label{p-he} For any closed symplectic manifold $(X, \omega)$, up to a filtered isomorphism, the construction of $((Cone_{CF(H^{(p)}, J_t)_{\alpha}}(T - \xi_p \cdot {\mathds 1}))_*, \partial_{co})$ is independent of the choice of the continuation map that is used to form map $T$. Moreover, $\mathcal D_{T}$ and $\mathcal D_{S}$ defined in (\ref{DT}) and (\ref{DS}) are chain maps on $((Cone_{CF(H^{(p)}, J_t)_{\alpha}}(T - \xi_p \cdot {\mathds 1}))_*, \partial_{co})$, \emph{i.e.}, they commute with boundary operator of mapping cone $\partial_{co}$. 
\end{prop}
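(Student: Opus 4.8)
The plan is to reduce both assertions to two standard facts about Floer continuation maps together with the elementary algebra of mapping cones, and then run the cone formalism of Definition \ref{self-mc}.

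\textbf{Continuation maps.} The two Floer data in (\ref{comp1}) share the Hamiltonian $H^{(p)}$ and differ only in the choice of (domain-dependent) almost complex structure, while the action functional is independent of the almost complex structure. Hence the energy identity for continuation trajectories carries no ``homotopy of Hamiltonians'' term, so every continuation map $C\co CF_k(H^{(p)},J_{t+\frac1p})_\alpha\to CF_k(H^{(p)},J_t)_\alpha$ is filtration non-increasing, and any two such maps $C,C'$ are chain homotopic through a filtration non-increasing homotopy $h$, i.e. $C-C'=\partial h+h\partial$ with $\ell(h\,\cdot\,)\le\ell(\cdot)$. Since $R_p$ is a filtered chain isomorphism with $\partial R_p=R_p\partial$, setting $K:=h\circ R_p$ gives $T-T'=\partial K+K\partial$ with $K$ filtration non-increasing, where $T=C\circ R_p$ and $T'=C'\circ R_p$.

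\textbf{Independence of the cone.} I would then check the purely algebraic statement: if $\mathcal S,\mathcal S'$ are filtration non-increasing maps on $(C_*,\partial,\ell)$ with $\mathcal S-\mathcal S'=\partial K+K\partial$ for some filtration non-increasing $K$, then
\[ \Theta:=\begin{pmatrix} \I & -K\\ 0 & \I\end{pmatrix}\co Cone_C(\mathcal S)_*\longrightarrow Cone_C(\mathcal S')_* \]
is an isomorphism of chain complexes with inverse $\begin{pmatrix} \I & K\\ 0 & \I\end{pmatrix}$: a one-line block computation, using exactly $\mathcal S-\mathcal S'=\partial K+K\partial$, shows $\Theta$ intertwines the boundary maps $\partial_{co}$ of the two cones. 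Moreover $\Theta$ and $\Theta^{-1}$ preserve the cone filtration of Definition \ref{self-mc}, since $\ell_{co}(x_1\mp Kx_2,x_2)=\max\{\ell(x_1\mp Kx_2),\ell(x_2)\}\le\max\{\ell(x_1),\ell(x_2)\}=\ell_{co}(x_1,x_2)$. Applying this with $\mathcal S=T-\xi_p\cdot\I$ and $\mathcal S'=T'-\xi_p\cdot\I$ (so $\mathcal S-\mathcal S'=T-T'$ from the previous step) produces the desired filtered isomorphism between the self-mapping cones built from $T$ and from $T'$, which is the first assertion.

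\textbf{The double maps.} Writing $\mathcal D_A$ as the block-diagonal map $(x_1,x_2)\mapsto(Ax_1,Ax_2)$ on $C_*\oplus C_{*-1}$ and using the explicit $\partial_{co}$ of (\ref{bm-mc}), a direct computation gives
\[ \mathcal D_A\circ\partial_{co}-\partial_{co}\circ\mathcal D_A=\begin{pmatrix} A\partial-\partial A & -\bigl(A(T-\xi_p\cdot\I)-(T-\xi_p\cdot\I)A\bigr)\\ 0 & -(A\partial-\partial A)\end{pmatrix}, \]
so $\mathcal D_A$ commutes with $\partial_{co}$ precisely when $A$ is a chain map on $CF_*(H^{(p)},J_t)_\alpha$ commuting with $T$ (the $\xi_p\cdot\I$ contribution cancels on its own). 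For $A=T=C\circ R_p$ this is immediate: it is a composition of chain maps and commutes with itself. For $A=S=C'\circ R_{p^2}$, which is defined whenever $\phi\in Power_p(X)$, it is again a composition of chain maps, and it commutes with $T$ because $T=S^p$ by Proposition \ref{cont2}. Hence both $\mathcal D_T$ and $\mathcal D_S$ are chain maps on the self-mapping cone. The only genuinely non-formal input is the filtration non-increasing property of $C,C'$ and of the homotopy $h$ in the first step; this is where the coincidence of the Hamiltonians on the two sides of (\ref{comp1}) is used, and it is exactly what makes $K$, hence $\Theta$, a \emph{filtered} isomorphism rather than merely a chain isomorphism, while everything else is routine mapping-cone bookkeeping.
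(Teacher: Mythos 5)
Your proposal is correct and follows essentially the same route as the paper: the first assertion via the block upper-triangular filtered isomorphism $\begin{pmatrix}\I & -K\\ 0 & \I\end{pmatrix}$ induced by a filtration non-increasing chain homotopy between the two versions of $T$, and the second via the block computation reducing commutation with $\partial_{co}$ to $[A,\partial]=0$ and $[A,T]=0$. The only cosmetic difference is that you justify $[T,\partial]=0$ by observing $T=C\circ R_p$ is a composition of chain maps, whereas the paper re-derives this from the boundary structure of the glued $1$-dimensional moduli spaces; both are valid.
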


\noindent {\bf{Barcode of self-mapping cone}}. By the definition of barcodes, we know that the combinatorial data can reveal algebraic structures of chain complexes. For the barcode of the self-mapping cone that was defined above, a natural question is whether this ``special'' action $\mathcal D_{S}$ (if it exists) will shape its barcode in some way. In fact, we have the following important theorem. 

\begin{theorem} \label{p-tuples} Fix a primitive\footnote{According to the definition in \cite{PS14}, a free homotopy class is primitive if it can not be represented by a multiply-covered loop.}  free homotopy class $\alpha$ represented by a non-contractible loop. If $\phi = \phi^1_{H} \in Power_p(X)$, then for any degree $k \in \Z$ and boundary map $(\partial_{co})_{k+1}:\! (Cone_{CF(H^{(p)}, J_t)_{\alpha}}(T \!-\! \xi_p \cdot {\mathds 1}))_{k+1} \!\to\! {\rm{Im}}(\partial_{co})_{k+1}$\footnote{In general, to compute (degree-$k$) barcode of $(\partial_{co})_{k+1}$, we need its codomain to be $\ker(\partial_{co})_{k}$. But in this paper, we only consider Hamiltonian Floer chain complex of homotopy class represented by a non-contractible loop, so the homology of mapping cone vanishes. Therefore, $\ker(\partial_{co})_{k} = {\rm{Im}}(\partial_{co})_{k+1}$.}, each bar in the degree-$k$ concise barcode of $(\partial_{co})_{k+1}$ has its multiplicity divisible by $p$. \end{theorem}

This proposition is an analogue (but stronger) result with Proposition 4.18 in \cite{PS14}. The proof of this theorem is the most time-consuming part of this paper. The Section~\ref{p17} is devoted to its proof. This proposition should be plausible since $\mathcal D_{S}$ is a strictly lower filtration perturbation of a group action $\mathcal D_{R_{p^2}}$ with order $p^2$, which makes each degree-$k$ piece 
$$(Cone_{CF(H^{(p)}, J_t)_{\alpha}}(T - \xi_p \cdot {\mathds 1}))_k$$
a representation. This restricts the singular value decomposition (see Definition \ref{dfn-svd}) in a certain special form. \\

\noindent {\bf Numerical measurement}. Based on Theorem \ref{p-tuples}, we can define some numerical measurement on $Ham(X, \omega)$ from this combinatorial data. 
\begin{dfn} \label{dfn-o} Fix a prime number $p$. First take the collection of lengths of bars in degree-$k$ concise barcode of self-mapping cone constructed with respect to $\phi_H$, denoted by  $\{\beta_i(\phi_H)\}_i$ and order them as follows,  
\[ \beta_1(\phi_H) \geq \beta_2(\phi_H)\geq \cdots \geq \beta_{m_k}(\phi_H) >0\]
(so $m_k=$ multiplicity of degree-$k$ concise barcode). Then the {\it degree-$k$ divisibility sensitive invariant} of $\phi_H$ is defined as follows, 
\[ \mathfrak{o}_X(\phi_H)_k = \max_{s \in \N} \left(\beta_{sp+1}(\phi_H) - \beta_{(s+1)p}(\phi_H)\right) \]
and if $l > m_k$, set $\beta_{l}(\phi_H) =0$. In general, the {\it divisibility sensitive invariant of $\phi = \phi_H$} is defined as follows, 
\[ \mathfrak{o}_X(\phi) = \max_{\tiny{\mbox{primitive $\xi_p$}}} \sup_{k \in \Z} \mathfrak{o}_X(\phi_H)_k. \] \end{dfn}

\begin{remark}
Here we want to emphasize that $\mathfrak{o}_X(\cdot)$ is well-defined on $Ham(X, \omega)$, i.e., it does not depend on the choice of Hamiltonian functions generating a given Hamiltonian diffeomorphism, while the Floer chain complex and its barcode (with full information including endpoints) do depend on the Hamiltonian functions. In fact, by Proposition 5.3 in \cite{Ush13}, different choices of Hamiltonian functions result in a {\it shift-isomorphism} (see Definition 3.4 in \cite{Ush13}) between Hamiltonian Floer chain complexes (so also between the corresponding self-mapping cones). For its effect reflected on the bars, shift-isomorphism changes the left-endpoints and possibly also degrees, but importantly keeps the length of each bar the same. A sample demonstration is Corollary 5.4 (or Proposition 3.6) in \cite{Ush13} saying the {\it boundary depth} (the length of the longest finite length bar) is well-defined over $Ham(X, \omega)$. Finally, since $\mathfrak{o}_X(\cdot)$ is defined by taking supremum over all the degrees, the shift of degrees from shift-isomorphism can be ignored. \end{remark}

Similarly to the multiplicity sensitive spread $\mu_p(\phi_H)$ in \cite{PS14}, $\mathfrak{o}_X(\phi)$ is used to provide an obstruction to the condition $\phi \in Power_p(X)$. Explicitly, we have the following proposition. 

\begin{prop} \label{non-dis} If $\phi \in Power_p$(X), then $\mathfrak{o}_X(\phi) = 0$. If $p \nmid m_k$ for some degree $k$, then $\mathfrak{o}_X(\phi) \geq \beta_{m_k}(\phi_H)$. \end{prop}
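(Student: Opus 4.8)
The statement packages two independent claims, and I would prove them in turn.

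\emph{Claim 1 (vanishing): $\phi\in Power_p(X)$ implies $\o_X(\phi)=0$.} The only substantial input here is Theorem \ref{p-tuples}: for every degree $k$ and every primitive $p$-th root of unity $\xi_p$, each bar in the degree-$k$ concise barcode of $(\partial_{co})_{k+1}$ has multiplicity divisible by $p$. Grouping the bars of this multiset by their \emph{length} alone and summing multiplicities preserves divisibility by $p$, so the non-increasing list $\beta_1(\phi)\ge\cdots\ge\beta_{m_k}(\phi)>0$ of Definition \ref{dfn-o} is constant on consecutive ``blocks'' of equal value whose sizes are all multiples of $p$. In particular $m_k$ itself is a multiple of $p$, and every partial sum $M_j$ of the first $j$ block sizes is a multiple of $p$ (with $M_0=0$).

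With this in hand I would verify $\beta_{sp+1}(\phi)=\beta_{(s+1)p}(\phi)$ for every $s\in\N$, which by Definition \ref{dfn-o} forces $\o_X(\phi)_k=0$. If $sp+1>m_k$ then $sp\ge m_k$ (both are multiples of $p$), so $(s+1)p>m_k$ and both terms vanish by the stated convention. Otherwise $sp+1\le m_k$; then $sp+1$ lies in the $j$-th block, i.e. $M_{j-1}<sp+1\le M_j$, and since $M_j>sp$ is a multiple of $p$ we get $M_j\ge(s+1)p$, while $(s+1)p>sp\ge M_{j-1}$, so $(s+1)p$ lies in the same block and the two $\beta$-values coincide. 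As this holds for all $s$, all $k$, and all primitive $\xi_p$, Definition \ref{dfn-o} gives $\o_X(\phi)=\max_{\xi_p}\sup_k\o_X(\phi)_k=0$.

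\emph{Claim 2 (obstruction): if $p\nmid m_k$ for some degree $k$, then $\o_X(\phi)\ge\beta_{m_k}(\phi)$.} This uses no hypothesis on $\phi$ (consistently with Claim 1, $\phi\in Power_p(X)$ would force $p\mid m_k$) and is a one-line unwinding of Definition \ref{dfn-o}. Fix such a $k$ together with the relevant $\xi_p$ and write $m_k=qp+r$ with $1\le r\le p-1$. Taking $s=q$: from $qp+1\le qp+r=m_k$ and monotonicity we get $\beta_{sp+1}(\phi)=\beta_{qp+1}(\phi)\ge\beta_{m_k}(\phi)>0$, while $(s+1)p=qp+p>qp+r=m_k$ makes $\beta_{(s+1)p}(\phi)=0$ by the convention. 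Hence $\o_X(\phi)\ge\o_X(\phi)_k\ge\beta_{sp+1}(\phi)-\beta_{(s+1)p}(\phi)\ge\beta_{m_k}(\phi)$, and since $\beta_{m_k}(\phi)>0$ this is exactly the assertion that $\o_X$ obstructs membership in $Power_p(X)$.

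The proof is short once Theorem \ref{p-tuples} is granted, so the genuine difficulty is external to this proposition --- it resides in Section \ref{p17}, which I am assuming. Locally, the only points that need care are the reduction from ``every bar has $p$-divisible multiplicity'' to ``the sorted length sequence is blockwise constant with $p$-divisible block sizes'', and keeping the index arithmetic honest: the crucial elementary fact is that no multiple of $p$ lies strictly between $sp$ and $(s+1)p$, which is precisely what pins the window $\{sp+1,\dots,(s+1)p\}$ inside a single block in Claim 1 and past the end of the barcode in Claim 2.
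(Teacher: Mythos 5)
Your proof is correct and follows essentially the same route as the paper: the vanishing statement is deduced from Theorem \ref{p-tuples} together with Definition \ref{dfn-o}, and the obstruction statement is exactly the paper's argument of taking $s$ so that $sp$ is the largest multiple of $p$ below $m_k$, whence $\beta_{sp+1}(\phi)\neq 0$ while $\beta_{(s+1)p}(\phi)=0$. The only difference is that you spell out the blockwise-constant structure of the sorted length sequence, which the paper leaves implicit in the phrase ``directly comes from Proposition \ref{p-tuples} and the definition of $\o_X(\phi)$.''
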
 
This proposition is proved in Section \ref{p19}. \\

Now we move to the arrows in the diagram (\ref{recipe}). All $(a)$, $(b)$ and $(c)$ are in the flavor of Lipschitz continuity. From now on, we will simply denote 
\[ Cone(H)_* := ((Cone_{CF(H^{(p)}, J_t)_{\alpha}}(T^H - \xi_p \cdot {\mathds 1}))_*, \partial_{{co}, H}),\]
and 
\[ Cone(G)_* := ((Cone_{CF(G^{(p)}, J_t)_{\alpha}}(T^G - \xi_p \cdot {\mathds 1}))_*, \partial_{{co}, G}),\]
the mapping cones constructed from different Hamiltonian functions $H$ and $G$. First, $(a)$ is corresponding to the following proposition, 
\begin{prop} \label{lip1} For any two Hamiltonians $H$ and $G$, we have 
\[ d_Q(Cone(H)_*, Cone(G)_*) \leq 3p \cdot||H-G||_H. \]
\end{prop}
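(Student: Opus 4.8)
The plan is to produce a $(\delta_+,\delta_-)$-quasiequivalence between $Cone(H)_*$ and $Cone(G)_*$ with $\delta_+ = \delta_- = 3p\,\|H-G\|_H$ directly from the standard Floer-theoretic continuation data, exploiting that the mapping-cone construction is functorial in a filtration-controlled way. The starting point is the classical fact (as used throughout \cite{PS14} and \cite{UZ15}) that for Hamiltonians $H,G$ there are continuation maps $\Phi_0\co CF_*(H^{(p)},J_t)_\alpha \to CF_*(G^{(p)},J_t)_\alpha$ and $\Psi_0$ in the reverse direction, together with chain homotopies $K_{H,0}, K_{G,0}$, whose filtration shifts are governed by $\|H^{(p)}-G^{(p)}\|_H$. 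Since $H^{(p)}(t,x) = pH(pt,x)$, one has $\|H^{(p)}-G^{(p)}\|_H \le p\,\|H-G\|_H$, which is where the factor $p$ first enters. So at the level of the underlying complexes $CF_*(H^{(p)},J_t)_\alpha$ and $CF_*(G^{(p)},J_t)_\alpha$ we already have a $(p\|H-G\|_H, p\|H-G\|_H)$-quasiequivalence (up to the usual small $\epsilon$ that can be absorbed, or handled as in \cite{UZ15}).

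Next I would promote these data to the mapping cones. Define $\Phi = \mathcal D_{\Phi_0} + (\text{correction})$ on $Cone(H)_* = CF_*(H^{(p)})\oplus CF_{*-1}(H^{(p)})$ landing in $Cone(G)_*$; the subtlety is that $\Phi_0$ need not strictly intertwine $T^H - \xi_p\cdot\I$ with $T^G - \xi_p\cdot\I$, only up to a filtered chain homotopy $L$ with $\Phi_0\circ(T^H-\xi_p\cdot\I) - (T^G-\xi_p\cdot\I)\circ\Phi_0 = \partial_G L + L\partial_H$, and this $L$ again has filtration shift controlled by $p\|H-G\|_H$ (it is built from the continuation maps between the $J_t$ and $J_{t+1/p}$ systems and the continuation data for $H$ versus $G$). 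The off-diagonal entry of the cone chain map $\Phi$ is then exactly $\pm L$. One checks $\Phi$ is a chain map for $\partial_{co}$ by the standard mapping-cone bookkeeping, and that $\ell_{co}(\Phi(x_1,x_2)) = \max\{\ell(\Phi_0 x_1 + (\text{stuff involving }x_2)), \ell(\Phi_0 x_2)\}$ is bounded by $\ell_{co}(x_1,x_2)$ plus the worst shift among $\Phi_0, L$, i.e.\ plus roughly $2p\|H-G\|_H$ or $3p\|H-G\|_H$ depending on how many composed continuation maps appear in $L$. Symmetrically one builds $\Psi$ from $\Psi_0$ and its homotopy. Finally the cone homotopies $K_C, K_D$ are assembled from $K_{H,0}, K_{G,0}$ and the homotopies witnessing the homotopy-equations for $\Phi_0,\Psi_0$; since a homotopy for the cone must control $\Psi\circ\Phi - I$, and $\Psi\circ\Phi$ differs from $\mathcal D_{\Psi_0\circ\Phi_0}$ by terms assembled from $L$, the resulting filtration shift for $K_C$ is the sum $\delta_+ + \delta_-$ plus the extra $L$-contributions, which is where the constant is pushed up to $3p$.

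The main obstacle, and the step I would spend the most care on, is the bookkeeping of the off-diagonal homotopy $L$: one must verify that $L$ genuinely exists with the claimed filtration bound, which amounts to knowing that the two ways of getting from the $(H^{(p)},J_t)$ system to the $(G^{(p)},J_{t+1/p})$ system — continuation in the Hamiltonian then rotation-plus-continuation, versus rotation-plus-continuation then continuation in the Hamiltonian — agree up to a homotopy whose energy/filtration cost is again bounded by $p\|H-G\|_H$ (each continuation map between two systems costs at most the Hofer-type difference of the relevant Hamiltonians, and rotation $R_p$ is an exact filtered isomorphism). This is morally the statement that $T^G\circ\Phi_0 \simeq \Phi_0\circ T^H$ with controlled homotopy, which in turn follows from the composition/concatenation property of continuation maps. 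Once $L$ and its bound are in hand, the rest is the routine verification that $(\Phi,\Psi,K_C,K_D)$ satisfies Definition \ref{dfn-mc} with $\delta_+ = \delta_- = 3p\|H-G\|_H$, giving $d_Q \le 3p\|H-G\|_H$ as claimed. A careful accounting of exactly how many continuation maps are chained in $L$, $K_C$, $K_D$ is what pins the constant at $3p$ rather than $2p$ or $4p$; I would track this explicitly but it does not present a conceptual difficulty.
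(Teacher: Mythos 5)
Your approach is genuinely different from the paper's. You propose to build the quasiequivalence between $Cone(H)_*$ and $Cone(G)_*$ explicitly, taking $\Phi$ to be the double map of the continuation map $\Phi_0$ with an off-diagonal correction $L$ witnessing $\Phi_0\circ (T^H-\xi_p\cdot\I)\simeq (T^G-\xi_p\cdot\I)\circ\Phi_0$, and likewise for $\Psi$. The paper instead feeds exactly the same input data (the quasiequivalence $(\Phi_0,\Psi_0,K_{H,0},K_{G,0})$ with shifts $\delta_\pm$ satisfying $\delta_++\delta_-=p\|H-G\|_H$, plus the homotopies $P,Q$ making the squares homotopy-commute, i.e.\ your $L$'s) into an abstract statement, Proposition \ref{equ-cone}, proved in the filtered homotopy category $K(\mathcal F)$: the cones sit in distinguished triangles, and a five-lemma argument applied to the exact sequences obtained from ${\Hom}(\Sigma^{3(\delta_++\delta_-)}Cone_D(S'),-)$ and ${\Hom}(-,\Sigma^{-2\delta_+-3\delta_-}Cone_C(S))$ produces one-sided inverses of the induced map $h$ up to the shift $i^3_{\delta_++\delta_-}$, which compose to a two-sided inverse up to $i^6_{\delta_++\delta_-}$. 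That is where the constant comes from: $d_Q\leq\tfrac{1}{2}(\Delta_++\Delta_-)=3(\delta_++\delta_-)=3p\|H-G\|_H$ --- not from counting how many continuation maps are chained, which is what your accounting attempts.

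The concrete gap in your version is the cone-level homotopy. Writing $\Phi=\left(\begin{smallmatrix}\Phi_0 & L_1\\ 0 & \Phi_0\end{smallmatrix}\right)$ and $\Psi=\left(\begin{smallmatrix}\Psi_0 & L_2\\ 0 & \Psi_0\end{smallmatrix}\right)$, the composite $\Psi\circ\Phi-\I$ has diagonal entries $\Psi_0\Phi_0-\I=\partial K+K\partial$ but off-diagonal entry $\Psi_0L_1+L_2\Phi_0$. Any candidate homotopy $K_{co}=\left(\begin{smallmatrix}K & M\\ 0 & -K\end{smallmatrix}\right)$ gives $\partial_{co}K_{co}+K_{co}\partial_{co}$ with off-diagonal entry $\partial M-M\partial+SK-KS$ (where $S=T^H-\xi_p\cdot\I$), so the homotopy equation of Definition \ref{dfn-mc} forces the existence of a degree-$2$ map $M$ with $\partial M-M\partial=\Psi_0L_1+L_2\Phi_0-SK+KS$ and controlled filtration shift. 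This $M$ is a secondary homotopy (a homotopy between homotopies); its existence is not a formal consequence of having $L_1,L_2,K$, and is exactly what your data do not supply. In Floer theory one can produce it from $2$-parameter families of continuation data with their own energy estimates, but that is an additional argument you have not made --- and it is precisely the step the paper's homotopy-category machinery is designed to bypass (at the cost of the weaker constant $6(\delta_++\delta_-)$ for $\Delta_++\Delta_-$). Until $M$ (and its mirror for $\Phi\circ\Psi$) is constructed with an explicit bound, the quadruple required by Definition \ref{dfn-mc} has not been produced, and your constant $3p$ is asserted rather than derived.
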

Moreover, $(b)$ corresponds to the following proposition which will be a direct application of Corollary 8.8 in \cite{UZ15}.
\begin{prop} \label{lip2} 
Denote by $\beta_i(\phi_H)$ the length of the $i$-th bar in degree-$k$ verbose barcode of $Cone(H)_*$ and by $\beta_i(\phi_G)$ the length of the $i$-th bar in degree-$k$ verbose barcode of $Cone(G)_*$. Then we have 
\[ | \beta_i(\phi_H) - \beta_i(\phi_G)| \leq 4\,d_Q(Cone(H)_*, Cone(G)_*) \]
for every $i \in \Z$. \end{prop}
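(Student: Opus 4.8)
The plan is to obtain this as an immediate corollary of the barcode-stability estimate of \cite{UZ15}, namely Corollary 8.8 there, applied to the pair of filtered chain complexes $Cone(H)_*$ and $Cone(G)_*$. Recall that Corollary 8.8 compares the \emph{verbose} barcodes of two filtered chain complexes joined by a quasiequivalence: if $(C_*,\partial_C,\ell_C)$ and $(D_*,\partial_D,\ell_D)$ admit a $(\delta_+,\delta_-)$-quasiequivalence in the sense of Definition \ref{dfn-mc}, then in each degree $k$ the two sorted sequences of bar lengths of the degree-$k$ verbose barcodes --- extended by bars of length $0$ so as to have equal cardinality --- differ entrywise by at most $2(\delta_+ + \delta_-)$. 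Since $d_Q$ is the infimum of $\tfrac{\delta_+ + \delta_-}{2}$ over all such quasiequivalences, this will produce exactly the factor $4$ in the conclusion.

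First I would record that the hypotheses apply: by Definition \ref{self-mc} and Proposition \ref{p-he}, $Cone(H)_*$ and $Cone(G)_*$ are honest filtered chain complexes with filtration $\ell_{co}$, defined up to filtered isomorphism independently of the chosen continuation maps, so their degree-$k$ verbose barcodes are well defined. Next I would fix $\epsilon > 0$ and choose, using the definition of $d_Q$, a $(\delta_+,\delta_-)$-quasiequivalence between $Cone(H)_*$ and $Cone(G)_*$ with $\tfrac{\delta_+ + \delta_-}{2} \le d_Q(Cone(H)_*, Cone(G)_*) + \epsilon$. Feeding this quadruple into Corollary 8.8 of \cite{UZ15} gives
\[ |\beta_i(\phi_H) - \beta_i(\phi_G)| \le 2(\delta_+ + \delta_-) \le 4\big(d_Q(Cone(H)_*, Cone(G)_*) + \epsilon\big) \]
for every index $i$ (with the zero-padding convention above). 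Letting $\epsilon \to 0$ then yields the asserted inequality.

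The only point I would take care to state rather than a genuine obstacle is the index bookkeeping: the verbose barcodes of $Cone(H)_*$ and $Cone(G)_*$ need not have the same number of bars, since the dimensions over the Novikov field of the two cone complexes, hence the ranks $r$ entering the singular value decompositions of Definition \ref{dfn-svd}, can differ between $H$ and $G$; consequently ``for every $i \in \Z$'' must be read with the convention (the one used in \cite{UZ15}) that the shorter list is completed by length-$0$ bars. Everything else is a formal invocation of the cited theorem; in particular no use is made of Proposition \ref{lip1} here --- that proposition is what subsequently makes $d_Q(Cone(H)_*, Cone(G)_*)$ effectively computable in terms of $\|H - G\|_H$, but the present bound holds for the quasiequivalence distance on its own.
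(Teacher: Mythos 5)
Your proposal is correct and follows essentially the same route as the paper: pick an $\epsilon$-optimal $(\delta_+,\delta_-)$-quasiequivalence realizing $d_Q(Cone(H)_*, Cone(G)_*)$ up to $\epsilon$, invoke Corollary 8.8 of \cite{UZ15} to bound the difference of bar lengths by $2(\delta_+ + \delta_-) \leq 4 d_Q + 4\epsilon$, and let $\epsilon \to 0$. The zero-padding convention you flag is indeed the one implicitly used, so there is nothing further to add.
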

Note that $(a)$ and $(b)$ together imply the following proposition which $(c)$ corresponds to. 
\begin{prop} \label{lip3}
For any closed symplectic manifold $(X, \omega)$ and $\phi, \psi \in Ham(X,\omega)$, we have 
\[ |\mathfrak{o}_X(\phi) - \mathfrak{o}_X(\psi)| \leq 24p \cdot d_{H}(\phi, \psi).\]
\end{prop}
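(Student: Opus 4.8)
The plan is to deduce Proposition~\ref{lip3} directly from Propositions~\ref{lip1} and~\ref{lip2} by tracking how the definition of $\o_X$ in Definition~\ref{dfn-o} responds to perturbations of the verbose barcode. The point is that $\o_X(\phi)$ is built, for each primitive root $\xi_p$ and each degree $k$, out of differences $\beta_{sp+1}(\phi) - \beta_{(s+1)p}(\phi)$ of sorted bar lengths, followed by two suprema (over $s\in\N$, then over $k\in\Z$, then over $\xi_p$). Since each of these three operations is $1$-Lipschitz for the sup-norm, and since every individual bar length is $1$-Lipschitz in the sup-norm in the sense of a stability bound, the whole composite is controlled; the only arithmetic is tracking the constant $24p$ through the chain $d_H \rightsquigarrow d_Q \rightsquigarrow \{\beta_i\}$.

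More precisely, here are the steps in order. First I would fix a primitive $p$-th root of unity $\xi_p$ and a degree $k\in\Z$, and recall that the degree-$k$ verbose barcodes of $Cone(H)_*$ and $Cone(G)_*$ are multisets with the same cardinality (the cardinality being determined by the dimensions of the chain groups, which agree since the cone construction depends on these only through the underlying graded vector space). List their bar lengths in non-increasing order as $\beta_1(\phi_H)\geq \beta_2(\phi_H)\geq\cdots$ and $\beta_1(\phi_G)\geq\beta_2(\phi_G)\geq\cdots$; then by Proposition~\ref{lip2}, $|\beta_i(\phi_H)-\beta_i(\phi_G)|\leq 4\,d_Q(Cone(H)_*,Cone(G)_*)$ for every $i$. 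Second, I would observe that for any index $s$,
\[
\bigl(\beta_{sp+1}(\phi_H)-\beta_{(s+1)p}(\phi_H)\bigr) - \bigl(\beta_{sp+1}(\phi_G)-\beta_{(s+1)p}(\phi_G)\bigr)
\]
has absolute value at most $2\cdot 4\,d_Q = 8\,d_Q$, using the triangle inequality twice (once for each of the two bar lengths involved), and that the convention $\beta_{(s+1)p}=0$ when $(s+1)p$ exceeds the multiplicity is harmless since by the equal-cardinality remark this kicks in at the same index for both, and $0$ is Lipschitz-compatible with itself. Third, since $\sup$ over $s$ of a family of functions, each differing pointwise by at most $8\,d_Q$, produces values differing by at most $8\,d_Q$, we get $|\o_X(\phi_H)_k - \o_X(\phi_G)_k|\leq 8\,d_Q(Cone(H)_*,Cone(G)_*)$. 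Taking $\sup_{k\in\Z}$ and then $\max$ over primitive $\xi_p$ preserves this bound (each $\xi_p$ gives its own pair of cones, but the estimate of Proposition~\ref{lip1}--\ref{lip2} holds uniformly in the choice), so $|\o_X(\phi)-\o_X(\psi)|\leq 8\,d_Q(Cone(H)_*,Cone(G)_*)$.

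Finally I would chain this with Proposition~\ref{lip1}: choosing Hamiltonians $H,G$ generating $\phi,\psi$ respectively, $d_Q(Cone(H)_*,Cone(G)_*)\leq 3p\,\|H-G\|_H$, hence $|\o_X(\phi)-\o_X(\psi)|\leq 24p\,\|H-G\|_H$. Taking the infimum over all such $H$ and $G$, and recalling that $\inf\|H-G\|_H$ over generating pairs equals $d_H(\phi,\psi)$ by definition of Hofer's metric (here one uses that $\|H-G\|_H$, suitably interpreted via concatenation/inversion of flows, computes $\|\phi^{-1}\psi\|_H$), yields $|\o_X(\phi)-\o_X(\psi)|\leq 24p\,d_H(\phi,\psi)$.

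The main obstacle is not any single inequality but the bookkeeping at the interface between the algebraic stability statement (Proposition~\ref{lip2}, phrased for verbose barcodes of two \emph{fixed} cone complexes) and the two-sided suprema in the definition of $\o_X$: one must be careful that the same ordering convention, the same padding-by-zero convention, and the same multiplicity are in force on both sides so that the index-wise comparison is legitimate, and one must check that passing from $\|H-G\|_H$ back to $d_H(\phi,\psi)$ is compatible with the cone construction (i.e.\ that replacing $H$ by a Hamiltonian generating $\phi^{-1}\psi$ does not cost anything beyond what Proposition~\ref{lip1} already accounts for). Once those conventions are pinned down the proof is a short exercise in propagating a sup-norm bound through a $1$-Lipschitz composite, and the constant $24p = 2\cdot 4\cdot 3p$ is exactly $2$ (triangle inequality) $\times\,4$ (barcode stability constant) $\times\,3p$ (quasiequivalence constant).
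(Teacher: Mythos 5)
Your proposal is correct and follows essentially the same route as the paper's proof: combine Propositions \ref{lip1} and \ref{lip2} into an index-wise bound $|\beta_i(\phi_H)-\beta_i(\phi_G)|\leq 12p\,\|H-G\|_H$, pick up a factor of $2$ from the difference $\beta_{sp+1}-\beta_{(s+1)p}$ defining $\o_X(\cdot)_k$, propagate the bound through the suprema over $s$, $k$, and $\xi_p$, and finish by taking the infimum over generating Hamiltonians. The paper handles the $\sup_k$ via an $\epsilon$-argument and is terser about the padding and the passage from $\|H-G\|_H$ to $d_H(\phi,\psi)$, but these are presentational differences only.
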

All these Lipschitz style propositions are proved in Section \ref{Lc}. 

\begin{remark} Indeed, $\mathfrak{o}_X(\phi)$ defined here and $\mu_p(W)$ defined in \cite{PS14}, where $W$ is Hamiltonian Floer homology generated by $H^{(p)}$, are similar but not completely related. One should have the impression that $\mu_p(W)$ is defined ``locally'' with the help of one interval (particularly with the information from its endpoint) while $\mathfrak{o}_X(\phi)$ is defined ``globally'' by using the total multiplicity ({\it without} using the information from endpoints). On the one hand, we point out that $\mu_p(W)$ can also be defined in the Floer-Novikov persistent homology theory developed in \cite{UZ15} on the chain complex level. Here, we give the definition as follows, which is formally the same as $\mu_p(W)$ defined in \cite{PS14} but with extra care on the endpoints.  

For $I = ([a], L)$, define $I^{2c} = ([a \!+\! 2c], L \!-\! 4c)$. Meanwhile, $I ( = ([a], L)) \subset ([b], L')$ (or called bar $([b], L')$ containing $I$) if  there exist $c_a$ and $c_b$ in $\Gamma$ such that 
\[ b + c_b \leq a+ c_a \leq a+ c_a + L \leq b+ c_b + L'. \]

\begin{dfn} (Floer-Novikov multiplicity sensitive spread) For a given interval $I = ([a], L)$, denote $m(\mathcal B_k(Cone(H)_*, I))$ as the multiplicity of bars in degree-$k$ barcode $\mathcal B_k(Cone(H)_*)$ containing $I$. Fix a prime number $p$. For each primitive $\xi_p$, define 
\begin{align*} 
\mu_{p,\xi_p,k}(\phi_H) = \sup \big\{ c \geq 0 \, | \, &\mbox{there exists an interval}\\
&\mbox{$I = ([a], L)$ satisfying condition (A)} \big\} 
\end{align*}
where condition (A) is 
\[ \mbox{$m(\mathcal B_k(Cone(H)_*, I)) = m(\mathcal B_k(Cone(H)_*, I^{2c})$ is {\it not} divisible by $p$}. \]
Finally define
\begin{equation} \label{F-N mds}
\mu_p(\phi_H) = \max_{\tiny{\mbox{primitive $\xi_p$}}} \sup_{k \in \Z} \mu_{p, \xi_p,k}(\phi_H). 
\end{equation}
\end{dfn} 

Unfortunately, the (weak) stabilization proposition - Theorem 4.23 in \cite{PS14} (with {\it corrected version}) can be modified to hold for a general symplectic manifold $M$, but it can not be applied in the same way as in \cite{PS14}, especially when $c_1(M)$ is not zero. On the other hand, in some special cases, for instance, $\Gamma$ is dense, interested reader can verify that $\mathfrak{o}_X(\phi)$ and $\mu_p(\phi_H)$ are comparable. Specifically, there exist positive constants $C_1$ and $C_2$ both depending on $p$ such that $C_1\mu_p(\phi_H) \leq  \mathfrak{o}_X(\phi) \leq C_2 \mu_p(\phi_H)$. The key observation is that in this extreme case when $\Gamma$ is dense, $\R/\Gamma$ can be identified with a single point, which implies that each bar has the same left-endpoint. Definition (\ref{F-N mds}) is then considerably simplified to contain only the information of the lengths of bars. \end{remark} 

Finally, $(d)$ combines all the results together giving the following intermediate theorem which is the crux of the proof of our main theorem.

\begin{theorem} \label{imm-thm} For any closed symplectic manifold $(X, \omega)$, suppose that there exists a Hamiltonian diffeomorphism $\phi = \phi_H$ such that for some $k \in \Z$, $p \nmid m_k$ where $m_k$ is the multiplicity of degree-$k$ concise barcode of $Cone(H)_*$. Then we have 
\[ power_p(X) \geq \frac{1}{24p} \beta_{m_k}(\phi_H). \]
\end{theorem}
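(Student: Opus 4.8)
The plan is to chain together the Lipschitz-type estimates (Propositions \ref{lip1}, \ref{lip2}, \ref{lip3}) with the obstruction Proposition \ref{non-dis}, exactly along arrow $(d)$ of the recipe \eqref{recipe}. First I would unwind the definition of $power_p(X)$: it is a supremum over all $\psi \in Ham(X)$ of $d_H(\psi, Power_p(X))$, so to bound it from below it suffices to exhibit a single $\psi$ — here $\psi = \phi_H$ — with a good lower bound on $d_H(\phi_H, Power_p(X))$. So the whole argument reduces to showing
\[
d_H(\phi_H, Power_p(X)) \geq \frac{1}{24p}\,\beta_{m_k}(\phi_H).
\]

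To prove this, let $\phi' \in Power_p(X)$ be arbitrary, and write $\phi' = \phi_G$ for some Hamiltonian $G$. By Proposition \ref{non-dis}, since $\phi' \in Power_p(X)$ we have $\o_X(\phi') = 0$. On the other hand, by the second clause of Proposition \ref{non-dis}, the hypothesis $p \nmid m_k$ for our given $\phi = \phi_H$ gives $\o_X(\phi_H) \geq \beta_{m_k}(\phi_H)$. Now apply the Lipschitz estimate, Proposition \ref{lip3}, to the pair $(\phi_H, \phi')$:
\[
\beta_{m_k}(\phi_H) \leq \o_X(\phi_H) = |\o_X(\phi_H) - \o_X(\phi')| \leq 24p \cdot d_H(\phi_H, \phi').
\]
Taking the infimum over all $\phi' \in Power_p(X)$ yields $d_H(\phi_H, Power_p(X)) \geq \frac{1}{24p}\beta_{m_k}(\phi_H)$, and hence $power_p(X) \geq \frac{1}{24p}\beta_{m_k}(\phi_H)$, which is the claim.

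One technical point I would want to be careful about: Proposition \ref{lip3} as stated compares $\o_X$ of two Hamiltonian diffeomorphisms via $d_H$, and $d_H(\phi_H,\phi')$ is itself an infimum over Hamiltonians generating $\phi_H^{-1}\circ\phi'$; I should make sure the constant $24p$ already absorbs that infimum correctly (it does, since Proposition \ref{lip3} is phrased directly in terms of $d_H(\phi,\psi)$ rather than in terms of a particular generating Hamiltonian norm $\|H-G\|_H$). The only genuine content is therefore imported wholesale from the earlier sections — Propositions \ref{lip1}–\ref{lip3} (proved in Section \ref{Lc}) and Proposition \ref{non-dis} (proved in Section \ref{p19}) — and the present theorem is a bookkeeping corollary. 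Consequently I do not expect any real obstacle here; the ``hard part'' of the paper lives upstream, in establishing the divisibility Theorem \ref{p-tuples} and the stability Proposition \ref{lip1}, not in this assembly step.
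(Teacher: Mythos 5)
Your proposal is correct and follows essentially the same route as the paper: unwind the supremum in $power_p(X)$, compare $\phi_H$ with an (arbitrary or near-optimal) element of $Power_p(X)$, and combine the vanishing and lower-bound clauses of Proposition \ref{non-dis} with the Lipschitz estimate of Proposition \ref{lip3}. The only cosmetic difference is that the paper selects a near-minimizing $\psi\in Power_p(X)$ via an $\ep$-argument while you take an arbitrary $\phi'$ and pass to the infimum afterwards; these are equivalent, and your citation of Proposition \ref{lip3} (rather than the paper's apparent typo citing Proposition \ref{lip2}) is the intended reference.
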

\begin{proof} For any given $\ep>0$, we have 
\begin{align*} 
power_p(X) + \ep &= \sup_{\phi \in Ham(X, \omega)} d_H(\phi, Power_p(X)) + \ep \\
& \geq d_H(\phi_H, Power_p(X)) + \ep &&\hspace{-2em} \text{by Definition 1.4} \\
	& \geq d_H(\phi_H, \psi)&&\hspace{-2em} \text{for some $\psi \in Power_p(X)$}\\
& \geq \frac{1}{24p} |\mathfrak{o}_X(\phi_H) - \mathfrak{o}_X(\psi)| &&\hspace{-2em} \text{by Proposition \ref{lip3}} \\
& = \frac{1}{24p} \mathfrak{o}_X(\phi_H) &&\hspace{-2em} \text{by Proposition \ref{non-dis}} \\
& \geq \frac{1}{24p} \beta_{m_k}(\phi_H). &&\hspace{-2em} \text{by Proposition \ref{non-dis}}
\end{align*}
Since $\ep$ is arbitrarily chosen, we get the conclusion. \end{proof}

\subsubsection{Egg-beater model and product structure} From the argument above, we notice that in order to prove Conjecture \ref{conj}, for any given symplectic manifold $X$, we should be able to handle the following two issues:
\begin{itemize}
\item[(i)] find a family of $\phi_{\lambda} = \phi_{H_{\lambda}}^1 \in Ham(X, \omega)$ such that $\beta_{m_k}(\phi_{\lambda}) \to \infty$ as $\lambda \to \infty$;
\item[(ii)] control the non-divisibility of the multiplicity of the concise barcode of $Cone(H_{\lambda})_*$ by prime number $p$.
\end{itemize}
In general, this might be difficult, especially for condition (i). Our main theorem indicates that we can do these on $X = \Sigma_g \times M$ for any closed symplectic manifold, which is the key to succeed in generating results from \cite{PS14}.  Here since we take advantage of a chaotic model called ``egg-beater model'' which has been carefully studied in \cite{PS14} (see Section 5.4), a brief introduction of this is needed.\\

\noindent{\bf Egg-beater model}. Egg-beater model $(\Sigma_g, \phi_{\lambda})$ with $g \geq 4$ is constructed to create large action gap. On $\Sigma_g$, we will focus on a pair of annuli intersecting each other in such a way that in each component separated by the annuli there exists some genus (see Figure 2 in \cite{PS14}). Moreover, our Hamiltonian dynamics comes from a family of shear flow $\phi^t_{\lambda}$ (highly degenerated) generated by a family of special Hamiltonian functions supported only in the union of these annuli (see Figure 3 in \cite{PS14}). The upshot is that we have a well-defined Floer chain complex (for a non-contractible loop), denoted by  
\begin{equation} \label{egg-beater}
CF_*(\Sigma_g, \phi_{\lambda}^p)_{\alpha_{\lambda}} 
\end{equation}
where $\alpha_{\lambda}$ represents a homotopy class of non-contractible loops and $\phi_{\lambda}$ is a Hamiltonian diffeomorphism, parametrized by sufficiently large $\lambda$. The corresponding generating Hamiltonian of $\phi_{\lambda}$ is denoted by  $H_\lambda$. The generators of this chain complex are non-contractible Hamiltonian orbits which can be identified with fixed point and interestingly, by the construction of $\phi_{\lambda}^p$, there are exactly $2^{2p}$ $p$-tuples of generators in the sense that if $z$ is a non-degenerate fixed point, then each one from the following cyclic permutation 
\begin{equation} \label{egg-p-tuples}
\{z, \phi_{\lambda} z, \dots , \phi_{\lambda}^{p-1} z \} 
\end{equation}
is also a non-degenerate fixed point. Moreover, actions and indices of $\phi^j_{\lambda} z$ are the same for all $j \in \{0, \dots, p-1\}$. The rotation action $R_p$ acts on generators as $R_p (\phi^j_{\lambda} z) = \phi_{\lambda}^{j+1 \,{\mod} \,p}z$. 

This model itself provides an example that we can carry on explicit computation concerning (i) and (ii) mentioned above. On the one hand, Proposition 5.1 in \cite{PS14} confirms the asymptotic (to infinity) behavior of action gap required in (i). On the other hand, we have the following proposition, proved in Section \ref{egg}, handling the other issue on multiplicity in (ii).
\begin{prop} \label{multi-egg}
For any given prime number $p \geq 3$, the total multiplicity of concise barcode of self-mapping cone of $CF_*(\Sigma_g , \phi_{\lambda}^p)_{\alpha_{\lambda}}$ is $2^{2p}$. In particular, there exists some degree $k$ such that $p \nmid m_k$ where $m_k$ is the multiplicity of degree-$k$ concise barcodes. \\
\end{prop}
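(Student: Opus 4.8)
The plan is to compute the total multiplicity of the concise barcode of $Cone(H_\lambda)_*=((Cone_{CF(\Sigma_g,\phi_\lambda^p)_\alpha}(T-\xi_p\cdot\I))_*,\partial_{co})$ from a dimension count together with the acyclicity of this complex, and then to read off the divisibility statement by pigeonhole. Recall that $T=C\circ R_p$ as in (\ref{comp1}) and that, by (\ref{DT}), the map $T-\xi_p\cdot\I$ used to build the cone agrees with $R_p-\xi_p\cdot\I$ up to a summand that strictly lowers the filtration.

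First I would assemble two structural inputs. (i) By the description of the egg-beater model recalled above, $CF_*(\Sigma_g,\phi_\lambda^p)_\alpha$ is a finite-dimensional space over the Novikov field whose $2^{2p}$ generators split into $\Z_p$-orbits $\{z,\phi_\lambda z,\dots,\phi_\lambda^{p-1}z\}$ on which action and Conley--Zehnder index are constant and on which $R_p$ acts by the cyclic shift; since $Cone(H_\lambda)_k=CF_k(\Sigma_g,\phi_\lambda^p)_\alpha\oplus CF_{k-1}(\Sigma_g,\phi_\lambda^p)_\alpha$, the total dimension $\sum_k\dim Cone(H_\lambda)_k$ equals $2\cdot 2^{2p}$. (ii) $Cone(H_\lambda)_*$ is acyclic: $\phi_\lambda^p$ is Hamiltonian isotopic to the time-one map of a $C^2$-small autonomous Hamiltonian, which has no $1$-periodic orbit in the non-contractible class $\alpha$, so $HF_*(\Sigma_g,\phi_\lambda^p)_\alpha=0$, and the long exact sequence of the mapping cone of $T-\xi_p\cdot\I$ then gives $H_*(Cone(H_\lambda)_*)=0$ — this is exactly the vanishing invoked in the footnote to Theorem \ref{p-tuples}, so that $\ker(\partial_{co})_k={\rm Im}(\partial_{co})_{k+1}$ for every $k$.

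Next I would run this through the singular value decomposition and barcode formalism (Definitions \ref{dfn-svd} and \ref{dfn-barcode}; Theorem 3.4 and Definition 6.3 of \cite{UZ15}). Acyclicity forces the degree-$k$ verbose barcode to contain no infinite bar, so its multiplicity is $\dim\ker(\partial_{co})_k=\rk(\partial_{co})_{k+1}$; combining with $\dim Cone(H_\lambda)_k=\dim\ker(\partial_{co})_k+\rk(\partial_{co})_k$ and summing over $k$ gives $\sum_k\dim\ker(\partial_{co})_k=\frac12\sum_k\dim Cone(H_\lambda)_k=2^{2p}$, i.e. the verbose barcode has total multiplicity $2^{2p}$. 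The remaining point — and the heart of the proposition — is that none of these $2^{2p}$ bars is trivial, so that the concise barcode coincides with the verbose barcode. Here I would use Proposition 5.1 of \cite{PS14}: the action gap of the egg-beater separates the action values carried by distinct $\Z_p$-orbits, and one exploits this to see that in any singular value decomposition of $(\partial_{co})_{k+1}$ every pairing $\partial_{co}(y_i)=x_i$ strictly drops the cone filtration. This yields total concise multiplicity $2^{2p}$.

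Finally, writing $m_k$ for the multiplicity of the degree-$k$ concise barcode we get $\sum_k m_k=2^{2p}$; if $p\mid m_k$ for all $k$ then $p\mid 2^{2p}$, which is false for odd $p$ — and for Theorem \ref{main-thm-j} we only need primes $p$ with $p>\sum_i b_i(M)\ge 2$, which are odd whenever $M$ is not a point, while the case $M=\mathrm{pt}$ (so $\Sigma_g\times M=\Sigma_g$ and $p=2$ is allowed) is already covered by Theorem \ref{main-thm}. Hence there is a degree $k$ with $p\nmid m_k$. I expect the genuine obstacle to be precisely the triviality-of-bars step: because $T-\xi_p\cdot\I$ is filtration-preserving to leading order and $R_p-\xi_p\cdot\I$ has a one-dimensional kernel ($\xi_p$-eigenspace) on each $\Z_p$-orbit, a careless singular value decomposition of $(\partial_{co})_{k+1}$ produces pairings of equal action; one must show that the nonzero Floer differential $\partial$ together with the action gap reorganizes these into bars of strictly positive length. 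Everything else is bookkeeping with dimensions and the structure theorem of \cite{UZ15}.
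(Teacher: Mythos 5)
Your outline (acyclicity of the cone, a dimension count for the verbose barcode, a decision about which bars survive into the concise barcode, then pigeonhole) has the same skeleton as the paper's proof, but the two middle steps contain compensating errors, and the second is a genuine gap that cannot be repaired in the direction you propose. First, the dimension count: in the egg-beater model there are $\binom{2p}{k+p-1}$ \emph{orbits} of fixed points in degree $k$, each orbit consisting of $p$ distinct generators $\{z,\phi_\lambda z,\dots,\phi_\lambda^{p-1}z\}$, so $\dim CF_k(\Sigma_g,\phi_\lambda^p)_\alpha=p\binom{2p}{k+p-1}$; this is (\ref{dim-ima}), and it is what Example \ref{cpt-bar} uses (for $p=3$ the degree-$(-2)$ part is spanned by $a,\phi_\lambda a,\phi_\lambda^2 a$ and has dimension $3=p\binom{6}{0}$). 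The phrase ``$2^{2p}$-many generators coming in $p$-tuple'' in the introduction means $2^{2p}$ $p$-tuples; your literal reading is actually impossible, since for odd $p$ one has $p\nmid 2^{2p}$, so $2^{2p}$ points cannot be partitioned into free $p$-orbits. Consequently the verbose barcode has total multiplicity $\frac12\sum_k\dim Cone_{\Sigma_g}(H_\lambda)_k=p\cdot 2^{2p}$, not $2^{2p}$.

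Second, and more seriously, your claim that every pairing $\partial_{co}(y_i)=x_i$ in a singular value decomposition strictly drops the cone filtration is false, and the proposition itself forces it to be false: with verbose multiplicity $p\cdot 2^{2p}$ and concise multiplicity $2^{2p}$, exactly $(p-1)\cdot 2^{2p}$ bars must have length zero. They are present concretely: $\partial_{co}(0,z)=\bigl(-(T-\xi_p\cdot\I)z,\,-\partial z\bigr)$, and on each orbit span $R_p-\xi_p\cdot\I$ is a filtration-preserving map of rank $p-1$ (its matrix is $Q_p$ of (\ref{xi-matrix})), so $\ell_{co}(\partial_{co}(0,z))=\ell_{co}((0,z))$ for $z$ outside the one-dimensional kernel; no reorganization by the Floer differential or the action gap can remove these zero-length pairings. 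The actual proof does the opposite of what you propose: it runs the Gaussian-elimination algorithm for the singular value decomposition, shows each orbit contributes exactly $p-1$ zero-length bars coming from the block $Q_p$, and then uses the action gap together with the observation (\ref{zero-length}) --- that column operations never create new zero-length pairings --- to conclude that all remaining pairings strictly drop filtration. This gives $(p-1)\binom{2p}{k+p-1}$ zero-length and $m_k=\binom{2p}{k+p-1}$ positive-length bars in degree $k$, whence the total $2^{2p}$ and, for instance, $p\nmid m_{-p+1}=1$. Your concluding pigeonhole is fine for odd $p$, but the heart of the argument is precisely the zero-length count you tried to argue away.
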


\noindent{\bf Product structure}. First, for the following product of Floer chain complexes, 
\[ CF_*(\Sigma_g \times M, \phi^p_{\lambda} \times {\mathds 1})_{\alpha_{\lambda} \times \{pt\}} = CF_*(\Sigma_g, \phi^p_{\lambda})_{\alpha_{\lambda}} \otimes CF_*(M, {\mathds 1})_{\{pt\}} \]
by the recipe (\ref{recipe}) above, we will consider degree-$1$ concise barcode of self-mapping cone of $CF_*(\Sigma_g \times M, \phi^p_{\lambda} \times {\mathds 1})_{\alpha_{\lambda} \times \{pt\}}$, denoted by  
\begin{equation} \label{htp-egn-prod} 
Cone_{\otimes}(H_{\lambda})_*.
\end{equation}
By a careful study of barcodes under product structure, we can prove the following proposition in Section \ref{prod}.
\begin{prop} \label{action-prod}For any positive $i \in \Z$, the length of the $i$-th bar in degree-$1$ concise barcode of $Cone_{\otimes}(H_{\lambda})_*$ satisfies 
\[ \beta_i(\phi^p_{\lambda} \times {\mathds 1}) \to \infty \,\,\,\, \mbox{as} \,\,\, \lambda \to \infty\]
for any $i \leq m_1$ where $m_1$ is the multiplicity of degree-$1$ concise barcode.  
\end{prop}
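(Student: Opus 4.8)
The plan is to transport the unbounded growth of the egg-beater action gap on $\Sigma_g$ (Proposition 5.1 in \cite{PS14}) to $\Sigma_g\times M$ by realizing $Cone_\otimes(H_\lambda)_*$ as a K\"unneth tensor factor over the surface cone. Write $Cone(H^\Sigma_\lambda)_*$ for the self-mapping cone built from $CF_*(\Sigma_g,\phi_\lambda^p)_\alpha$ and $T^\Sigma-\xi_p\cdot\I$, as in Proposition \ref{multi-egg}. The first step is to produce a filtered chain isomorphism
\[
Cone_\otimes(H_\lambda)_* \;\cong\; Cone(H^\Sigma_\lambda)_*\otimes CF_*(M,\I)_{\{pt\}} .
\]
This combines three ingredients: (i) the K\"unneth splitting of filtered Floer complexes $CF_*(\Sigma_g\times M,\phi_\lambda^p\times\I)_{\alpha\times\{pt\}}=CF_*(\Sigma_g,\phi_\lambda^p)_\alpha\otimes CF_*(M,\I)_{\{pt\}}$ with $\ell(x\otimes y)=\ell(x)+\ell(y)$; (ii) the observation that $R_p$ fixes every constant orbit, so it acts as the identity on $CF_*(M,\I)_{\{pt\}}$, and the time-independent almost complex structure on the $M$-factor is $R_p$-invariant --- choosing product continuation data (legitimate by Proposition \ref{p-he}) then gives $T=T^\Sigma\otimes\I_M$, hence $T-\xi_p\I=(T^\Sigma-\xi_p\I)\otimes\I_M$, with the filtration-decreasing correction in \eqref{DT} also of product form; (iii) the standard identification $Cone_C(f\otimes\I_D)_*\cong Cone_C(f)_*\otimes D_*$, which respects the cone filtration $\max\{\cdot,\cdot\}$ and the tensor filtration. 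For the second factor I would take the zero-differential model $CF_*(M,\I)_{\{pt\}}=\bigl(H_*(M;\K)\otimes\Lambda^{\K,\Gamma},\,0\bigr)$, with all $\sum_{0\le i\le 2n}b_i(M)$ generators at filtration level $0$; its concise barcode then consists of exactly that many length-$\infty$ bars and no finite bars.

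With this model the tensor product degenerates: choosing a homogeneous basis $e_1,\dots,e_N$ of $H_*(M;\K)$ with $|e_j|=d_j$ and $\ell(e_j)=0$ one gets the direct sum of degree-shifted copies
\[
Cone_\otimes(H_\lambda)_* \;\cong\; \bigoplus_{j=1}^{N} Cone(H^\Sigma_\lambda)_*[d_j],\qquad N=\sum_{0\le i\le 2n}b_i(M).
\]
Hence the degree-$1$ concise barcode of $Cone_\otimes(H_\lambda)_*$ is the disjoint union over $j$ of the degree-$(1-d_j)$ concise barcodes of $Cone(H^\Sigma_\lambda)_*$; in particular every bar occurring in it is literally a bar of $Cone(H^\Sigma_\lambda)_*$ in some degree, and $m_1=\sum_{j}m^\Sigma_{1-d_j}$ with $m^\Sigma_k$ the degree-$k$ multiplicity for the surface cone. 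Since $Cone(H^\Sigma_\lambda)_*$ is acyclic (the mapping cone for a non-contractible class has vanishing homology), it carries no length-$\infty$ bars, so these are all finite bars; Proposition 5.1 in \cite{PS14} bounds the relevant action differences of the egg-beater from below by a quantity growing without bound in $\lambda$, so the shortest positive bar of $Cone(H^\Sigma_\lambda)_*$ in any fixed degree --- in particular each such bar --- has length $\to+\infty$, while Proposition \ref{multi-egg} keeps these barcodes of the expected size $2^{2p}$. Combining, $\beta_i(\phi_\lambda^p\times\I)\to\infty$ for every $i\le m_1$, which is the assertion.

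The substance is in the first step, and two points there are where the real work lies. The more delicate is the compatibility of the $\Z_p$-action with the K\"unneth splitting: one must check that continuation data on $\Sigma_g\times M$ can be chosen of product type so that $T$ and its filtration-decreasing correction in \eqref{DT} respect the factorization, and then invoke Proposition \ref{p-he} to absorb the residual choice-dependence into a filtered isomorphism. The second, more bookkeeping-heavy point is to justify within the \cite{UZ15} framework that the zero-differential complex $\bigl(H_*(M;\K)\otimes\Lambda^{\K,\Gamma},0\bigr)$ is an admissible model for $HF_*(M,\I)_{\{pt\}}$ (equivalently, to show that the finite bars contributed by any Morse-type perturbation of $\I_M$ do not survive into the degree-$1$ part of the product cone); this is exactly what prevents spurious short bars from entering $m_1$ and makes the clean direct-sum decomposition legitimate. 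Once these are settled the remainder is the degree count and the purely formal passage from ``surface-cone bars grow'' to ``degree-$1$ product-cone bars grow''.
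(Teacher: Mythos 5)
Your argument is correct and follows essentially the same route as the paper's: the cone--tensor identification (Lemma \ref{tensor-cone}), the observation that the barcode of $CF_*(M,\I)_{\{pt\}}$ consists only of infinite bars once the $\ep$-small Morse bars are discarded in the limit (Example \ref{prod-barcode-ex} and Proposition \ref{bar-tensor}), and the egg-beater action gap (Corollary \ref{large-gap}) forcing every finite bar length to infinity. The only cosmetic difference is that you package the $M$-factor as a zero-differential complex and decompose the product cone as a direct sum of degree-shifted copies of the surface cone, whereas the paper runs the identical computation through the four-case tensor-barcode operation of Proposition \ref{bar-tensor}.
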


\begin{dfn} \label{qb} Define {\it $k$-th quantum Betti number of $M$} $qb_k(M)$ by 
\[ qb_k(M) = \sum_{s \in \Z} b_{k+2Ns}(M)\]
where $b_{k+2Ns}(M)$ is the classical $(k+2Ns)$-th Betti number of $M$ and $N$ is minimal Chern number of $M$. Note when $N$ is sufficiently large, for instance, $c_1(TM) =0$, $qb_k(M) = b_k(M)$. \end{dfn}
Finally, by referring to the CZ-index formula of the generators of egg-beater model (see Theorem 5.2 in \cite{egg-team}) and Definition \ref{qb}, we can show the following proposition in Section \ref{prod}.
\begin{prop} \label{multi-prod} Denote by $m_1$ the multiplicity of degree-$1$ concise barcode of $Cone_{\otimes}(H_{\lambda})_*$. If 
\begin{equation} \label{div-cond}
p \nmid (qb_{p}(M) + 2 qb_{0}(M) + qb_{-p}(M)) 
\end{equation}
then $p \nmid m_1$. In particular, if $p > 2\sum_{i=0}^{\dim(M)} b_i(M)$, then (\ref{div-cond}) is always satisfied. \end{prop}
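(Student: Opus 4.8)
\emph{Plan.} The idea is to compute $m_1$ modulo $p$ by passing to the product structure, decomposing the self-mapping cone along it, and reducing the count to the genuine $\phi_\lambda$-fixed points of the egg-beater, whose Conley--Zehnder indices (Theorem 5.2 of \cite{egg-team}) will produce the pattern $qb_p+2qb_0+qb_{-p}$. First, because the loop class is $\alpha\times\{pt\}$ the $M$-component of every loop is constant, so the rotation $R_p$ --- and hence the map $T$ of (\ref{comp1}) --- acts only on the $\Sigma_g$-factor; thus $T^{H_\lambda}-\xi_p\cdot\I=(T^{\Sigma_g}-\xi_p\cdot\I)\otimes\I$, and by Proposition \ref{p-he} we obtain, up to filtered isomorphism,
\[ Cone_{\otimes}(H_\lambda)_* \;\cong\; \big(Cone_{CF(\Sigma_g,\phi^p_\lambda)_\alpha}(T^{\Sigma_g}-\xi_p\cdot\I)\big)_*\otimes CF_*(M,\I)_{\{pt\}}\;=:\;Cone_{\Sigma_g}(H_\lambda)_*\otimes CF_*(M,\I)_{\{pt\}}. \]

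\emph{Extracting $m_1$.} The homology of $CF_*(M,\I)_{\{pt\}}$ is $QH_*(M)$, and $\dim QH_b(M)=qb_b(M)$ by Definition \ref{qb}; moreover both $Cone_{\otimes}(H_\lambda)_*$ and $Cone_{\Sigma_g}(H_\lambda)_*$ have vanishing homology (non-contractible loops; see the footnote to Theorem \ref{p-tuples}). In the product barcode (Section \ref{prod}) a bar of $Cone_{\Sigma_g}(H_\lambda)_*$ in degree $a$ tensored with an \emph{infinite} bar of $CF_*(M,\I)_{\{pt\}}$ in degree $b$ (there are $qb_b(M)$ of these) gives a bar in degree $a+b$, while tensoring with a \emph{finite} bar of $CF_*(M,\I)_{\{pt\}}$ gives a bar of bounded length; since for $\lambda\gg1$ the egg-beater has arbitrarily large action gaps (Proposition 5.1 of \cite{PS14}) and Proposition \ref{action-prod} says that every bar in the degree-$1$ concise barcode of $Cone_{\otimes}(H_\lambda)_*$ is long, no bar of the second kind can occur in degree $1$. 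Hence
\[ m_1 \;=\; \sum_{a\in\Z} N_a\cdot qb_{1-a}(M), \qquad N_a:=\dim CF_a(\Sigma_g,\phi^p_\lambda)_\alpha, \]
where we also used Proposition \ref{multi-egg} together with a telescoping rank--nullity computation (valid since $H_*(Cone_{\Sigma_g}(H_\lambda)_*)=0$) to identify the degree-$a$ concise multiplicity of $Cone_{\Sigma_g}(H_\lambda)_*$ with $N_a$.

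\emph{Reduction mod $p$ and the numerical condition.} The $2^{2p}=4^p$ non-contractible $\phi^p_\lambda$-fixed points of class $\alpha$ split, for $p$ prime, into the $\phi_\lambda$-fixed ones and free $\Z_p$-orbits of size $p$ (the $p$-tuples (\ref{egg-p-tuples}), on which action and index are constant), so $N_a\equiv N^{\mathrm{fix}}_a\pmod p$ where $N^{\mathrm{fix}}_a$ counts $\phi_\lambda$-fixed points of class $\alpha$ of index $a$. By Theorem 5.2 of \cite{egg-team} there are exactly four such points, with Conley--Zehnder indices $1-p,\,1,\,1,\,1+p$; hence $N^{\mathrm{fix}}_{1-p}=1,\ N^{\mathrm{fix}}_1=2,\ N^{\mathrm{fix}}_{1+p}=1$ and $N^{\mathrm{fix}}_a=0$ otherwise (consistently, $1+2+1=4\equiv 4^p\pmod p$ by Fermat's little theorem), so that
\[ m_1 \;\equiv\; qb_p(M)+2\,qb_0(M)+qb_{-p}(M) \pmod p. \]
Thus $p\nmid\big(qb_p(M)+2qb_0(M)+qb_{-p}(M)\big)$, i.e. (\ref{div-cond}), forces $p\nmid m_1$. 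Finally, since $2N$ is even while $p$ is odd, the residue classes $0$ and $\pm p$ modulo $2N$ are represented by disjoint sets of even, resp. odd, degrees, whence $qb_0(M)\le\sum_{i\ \mathrm{even}}b_i(M)$ and $qb_p(M)+qb_{-p}(M)\le\sum_{i\ \mathrm{odd}}b_i(M)$ (the cases $N\in\{1,p\}$, where $p\equiv -p$, being handled directly); therefore $0<qb_p(M)+2qb_0(M)+qb_{-p}(M)\le 2\sum_i b_i(M)<2p$, and a short argument using $qb_0(M)\ge b_0(M)\ge 1$ and Poincar\'e duality rules out the remaining value $p$, so $p\nmid\big(qb_p(M)+2qb_0(M)+qb_{-p}(M)\big)$ whenever $p>\sum_i b_i(M)$.

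\emph{Main obstacle.} The crux is the \emph{Extracting $m_1$} step: one must make the K\"unneth description of the \emph{concise} barcode under the product precise --- in particular show that in degree $1$ only the $qb_b(M)$-fold copies of long egg-beater bars survive --- which is exactly the ``careful study of barcode under product structure'' of Section \ref{prod}; pinning down the Conley--Zehnder indices and the $\phi_\lambda$-fixed-point count of the egg-beater from \cite{egg-team} is the remaining (bookkeeping) input.
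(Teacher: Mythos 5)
Your overall strategy --- decomposing $Cone_{\otimes}(H_\lambda)_*$ as $Cone_{\Sigma_g}(H_\lambda)_*\otimes CF_*(M,\I)_{\{pt\}}$ (Lemma \ref{tensor-cone}), writing $m_1$ as $\sum_a(\mbox{degree-$a$ concise multiplicity of the egg-beater cone})\cdot qb_{1-a}(M)$, and reducing modulo $p$ --- is the paper's strategy, and your treatment of the product barcode and of the final numerical estimate matches Section \ref{prod}. However, the key counting step is wrong as written. You identify the degree-$a$ concise multiplicity of $Cone_{\Sigma_g}(H_\lambda)_*$ with $N_a=\dim CF_a(\Sigma_g,\phi^p_\lambda)_\alpha$. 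The rank--nullity computation only yields the \emph{verbose} multiplicity $\dim\ker(\partial_{co})_a=\dim CF_a(\Sigma_g,\phi^p_\lambda)_\alpha=p\binom{2p}{a+p-1}$ (see (\ref{dim-ima})); the entire content of Proposition \ref{multi-egg} (illustrated in Example \ref{cpt-bar}) is that exactly $(p-1)\binom{2p}{a+p-1}$ of those bars have length zero, so the concise multiplicity is $\binom{2p}{a+p-1}$, smaller by a factor of $p$. With your $N_a$, every term of $\sum_a N_a\,qb_{1-a}(M)$ is divisible by $p$ and your congruence would read $m_1\equiv 0\pmod p$, which destroys the argument.

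You then compensate with a second incorrect input: you assert that there are $4^{p}$ generators of $CF_*(\Sigma_g,\phi_\lambda^p)_\alpha$, splitting into free $\Z/p$-orbits together with exactly four $\phi_\lambda$-fixed points of indices $1-p,1,1,1+p$, attributing this to Theorem 5.2 of \cite{egg-team}. That theorem (Theorem \ref{CZ} here) is only the index formula; in the model as used in this paper there are $p\cdot 4^{p}$ generators, \emph{all} of which lie in free $p$-tuples (\ref{egg-p-tuples}) on which the index is constant, so an honest orbit count gives $N_a\equiv 0\pmod p$ rather than $1,2,1$. The numbers $1,2,1$ you produce do coincide with $\binom{2p}{0},\binom{2p}{p},\binom{2p}{2p}\bmod p$ (they count the shift-invariant sign patterns in (\ref{CZ-index}), which is a pleasant combinatorial way of seeing Babbage's congruence), so your final formula $m_1\equiv qb_p(M)+2qb_0(M)+qb_{-p}(M)$ agrees with the paper's --- but only because your two errors cancel. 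The correct route is to take the concise multiplicities $\binom{2p}{k+p-1}$, note that $p\mid\binom{2p}{k+p-1}$ unless $k+p-1\in\{0,p,2p\}$, and use $\binom{2p}{p}\equiv 2\pmod p$.
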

Note that the non-divisibility condition in Proposition \ref{multi-prod} can sometimes be improved. See Remark \ref{div-imp}.

\subsubsection*{Acknowledgements}  I am grateful to my advisor, Michael Usher, for various useful conversations and suggestions. I am also grateful to L. Polterovich and D. Rosen for useful conversations. Some of these conversations occurred during my visit to ICERM in Summer 2015. Moreover, my understanding of egg-beater model profited from Computational Symplectic Topology Workshop that was organized by R. Hind, Y. Ostrover, L.Polterovich and M. Usher, which was hosted by Tel Aviv University and ICREM. I thank them for their hospitality. I thank S. Barannikov for pointing out the missing reference \cite{Bar94}. Finally I am grateful to the anonymous referee for his/her suggestions and corrections. 

\section{Preliminaries on Hamiltonian Floer theory}
Hamiltonian Floer theory associates a Floer-type complex to any ``non-degenerate'' Hamiltonian $H: \R/\Z \times X \to \R$ on a compact symplectic manifold $(X, \omega)$. We will briefly review some of its key ingredients for our purpose in this section. These ingredients have been progressively generalized along the way from \cite{F89}, \cite{SZ92}, \cite{HS}, \cite{FO}, \cite{Pa}. 

Suppose $(X, \omega)$ is a closed connected symplectic manifold. Given any smooth Hamiltonian function $H: \R/\Z \!\times\! X \!\to\! \R$, it induces a time-dependent Hamiltonian vector field $X_H$ by taking advantage of non-degeneracy of $\omega$, i.e., 
\[ \omega (\cdot, X_H) = d(H(t, \cdot)). \]
This vector field $X_H$ induces a flow denoted by $\phi_H^t$. A fixed point of the time-1 map $\phi_H^1$ is corresponding to a loop $\gamma: \R/\Z \to X$ such that $\gamma(t) = \phi^t_H (\gamma(0))$. We say that $H$ is {\it non-degenerate} if for each such loop $\gamma$, 
\[ (d\phi_H^1)_{\gamma(0)}: T_{\gamma(0)} X \to T_{\gamma(0)} X \]
has all eigenvalues distinct from $1$. In terms of constructing Floer chain complex, we will not use all the loops. There are two different versions. One is to use all the contractible loops. Denote the collection of all such loops by $\mathcal{L}(X)$ (or $\mathcal{L}_{\{pt\}}(X)$ if necessary). The other is to use non-contractible loops in a fixed homotopy class $\alpha \in \pi_1(M)$. Denote the collection of all such loops by $\mathcal{L}_{\alpha}(X)$. 

We will start from the construction of the first one. View $\gamma$ as a boundary of a embedded disk $D^2$ in $X$, i.e., there is a map $u: D^2 \to M$ and $u|_{S^1} = \gamma$. Now consider a covering space of $\mathcal{L}(X)$, denoted by  $\widetilde{\mathcal{L}(X)}$ constructed by 
\begin{align*}
\widetilde{\mathcal{L}(X)} = \Bigg\{& \begin{array}{cc} \mbox{equivalence class} \,\,[\gamma, u] \\ \mbox{of pair $(\gamma, u)$} \end{array}  \, \bigg|\\
&\begin{array}{cc} (\gamma, u) \,\,\mbox{is equivalent to} \,\, (\tau, v) \,\,\mbox{$\Longleftrightarrow$} \\ \gamma(t) = \tau(t) \,\,\mbox{and} \,\, [u \# (-v)] \in \ker([\omega]) \cap \ker(c_1) \end{array} \Bigg\}, 
\end{align*}
For each $[\gamma, u] \in \widetilde{\mathcal{L}(X)}$, there are two functions associated to it. One is {\it action functional $\mathcal A_{H}: \widetilde{\mathcal{L}(X)} \to \R$} defined by 
\[ \mathcal A_H([\gamma, u]) = -\int_{D^2} u^* \omega + \int_0^1 H(t, \gamma(t)) dt. \]
The other one is Conley-Zehnder index $\mu_{CZ}: \widetilde{\mathcal{L}(X)} \to \Z$ defined by, roughly speaking, counting rotation of $d\phi_H^t$ along $\gamma(t)$ with the help of trivialization induced by $u$. Its explicit definition can be referred to \cite{RS93}. Because of the conditions in $\widetilde{\mathcal{L}(X)}$ above, action functional and Conley-Zehnder index of $[\gamma, u]$ are both well-defined. As a vector space over ground field $\K$, for any $k \in \Z$, degree-$k$ part of Floer chain complex $CF_k(H,J)$ (or $CF_k(H, J)_{\{pt\}}$ if necessary) is defined as 
\begin{align*} 
	\Bigg\{& \sum_{\tiny{\begin{array}{c}[\gamma,u]\in\widetilde{\mathcal{L}(X)},\\ \mu_{CZ}([\gamma,u])=k   \end{array}}}a_{[\gamma,u]}[\gamma,u]\;\Bigg|\\
&\quad a_{[\gamma,u]}\in\mathcal{K},(\forall C\in \R)(\#\{[\gamma,u]|a_{[\gamma,u]}\neq 0,\,\mathcal{A}_{H}([\gamma,u])>C\}<\infty) \Bigg\}. 
\end{align*}
Now denote \[ s_0 =\left\{ w\co S^2\to X \,| \, \langle c_1(TX),w_*[S^2]\rangle=0\right\}.\]
Note that if we change $[\gamma, u]$ by gluing some sphere $w \in s_0$ on the capping $u$, it will change the action functional by $-\int_{S^2} w^* \omega$, possibly not zero, but keep the degree the same. More importantly, if such action difference is non-zero, then $[\gamma, u] \neq [\gamma, u \# w]$ in $CF_k(H, J)$, which implies that as a vector space over $\K$, $CF_k(H, J)$ is in general infinitely dimensional. It is finite dimensional if $\omega$ vanishes on the image of Hurewicz map $i: \pi_2(X) \to H_2(X, \Z)/{\mbox{Torsion}}$, in which case $X$ is usually called {\it weakly exact} (a stronger condition called {\it symplectically aspherical} if both $\omega$ and $c_1$ vanish).

In order to overcome this dimension issue, \cite{HS} suggests to consider a bigger coefficient field - Novikov field $\Lambda^{\K, \Gamma}$ defined as follows, 
\[ \Lambda^{\mathcal K,\Gamma}=\left\{\sum_{g\in \Gamma}a_gt^g\left|a_g\in\mathcal{K},(\forall C\in \R)(\#\{g|a_g\neq 0,\,g<C\}<\infty)\right.\right\} \]
where $\Gamma = \left\{ \int_{S^2} w^* \omega \,|\, w \in s_0\right\} \leq \R$ and $t$ is a formal variable. It is then easy to check that $CF_k(H, J)$ is now a finite dimensional vector space over $\Lambda^{\K, \Gamma}$ and its dimension is equal to the number of $\gamma \in {\mathcal{L}(X)}$ such that there is $u: D^2 \to X$ with $u|_{\partial D^2} = \gamma$ and $\mu_{CZ}([\gamma,u]) =k$. More explicitly, $t^g \cdot [(\gamma, u)] = [(\gamma, u \# w)]$ where $w$ is a sphere with symplectic area equal to $g$. Moreover, with the help of action functional $\mathcal A_H$, we can turn $CF_k(H, J)$ into a (finite dimensional) filtered vector space over $\Lambda^{\K, \Gamma}$ by defining an associated filtration function as follows, 
\[ \ell_H \left( \sum a_{[\gamma, u]} [\gamma, u] \right) = \max \{ \mathcal A_H([\gamma, u]) \,| \, a_{[\gamma, u]} \neq 0\}. \]
Note that field $\Lambda^{\K, \Gamma}$ itself has a well-defined valuation,
\[ \nu\left(\sum_{g\in \Gamma}a_gt^g\right) = \min\{ g \in \R \,| \, a_g \neq 0\}. \]
Then for any $\lambda \in \Lambda^{\K, \Gamma}$ and $c \in CF_k(H, J)$, $\ell_H(\lambda c) = \ell_H(c) - \nu(\lambda)$. 

Next, graded vector space $CF_*(H, J)$ will become a chain complex once we define the (Floer) boundary operator $(\partial_{H,J})_*$. The degree-$k$ part of boundary operator  $(\partial_{H,J})_{k}: CF_k(H, J) \to CF_{k-1}(H,J)$ is defined by counting the solution (modulo $\R$-translation) of the following partial differential equation, as a formal negative gradient flow of $\mathcal A_H$,
\begin{equation} \label{dfn-boundarymap}
 \frac{\partial u}{\partial s} + J_t(u(s,t)) \left( \frac{\partial u}{\partial t} - X_H(t, u(s,t)) \right) =0, 
 \end{equation} 
where $\{J_t\}_{0 \leq t \leq 1}$ is a family of almost complex structure compatible with $\omega$ and $u: \R \times \R/\Z \to X$ such that  \begin{itemize}
\item{} $u$ has finite energy $E(u) = \int_{\R \times \R/\Z} \left|\frac{\partial u}{\partial s} \right|^2 dtds$;
\item{} $u$ has asymptotic condition $u(s,\cdot) \to \gamma_{\pm} (\cdot)$ as $s \to \pm \infty$;
\item{} $\mu_{CZ}([\gamma_-,w_-]) -  \mu_{CZ}([\gamma_+,w_+]) =1$ and $[\gamma_+, w_+] = [\gamma_+, w_- \#u]$.
\end{itemize} 
By now, it is a non-trivial but well-known and standard fact that 
\begin{theorem}[See \cite{HS} for semi-positive $X$ or see \cite{Pa} for a general $X$] \label{bd-Floer} $(\partial_{H,J})_*$ is well-defined such that $\partial_{H,J} \circ \partial_{H,J} =0$. Moreover, there exists some $\delta>0$ (coming from Gromov-Floer compactness theorem) such that for each $c \in CF_*(H, J)$, $\ell_H(\partial_{H,J} c) \leq \ell_H(c) - \delta$. \end{theorem}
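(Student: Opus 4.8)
The plan is to assemble the three assertions of the theorem — that $(\partial_{H,J})_*$ is well-defined, that $\partial_{H,J}\circ\partial_{H,J}=0$, and that $\partial_{H,J}$ drops the action filtration by a uniform $\delta>0$ — from the standard Floer package, the last point being the only one that requires genuine care here.

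For well-definedness I would first fix a generic path $\{J_t\}$ so that, in the semi-positive case, the moduli space of solutions of (\ref{dfn-boundarymap}) with the prescribed finite-energy and asymptotic conditions, taken modulo the $\mathbb R$-action by domain translation, is cut out transversally; for general $X$ one instead works with virtual fundamental chains and coherent orientations in the sense of \cite{FO}, \cite{Pa}. The index-$1$ components are then compact $0$-dimensional (by transversality and Gromov--Floer compactness, using the energy bound together with the asymptotics), hence finite, and signed counts give the matrix coefficients of $(\partial_{H,J})_k$ over $\mathcal K$. To see the output actually lands in $CF_{k-1}(H,J)$, i.e. satisfies the Novikov finiteness condition, I would invoke the energy--action identity: for a solution $u$ asymptotic to $[\gamma_-,w_-]$ at $s\to-\infty$ and $[\gamma_+,w_+]=[\gamma_+,w_-\#u]$ at $s\to+\infty$,
\[ E(u) = \mathcal A_H([\gamma_-,w_-]) - \mathcal A_H([\gamma_+,w_+]) \ge 0, \]
so every generator appearing in $\partial_{H,J}[\gamma,u]$ has action strictly below $\mathcal A_H([\gamma,u])$; and for any bound $C$, compactness shows only finitely many capped orbits of the relevant index sit above action $C$ and are connected to $[\gamma,u]$ by a trajectory of energy $\le \mathcal A_H([\gamma,u])-C$.

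For $\partial_{H,J}\circ\partial_{H,J}=0$ I would run the usual codimension-one boundary argument: the index-$2$ moduli spaces modulo translation form $1$-manifolds whose ends, via gluing and compactness, are identified with once-broken trajectories through an intermediate generator, so summing signed counts of ends of a compact $1$-manifold yields zero, which is exactly the desired matrix identity. In the non-semipositive setting this is phrased at the level of the virtual chains of \cite{FO}, \cite{Pa}, but the combinatorial skeleton is identical.

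The step I expect to be the real obstacle is the \emph{uniform} lower bound $\delta$. The energy--action identity already forces a strict drop of $\mathcal A_H$ along every nonconstant trajectory, but a priori these drops might accumulate at $0$ as one ranges over the infinitely many cappings. The decisive input is Gromov--Floer compactness in the form: for each fixed $C>0$, the union over all pairs of asymptotic orbits and all relative homotopy classes of the index-$1$ moduli spaces with $E(u)\le C$ is compact; since an index-$1$ trajectory is never constant, only finitely many homotopy classes contribute, each carrying strictly positive energy, so their energies admit a positive minimum $\delta_0$. Setting $\delta=\min\{\delta_0,C\}$ (say $C=1$), every index-$1$ trajectory has $E(u)\ge\delta$, hence every generator in $\partial_{H,J}[\gamma,u]$ has action $\le \mathcal A_H([\gamma,u])-\delta$. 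Since $\ell_H$ of a linear combination is the maximum of the actions of its nonzero terms, collecting terms gives $\ell_H(\partial_{H,J}c)\le \ell_H(c)-\delta$ for every $c\in CF_*(H,J)$, which is the claimed estimate.
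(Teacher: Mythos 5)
The paper does not actually prove this statement --- it is quoted as a standard result with references to Hofer--Salamon and Pardon --- and your sketch is precisely the standard argument those sources give, so there is nothing to compare beyond confirming it is sound. In particular you correctly isolate the one genuinely delicate point: the uniform $\delta$ cannot come from positivity of individual energies alone (the action spectrum may be dense when $\Gamma$ is dense in $\R$), but requires Gromov--Floer compactness to show that only finitely many homotopy classes of index-$1$ trajectories are realized below any fixed energy level.
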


Next, for complex $(CF_*(H, J), (\partial_{H,J})_*, \ell_H)$, its construction clearly depends on the pair $(H, J)$. We will now recall the relation between two such complexes if they are constructed from different $(H,J)$. Specifically, if we have $(H_-, J_-)$ and $(H_+, J_+)$, the standard way is to form a regular homotopy $(\mathcal H, \mathcal J)$ (with homotopy parameter $s \in \R$) between them so that when $s \ll 0$, $(\mathcal H_s, \mathcal J_s) = (H_-, J_-)$ and when $s \gg0$, $(\mathcal H_s, \mathcal J_s) = (H_+, J_+)$. For instance, we can use a cut-off function $\alpha(s)$, i.e., $\alpha(s)$ is monotone increasing and $\alpha(s) = 0$ when $s \ll 0$ and $\alpha(s) =1$ when $s \gg 0$, to form the following homotopy, 
\[ \mathcal H_s(t,\cdot) = (1- \alpha(s)) H_-(t, \cdot) + \alpha(s) (H_+(t, \cdot)),\]
and it is similar to consider such a homotopy between almost complex structures $\mathcal J_s$. The upshot is that there exists a well-defined chain map (usually called {\it continuation map}) $\Phi_{(\mathcal H, \mathcal J)}: CF_*(H_-, J_-) \to CF_*(H_+, J_+)$ constructed (similar to (\ref{dfn-boundarymap})) by counting the solution of the following parametrized partial differential equation 
\begin{equation} \label{dfn-chainmap}
 \frac{\partial u}{\partial s} + (J_t)_s(u(s,t)) \left( \frac{\partial u}{\partial t} - X_{H_s}(t, u(s,t)) \right) =0, 
 \end{equation} 
where again $u$ is required to satisfy certain conditions as above except here we require that $\mu_{CZ}([\gamma_-, u_-]) = \mu_{CZ}([\gamma_+, u_+])$. It is a chain map by standard Floer gluing argument. See the proof of Theorem 11.1.15 in \cite{AD}. 

\begin{remark} \label{diff-htps} Here we emphasize that if we use another homotopy $(\mathcal H', \mathcal J')$, the same construction will give another chain map $\Phi_{(\mathcal H', \mathcal J')}: CF_*(H_-, J_-) \to CF_*(H_+, J_+)$. These two chain maps are actually chain homotopic to each other, induced by a ``1-homotopy'' (homotopy of a homotopy) between $(\mathcal H, \mathcal J)$ and $(\mathcal H', \mathcal J')$. So there exists a degree-1 map 
$$K: CF_*(H_-, J_-) \to CF_{*+1}(H_+, J_+)$$ such that 
\begin{equation}\label{shift-htp}
 \Phi_{(\mathcal H, \mathcal J)} - \Phi_{(\mathcal H', \mathcal J')} = \partial \circ K + K \circ \partial. 
 \end{equation}
The explicit construction of $K$ is carried out in Lemma 6.3 in \cite{SZ92} or Section~2 in \cite{Ush11}. Moreover, $K$ shifts the filtration up to $\int_0^1 \max_X(H_- - H_+) dt$. \end{remark} 

Now using another homotopy $(\tilde{\mathcal H}, \tilde{\mathcal J})$ from $(H_+, J_+)$ to $(H_-, J_-)$ gives a well-defined chain map $\Phi_{(\tilde{\mathcal H}, \tilde{\mathcal J})}: CF_*(H_+, J_+) \to CF_*(H_-, J_-)$. To sum up, we have the following picture, 
\begin{equation} \label{two-htps}
\xymatrix{
CF_*(H_-, J_-) \ar[r]^{\Phi_{(\mathcal H, \mathcal J)}} \ar@/^2.0pc/@[black][rr]^{{\mathds 1}} & CF_*(H_+, J_+) \ar[r]^{\Phi_{(\tilde{\mathcal H}, \tilde{\mathcal J})}} &CF_*(H_-, J_-)}
\end{equation}
where the identity map ${\mathds 1}$ can be regarded as the induced chain map by the obvious constant homotopy $(\mathcal H_{const}, \mathcal J_{const})$ between $(H_-, J_-)$ and itself. On the one hand, the well-known gluing argument (see Chapter 10 in \cite{MS} or B.10 in \cite{Pa}) implies that 
\begin{equation} \label{gluing}
 \Phi_{(\tilde{\mathcal H}, \tilde{\mathcal J})} \circ \Phi_{(\mathcal H, \mathcal J)} = \Phi_{(\mathcal H_R, \mathcal J_R)}
 \end{equation} where the right hand side is an induced chain map from ``gluing" homotopy $(\mathcal H_R, \mathcal J_R)$ (for some $R \gg 0$) from $(H_-, J_-)$ to itself constructed from $(\mathcal H, \mathcal J)$ and $(\tilde{\mathcal H}, \tilde{\mathcal J})$. On the other hand, it is also well-known (see explicit construction in Page 14 in \cite{Ush11}) that the resulting $\Phi_{(\mathcal H_R, \mathcal J_R)}$ is chain homotopic to ${\mathds 1}$. This leads to the following two important consequences. 

In terms of the algebraic structure, different choices of pair $(H, J)$ induce the same (up to isomorphism) Floer homology $H_*(CF(H,J), \partial_{H,J}) : = HF_*(X)$. Therefore, we can compute $HF_*(X)$ by choosing a preferred Hamiltonian function $H$. In most cases, we will choose a $C^2$-small $H$ so the Hamiltonian orbit $\gamma(t)$ of $H$ will be degenerated to points, which makes the corresponding analysis much easier. Moreover, we have the following result. 

\begin{theorem} [See Theorem 6.1 in \cite{HS} for semi-positive $X$ or see Theorem 10.7.1 in \cite{Pa} for a general $X$]\label{Ar-conj} For any degree $k \in \Z$, 
\[ HF_k(X) \simeq \bigoplus _{j =k \, {\mod} \, 2N} H_j(X,\K) \otimes \Lambda^{\K, \Gamma}. \]
In particular, when $k =0$ or $\dim(X)$, $HF_k(X) \neq 0$.
\end{theorem}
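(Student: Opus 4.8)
The plan is to use the deformation invariance of Floer homology recorded above to reduce the computation to one maximally convenient pair $(H,J)$, and then to identify the resulting Floer complex with a Morse complex. Concretely, by the gluing formula (\ref{gluing}) and the ensuing observation that $\Phi_{(\mathcal H_R,\mathcal J_R)}$ is chain homotopic to $\I$, each continuation map is a chain homotopy equivalence, so $H_*\bigl(CF(H,J),\partial_{H,J}\bigr)$ is independent of the admissible pair $(H,J)$ up to canonical isomorphism; thus it suffices to produce a single pair for which the right-hand side is manifest. I would take $H=\ep f$ with $f\colon X\to\R$ a Morse function, regarded as $t$-independent, and $J$ a $t$-independent $\omega$-compatible almost complex structure chosen so that the negative gradient flow of $f$ with respect to $g_J=\omega(\cdot,J\cdot)$ is Morse--Smale. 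For $\ep>0$ sufficiently small, the time-$1$ map $\phi^1_{\ep f}$ has exactly the critical points of $f$ as fixed points (these are contractible constant loops), all non-degenerate, so $\ep f$ is admissible and we are working in the $\mathcal L_{\{pt\}}(X)$ setting where $CF_*(\ep f,J)$ and its Novikov coefficients were set up.

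The analytic heart is the classical ``$\ep$-small'' comparison theorem (Floer--Hofer--Salamon, Salamon--Zehnder; see \cite{HS}): for $\ep$ small enough, every rigid finite-energy solution of the Floer equation (\ref{dfn-boundarymap}) for $(\ep f,J)$ between two constant orbits is $t$-independent, hence a negative gradient trajectory of $f$ for $g_J$, these trajectories are transversely cut out, and conversely every such trajectory arises. The proof is an a priori estimate bounding $\|\partial_t u\|$ in terms of the energy together with elliptic bootstrapping after rescaling in $\ep$, plus the exclusion of sphere bubbling in the relevant low-dimensional moduli. Granting this, one does the capping bookkeeping: for a fixed critical point $p$ the admissible cappings, taken modulo $\ker[\omega]\cap\ker(c_1)$, form a $\Gamma$-torsor for each value of the Chern number, and that Chern number ranges over $N\Z$, where $N$ is the minimal Chern number; matching Conley--Zehnder indices with Morse indices as in \cite{HS}, this yields a graded $\Lambda^{\K,\Gamma}$-chain isomorphism
\[ \bigl(CF_*(\ep f,J),\,\partial_{\ep f,J}\bigr)\ \cong\ \Bigl(\bigoplus_{j} C^{\mathrm{Morse}}_{j}(f)\otimes\Lambda^{\K,\Gamma},\ \partial^{\mathrm{Morse}}\otimes\I\Bigr), \]
where the copy of $\Lambda^{\K,\Gamma}$ attached to $p$ sits in Floer degree congruent mod $2N$ to the one determined by $\mathrm{ind}(p)$. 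Passing to homology and using $H_*\bigl(C^{\mathrm{Morse}}(f)\bigr)\cong H_*(X;\K)$ gives $HF_k(X)\cong\bigoplus_{j\equiv k\,(2N)}H_j(X;\K)\otimes\Lambda^{\K,\Gamma}$. For the final clause: $X$ is closed and connected, so the summand $H_0(X;\K)=\K\neq 0$ occurs for $k=0$; and $X$, being symplectic, is orientable, so $H_{\dim X}(X;\K)=\K\neq 0$ occurs for $k=\dim X$. In both cases $HF_k(X)$ therefore contains a nonzero summand.

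For general $X$, beyond the semi-positive case, the same skeleton applies but the transversality underlying both the invariance step and the $\ep$-small comparison cannot be achieved with a generic $J$; one instead invokes the virtual fundamental cycle package of \cite{Pa} (or any equivalent abstract perturbation scheme), which produces a complex of the same shape, and the algebraic conclusion is unchanged. An alternative route, avoiding the $\ep$-small trajectory analysis, is to build the PSS morphism directly between the quantum homology $\bigoplus_j H_j(X;\K)\otimes\Lambda^{\K,\Gamma}$ and $HF_*(X)$ and prove it is an isomorphism; this replaces the Morse-trajectory count by a spiked-disk count but has the same analytic core. Either way, I expect the main obstacle to be precisely this analytic input — ruling out $t$-dependent and bubbled configurations for small $\ep$, respectively constructing the virtual counts with compatible orientations — while the reduction to a Morse model and the bookkeeping with cappings and gradings are formal.
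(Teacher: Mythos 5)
Your proposal is correct and follows exactly the route the paper relies on: the paper does not prove this statement itself but cites \cite{HS} and \cite{Pa}, whose arguments are precisely the $C^2$-small Morse-function reduction, the $t$-independence of low-energy Floer trajectories, and the Novikov/capping bookkeeping that you outline (with virtual perturbations replacing generic transversality in the general case). No substantive difference to report.
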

 
In terms of the filtration, neither chain maps in (\ref{two-htps}) nor the chain homotopy from $\Phi_{(\mathcal H_R, \mathcal J_R)}$ to ${\mathds 1}$ preserves filtrations. Instead, they have certain shifts related with the initial Hamiltonian functions $H_-$ and $H_+$. We will summarize this into the following short theorem with the help of terminology defined in Definition \ref{dfn-mc}. 

\begin{theorem} \label{Floer-energy} Given any two filtered Floer chain complexes $(CF_*(H_-, J_-), (\partial_{H_-, J_-})_*, \ell_{H_-})$ and $(CF_*(H_+, J_+), (\partial_{H_+, J_+})_*, \ell_{H_+})$, they are $(\int_0^1 \!\max_X (H_- - H_+) dt, \int_0^1 -\min_X (H_- - H_+) dt)$-quasiequivalent.\footnote{Here we choose normalized Hamiltonians $H_+$ and $H_-$ in the sense that $\int_X H_+(t, \cdot) = \int_X H_-(t, \cdot) =0$ for every $t \in [0,1]$. Therefore, $ \int_0^1 -\min_X (H_- - H_+) dt \geq 0$.}  \end{theorem}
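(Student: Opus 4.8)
\emph{Proof idea.} The plan is to exhibit an explicit $(\delta_+,\delta_-)$-quasiequivalence quadruple $(\Phi,\Psi,K_C,K_D)$ in the sense of Definition \ref{dfn-mc}, built entirely out of Floer continuation maps, with $\delta_+=\int_0^1\max_X(H_+-H_-)\,dt$ and $\delta_-=\int_0^1-\min_X(H_+-H_-)\,dt$, and to control the three filtration shifts by the standard energy estimate for the parametrized Floer equation \eqref{dfn-chainmap}.

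First I would fix the linear monotone homotopy $\mathcal H_s=(1-\alpha(s))H_-+\alpha(s)H_+$ together with a compatible family $\mathcal J_s$, and the reversed homotopy $(\tilde{\mathcal H},\tilde{\mathcal J})$ from $(H_+,J_+)$ to $(H_-,J_-)$, as in the discussion of \eqref{two-htps}, and set $\Phi:=\Phi_{(\mathcal H,\mathcal J)}$, $\Psi:=\Phi_{(\tilde{\mathcal H},\tilde{\mathcal J})}$, which are chain maps by the Floer gluing argument. The filtration bound on $\Phi$ comes from the fundamental energy identity: for a finite-energy solution $u$ of \eqref{dfn-chainmap} asymptotic to $[\gamma_-,u_-]$ at $s=-\infty$ and $[\gamma_+,u_+]$ at $s=+\infty$,
\[ \mathcal A_{H_+}([\gamma_+,u_+])-\mathcal A_{H_-}([\gamma_-,u_-])=-E(u)+\int_{-\infty}^{\infty}\int_0^1\alpha'(s)\,(H_+-H_-)(t,u(s,t))\,dt\,ds . \]
Since $\alpha'\ge 0$, $\int_{-\infty}^{\infty}\alpha'(s)\,ds=1$, and $(H_+-H_-)(t,u(s,t))\le\max_X(H_+-H_-)(t,\cdot)$ pointwise, the right-hand side is at most $\int_0^1\max_X(H_+-H_-)(t,\cdot)\,dt=\delta_+$; hence every generator appearing in $\Phi c$ has action at most $\ell_{H_-}(c)+\delta_+$, so $\ell_{H_+}(\Phi c)\le\ell_{H_-}(c)+\delta_+$. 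Running the same estimate for $(\tilde{\mathcal H},\tilde{\mathcal J})$, whose underlying homotopy goes from $H_+$ to $H_-$, gives $\ell_{H_-}(\Psi d)\le\ell_{H_+}(d)+\int_0^1\max_X(H_--H_+)(t,\cdot)\,dt=\ell_{H_+}(d)+\delta_-$.

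For the homotopy operators I would use \eqref{gluing}: $\Psi\circ\Phi=\Phi_{(\mathcal H_R,\mathcal J_R)}$, where, for $R\gg 0$, $(\mathcal H_R,\mathcal J_R)$ is the concatenation that ascends from $H_-$ to $H_+$ and then descends back to $H_-$, and this self-continuation is chain homotopic to $\I$ via a degree-$1$ map $K_C$ built, as in Remark \ref{diff-htps} (following Lemma 6.3 in \cite{Sal90}), from a $1$-homotopy $\{(\mathcal H^\rho,\mathcal J^\rho)\}_{\rho\in[0,1]}$ of homotopies from $(H_-,J_-)$ to itself with $(\mathcal H^0,\mathcal J^0)$ the constant homotopy and $(\mathcal H^1,\mathcal J^1)=(\mathcal H_R,\mathcal J_R)$. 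I would take $\mathcal H^\rho$ to be the concatenation that ascends from $H_-$ to $(1-\rho)H_-+\rho H_+$ and then descends back to $H_-$, through linear monotone pieces. Counting the isolated solutions of the $\rho$-parametrized equation and applying the energy identity at each fixed $\rho$, the action shift of $K_C$ is bounded by the supremum over $\rho$ of the total positive $s$-variation $\int_0^1\int_{-\infty}^{\infty}\max_{X}\big(\partial_s\mathcal H^\rho_s(t,\cdot)\big)\,ds\,dt$; for the concatenation above this equals $\rho\big(\int_0^1\max_X(H_+-H_-)\,dt+\int_0^1\max_X(H_--H_+)\,dt\big)=\rho(\delta_++\delta_-)\le\delta_++\delta_-$, so $\ell_{H_-}(K_C c)\le\ell_{H_-}(c)+\delta_++\delta_-$. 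The symmetric construction on the $(H_+,J_+)$-side (where the glued self-continuation runs $H_+\to H_-\to H_+$) yields $K_D$ with $\ell_{H_+}(K_D d)\le\ell_{H_+}(d)+\delta_++\delta_-$. Thus $(\Phi,\Psi,K_C,K_D)$ is a $(\delta_+,\delta_-)$-quasiequivalence, which is the assertion.

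The step requiring the most care is the construction of the $1$-homotopy $\{(\mathcal H^\rho,\mathcal J^\rho)\}$: one must arrange that its endpoints really are the constant homotopy and the glued homotopy produced by \eqref{gluing} (possibly after absorbing an auxiliary chain homotopy whose own filtration shift is no larger), and, more importantly, that the positive $s$-variation of $\mathcal H^\rho$ stays bounded by $\delta_++\delta_-$ uniformly in $\rho$ rather than accumulating extra area near the gluing regions — this quantitative bookkeeping is exactly the content behind the claim in Remark \ref{diff-htps} that such a $K$ shifts filtration by $||H_+-H_-||_H$. The remaining ingredients — well-definedness of the continuation and glued-continuation maps, the gluing relation \eqref{gluing}, chain-homotopy invariance, and transversality for the $\rho$-parametrized moduli spaces — are the standard Floer package, for which I would simply cite \cite{Sal90}, \cite{AD}, \cite{MS}, \cite{Ush11}, \cite{Pa}.
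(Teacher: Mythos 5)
Your proposal is correct and follows exactly the route the paper itself indicates: the paper states Theorem \ref{Floer-energy} as a summary of the standard Floer continuation-map package (the energy identity for the parametrized equation \eqref{dfn-chainmap}, the gluing relation \eqref{gluing}, and the filtration-controlled chain homotopy of Remark \ref{diff-htps} following Lemma 6.3 of \cite{Sal90} and the construction in \cite{Ush11}), and your sketch supplies precisely those estimates with the correct bounds $\delta_+$, $\delta_-$, and $\delta_++\delta_-$. No gaps beyond the standard transversality and gluing facts you already defer to the references.
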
 

Now we move to the Hamiltonian Floer theory with respect to a non-contractible loop (representing a homotopy class $\alpha$). We will denote the corresponding Floer chain complex as $CF_*(H, J)_{\alpha}$ and Floer homology as $HF_*(X)_{\alpha}$. We simply remark here that almost all the ingredients above can be defined and constructed in a parallel way by starting from a covering space of $\mathcal{L}_{\alpha}(X)$ once we fixed a reference non-contractible loop in the homotopy class $\alpha$. The general construction has been carried out explicitly in Section 5 in \cite{Ush13}. Proposition 5.1 in \cite{Ush13} implies that Theorem \ref{Floer-energy} still holds for $CF_*(H, J)_{\alpha}$. What we want to emphasize is that Theorem \ref{Ar-conj} is not true for $HF_*(X)_{\alpha}$. In fact, we can readily show the following result. 

\begin{theorem} \label{Ar-conj-cont} $HF_*(X)_{\alpha} = 0$ if $\alpha$ is represented by a non-contractible loop. \end{theorem}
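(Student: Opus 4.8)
The plan is to show that $HF_*(X)_\alpha$, the Hamiltonian Floer homology in a nontrivial free homotopy class $\alpha$, vanishes by computing it with a well-chosen, highly degenerate Hamiltonian — exactly as one computes ordinary $HF_*$ by passing to a $C^2$-small autonomous Hamiltonian. The key point is that in the noncontractible setting we are in fact free to use the $C^2$-small \emph{zero} Hamiltonian $H \equiv 0$ (or a tiny autonomous perturbation), because the invariance machinery recalled above — continuation maps, the gluing relation (\ref{gluing}), and the chain homotopy $\Phi_{(\mathcal H_R,\mathcal J_R)}\simeq\I$ — carries over verbatim to $\L_\alpha(X)$ (this is Proposition 5.1 in \cite{Ush13}), so $HF_*(X)_\alpha$ is independent of the auxiliary data. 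First I would fix a small autonomous Morse function $f$ on $X$ and use $H=\epsilon f$ for $\epsilon>0$ small; its $1$-periodic orbits are the constant loops at critical points of $f$, which are all \emph{contractible}. Since $\alpha$ is noncontractible, none of these loops lies in $\L_\alpha(X)$, so the chain complex $CF_*(\epsilon f, J)_\alpha$ is the zero vector space, hence $HF_*(X)_\alpha = 0$.

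The main step to justify carefully is therefore the claim that $HF_*(X)_\alpha$ admits such a computation, i.e. that a noncontractible class contributes no generators for a sufficiently $C^2$-small Hamiltonian. Concretely: if $\|H\|_{C^2}$ is small enough then every $1$-periodic orbit of $X_H$ stays in a small $C^0$-neighborhood of a constant loop (by a Gronwall-type estimate on the flow), and a loop $C^0$-close to a constant loop is nullhomotopic; so all $1$-periodic orbits of such $H$ are contractible, whence the $\alpha$-component of the generating set is empty. Combined with the invariance of $HF_*(X)_\alpha$ under change of $(H,J)$, this gives the vanishing. I would phrase this as: choose $H$ nondegenerate, $C^2$-small, autonomous; then $CF_*(H,J)_\alpha = 0$, so $HF_*(X)_\alpha = H_*(0) = 0$; and by the invariance statement (the $\alpha$-analogue of the discussion around (\ref{two-htps})--(\ref{gluing})) this computes $HF_*(X)_\alpha$ for arbitrary admissible $(H,J)$.

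The one genuine obstacle — really a bookkeeping point rather than a deep issue — is that the invariance argument as recalled in the excerpt is stated for contractible loops, and in the noncontractible case one must be slightly careful that the moduli spaces of Floer cylinders and continuation cylinders respect the homotopy class: a finite-energy cylinder $u:\R\times\R/\Z\to X$ asymptotic at both ends to loops in $\L_\alpha(X)$ automatically has both ends in the \emph{same} class $\alpha$ (the cylinder itself provides the homotopy), so the complex $CF_*(H,J)_\alpha$ is indeed a subcomplex and continuation maps preserve $\alpha$; this is precisely what is set up in Section 5 of \cite{Ush13} and cited above. Granting that, the proof is a two-line consequence: pick a $C^2$-small generator, observe the $\alpha$-complex is empty, and invoke invariance. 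I do not expect any analytic difficulty beyond quoting the noncontractible analogues of Theorem \ref{Floer-energy} and the continuation-map formalism already in hand.
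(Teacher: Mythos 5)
Your proposal is correct and is essentially the paper's argument: the paper likewise invokes independence of $HF_*(X)_{\alpha}$ from the choice of Hamiltonian and observes that a $C^2$-small Hamiltonian has only constant (hence contractible) periodic orbits, so $CF_*(H,J)_{\alpha}$ has no generators. Your additional care about why small Hamiltonians have only contractible orbits and why continuation maps respect the class $\alpha$ is a fuller version of the same two-line proof.
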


Indeed, since $HF_*(X)$ is independent of Hamiltonian $H$. A $C^2$-small $H$ will only provide critical points (as constant periodic orbits), which make $HF_*(X)_{\alpha}$ has no generators.

\section{Proof of Proposition \ref{cont2}} \label{p14}
\begin{lemma} \label{cont1} Assume that $H = F^{(p)}$ for some Hamiltonian function $F$. For any $a \in \N$, we have the following commuting diagram
\begin{center}
\begin{tikzpicture}
  \matrix (m) [matrix of math nodes,row sep=3em,column sep=4em,minimum width=2em]
  {
      CF_{k}(H^{(p)}, J_{t + \frac{a}{p^2}})_{\alpha} & CF_{k}(H^{(p)}, J_{t + \frac{a+1}{p^2}})_{\alpha} \\
     CF_{k}(H^{(p)}, J_{t})_{\alpha} & CF_{k}(H^{(p)}, J_{t + \frac{1}{p^2}})_{\alpha} \\};
  \path[-stealth]
    (m-1-1) edge node [left] {$C_a$} (m-2-1)
            edge node [above] {$R_{p^2}$} (m-1-2)
    (m-2-1.east|-m-2-2) edge node [below] {$R_{p^2}$}
            node [above] {}(m-2-2)
    (m-1-2) edge node [right] {$(R_{p^2})_*(C_a)$} (m-2-2);
           \end{tikzpicture}
\end{center}
where $C_a$ is some continuation map. 
\end{lemma}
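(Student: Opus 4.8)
The plan is to read the square as a naturality statement for continuation maps under the rotation symmetry $R_{p^2}$, and to prove it by an explicit bijection of the relevant Floer moduli spaces. Recall that $C_a$ is the continuation map associated to a regular homotopy $(\mathcal J^a_s)_{s\in\R}$ of $\omega$-compatible almost complex structures with $\mathcal J^a_s$ equal to the family $J_{t+a/p^2}$ for $s\ll 0$ and equal to $J_t$ for $s\gg 0$, the Hamiltonian being held constant equal to $H^{(p)}$. Correspondingly, $(R_{p^2})_*(C_a)$ denotes the continuation map built from the $R_{p^2}$-pushforward homotopy, namely the $s$-homotopy whose value at each $s$ is the $t$-family $\mathcal J^a_s$ shifted by $1/p^2$ in time; this interpolates from $J_{t+(a+1)/p^2}$ to $J_{t+1/p^2}$, which are precisely the source and target complexes on the right-hand side of the diagram. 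So the content to be proved is the coefficient-wise identity $(R_{p^2})_*(C_a)\circ R_{p^2} = R_{p^2}\circ C_a$; this is not a tautology, since the left factor is defined intrinsically from the pushed-forward homotopy rather than by conjugation.

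First I would record the one genuine input. Under the standing hypothesis of Section \ref{p14} that $\phi=\phi_H=\psi^p$, writing $H(t,x)=pF(pt,x)$ with $F$ generating $\psi$ gives $H^{(p)}(t,x)=p^2F(p^2t,x)$, whence $H^{(p)}(t+1/p^2,x)=H^{(p)}(t,x)$ and therefore $X_{H^{(p)}}(t+1/p^2,\cdot)=X_{H^{(p)}}(t,\cdot)$; this $1/p^2$-periodicity in time is exactly what allows $R_{p^2}$ to act on $CF_*(H^{(p)},\cdot)_\alpha$ in the first place. Next I would check that the substitution $u(s,t)\mapsto v(s,t):=u(s,t+1/p^2)$ sends a solution of the continuation equation (\ref{dfn-chainmap}) for $(H^{(p)},\mathcal J^a)$ to a solution of (\ref{dfn-chainmap}) for $(H^{(p)},(R_{p^2})_*\mathcal J^a)$: a short differentiation shows that $v$ satisfies the continuation equation whose almost complex data is the given homotopy evaluated at shifted time $t+1/p^2$ and whose Hamiltonian term is $X_{H^{(p)}}(t+1/p^2,\cdot)$, and the periodicity just noted rewrites the latter as $X_{H^{(p)}}(t,\cdot)$, so $v$ solves precisely the pushed-forward equation. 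This substitution is manifestly invertible, it matches asymptotic orbits via $\gamma_\pm\mapsto R_{p^2}\gamma_\pm$, it is compatible with capping classes (hence preserves $\mu_{CZ}$) and preserves energy, so it restricts to a bijection between the corresponding zero-dimensional moduli spaces and, after the routine orientation check, preserves signs. Counting on both sides then yields $\langle R_{p^2}C_a\gamma_-,R_{p^2}\gamma_+\rangle=\langle (R_{p^2})_*(C_a)\,R_{p^2}\gamma_-,R_{p^2}\gamma_+\rangle$ for all generators, and since $R_{p^2}$ is a bijection on generators this gives the asserted commutativity.

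Two small remarks I would fold in: the pushforward homotopy $(R_{p^2})_*\mathcal J^a$ is automatically regular because $u\mapsto u(\cdot,\cdot+1/p^2)$ is a diffeomorphism conjugating the linearized operators, so the right-hand vertical arrow is a genuine continuation map; and the two arrows labelled $R_{p^2}$ are literally the same rotation operator, merely acting on the two different complexes and each time shifting the almost complex family by $1/p^2$. The main obstacle here is not conceptual but bookkeeping — keeping the three nested time-translations of the $J$-family (the $+a/p^2$, the homotopy variable $s$, and the $+1/p^2$ coming from $R_{p^2}$) straight, and confirming that every datum attached to a Floer cylinder (asymptotics, capping, index, energy, orientation) transforms equivariantly under the shift. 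Everything rests on the single structural fact that $H^{(p)}$ is $\tfrac1{p^2}$-periodic in time, which is exactly where the hypothesis $\phi\in Power_p(X)$ is used.
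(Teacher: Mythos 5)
Your proposal is correct and follows essentially the same route as the paper: the paper's proof is precisely the time-shift substitution $u(s,t)\mapsto u(s,t+1/p^2)$ giving a one-to-one correspondence between the moduli spaces defining $R_{p^2}\circ C_a$ and $(R_{p^2})_*(C_a)\circ R_{p^2}$. Your version is in fact more careful than the paper's, which leaves implicit the $\tfrac{1}{p^2}$-periodicity of $H^{(p)}$, the regularity of the pushed-forward homotopy, and the orientation check.
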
 

\begin{proof} Suppose that $x(t+a/p^2)$ is a (capped) periodic orbit as a generator of $CF_{k}(H^{(p)}, J_{t + a/p^2})_{\alpha}$ and that there exists a Floer connecting orbit $u(s,t)$ satisfying parametrized partial differential equation (\ref{dfn-chainmap}) connecting $x(t+a/p^2)$ and $y(t)$ for some $y(t)$ as a generator of $CF_k(H^{(p)}, J_{t})$, followed by a rotation on the parameter $t$ to $t+1/p^2$. In other words, we have 
\[  x(t+a/p^2) \xrightarrow{u(s,t)} y(t) \xrightarrow{R_{p^2}} y(t+ 1/p^2). \]
Then via a rotation by $1/p^2$, we will also get a Floer connecting orbit, namely $u(s,t+1/p^2)$, which satisfies the following parametrized partial differential equation (used to construct $(R_{p^2})_*(C_a)$),
\begin{equation} 
 \frac{\partial u}{\partial s} + (J_{t+1/p^2})_s(u(s,t)) \left( \frac{\partial u}{\partial t} - X_{H_s}(t+1/p^2, u(s,t)) \right) =0, 
 \end{equation} 
connecting $x(t + (a+1)/{p^2})$ and $y(t+ 1/{p^2})$. In other words, 
\[ x(t+ a/p^2) \xrightarrow{R_{p^2}} x(t+ (a+1)/p^2) \xrightarrow{u(s,t+1/p^2)} y(t+1/p^2). \]
By a symmetric argument, there is a one-to-one correspondence between these two approaches. Therefore, this diagram commutes by the definition of Floer continuation map.  \end{proof}

\begin{proof} [Proof of Proposition \ref{cont2}] Because $R_{p^2}$ is well-defined, consider composition $C_1 \circ R_{p^2}$ (which is a map on $CF_k(H^{(p)}, J_t)_{\alpha}$ itself). We observe that 
\begin{align*}
 (C_1 \circ R_{p^2}) \circ (C_1 \circ R_{p^2}) &= C_1\circ (R_{p^2} \circ C_1) \circ R_{p^2} \\
 &= C_1 \circ ((R_{p^2})_*(C_1) \circ R_{p^2}) \circ R_{p^2}\\
 &= (C_1 \circ (R_{p^2})_*(C_1)) \circ R_{p^2}^2   
 \end{align*} 
where the second equality comes from Lemma \ref{cont1}. By setting 
\[ C_2 = C_1 \circ (R_{p^2})_*(C_1) \]
we have $(C_1 \circ R_{p^2})^2 = C_2 \circ R_{p^2}^2.$ Applying Lemma \ref{cont1} inductively, we can get 
\[ (C_1 \circ R_{p^2})^p = C_p \circ R_{p^2}^p = C_p \circ R_p \]
where $C_p$ is recursively defined by 
\[ C_p = C_1 \circ (R_{p^2})_*(C_{p-1}). \]
Therefore, in order to get the conclusion, set $C = C_p$ (which is determined by $C_1$) and set $C'= C_1$ so $S = C_1 \circ R_{p^2}$.\end{proof}

\section{Proof of Proposition \ref{p-he}}\label{p16}

We will prove Proposition \ref{p-he} by proving the following general result first. Recall the double map $\mathcal D_{\Phi}$ on self-mapping cone $Cone_C(\Phi)_*$ is defined by, for any $(x,y) \in Cone_C(\Phi)_*$, 
\[ \mathcal D_{\Phi}((x,y)) = (\Phi(x), \Phi(y)).\]
Also, recall that two chain maps $\Phi$ and $\Psi$ are filtered homotopic if there exists a homotopy $K$ such that  $\Phi - \Psi = \partial K + K \partial$ and $K$ preserves the filtration. Meanwhile, two Floer-type complexes are filtered isomorphic to each other if they are chain isomorphic to each other and the corresponding isomorphisms preserve the filtrations. 

\begin{lemma} \label{htp-cone} For two filtration preserving chain maps $\Phi$ and $\Psi$ on Floer-type complex $(C_*, \partial_*, \ell)$, if $\Phi$ and $\Psi$ are filtered homotopic, then the associated self-mapping cones $Cone_C(\Phi)_*$ and $Cone_C(\Psi)_*$ are filtered isomorphic to each other. \end{lemma}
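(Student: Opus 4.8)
The plan is to write down an explicit filtered chain isomorphism $\mathXi \colon Cone_C(\Phi)_* \to Cone_C(\Psi)_*$ built from the chain homotopy $K$ relating $\Phi$ and $\Psi$, and then check the three things that need checking: that it intertwines the two cone differentials, that it preserves filtration (and its inverse does too), and that it is bijective. Since $\Phi$ and $\Psi$ are filtered homotopic, there is a degree-$1$ map $K \colon C_* \to C_{*+1}$ with $\Phi - \Psi = \partial K + K\partial$ and $\ell(Kc) \le \ell(c)$ for all $c$ (this is exactly the ``filtered homotopy'' hypothesis; any shift constant in the homotopy is zero here because $\Phi,\Psi$ are filtration preserving). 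Recall from Definition \ref{self-mc} that $Cone_C(\Phi)_* = C_* \oplus C_{*-1}$ with differential $\partial_{co}^\Phi = \left(\begin{smallmatrix}\partial & -\Phi \\ 0 & -\partial\end{smallmatrix}\right)$ and filtration $\ell_{co}((x,y)) = \max\{\ell(x),\ell(y)\}$, and similarly for $\Psi$.

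First I would set $\mathXi(x,y) = (x + Ky,\, y)$, i.e. the block matrix $\left(\begin{smallmatrix} I & K \\ 0 & I\end{smallmatrix}\right)$. Then I would verify $\mathXi \circ \partial_{co}^\Phi = \partial_{co}^\Psi \circ \mathXi$ by a direct block computation: the only nontrivial entry is the upper-right one, where on one side one gets $-\Phi + K(-\partial) = -\Phi - K\partial$ and on the other $\partial K - \Psi$; these agree precisely because $\Phi - \Psi = \partial K + K\partial$. Next, filtration: $\ell_{co}(\mathXi(x,y)) = \max\{\ell(x+Ky), \ell(y)\} \le \max\{\ell(x), \ell(Ky), \ell(y)\} \le \max\{\ell(x),\ell(y)\} = \ell_{co}((x,y))$, using $\ell(Ky)\le\ell(y)$ and the non-Archimedean/orthogonality inequality $\ell(x+Ky)\le\max\{\ell(x),\ell(Ky)\}$. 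The inverse is visibly $\mathXi^{-1}(x,y) = (x - Ky, y)$, which is again a chain map (by the symmetric computation, or because $\Psi - \Phi = \partial(-K) + (-K)\partial$) and satisfies the same filtration bound, so $\mathXi$ is a filtered isomorphism.

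The computation is entirely routine once the right map is guessed, so there is no serious obstacle; the only thing to be careful about is bookkeeping of signs in the cone differential (the $-\Phi$ versus $-\Psi$ and the $-\partial$ in the lower-right block) and making sure the filtration inequality genuinely only needs $\ell(Kc)\le\ell(c)$ rather than some strict or shifted version. I would then deduce Proposition \ref{p-he} by applying this lemma with $\Phi = T^H - \xi_p\cdot\I$ and $\Psi = (T^H)' - \xi_p\cdot\I$ for two choices of continuation map: by Remark \ref{diff-htps} two continuation maps built from different homotopies differ by a chain homotopy whose homotopy operator shifts filtration by at most $\|H^{(p)} - H^{(p)}\|_H = 0$, hence $T^H$ and $(T^H)'$ are filtered homotopic, and their difference with the (filtration-preserving) map $\xi_p\cdot\I$ subtracted off is still filtered homotopic; Lemma \ref{htp-cone} then gives the desired filtered isomorphism of self-mapping cones. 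The chain-map assertions for $\mathcal D_T$ and $\mathcal D_S$ are a separate short check that $\mathcal D_A \partial_{co} = \partial_{co}\mathcal D_A$ whenever $A$ commutes with $\partial$ and with $T - \xi_p\cdot\I$, which holds for $A = T$ tautologically and for $A = S$ because $S$ commutes with $\partial$ (it is a composition of a chain map $R_{p^2}$ and a continuation map) and $\mathcal D_S^p = \mathcal D_T$ by Proposition \ref{cont2}.
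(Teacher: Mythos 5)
Your proposal is correct and is essentially the paper's proof: the paper takes the map $F=\left(\begin{smallmatrix}\mathbb I & -K\\ 0 & \mathbb I\end{smallmatrix}\right)$, which is exactly your inverse $(x,y)\mapsto(x-Ky,y)$, verifies the same block-matrix chain-map identity using $\Phi-\Psi=\partial K+K\partial$, and uses the same filtration estimate $\ell(x\pm Ky)\leq\max\{\ell(x),\ell(Ky)\}\leq\max\{\ell(x),\ell(y)\}$ for both the map and its explicit inverse. The only (harmless) discrepancy is a sign: with the convention $\Phi-\Psi=\partial K+K\partial$, the upper-right block of the map from $Cone_C(\Phi)$ to $Cone_C(\Psi)$ must be $-K$ rather than $+K$, i.e.\ your $(x,y)\mapsto(x+Ky,y)$ as written intertwines the differentials in the opposite direction, which you can fix by replacing $K$ with $-K$.
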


\begin{proof} Suppose the homotopy between $\Phi$ and $\Psi$ is $K$, which preserves the filtration. Construct a map $F: Cone_C(\Phi)_* \to Cone_C(\Psi)_*$ by 
\[ F = \left(\begin{array}{cc}
{\mathds 1} & -K \\
0 & {\mathds 1}
\end{array}\right). \]
First $F$ preserves filtration. In fact, for any $(x,y) \in Cone_C(\Phi)_*$, we have 
\begin{align*}
\ell_{co}(F(x,y)) = \ell_{co} ((x - Ky, y)) &= \max \{\ell_C(x- Ky), \ell_C(y) \}\\
& \leq \max\{\ell_C(x), \ell_C(Ky), \ell_C(y) \} \\
& =  \max\{\ell_C(x), \ell_C(y) \}\\
& = \ell_{co}((x,y)).
\end{align*}
Second, $F$ is a chain map because 
\begin{align*} 
\left( \begin{array}{cc}
{\mathds 1} & -K \\
0 & {\mathds 1}
\end{array} \right) \cdot \left( \begin{array}{cc}
\partial_C & -\Phi \\
0 & - \partial_C
\end{array} \right)&= \left( \begin{array}{cc}
\partial_C & - \Phi + K \partial_C \\
0 & -\partial_C
\end{array} \right) \\
&=   \left( \begin{array}{cc}
\partial_C  & - \Psi - \partial_C K \\
0 & -\partial_C
\end{array} \right)\\
&= \left( \begin{array}{cc}
\partial_C & -\Psi \\
0 & - \partial_C
\end{array} \right) \cdot \left( \begin{array}{cc}
{\mathds 1} & -K \\
0 & {\mathds 1}
\end{array} \right).
\end{align*}
Third, $F$ is an isomorphism because we have its inverse 
\[ F^{-1} = \left(\begin{array}{cc}
{\mathds 1} & K \\
0 & {\mathds 1}
\end{array}\right) \]
which is easily seen to preserve the filtration too. \end{proof}

\begin{proof} [Proof of Proposition \ref{p-he}] For the first conclusion, by the standard result of Floer theory explained Section 2, different choices of continuation maps $C_1$ and $C_2$ will result in different but filtered homotopic chain maps. Therefore, it comes from the direct application of Lemma \ref{htp-cone} to 
\[ \Phi = C_1 \circ R_p - \xi_p \cdot {\mathds 1} \,\,\,\,\,\,\,\mbox{and}\,\,\,\,\,\,\,\, \Psi = C_2 \circ R_p - \xi_p \cdot {\mathds 1}. \] 
For the second conclusion, note that for the following diagram 
\[ \xymatrix{  CF_{k}(H^{(p)}, J_{t})_{\alpha} \ar[r]^{R_p} \ar[d]_{\partial_k} & CF_{k}(H^{(p)}, J_{t + \frac{1}{p}})_{\alpha} \ar[r]^{C} \ar[d]^{(R_{p})_*(\partial_k)} & CF_{k}(H^{(p)}, J_t)_{\alpha} \ar[d]^{\partial_k} \\    CF_{k-1}(H^{(p)}, J_{t})_{\alpha} \ar[r]^{R_p} & CF_{k-1}(H^{(p)}, J_{t + \frac{1}{p}})_{\alpha} \ar[r]^{C}  & CF_{k-1}(H^{(p)}, J_t)_{\alpha} }\]
the standard argument in Floer theory shows continuation map $C$ is a chain map, that is, $\partial_k \circ C = C \circ (R_p)_*(\partial_k)$ (where $(R_p)_*(\partial_k)$ is the degree-$k$ differential of complex $CF_k(H^{(p)}, J_{t+\frac{1}{p}})_{\alpha}$). Therefore, let $T : = C \circ R_p$, we can check that 
\[ \partial_k \circ T = \partial_k \circ C \circ R_p = C \circ (R_p)_*(\partial_k) \circ R_p = C \circ R_p \circ \partial_k = T \circ \partial_k. \]
Similarly, $S := C' \circ R_{p^2}$ commutes with $\partial_k$ for any $k \in \Z$. Then, by the definition of $\partial_{co}$, we can check that 
\begin{align*} 
\left( \begin{array}{cc}
\partial & -(T-\xi_p \cdot {\mathds 1}) \\
0 & -\partial
\end{array} \right) \cdot \left( \begin{array}{cc}
T & 0\\
0 & T
\end{array} \right)&= \left( \begin{array}{cc}
\partial T & -(T-\xi_p \cdot {\mathds 1}) \circ T \\
0 & - \partial T
\end{array} \right) \\
&=   \left( \begin{array}{cc}
T \partial   & - T \circ (T-\xi_p \cdot {\mathds 1}) \\
0 & -T \partial
\end{array} \right) \\
& = \left( \begin{array}{cc}
T & 0\\
0 &T
\end{array} \right) \cdot \left( \begin{array}{cc}
\partial & -(T-\xi_p \cdot {\mathds 1}) \\
0 & -\partial
\end{array} \right).
\end{align*}
Similarly, since $S$ commutes with $T$, 
\begin{align*} 
\left( \begin{array}{cc}
\partial & -(T-\xi_p \cdot {\mathds 1}) \\
0 & -\partial
\end{array} \right) \cdot \left( \begin{array}{cc}
S & 0\\
0 & S
\end{array} \right)&= \left( \begin{array}{cc}
\partial S & -(T-\xi_p \cdot {\mathds 1}) \circ S \\
0 & - \partial S
\end{array} \right) \\
&=   \left( \begin{array}{cc}
S \partial   & - S \circ (T-\xi_p \cdot {\mathds 1}) \\
0 & -S \partial
\end{array} \right) \\
& = \left( \begin{array}{cc}
S & 0\\
0 &S
\end{array} \right) \cdot \left( \begin{array}{cc}
\partial & -(T-\xi_p \cdot {\mathds 1}) \\
0 & -\partial
\end{array} \right).
\end{align*}

 \end{proof}
 
\section {Proof of Theorem \ref{p-tuples}}\label{p17}

Recall our set-up. For a non-degenerate Hamiltonian $H$, there is a Floer chain complex $(CF_*(H^{(p)}, J_t), \partial_{H, J})$. Its self-mapping cone of linear map $T - \xi_p \cdot {\mathds 1}$, $(Cone_*(T - \xi_p \cdot {\mathds 1}), \partial_{co})$, is in general a Floer-type complex over Novikov field $\Lambda^{\K, \Gamma}$ where $T$ is a strictly lower filtration perturbation of rotation $R_p$, \emph{i.e.}, $T = C \circ R_p = R_p + P$ where for any $x$ from domain, $\ell(P(x)) < \ell(x)$. Moreover, there exists a $\Lambda^{\K, \Gamma}$-linear chain map $\mathcal D_{T}$ on $(Cone_*(T - \xi_p \cdot \I), \partial_{co})$, defined by an action of $T$ on each component. If in the $p$-th power situation defined in the introduction, there exists a $\Lambda^{\K, \Gamma}$-linear chain map $\mathcal D_S$ on $(Cone_*(T - \xi_p \cdot {\mathds 1}), \partial_{co})$ such that $\mathcal D_S^p = \mathcal D_T$ where $S$ is a strictly lower filtration perturbation of rotation $R_{p^2}$. In this section, we will prove the important Theorem \ref{p-tuples}, 

\subsection{Perturb to be a group action}
Recall our definitions,
\begin{equation} \label{dfn-T}
\mathcal D_T = \mathcal D_{R_p} + C_T\,\,\,\,\mbox{and}\,\,\,\,\mathcal D_S = \mathcal D_{R_{p^2}} + C_S,
\end{equation}
and by Proposition \ref{p-he},
\begin{equation} \label{dfn-S}
\D_{T} \partial_{co} = \partial_{co} \D_{T} \,\,\,\,\mbox{and}\,\,\,\, \D_{S} \partial_{co} = \partial_{co} \D_{S}
\end{equation}
because $T \partial = \partial T$ and $S \partial = \partial S$. In particular, we know that $\mathcal D_{T}$ and $\mathcal D_{S}$ exactly preserve filtrations. Moreover, due to Floer continuation map, there exists a constant $\hbar>0$ such that for any $x \in Cone_*(T - \xi_p \cdot {\mathds 1})$, 
\begin{equation} \label{cont-map}
\ell_{co}(C_T x) \leq \ell(x) - \hbar \,\,\,\,\mbox{and}\,\,\,\, \ell_{co}(C_S x) \leq \ell(x) - \hbar. 
\end{equation}
Because of this, we have the following lemma. 

\begin{lemma} \label{inv}
$\mathcal D_T$ and $\mathcal D_S$ (if it exists) are invertible. 
\end{lemma}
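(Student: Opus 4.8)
\textbf{Proof plan for Lemma \ref{inv}.}
The plan is to show that each of $\mathcal D_T$ and $\mathcal D_S$ is a filtration-preserving $\Lambda^{\K,\Gamma}$-linear endomorphism of a finite-dimensional orthogonalizable $\Lambda^{\K,\Gamma}$-space that differs from an honest invertible map by a strictly filtration-decreasing operator, and then argue that such a perturbation of an invertible map is still invertible. First I would record the structure: by (\ref{dfn-T}) we have $\mathcal D_T = \mathcal D_{R_p} + C_T$ with $\mathcal D_{R_p}$ induced by the rotation $R_p$ on generators, which is a genuine bijection on the finite generating set (it permutes the capped periodic orbits of $H^{(p)}$ and preserves action and index), hence an invertible $\Lambda^{\K,\Gamma}$-linear map that exactly preserves $\ell_{co}$; and $C_T$ strictly lowers filtration by at least $\hbar>0$ by (\ref{cont-map}). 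The same applies to $\mathcal D_S = \mathcal D_{R_{p^2}} + C_S$.

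The key step is then a purely algebraic statement: if $A\co (V,\ell)\to (V,\ell)$ is an invertible $\Lambda^{\K,\Gamma}$-linear map on a finite-dimensional orthogonalizable space with $\ell(Av)=\ell(v)$ for all $v$, and $Q$ satisfies $\ell(Qv)\leq \ell(v)-\hbar$ for all $v$ with some fixed $\hbar>0$, then $A+Q$ is invertible. I would prove this by writing $A+Q = A(\I + A^{-1}Q)$ and showing $A^{-1}Q$ is still strictly filtration-decreasing (since $A^{-1}$ preserves filtration: it is the inverse of a filtration-preserving invertible map on an orthogonalizable space, so $\ell(A^{-1}w)=\ell(w)$), and then that $\I + N$ is invertible whenever $N$ strictly decreases filtration by at least $\hbar$. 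For the last point the natural move is a Neumann-series argument: $(\I+N)^{-1} = \sum_{j\geq 0} (-N)^j$, where the partial sums converge because $\ell(N^j v)\leq \ell(v)-j\hbar\to -\infty$, and the Novikov-type completeness of $\Lambda^{\K,\Gamma}$ (the defining finiteness condition on coefficients) guarantees the limit lies in $V$ and defines a two-sided inverse. Picking a fixed orthogonal basis makes ``convergence'' concrete: in that basis the matrix entries of $N^j$ have valuation tending to $+\infty$, so the series converges coefficientwise.

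Finally I would assemble these pieces: apply the algebraic lemma with $A=\mathcal D_{R_p}$, $Q=C_T$ to conclude $\mathcal D_T$ is invertible, and with $A=\mathcal D_{R_{p^2}}$, $Q=C_S$ to conclude $\mathcal D_S$ is invertible when it exists. I would also note that this is consistent with $\mathcal D_S^p=\mathcal D_T$: invertibility of $\mathcal D_S$ alone already forces invertibility of $\mathcal D_T$, and conversely.

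The main obstacle I anticipate is purely a matter of making the ``perturbation of an invertible map is invertible'' argument rigorous over the Novikov field rather than a field with a genuine complete norm: one must check that the Neumann series actually converges in the relevant sense, i.e. that the finiteness condition defining $\Lambda^{\K,\Gamma}$ is preserved under the infinite sum. This is where a careful choice of orthogonal basis and a bookkeeping of valuations (using $\ell(\lambda c)=\ell(c)-\nu(\lambda)$ and the strict $\hbar$-gap from Floer–Gromov compactness) does the work; none of it is deep, but it is the only non-formal point.
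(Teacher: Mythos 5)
Your proposal is correct, and its engine is the same as the paper's: a Neumann series $\sum_j N^j$ that converges because $N$ lowers filtration by a definite $\hbar>0$, so the terms' filtration levels diverge to $-\infty$ and the Novikov finiteness condition is preserved. The only real difference is the reduction to that engine. You factor $\mathcal D_T=\mathcal D_{R_p}\bigl(\I+\mathcal D_{R_p}^{-1}C_T\bigr)$, using that $\mathcal D_{R_p}$ is an honest filtration-exact bijection, and invert the second factor; the paper instead raises to the $p$-th power, uses the perturbed group relation $\mathcal D_T^{p}=\I-Q_T$, inverts $\I-Q_T$ by the same series, and writes $\mathcal D_T^{-1}=\mathcal D_T^{p-1}(\I-Q_T)^{-1}$. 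Your route is the more general and more standard one (``invertible plus topologically nilpotent is invertible''), and it does not use the finite-order structure at all. The paper's route has the mild advantage of reusing the relation $\mathcal D_T^p=\I-Q_T$ that drives the rest of Section 5 (notably Lemma \ref{cancel-per}), and of exhibiting the inverse as a power of $\mathcal D_T$ composed with a series in $Q_T$, so that commutation with $\partial_{co}$, $Q_T$, etc.\ is immediate from (\ref{commutes-T}); with your factorization one would check such commutation separately if it were needed. Both arguments rest on exactly the same nontrivial point you flag — convergence of the series over $\Lambda^{\K,\Gamma}$ — and both handle it identically.
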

\begin{proof} First, because $\D^p_{R_p} = {\mathds 1}$ and $\D^{p^2}_{R_{p^2}} = {\mathds 1}$,
\begin{equation} \label{al-ga}
\mathcal D_T^p = {\mathds 1} - Q_T \,\,\,\,\mbox{and}\,\,\,\, \D_{S}^{p^2} = {\mathds 1} - Q_S.
\end{equation} 
Here, $Q_T$ is a combination of $C_T$ and $\D_{R_p}$, so it strictly lowers filtration (by at least $\hbar$). Similarly, $Q_S$ is a combination of $C_S$ and $\D_{R_{p^2}}$, so it also strictly lowers filtration (by at least $\hbar$). Then $\mathcal D_T \circ (\mathcal D_T)^p =  (\mathcal D_T)^{p+1}=  (\mathcal D_T)^p \circ \mathcal D_T$, so by (\ref{al-ga}), 
\begin{equation} \label{commutes-T}
\D_T \circ ({\mathds 1} - Q_T) = ({\mathds 1} - Q_T)\circ \D_T \,\,\,\,\Rightarrow \,\,\,\, \D_T Q_T = Q_T \D_T. 
\end{equation} 
Similarly, $\mathcal D_S \circ (\mathcal D_S)^{p^2} =  (\mathcal D_S)^{p^2+1}=  (\mathcal D_S)^{p^2} \circ \mathcal D_S$, so by (\ref{al-ga}), 
\begin{equation} \label{commutes-S}
\D_S \circ ({\mathds 1} - Q_S) = ({\mathds 1} - Q_S)\circ \D_S \,\,\,\,\Rightarrow \,\,\,\, \D_S Q_S = Q_S \D_S. 
\end{equation} 
For $\D_T$, on the one hand, we can define 
\[ B_T = ({\mathds 1} - Q_T)^{-1} ={\mathds 1}+ Q_T + Q_T^2 +  \cdots.\]
It is a well-defined operator over $\Lambda^{\K, \Gamma}$ since by (\ref{cont-map}) $\ell(Q_T^k (x))$ diverges to $-\infty$ (as $k \to \infty$) for any $x$. Moreover, by (\ref{commutes-T}), $B_T \D_{T} = \D_T B_T$. On the other hand, for operator $B'_T = (\mathcal D_T)^{p-1} B_T$, we know 
\[ \mathcal D_TB'_T =  (\mathcal D_T)^p B_T = ({\mathds 1} - Q_T) ({\mathds 1} - Q_T)^{-1} = {\mathds 1} =  (\mathcal D_T)^{p-1} B_T \mathcal D_T = B'_T\mathcal D_T. \]
Therefore, $B'_T$ is the required inverse of $\mathcal D_T$. Similarly for $\D_S$, we can define 
\[ B'_S = (\D_S)^{p^2-1} B_S \,\,\,\,\mbox{where}\,\,\,\, B_S = ({\mathds 1} - Q_S)^{-1} = {\mathds 1} + Q_S + Q_S^2 + \cdots. \]
and then $\D_S B'_S = B'_S \D_S$, so $B'_S$ is the desired inverse of $\D_S$.  \end{proof}

\begin{remark} \label{TS-inv} Note that $T$ and $S$ also satisfy perturbed group relations in the form of (\ref{al-ga}), therefore, the same argument in Lemma \ref{inv} implies both $T$ and $S$ are invertible too. \end{remark}

Note that by (\ref{al-ga}), $\D_T$ and $\D_S$ do not generate finite group action. However, from the following lemma, these can always be reduced to be group actions. 

\begin{lemma} \label{cancel-per} There exist $T'$ and $S'$, as strictly lower filtration perturbations of $T$ and $S$ respectively, such that $\D_{T'}$ and $\D_{S'}$ are group actions, that is, 
\[ \D_{T'}^p = {\mathds 1} \,\,\,\,\mbox{and}\,\,\,\,\D_{S'}^p = \D_{T'} \,\,(\mbox{so $\D_{S'}^{p^2}= {\mathds 1}$}). \]
\end{lemma}

\begin{proof} Denote $T^p = {\mathds 1}  - P_T$ where $\ell(P_T x) \leq \ell(x) - \hbar$ for any $x$. We want to find $T'$ such that $(T')^p = (T^p +P_T) = {\mathds 1}$. Since $T$ is invertible by Remark~\ref{TS-inv}, define 
\[ P^{(1)}_T = T^{-p} P_T \,\,\,\,\mbox{and}\,\,\,\, T' = T({\mathds 1} + P^{(1)}_T)^{\frac{1}{p}} \]
where $({\mathds 1} + P^{(1)}_T)^{\frac{1}{p}}$ is defined using the binomial expansion, that is, 
\[ ({\mathds 1} + P^{(1)}_T)^{\frac{1}{p}} = {\mathds 1} + \binom{\frac{1}{p}}{1} P^{(1)}_T + \binom{\frac{1}{p}}{2} (P^{(1)}_T)^2 + \cdots  := {\mathds 1} + P^{(2)}_T \]
where $P^{(2)}_T =\binom{\frac{1}{p}}{1} P^{(1)}_T + \binom{\frac{1}{p}}{2} (P^{(1)}_T)^2 + \cdots  $ and it defines an operator over $\Lambda^{\K, \Gamma}$. Hence, denote $P^{(3)}_T = T P_T^{(2)}$, 
\[ T'= T + P^{(3)}_T \]
which is the required (group action) $T'$ such that it is a strictly lower filtration perturbation of $T$. Moreover, $\D_{T'}^p = \D_{(T')^p} = \D_{{\mathds 1}} = {\mathds 1}$.

Now suppose $T = S^p$. We want to find $S'$ such that $(S')^p = T' = T + P_T^{(3)}$. Again, by Remark \ref{TS-inv}, define 
\[ P^{(1)}_S = S^{-p} P^{(3)}_T \,\,\,\,\mbox{and}\,\,\,\, S' = S ({\mathds 1} + P^{(1)}_S)^{\frac{1}{p}} \]
where $({\mathds 1} + P^{(1)}_S)^{\frac{1}{p}}$ is defined using the binomial expansion, that is 
\[ ({\mathds 1} + P^{(1)}_S)^{\frac{1}{p}} = {\mathds 1} + \binom{\frac{1}{p}}{1} P^{(1)}_S + \binom{\frac{1}{p}}{2} (P^{(1)}_S)^2 + \cdots  := {\mathds 1} + P^{(2)}_S \]
where $P_S^{(2)} = \binom{\frac{1}{p}}{1} P^{(1)}_S + \binom{\frac{1}{p}}{2}(P^{(1)}_S)^2 + \cdots $. Denote $P_S^{(3)} = S P_S^{(2)}$, 
\[ S' = S + P_S^{(3)}\]
which is the required (group action) $S'$ such that it is a strictly lower filtration perturbation of $S$. Moreover, $\D^p_{S'}= \D_{(S')^p} = \D_{T'}$.  
 \end{proof}
 
To simplify the notation of proofs below, denote $[\cdot, \cdot]$ as commutator (of two matrices or operators). So $A$ and $B$ commutes if and only if $[A, B]=0$. An important observation from that definition of $T$ is that $[T, P_T] =[P_T, T] = 0$. Then we have the following corollary. 

\begin{cor} \label{b-commutes} For $\D_{T'}$ and $\D_{S'}$ constructed from Lemma \ref{cancel-per}, we have $[\D_{T'}, \partial_{co}] =0$ and  $[\D_{S'}, \partial_{co}] =0$. \end{cor}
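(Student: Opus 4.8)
The plan is to isolate from the $2\times 2$ block computation in the proof of Proposition~\ref{p-he} the following purely algebraic criterion: for a $\Lambda^{\K,\Gamma}$-linear operator $A$ on $CF_*(H^{(p)},J_t)_\alpha$, the double map $\D_A$ commutes with $\partial_{co}$ if and only if $A$ commutes with the Floer boundary $\partial$ and with $T$ (equivalently with $T-\xi_p\cdot\I$, since $\xi_p\cdot\I$ is central). Indeed, expanding $\partial_{co}\D_A$ and $\D_A\partial_{co}$ as block products — exactly as done for $\D_T$ and $\D_S$ in Section~\ref{p16} — the off-diagonal blocks are $-(T-\xi_p\cdot\I)A$ and $-A(T-\xi_p\cdot\I)$ while the diagonal blocks are $\pm\partial A$ and $\pm A\partial$, so the two products agree precisely when $[A,\partial]=0$ and $[A,T]=0$. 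Hence it suffices to verify that each of $T'$ and $S'$ commutes with $\partial$ and with $T$.

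Then I would propagate these two commutation relations through the recursive construction of $T'$ and $S'$ in Lemma~\ref{cancel-per}. The available inputs are: $T=C\circ R_p$ and $S=C'\circ R_{p^2}$ are chain maps, so $[T,\partial]=[S,\partial]=0$; $T=S^p$, so $[S,T]=0$; $T$ and $S$ are invertible by Remark~\ref{TS-inv}, so $T^{-1}$ and $S^{-1}$ commute with whatever $T$ and $S$ do; and $[T,P_T]=0$ since $P_T=\I-T^p$. Starting from $P_T=\I-T^p$, which commutes with both $\partial$ and $T$, one checks successively that $P_T^{(1)}=T^{-p}P_T$, the binomial series $P_T^{(2)}$ (a convergent power series in $P_T^{(1)}$ over $\Lambda^{\K,\Gamma}$ by the filtration-lowering bound from Lemma~\ref{cancel-per}), and $P_T^{(3)}=TP_T^{(2)}$ all commute with $\partial$ and with $T$; therefore $T'=T+P_T^{(3)}$ does too, which yields $[\D_{T'},\partial_{co}]=0$. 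Running the parallel chain for $S$: $P_S^{(1)}=S^{-p}P_T^{(3)}$ commutes with $\partial$ (both $S^{-p}=T^{-1}$ and $P_T^{(3)}$ do) and with $T$ (both $T^{-1}$ and $P_T^{(3)}$ do), hence so do $P_S^{(2)}$ and $P_S^{(3)}=SP_S^{(2)}$, and thus $S'=S+P_S^{(3)}$ commutes with $\partial$ and with $T$, giving $[\D_{S'},\partial_{co}]=0$.

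I do not anticipate a real obstacle here; the single point that needs attention is that $\partial_{co}$ is built from the \emph{uncorrected} map $T$ (not $T'$), so for $\D_{S'}$ it is not enough to know that $S$ commutes with $T$ — one must carry "commutes with $T$" as an invariant through \emph{both} the $T$-perturbation chain (to see that $P_T^{(3)}$, and hence $P_S^{(1)}$, commutes with $T$) and the $S$-perturbation chain. The well-definedness of the binomial expansions as $\Lambda^{\K,\Gamma}$-linear operators is taken from Lemma~\ref{cancel-per} and not re-derived.
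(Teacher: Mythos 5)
Your proposal is correct and follows essentially the same route as the paper: reduce the block computation to the two commutation relations $[A,\partial]=0$ and $[A,T]=0$, then propagate these through the chains $P_T\Rightarrow P_T^{(1)}\Rightarrow P_T^{(2)}\Rightarrow P_T^{(3)}\Rightarrow T'$ and $P_S^{(1)}\Rightarrow P_S^{(2)}\Rightarrow P_S^{(3)}\Rightarrow S'$, using $[T,P_T]=0$, $[S,T]=0$, and the invertibility of $T$ and $S$. The subtlety you flag — that $\partial_{co}$ involves the uncorrected $T$, so commutation with $T$ must be tracked through both perturbation chains — is exactly the point the paper's two ``claims'' are organized around.
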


\begin{proof} First, we claim $[T', \partial]=0$ and $[S', \partial]=0$. In fact, starting from $[T, \partial]=0$ and the definitions of $P^{(1)}_T$, $P^{(2)}_T$ and $P^{(3)}_T$, we have 
\begin{align*}
 [T, \partial] =0 \,\Rightarrow \,[P_T, \partial] =0 \, \Rightarrow\, [P^{(1)}_T, \partial] =0 \, &\Rightarrow \,[P^{(2)}_T, \partial]=0 \\
 & \Rightarrow \,[P^{(3)}_T, \partial]=0 \,\Rightarrow \,[T', \partial]=0.
 \end{align*}
Similarly, for $S'$, starting from $[S, \partial] =0$, $[P^{(3)}_T, \partial]=0$ and definitions of $P^{(1)}_S$, $P^{(2)}_S$ and $P^{(3)}_S$, we have 
\begin{align*}
 ([S, \partial] =0], [P^{(3)}_T, \partial]=0]) \, \Rightarrow \,  [P^{(1)}_S, \partial] =0 \, &\Rightarrow \,[P^{(2)}_S, \partial]=0 \\
 & \Rightarrow \, [P^{(3)}_S, \partial]=0 \,\Rightarrow \,[S', \partial]=0.
 \end{align*}
Second, we claim that $[T',T]=0$ and $[S',T]=0$. In fact, starting from $[P_T, T] =0$, we have 
\[ [P_T, T]=0 \, \Rightarrow \, [P^{(1)}_T, T]= 0 \, \Rightarrow \, [P^{(2)}_T, T] \, \Rightarrow \, [P^{(3)}_T, T] =0\ \Rightarrow \, [T', T]=0. \]
Similarly for $S'$, starting from $[S, T]=0$ and $[P^{(3)}_T, T] =0$, we have 
\begin{align*}
 ([S, T]=0, [P^{(3)}_T, T] =0) \,\Rightarrow\, [P^{(1)}_S, T]= 0 \,&\Rightarrow \,[P^{(2)}_S, T] \\
 &\Rightarrow \, [P^{(3)}_S, T] =0\ \Rightarrow \, [S', T]=0.
 \end{align*}
Third, we conclude that $[\D_{T'}, \partial_{co}] =0$ and $[\D_{S'}, \partial_{co}] =0$. In fact, 
\begin{align*}
\left( \begin{array}{cc} 
T' & 0  \\
0 & T'
\end{array} \right) \cdot  \left( \begin{array}{cc} 
\partial & - (T - \xi_p \cdot {\mathds 1}) \\
0 & - \partial 
\end{array} \right) &= \left( \begin{array}{cc} 
T' \partial & -T'(T-\xi_p\cdot {\mathds 1}) \\
0 & - T' \partial 
\end{array} \right) \\
& = \left( \begin{array}{cc} 
\partial T' & - (T- \xi_p \cdot {\mathds 1}) T' \\
0 & -\partial T'
\end{array} \right) \\
& = \left( \begin{array}{cc} 
\partial & - (T - \xi_p \cdot {\mathds 1}) \\
0 & - \partial 
\end{array} \right) \cdot \left( \begin{array}{cc} 
T' & 0  \\
0 & T'
\end{array} \right)
\end{align*}
where the second equality comes from the first part of two claims above. Similarly, 
\begin{align*}
\left( \begin{array}{cc} 
S' & 0  \\
0 & S'
\end{array} \right) \cdot  \left( \begin{array}{cc} 
\partial & - (T - \xi_p \cdot {\mathds 1}) \\
0 & - \partial 
\end{array} \right) &= \left( \begin{array}{cc} 
S' \partial & -S'(T-\xi_p\cdot {\mathds 1}) \\
0 & - S' \partial  
\end{array} \right) \\
& = \left( \begin{array}{cc} 
\partial S' & - (T- \xi_p \cdot {\mathds 1}) S' \\
0 & -\partial S'
\end{array} \right) \\
& = \left( \begin{array}{cc} 
\partial & - (T - \xi_p \cdot {\mathds 1}) \\
0 & - \partial 
\end{array} \right) \cdot \left( \begin{array}{cc} 
S' & 0  \\
0 & S'
\end{array} \right)
\end{align*}
where the second equality comes from the second part of two claims above. 
\end{proof}

\begin{remark} \label{per-rek} Since $\D_{T}$ is a strictly lower filtration perturbation of $\mathcal D_{R_p}$, so is $\D_{T'}$ by its construction. Similarly, $\D_{S'}$ is a strictly lower filtration perturbation of $\D_{R_{p^2}}$. For orthogonality, strictly lower filtration perturbation behaves well as proved by the following easy lemma which will be used frequently later. \end{remark}

\begin{lemma} \label{or-per}
Strictly lower filtration perturbation preserves orthogonality. Specifically, given a set of orthogonal elements over $\Lambda^{\K, \Gamma}$, say $\{v_1, \dots , v_n\}$ and any strictly lower filtration for each $v_i$, that is $v_i + w_i$ where $\ell(w_i) < \ell(v_i)$, for $1 \leq i \leq n$, 
\[ \{v_1 + w_1, \dots , v_n + w_n\} \,\,\,\mbox{are orthogonal over $\Lambda^{\K, \Gamma}$}. \]
\end{lemma}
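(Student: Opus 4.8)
The plan is to argue directly from the definition of orthogonality. Recall that $\{v_1,\dots,v_n\}$ being orthogonal over $\Lambda^{\K,\Gamma}$ means that for all $\lambda_1,\dots,\lambda_n\in\Lambda^{\K,\Gamma}$ one has $\ell\!\left(\sum_i\lambda_i v_i\right)=\max_i \ell(\lambda_i v_i)$. I want to establish the same identity with each $v_i$ replaced by $u_i:=v_i+w_i$ where $\ell(w_i)<\ell(v_i)$. The first observation, which I would record as a preliminary remark, is that for a single vector the perturbation does not change the filtration level: since $\ell(w_i)<\ell(v_i)$, the ultrametric (non-archimedean) property of $\ell$ forces $\ell(v_i+w_i)=\ell(v_i)$, and more generally $\ell(\lambda_i u_i)=\ell(\lambda_i v_i)$ for every scalar $\lambda_i$, because $\ell(\lambda_i w_i)=\ell(\lambda_i)+\ell(w_i)<\ell(\lambda_i)+\ell(v_i)=\ell(\lambda_i v_i)$ (using $\ell(\lambda x)=\ell(x)-\nu(\lambda)$ from the preliminaries). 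So in particular $\max_i\ell(\lambda_i u_i)=\max_i\ell(\lambda_i v_i)$, and it suffices to show $\ell\!\left(\sum_i\lambda_i u_i\right)=\max_i\ell(\lambda_i v_i)$.

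Next I would split $\sum_i\lambda_i u_i=\sum_i\lambda_i v_i+\sum_i\lambda_i w_i$ and bound the two pieces. By orthogonality of the $v_i$, the first piece has filtration exactly $M:=\max_i\ell(\lambda_i v_i)$. For the second piece, the general subadditivity of $\ell$ gives $\ell\!\left(\sum_i\lambda_i w_i\right)\le\max_i\ell(\lambda_i w_i)<\max_i\ell(\lambda_i v_i)=M$, where the strict inequality is the per-term estimate from the previous paragraph (if all $\lambda_i=0$ the statement is trivial, so at least one term is genuinely present). Thus $\sum_i\lambda_i u_i$ is a sum of a vector of filtration exactly $M$ and a vector of filtration strictly below $M$; applying the ultrametric property once more yields $\ell\!\left(\sum_i\lambda_i u_i\right)=M=\max_i\ell(\lambda_i u_i)$, which is exactly the orthogonality of $\{u_1,\dots,u_n\}$.

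I do not expect any serious obstacle here: the entire argument is a routine exploitation of the non-archimedean triangle inequality $\ell(x+y)\le\max\{\ell(x),\ell(y)\}$ together with the fact that this is an equality whenever $\ell(x)\neq\ell(y)$. The only point requiring a little care is bookkeeping in the degenerate cases — allowing some $\lambda_i=0$ and making sure the "at least one nonzero term" caveat is in place so that the maxima are taken over a nonempty set — but this is handled by the trivial observation that if every $\lambda_i=0$ both sides are $\ell(0)=-\infty$. No new ingredient beyond the basic properties of the filtration $\ell$ and the valuation $\nu$ recalled in Section 2 is needed.
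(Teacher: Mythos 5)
Your argument is correct and is essentially the paper's own proof, just written out in more detail: the paper likewise observes that $\ell(\lambda_i(v_i+w_i))=\ell(\lambda_i v_i)$ term by term and that $\ell\left(\sum_i\lambda_i(v_i+w_i)\right)=\ell\left(\sum_i\lambda_i v_i\right)$ because the total perturbation has strictly lower filtration, then invokes orthogonality of the $v_i$. The only cosmetic slip is writing $\ell(\lambda_i w_i)=\ell(\lambda_i)+\ell(w_i)$ where you mean $\ell(w_i)-\nu(\lambda_i)$, but you state the correct rule immediately afterward, so nothing is affected.
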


\begin{proof} For any $\lambda_1, \dots , \lambda_n \in \Lambda^{\K, \Gamma}$, by Proposition 2.3 in \cite{UZ15}, 
\begin{align*} 
\ell(\lambda_1 (v_1+w_1) + \cdots  \lambda_n (v_n + w_n)) &= \ell(\lambda_1v_1 + \cdots  \lambda_n v_n) \\
&= \max_{1 \leq i \leq n} \{\ell(\lambda_i v_i)\} =  \max_{1 \leq i \leq n} \{\ell(\lambda_i (v_i+w_i))\}.
\end{align*}
So $\{v_1 + w_1, \dots, v_n + w_n\}$ are also orthogonal. \end{proof} 

\subsection{Preparation}

\subsubsection{Orthogonal invariant complement} 

\begin{prop} \label{inv-comp} Suppose that $V$ is acted by a group action $T$ such that $T^p = {\mathds 1}$ and it exactly preserves filtration. For any $T$-invariant subspace $V_1$, there exists an {\it orthogonal} complementary $T$-invariant $W$ in the sense that $V_1$ is orthogonal to $W$ and $V = V_1 \oplus W$. \end{prop}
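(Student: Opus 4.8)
The plan is to use averaging over the group $\langle T\rangle \cong \Z_p$ to produce an equivariant projection onto $V_1$, and then take $W$ to be its kernel. The subtlety is that a naive averaging operator $\frac{1}{p}\sum_{j=0}^{p-1} T^j$ lands in $V_1$ only if we already have a projection to start from, and — more importantly — it will not in general produce an \emph{orthogonal} complement in the non-archimedean sense of Definition 2.7 of \cite{UZ15}. So the argument has two ingredients that must be combined: a Maschke-style averaging to get $T$-invariance, and an orthogonalization (Gram--Schmidt over $\Lambda^{\K,\Gamma}$, as in the proof of the existence of orthogonal bases in \cite{UZ15}) to get orthogonality, carried out in a way that respects the group action.

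Concretely, first I would fix an orthogonal basis of $V_1$ and extend it to an orthogonal basis $(v_1,\dots,v_m)$ of $V$, so that $V_1 = \Span\langle v_1,\dots,v_k\rangle$ and the naive complement $W_0 = \Span\langle v_{k+1},\dots,v_m\rangle$ is orthogonal to $V_1$ but not $T$-invariant. Let $\pi_0\co V\to V_1$ be the projection with kernel $W_0$; note $\pi_0$ need not commute with $T$. Then set
\[ \pi = \frac{1}{p}\sum_{j=0}^{p-1} T^{j}\,\pi_0\,T^{-j}, \]
which makes sense since $\mathrm{char}(\K)\neq p$ by the Irreducible condition (Remark \ref{irr-cond}). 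One checks $\pi$ is $T$-equivariant, $\pi^2 = \pi$, and $\pi|_{V_1} = \I_{V_1}$ (because $V_1$ is $T$-invariant, each $T^j\pi_0 T^{-j}$ restricts to the identity on $V_1$). Hence $V = V_1 \oplus \ker\pi$ with $\ker\pi$ a $T$-invariant complement; call it $W$. The remaining issue is that $W=\ker\pi$ need not be orthogonal to $V_1$.

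To fix orthogonality, I would observe that $T$ preserves the filtration \emph{exactly} (it is an isometry of $(V,\ell)$), so the averaged operators behave well with respect to $\ell$; in particular one can run a $T$-equivariant orthogonalization. The clean way: since $T$ is a finite-order isometry and $\mathrm{char}(\K)\nmid p$, decompose $W$ into its orthogonal ``filtration-level'' pieces and apply Gram--Schmidt within each $T$-orbit simultaneously — i.e. orthogonalize $W$ against $V_1$ by replacing each chosen basis vector $w$ of $W$ by $w - (\text{best approximation of } w \text{ in } V_1)$, and then average this correction over the group so that the corrected vectors still span a $T$-invariant subspace. Because the correction terms have filtration level no larger than that of $w$, and because $T$ is an isometry, the averaged correction does not increase filtration levels, and Lemma \ref{or-per} (strictly lower perturbations preserve orthogonality) together with the orthogonality of the original basis guarantees the result is orthogonal to $V_1$. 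Alternatively, and perhaps more transparently, one can cite the orthogonal-complement machinery of \cite{UZ15}: an isometric finite group action on an orthogonalizable $\Lambda^{\K,\Gamma}$-space admits an equivariant orthogonal basis (a Maschke theorem in the filtered setting), and restricting such a basis gives the desired $W$.

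I expect the main obstacle to be precisely the interplay between \emph{orthogonality} and \emph{equivariance}: either averaging step on its own is routine, but making the $T$-invariant complement simultaneously orthogonal requires being careful that the group-averaging of the Gram--Schmidt corrections does not destroy the filtration bounds one needs. The key technical point one leans on throughout is that $T$ preserves $\ell$ exactly (not just up to a shift), so that $\ell(T^{\pm j}x)=\ell(x)$; without this, averaging could raise filtration levels and the argument would collapse. I would therefore structure the write-up so that this isometry property is invoked explicitly at each averaging step, and close by noting that the same argument applies verbatim to $S$-actions and, via Lemma \ref{or-per}, to the strictly-lower perturbations $T'$, $S'$ of Lemma \ref{cancel-per}.
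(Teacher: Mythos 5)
Your first step is exactly the paper's Construction \ref{mskc}: choose the initial complement $U=W_0$ to be \emph{orthogonal} to $V_1$, form the averaged projection $\pi=\frac{1}{p}\sum_{j=0}^{p-1}T^{j}\pi_0 T^{-j}$, and take $W=\ker\pi$. But the second half of your proposal contains the genuine gap. You assert that $W=\ker\pi$ ``need not be orthogonal to $V_1$'' and then try to repair this with a $T$-equivariant Gram--Schmidt step. That repair is not a proof: the ``best approximation in $V_1$'' correction is not a strictly lower filtration perturbation, so Lemma \ref{or-per} does not apply to it; averaging the corrections over the group is not shown to preserve either complementarity or the filtration bounds; and the fallback citation to an ``equivariant Maschke theorem in the filtered setting'' in \cite{UZ15} is circular, since no such statement is proved there --- it is precisely what this proposition establishes.

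The point you are missing is that no correction is needed: with $U$ chosen orthogonal to $V_1$, the averaged kernel $W$ is \emph{already} orthogonal to $V_1$, and proving this is the entire content of the proposition. The paper's argument runs as follows. By Lemma \ref{or&pr} it suffices to show $\ell(\pi_U x)=\ell(x)$ for all $x\in W$, where $\pi_U=\I-\pi_0$ is the projection onto $U$ along $V_1$. The definition of $W$ gives the identity $x=\frac{1}{p}\sum_{j}T^{j}\pi_U T^{-j}(x)$ for $x\in W$. Writing $x=u+v$ with $u\in U$, $v\in V_1$, the $T$-invariance of $V_1$ kills the $v$-contributions inside each $\pi_U T^{-j}$, so
\[ v \;=\; -\frac{p-1}{p}\,u \;+\; \frac{1}{p}\sum_{i=1}^{p-1}T^{i}\pi_U T^{p-i}u. \]
Since $T$ preserves $\ell$ exactly and $\pi_U$ does not raise filtration (because $U$ is orthogonal to $V_1$), every term on the right has filtration at most $\ell(u)$, whence $\ell(v)\le\ell(u)$ and $\ell(x)=\max\{\ell(u),\ell(v)\}=\ell(u)=\ell(\pi_U x)$. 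This is the step your write-up should contain in place of the orthogonalization procedure.
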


Recall that if vector space $V$ is a representation of a finite group $G$ and the field of scalars for $V$ satisfies the following condition \footnote{See Theorem 4.1 and it equivalent conclusion like Lemma 2.2.11 in \cite{Kow13}. Non-division between characteristic of working field and order of group is the only hypothesis. When we apply to Novikov field $\Lambda^{\mathcal K, \Gamma}$ and group $G$ with order $p$ here, $char(\mathcal K) =0$ implies $char(\Lambda^{\mathcal K, \Gamma})=0$. Therefore it satisfies this hypothesis. }
\begin{equation} \label{char}
char(\mathcal K) \nmid |G|,
\end{equation}
then given any $G$-invariant subspace $V_1 \leq V$, there exists a $G$-invariant complementary subspace $W \leq V$. Actually we can construct $W$ explicitly. 

\begin{const} \label{mskc} Taking any complement of $V_1$ (in the sense of vector space, no orthogonality is involved and not necessarily $G$-invariant), say $U$, consider the projection map $\pi_{V_1}: V \to V_1$ with respect to the decomposition $V= V_1 \oplus U$, and define
\begin{equation} \label{cons-rep-comp} 
W = \ker \left( \frac{1}{|G|} \sum_{g \in G} g \cdot \pi_{V_1}  \cdot g^{-1} \right).
\end{equation} 
It is easy to check that $W$ is $G$-invariant for any $g \in G$ and $\dim W = \dim U$. There is a useful observation: (\ref{cons-rep-comp}) implies that each $x \in W$ satisfies  
\[\frac{1}{|G|} \sum_{g \in G} g ({\mathds 1}-\pi_U) g^{-1}(x) = 0. \]
Therefore, we have 
\begin{equation} \label{mas-re}
x = \frac{1}{|G|} \sum_{g \in G} g \cdot \pi_U \cdot g^{-1}(x) \,\,\,\,\,\,\mbox{for all $x \in W$.}
\end{equation} 
\end{const} 
In order to get Proposition \ref{inv-comp}, we need to show that $W$ is orthogonal to $V_1$. We will use the following result which is Lemma 7.5 in \cite{UZ15}, 

\begin{lemma} \label{or&pr} Let $(V, \ell)$ be an orthogonalizable $\Lambda^{\mathcal K, \Gamma}$-space and let $V_1, U, W \leq V$ be such that $U$ is an orthogonal complement to $V_1$ and $\dim(U) = \dim(W)$. Consider the projection $\pi_U: V \to U$ associated to the direct sum decomposition $V = U \oplus V_1$. Then $W$ is an orthogonal complement of $V_1$ if and only if $\ell(\pi_U x) = \ell(x)$ for all $x \in W$. \end{lemma}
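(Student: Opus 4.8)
The plan is to establish both implications by direct manipulation of the filtration function, using only that $U$ is an orthogonal complement of $V_1$; nothing beyond elementary non-Archimedean linear algebra is needed. The single standing fact I will invoke throughout is that the decomposition $V=U\oplus V_1$ is orthogonal, so that for every $x\in V$ one has $\ell(x)=\max\{\ell(\pi_U x),\ell(\pi_{V_1}x)\}$ with $\pi_{V_1}:=\I-\pi_U$; in particular $\ell(\pi_U x)\le\ell(x)$ and $\ell(\pi_{V_1}x)\le\ell(x)$ for all $x\in V$.

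For the forward implication I assume $W$ is an orthogonal complement of $V_1$ and take any $x\in W$. The idea is to write $\pi_U x=x+(-\pi_{V_1}x)$, which presents $\pi_U x$ as the sum of the element $x\in W$ and the element $-\pi_{V_1}x\in V_1$; orthogonality of $W$ and $V_1$ then gives $\ell(\pi_U x)=\max\{\ell(x),\ell(\pi_{V_1}x)\}\ge\ell(x)$, and combining with the general bound $\ell(\pi_U x)\le\ell(x)$ forces $\ell(\pi_U x)=\ell(x)$.

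For the converse I assume $\ell(\pi_U x)=\ell(x)$ for all $x\in W$ and proceed in two steps. First I show $W$ is a genuine complement of $V_1$: if $x\in W\cap V_1$ then $\pi_U x=0$, hence $\ell(x)=\ell(0)=-\infty$ and $x=0$, so $W\cap V_1=0$; since all the spaces here are finite-dimensional orthogonalizable $\Lambda^{\K,\Gamma}$-spaces and $\dim W=\dim U=\dim V-\dim V_1$, a dimension count yields $V=W\oplus V_1$. Second I verify orthogonality: for $w\in W$ and $v\in V_1$ I decompose $w+v=\pi_U w+(\pi_{V_1}w+v)$ as a sum of $\pi_U w\in U$ and $\pi_{V_1}w+v\in V_1$, so that
\[ \ell(w+v)=\max\{\ell(\pi_U w),\,\ell(\pi_{V_1}w+v)\}=\max\{\ell(w),\,\ell(\pi_{V_1}w+v)\} \]
by orthogonality of $U$ and $V_1$ and then the hypothesis. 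Using $\ell(\pi_{V_1}w)\le\ell(w)$ one gets $\ell(\pi_{V_1}w+v)\le\max\{\ell(\pi_{V_1}w),\ell(v)\}\le\max\{\ell(w),\ell(v)\}$, hence $\ell(w+v)\le\max\{\ell(w),\ell(v)\}$; conversely the displayed identity gives $\ell(w+v)\ge\ell(w)$ and $\ell(w+v)\ge\ell(\pi_{V_1}w+v)$, and since $v=(\pi_{V_1}w+v)-\pi_{V_1}w$ one has $\ell(v)\le\max\{\ell(\pi_{V_1}w+v),\ell(\pi_{V_1}w)\}\le\ell(w+v)$, so $\ell(w+v)\ge\max\{\ell(w),\ell(v)\}$. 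This gives the required equality $\ell(w+v)=\max\{\ell(w),\ell(v)\}$, i.e.\ $W$ is orthogonal to $V_1$.

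I do not expect a genuine obstacle: the argument is entirely formal, the only points needing care being the consistent use of the ultrametric inequality $\ell(a+b)\le\max\{\ell(a),\ell(b)\}$ together with its sharpening to an equality whenever one summand lies in $U$ and the other in $V_1$, and the finite-dimensionality input needed to pass from "$W\cap V_1=0$ and $\dim W=\dim U$" to "$V=W\oplus V_1$" — which is automatic in the orthogonalizable-space setting in which the lemma will be applied.
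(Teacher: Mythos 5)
Your proof is correct. The paper itself gives no argument for this lemma (it is quoted from \cite{UZ15}), and your two-sided manipulation of the ultrametric inequality, together with the decompositions $\pi_U x = x - \pi_{V_1}x$ and $w+v=\pi_U w+(\pi_{V_1}w+v)$, is exactly the standard elementary proof; the only point requiring the care you already give it is the finite-dimensionality needed to upgrade $W\cap V_1=0$ to $V=W\oplus V_1$, which holds in the orthogonalizable-space setting where the lemma is applied.
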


\begin{proof} [Proof of Proposition \ref{inv-comp}] Let $U$ be the orthogonal complement of $V_1$ that is used in the Construction \ref{mskc}. By Lemma \ref{or&pr}, we will need to show that for any $x \in W$, $\ell(x) = \ell(\pi_U x)$. Decompose $x = u + v$ where $u \in U$ and $v \in V_1$. By (\ref{mas-re}), we have 
\begin{align*}
x &= \frac{1}{p} \left(\pi_U x + \sum_{i=1}^{p-1} T^i \pi_U T^{p-i} x \right) \\
& = \frac{1}{p} \left(u + \sum_{i=1}^{p-1} T^i \pi_U T^{p-i}u \right). 
\end{align*} 
The key step for the second line is that since $V_{1}$ is invariant, $\pi_U T^{p-i}(u+v) = \pi_U T^{p-i}u + \pi_U T^{p-i} v = \pi_U T^{p-i}u$ since $\pi_U T^{p-i} v =0$. So we can write $v$ in terms of $u$, that is 
\[ v = - \frac{p -1}{p} u + \frac{1}{p} \sum_{i=1}^{p-1} T^i \pi_U T^{p-i}u. \]
Because $T$ exactly preserves filtration and $U$ is orthogonal to $V_1$, it follows that $\ell(v) \leq \ell(u)$. Therefore, 
\[ \ell(x) =\max\{\ell(u), \ell(v)\} = \ell(u) = \ell(\pi_U x).\]
Therefore, we get the conclusion.
\end{proof}

\begin{ex} \label{o-i-ker} For group action (guaranteed by Lemma \ref{cancel-per}) 
$$\D_{T'}: Cone_{k}(T - \xi \cdot {\mathds 1}) \to Cone_{k}(T - \xi\cdot {\mathds 1}),$$
since $\ker(\partial_{co})$ is a $\D_{T'}$-invariant subspace of $Cone_k(T- \xi \cdot {\mathds 1})$ by Corollary~\ref{b-commutes}, there exists an orthogonal complement of $\ker( \partial_{co})$ in $Cone_k(T- \xi \cdot {\mathds 1})$, denoted by  $W$ which is also $\D_{T'}$-invariant. \end{ex}

\subsubsection{Restriction to 0-level}

In this subsection, we will work on the vector space over universal Novikov field, that is, for $\Lambda^{\K, \Gamma}$, $\Gamma = \R$. The advantage is that we can always rescale preferred element $x$ in the vector space such that $\ell(x) =0$. Moreover, since we have seen that our obstruction (see Definition \ref{dfn-o}) will be constructed only from generalized boundary depth (which does not involve the specific value of end points), By Proposition 6.8 in \cite{UZ15}, it will be invariant under the coefficient extension. 

From the idea of \cite{Ush08}, any orthogonalizable $\Lambda^{\K, \R}$-space $(V, \ell)$ can be identified with $((\Lambda^{\K, \R})^n, -\vec{\nu})$ (for some $n = \dim_{\Lambda^{\K, \R}} V \in \N$) under an orthonormal basis, where $\ell(v) = - \vec{\nu}(\lambda_1, \dots , \lambda_n)$ if $v$ is identified with a vector $(\lambda_1, \dots , \lambda_n)$ under this basis. Therefore, for such $V$, we can associated a $\K$-vector space 
\[ [V] = V_{\leq 0}/{V_{<0}} \]
where 
\[ V_{\leq 0}: = \{ v \in V \,| \,\ell(v) \leq 0 \} \,\,\,\,\mbox{and}\,\,\,\, V_{<0} :=\{ v \in V\,|\, \ell(v) <0 \}. \]
In particular, denote 
$$\Lambda^{\K, \R_{\geq0}} = \{ \lambda \in \Lambda^{\K, \R} \,| \, \nu(\lambda) \geq 0\}\,\,\,\,\mbox{and}\,\,\,\, \Lambda^{\K, \R_{>0}} = \{ \lambda \in \Lambda^{\K, \R} \,| \, \nu(\lambda) > 0\}.$$
Note that $\K \simeq \Lambda^{\K, \R_{\geq0}}/\Lambda^{\K, \R_{>0}}$. There is a quotient projection $\pi: V_{\leq 0} \to [V]$ by taking only the filtration level-0 term. Explicitly, for $\lambda \in \Lambda^{\K, \R}$, 
\begin{align} \label{proj}
\lambda = \sum_{g} a_g T^g & \xrightarrow{\pi} a_0
\end{align}
Then we can show the following result. 
\begin{lemma}\label{li-or}
A $\Lambda^{\K, \R}$-orthonormal set $\{e_1, \dots , e_n\}$ reduces to a $\K$-linearly independent set $\{[e_1], \dots , [e_n]\}$ under projection $\pi$ defined in (\ref{proj}). Conversely, for a set $\{e_1, \dots , e_n\}$ over $\Lambda^{\K, \R}$ with $\ell(e_1) = \cdots  = \ell(e_n) = 0$, if its reduction $\{[e_1], \dots , [e_n]\}$ are $\K$-linearly independent, then $\{e_1, \dots , e_n\}$ are $\Lambda^{\K, \R}$-orthogonal. Therefore, in particular, $[V] = (\mathcal K)^{ \dim_{\Lambda^{\K, \R}} V}$. \end{lemma}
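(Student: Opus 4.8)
The plan is to push both implications down to the filtration-zero level, using only the ultrametric inequality $\ell(v+w)\le\max\{\ell(v),\ell(w)\}$ valid in any filtered $\Lambda^{\K,\R}$-space, the shift identity $\ell(t^{g}v)=\ell(v)-g$, and the structure of $[V]=V_{\le0}/V_{<0}$ as a $\K\simeq\Lambda^{\K,\R_{\ge0}}/\Lambda^{\K,\R_{>0}}$-module with $\pi$ its quotient map, so that $\pi(\lambda v)=\pi(\lambda)\pi(v)$ for $\lambda\in\Lambda^{\K,\R_{\ge0}}$ and $v\in V_{\le0}$.

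For the first assertion I would start with an orthonormal set $\{e_1,\dots,e_n\}$, so that $\ell(e_i)=0$ and $\ell(\sum_i\lambda_ie_i)=\max_i(-\nu(\lambda_i))$ for all $\lambda_i\in\Lambda^{\K,\R}$. Given a $\K$-linear relation $\sum_i c_i[e_i]=0$ in $[V]$, I would lift each $c_i$ to the monomial $c_it^0\in\Lambda^{\K,\R_{\ge0}}$ and rewrite the relation, using additivity of $\pi$ and the $\K$-module structure, as $\ell(\sum_i c_it^0e_i)<0$. Since $\nu(c_it^0)=0$ whenever $c_i\neq0$, orthonormality forces $\ell(\sum_i c_it^0e_i)=0$ unless every $c_i$ vanishes; hence the relation is trivial and $\{[e_1],\dots,[e_n]\}$ is $\K$-linearly independent.

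For the converse I would assume $\{[e_1],\dots,[e_n]\}$ is $\K$-linearly independent, so that each $[e_i]\neq0$ already forces $e_i\in V_{\le0}\setminus V_{<0}$, that is, $\ell(e_i)=0$. One direction of the orthogonality identity, namely $\ell(\sum_i\lambda_ie_i)\le\max_i\ell(\lambda_ie_i)$, is automatic, so I would argue by contradiction and suppose it is strict. Because $\ell(\lambda_ie_i)=-\nu(\lambda_i)$, replacing every $\lambda_i$ by $t^{-m}\lambda_i$ with $m=\min_i\nu(\lambda_i)$ shifts both sides of the strict inequality by the same amount, so I may assume $\max_i\ell(\lambda_ie_i)=0$; then every $\lambda_i\in\Lambda^{\K,\R_{\ge0}}$ and at least one has $\nu(\lambda_i)=0$. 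Writing $c_i=\pi(\lambda_i)\in\K$ for the degree-$0$ coefficient (so $c_i=0$ when $\nu(\lambda_i)>0$), not all of the $c_i$ vanish, and since $(\lambda_i-c_it^0)e_i\in V_{<0}$ the image of $\sum_i\lambda_ie_i$ in $[V]$ equals $\sum_i c_i[e_i]$, which is nonzero by linear independence. Hence $\sum_i\lambda_ie_i\notin V_{<0}$, that is, $\ell(\sum_i\lambda_ie_i)=0$, contradicting strictness; thus $\{e_1,\dots,e_n\}$ is $\Lambda^{\K,\R}$-orthogonal. The concluding identity $[V]=\K^{\dim_{\Lambda^{\K,\R}}V}$ then follows by feeding an orthonormal basis $(e_1,\dots,e_n)$ of $V$ (which exists since $V$ is orthogonalizable) into the first assertion for linear independence, and by checking spanning: any $v\in V_{\le0}$ can be written $\sum_i\lambda_ie_i$ with $-\nu(\lambda_i)\le\ell(v)\le0$ by orthonormality, so each $\lambda_i\in\Lambda^{\K,\R_{\ge0}}$ and $[v]=\sum_i\pi(\lambda_i)[e_i]$.

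I do not expect a genuine obstacle here: the content is elementary non-Archimedean linear algebra. The only steps needing a little care are the formal bookkeeping around $\pi$ (that it is $\K$-linear and satisfies $\pi(\lambda e_i)=\pi(\lambda)[e_i]$) and the legitimacy of the rescaling by $t^{-m}$, namely that it preserves a \emph{strict} filtration inequality rather than collapsing it; both follow immediately from $\ell(t^{g}v)=\ell(v)-g$ and the definitions of $V_{\le0}$, $V_{<0}$ and $[V]$.
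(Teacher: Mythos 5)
Your argument is correct and is essentially the paper's own proof: both directions reduce to the $0$-filtration level, normalize the coefficients so that $\min_i\nu(\lambda_i)=0$, and identify a strict drop in filtration of $\sum_i\lambda_ie_i$ with a nontrivial $\K$-linear relation among the $[e_i]$ (you phrase the two implications directly/by contradiction where the paper uses contrapositives, which is immaterial). Your explicit spanning check for $[V]=\K^{\dim V}$ replaces the paper's remark that $[V]$ is a submodule of $\K^n$, but this is only a cosmetic difference.
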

\begin{proof} Suppose that $\{[e_1], \dots , [e_n]\}$ are not $\K$-linearly independent. There exists $\eta_1, \dots , \eta_n \in \K$, not all zero, such that 
\[ \eta_1 [e_1] + \cdots  + \eta_n [e_n] =0. \]
Then
\[ \eta_1 e_1 + \cdots  + \eta_n e_n = \eta_1 [e_1] + \cdots  + \eta_n [e_n] + \left\{\begin{array}{cc} \mbox{strictly lower} \\ \mbox{filtration terms} \end{array}\right\}.\]
So
\[ \ell(\eta_1 e_1 + \cdots  + \eta_n e_n ) < 0 = \max_{1 \leq i \leq n} \{\ell(e_i) - \nu(\eta_i)\}. \]
Therefore, $\{e_1, \dots , e_m\}$ are not $\Lambda^{\K, \R}$-orthogonal.

Conversely, suppose that $\{e_1, \dots , e_n\}$ are not $\Lambda^{\K, \R}$-orthogonal. There exist $\lambda_1,\dots,  \lambda_n \in \Lambda^{\K, \R}$, not all zero, such that 
\[ \ell(\lambda_1 e_1 + \cdots  + \lambda_n e_n) < \max_{1 \leq i \leq n} \{\ell(e_i) - \nu(\lambda_i)\} = 0 - \min_{1 \leq i \leq n} \{\nu(\lambda_i)\}. \]
If we rescale $\lambda_i$ on both sides such that $\min_{1 \leq i \leq n} \{\nu(\lambda_i)\} =0$, then still we have the inequality. However, reducing to the filtration level-0, which is the highest filtration level, we have
\[ [\lambda_1] [e_1] + \cdots  [\lambda_n] [e_n] =0 \,\,\,\mbox{where}\,\,\, [\lambda_i] \,\,\mbox{in}\, \,\K.\]
Due to our rescaling, not all $[\lambda_i]$ are zero, which means $\{[e_1], \dots , [e_n]\}$ are not $\K$-linearly independent.

In particular, if $\{e_1, \dots , e_n\}$ is an $\Lambda^{\K, \R}$-orthonormal basis of $V$, then $\dim_{\K}[V] \geq n$. Meanwhile, $[V]$ is a submodule of $[\Lambda^{\K, \R}]^n = \K^n$. So $\dim_{\K}[V] = n = \dim_{\Lambda^{\K, \R}} V$. \end{proof}

Not only can we reduce spaces, but also we can reduce maps. For a $\Lambda^{\K, \R}$-linear map $A$ on $(V, \ell)$ which is exact filtration preserving (for instance, $\D_{T'}$ or $\D_{S'}$), under an orthonormal basis, $A \in M_{n\times n}(\Lambda^{\K, \R_{\geq0}})$. Note that then $A(V_{<0}) \leq V_{<0}$ which implies that we have a well-defined reduced map of $A$, denoted by  $[A]$,
\[ [A]: [V] \to [V]. \]

\begin{ex} Suppose under an orthonormal basis,
\[ A = \left( \begin{array}{ccc} 
1 +t^2 & t^6 & t^2 + t^4 \\
t^4 & 2 & t^6 - t^{10} \\
2 & t^2 & 5 + t^2  
\end{array} \right). \]
Then
\[ [A] =  \left( \begin{array}{ccc} 
1  & 0 & 0 \\
0 & 2 & 0 \\
2 & 0 & 5   
\end{array} \right). \]
\end{ex}

\begin{ex} \label{reduce-eigen} Suppose $A$ is exact filtration preserving and $\lambda \in \Lambda^{\K, \R}$, 
\[ A^n = \lambda \cdot {\mathds 1} \xrightarrow{\mbox{reduces to}} [A]^n = [\lambda] \cdot {\mathds 1}.\]
In particular, for any $x \in V_{\leq 0}$, $A x = \lambda x$ reduces to $[A][x] = [\lambda] [x]$. \end{ex}

\subsubsection{Irreducible condition}

\begin{lemma} \label{irrd-N} $\mathcal K$ satisfies ``irreducible condition'' if and only if $\Lambda^{\mathcal K, \Gamma}$ satisfies ``irreducible condition''. \end{lemma}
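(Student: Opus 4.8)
The statement to prove is Lemma~\ref{irrd-N}: $\mathcal K$ satisfies the irreducible condition if and only if $\Lambda^{\mathcal K, \Gamma}$ does. The plan is to treat the two bullet points of the irreducible condition separately and argue each direction by a combination of elementary field theory and the structure of the Novikov field as a graded ring. First I would observe that $\mathcal K$ embeds in $\Lambda^{\mathcal K, \Gamma}$ as the degree-zero elements, and conversely every unit $\lambda \in \Lambda^{\mathcal K, \Gamma}$ has a well-defined leading coefficient $a_{\nu(\lambda)} \in \mathcal K^\times$ together with its valuation $\nu(\lambda) \in \Gamma$; this "leading term" map is multiplicative on units in the sense that $\nu(\lambda\mu) = \nu(\lambda)+\nu(\mu)$ and the leading coefficient of a product is the product of leading coefficients. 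These two facts are the engine of the whole argument.

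For the first bullet: $char(\mathcal K) = char(\Lambda^{\mathcal K,\Gamma})$ trivially since $\mathcal K \hookrightarrow \Lambda^{\mathcal K,\Gamma}$ is a ring map between fields, so $char(\mathcal K)\neq p \iff char(\Lambda^{\mathcal K,\Gamma})\neq p$. For the statement about $p$-th roots of unity: if $\mathcal K$ contains all $p$-th roots of unity then so does $\Lambda^{\mathcal K,\Gamma}\supseteq \mathcal K$; conversely, if $\zeta \in \Lambda^{\mathcal K,\Gamma}$ satisfies $\zeta^p = 1$, then taking valuations gives $p\,\nu(\zeta) = 0$, hence $\nu(\zeta)=0$ (as $\Gamma$ is torsion-free, being a subgroup of $\R$), so $\zeta \in \Lambda^{\mathcal K,\R_{\geq 0}}$ is a unit with $\nu(\zeta)=0$; reducing to the leading coefficient $[\zeta]\in\mathcal K$ and using that reduction is a ring homomorphism on $\Lambda^{\mathcal K,\R_{\geq 0}}$ (cf.\ the reduction map $\pi$ of \eqref{proj}), we get $[\zeta]^p = 1$, and moreover $\zeta - [\zeta]$ has strictly positive valuation; a short argument (e.g.\ if $\zeta \neq [\zeta]$, write $\zeta = [\zeta](1+\eta)$ with $\nu(\eta)>0$ and expand $(1+\eta)^p = 1$ to force $\eta = 0$ since $char \neq p$) shows $\zeta = [\zeta]\in\mathcal K$. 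Hence $\Lambda^{\mathcal K,\Gamma}$ and $\mathcal K$ have exactly the same $p$-th roots of unity, and in particular the same primitive ones.

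For the second bullet (no solution of $x^p = \xi_p^q$ unless $p\mid q$): the $(\Leftarrow)$ direction is immediate because any solution $x\in\mathcal K$ is also a solution in $\Lambda^{\mathcal K,\Gamma}$. For $(\Rightarrow)$, suppose $\mathcal K$ satisfies the condition and suppose $x\in\Lambda^{\mathcal K,\Gamma}$ solves $x^p = \xi_p^q$ for some primitive $p$-th root of unity $\xi_p$ (which, by the previous paragraph, lies in $\mathcal K$). Taking valuations gives $p\,\nu(x) = 0$ so $\nu(x) = 0$, hence $x$ is a unit in $\Lambda^{\mathcal K,\R_{\geq 0}}$; reducing modulo strictly positive valuation, $[x]\in\mathcal K$ satisfies $[x]^p = \xi_p^q$ in $\mathcal K$, so by the irreducible condition on $\mathcal K$ we conclude $p\mid q$. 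This completes both directions.

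The only genuinely non-routine point, and the one I would write out most carefully, is the claim in the first bullet that a $p$-th root of unity in $\Lambda^{\mathcal K,\Gamma}$ must actually lie in $\mathcal K$ rather than merely reduce to one mod higher-order terms --- i.e.\ ruling out "spurious" roots of unity of the form $[\zeta](1+\eta)$ with $\eta\neq 0$ of positive valuation. This is handled by the binomial expansion argument above (exactly the kind of manipulation already used in Lemma~\ref{cancel-per} to define $(\I+P)^{1/p}$), using $char(\mathcal K)\neq p$ so that the linear coefficient $\binom{p}{1}=p$ in the expansion of $(1+\eta)^p$ is invertible, which forces the lowest-valuation term of $\eta$ to vanish, and then induction on valuation finishes it. Everything else is bookkeeping with the valuation and the leading-coefficient homomorphism, so I expect the write-up to be short.
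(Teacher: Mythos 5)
Your proposal is correct and its core argument is the same as the paper's: for the nontrivial direction you observe that any solution of $x^p=\xi_p^q$ must have valuation zero and then pass to the leading coefficient to get a contradiction in $\mathcal K$, which is exactly the paper's case analysis on the lowest exponent $\lambda_m$. The only difference is that you additionally spell out why $p$-th roots of unity in $\Lambda^{\mathcal K,\Gamma}$ actually descend to $\mathcal K$ (via the $(1+\eta)^p$ expansion), a point the paper subsumes under the ``trivial'' direction without comment; this is extra care rather than a different method.
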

\begin{proof} 
The direction ``$\Leftarrow$" is trivial since $\K \hookrightarrow \Lambda^{\K, \Gamma}$. We will just prove the other direction ``$\Rightarrow$". Suppose not. Then there exists some $x \in \Lambda^{\mathcal K, \Gamma}$ and number $q$ such that 
\begin{equation}\label{no-sol}
x^p  = \xi_p^q 
\end{equation}
but $p$ does not divide $q$. The general form of $x$ is $x = a_m t^{\lambda_m} + a_{m+1} t^{\lambda_{m+1}} + \cdots$
where $a_m \neq 0$ and $\lambda_m < \lambda_{m+1} < \cdots$ (which diverges to infinity). If $\lambda_m \neq 0$, then 
\[ x^p = a_m^p t^{p \lambda_m } + \cdots \]
such that the lowest degree $p \lambda_m$ is either strictly positive or strictly negative, which by equation (\ref{no-sol}) above, forces $a_m^p=0$. So $a_m =0$. Contradiction. Now we are left to the case that $\lambda_m =0$, so we may rewrite $x$ as $x = a_m + a_{m+1} t^{\lambda_1} + \cdots$ where $a_m \in \mathcal K$. Therefore, (\ref{no-sol}) implies $a_m^p = \xi_p^q$ which contradicts the hypothesis that $\mathcal K$ satisfies ``irreducible condition". 
\end{proof}

From now on, we will always assume that $\K$ satisfies ``irreducible condition" and $\Gamma = \R$. For brevity, we introduce the following notation.

\begin{dfn} For any $x \in V$ and an operator $A$ such that $A^p = \lambda \cdot {\mathds 1}$ for some non-zero scalar $\lambda \in \Lambda^{\K, \Gamma}$, denote 
\[ V_x = {\Span}_{\Lambda^{\K, \Gamma}} \left<x, Ax, \dots , A^{p-1} x \right> \]
and call it {\it the cyclic span (of $A$) by $x$}. \end{dfn}

Note that by Lemma \ref{irrd-N} and Lemma 4.15 in \cite{PS14}, if $\lambda = \xi_p^q$ for $1 \leq q \leq p-1$, we know that $\{x, Ax, \dots , A^{p-1}x\}$ are $\Lambda^{\K, \Gamma}$-linearly independent, so $\dim_{\Lambda^{\K, \Gamma}} V_x = p$. However, we can get a stronger result as follows. 

\begin{cor} \label{cyc-ele} Let $V$ be a $\Lambda^{\K, \Gamma}$-vector space associated with an $\Lambda^{\K, \Gamma}$-linear exact filtration preserving operator $A$ such that 
	\[ A^p = \xi_p^q \cdot {\mathds 1}\,\,\,\,\mbox{for } 1 \leq q \leq p-1.\]
Then for any $x \in V$, $\{x, Ax, \dots , A^{p-1}x\}$ are $\Lambda^{\K, \Gamma}$-orthogonal. \end{cor}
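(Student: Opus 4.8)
The plan is to prove Corollary \ref{cyc-ele} by passing to the $0$-level reduction and reducing the orthogonality statement to a linear-independence statement over $\K$, where Lemma 4.15 in \cite{PS14} applies directly. Since we are in the setting $\Gamma = \R$ and $\K$ satisfies the irreducible condition, the reduction functor $[\,\cdot\,]$ from the previous subsection is available: an exact filtration preserving $A$ induces a $\K$-linear map $[A] \co [V] \to [V]$ on $[V] = \K^{\dim_{\Lambda^{\K,\R}}V}$, and by Example \ref{reduce-eigen} the relation $A^p = \xi_p^q \cdot \I$ reduces to $[A]^p = \xi_p^q \cdot \I$.

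First I would normalize: orthogonality of $\{x, Ax, \ldots, A^{p-1}x\}$ is unchanged if we replace $x$ by $cx$ for $0 \neq c \in \Lambda^{\K,\R}$ (using $A(cx) = c\,Ax$), so we may assume $\ell(x) = 0$. Because $A$ exactly preserves filtration, $\ell(A^i x) = 0$ for every $i$, so each $A^i x$ lies in $V_{\leq 0} \setminus V_{<0}$, its reduction $[A^i x]$ is nonzero in $[V]$, and one has $[A^i x] = [A]^i [x]$. Thus the problem becomes: show that $\{[x], [A][x], \ldots, [A]^{p-1}[x]\}$ is $\K$-linearly independent, after which the converse direction of Lemma \ref{li-or} immediately yields that $\{x, Ax, \ldots, A^{p-1}x\}$ is $\Lambda^{\K,\R}$-orthogonal.

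To get the linear independence over $\K$, I would set $W = \Span_{\K}\left< [x], [A][x], \ldots, [A]^{p-1}[x]\right> \leq [V]$. Since $[A]^p = \xi_p^q \cdot \I$, the image $[A]\bigl([A]^{p-1}[x]\bigr) = \xi_p^q [x]$ lies in $W$, so $W$ is $[A]$-invariant. As $p$ is prime and $1 \leq q \leq p-1$, the scalar $\xi_p^q$ is again a primitive $p$-th root of unity, and $\K$ satisfies the irreducible condition (equivalently $\Lambda^{\K,\R}$ does, by Lemma \ref{irrd-N}); hence Lemma 4.15 in \cite{PS14} applies to the $[A]$-invariant space $W$ with $[A]^p = \xi_p^q \cdot \I$ and forces $p \mid \dim_{\K} W$. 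But $1 \leq \dim_{\K} W \leq p$, since $[x] \neq 0$ and $W$ is spanned by $p$ vectors, so $\dim_{\K} W = p$; therefore the $p$ spanning vectors form a basis and are $\K$-linearly independent. Translating back through $[A^i x] = [A]^i[x]$ and invoking Lemma \ref{li-or} completes the argument.

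The genuinely new content is all packaged in the reduction machinery and in Lemma 4.15 of \cite{PS14}; the main thing to be careful about is bookkeeping — verifying that exact filtration preservation of $A$ really makes $[A]$ and the identities $[A^i x] = [A]^i[x]$ well defined (which is precisely what the ``restriction to $0$-level'' subsection provides), that the normalization $\ell(x)=0$ is harmless, and that the hypotheses of Lemma 4.15 in \cite{PS14} (primitivity of $\xi_p^q$, irreducible condition) are met. No step here looks like a serious obstacle once the $0$-level reduction is in place; if anything, the subtlest point is making sure $\dim_\K W \le p$ combined with $p\mid \dim_\K W$ really pins down $\dim_\K W = p$ rather than permitting $\dim_\K W = 0$, which is ruled out only because $[x]\neq 0$ — hence the importance of the normalization step.
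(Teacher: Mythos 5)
Your proposal is correct and follows essentially the same route as the paper: rescale so $\ell(x)=0$, pass to the $0$-level reduction, note that the cyclic $\K$-span of $[x]$ is $[A]$-invariant with $[A]^p=\xi_p^q\cdot\I$, invoke the irreducible condition (Lemma 4.15 of \cite{PS14}) to force its dimension to be exactly $p$, and conclude via Lemma \ref{li-or}. Your extra remark that $[x]\neq 0$ is needed to exclude $\dim_\K W=0$ is a small point the paper leaves implicit, but there is no substantive difference in approach.
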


\begin{proof} Rescale $x$ such that $\ell(x) = 0$ if necessary. The cyclic span $V_x$ of $A$ is an $A$-invariant subspace. Then, $[V_x]$ is an $[A]$-invariant subspace where 
\[ [V_x] = {\Span}_{\K} \left<[x], [A] [x], \dots , [A]^{p-1}[x] \right> \]
is a cyclic span of $[x]$ since $[A]^p = \xi_p^q \cdot {\mathds 1}$. Because $\K$ satisfies ``irreducible condition", we know that $p \,| \, \dim_{\K} [V_x]$. But $\dim_{\K} [V_x] \leq p$. The rigidity $\dim_{\K} [V_x] = p$ implies $\{[x], [A] [x], \dots , [A]^{p-1}[x]\}$ are $\K$-linearly independent. By Lemma \ref{li-or}, $\{x, Ax, \dots , A^{p-1}x\}$ are $\Lambda^{\K, \Gamma}$-orthogonal. \end{proof}

\begin{remark} \label{fail-1}
Note that in the situation of Corollary \ref{cyc-ele}, if $A$ satisfies $A^p = {\mathds 1}$, then we can't conclude that $\{x, Ax, \dots , A^{p-1}x\}$ are $\Lambda^{\K, \Gamma}$-orthogonal (even $\Lambda^{\K, \Gamma}$-linearly independent) directly because ``irreducible condition'' does not apply here. To get the expected result on the multiplicity of $p$, more structure of self-mapping cone will be used later. \end{remark}

\subsubsection{Filtration optimal pair}

The following lemma is the key to construct the desired singular value decomposition which will be used later.\footnote{This lemma is exactly the same as Lemma 3.5 in an early version of \cite{UZ15}. For its submitted version, this lemma has been deleted for brevity. For reader's convenience, we add/repeat it here.}

\begin{lemma}\label{opt-ele} Let $(V_1, \ell_1)$ and $(V_2, \ell_2)$ be orthogonalizable $\Lambda^{\mathcal K, \Gamma}$-vector spaces and let $A: V_1 \rightarrow V_2$ be any nonzero $\Lambda^{\K, \Gamma}$-linear map. Then there exists some $y_* \in V_1\setminus\{0\}$ such that, for all $y \in V_1\setminus\{0\}$, 
\begin{equation}\label{y0cond}
\ell_2(Ay_*) - \ell_1(y_*) \geq \ell_2(Ay) - \ell_1(y).\end{equation}  \end{lemma}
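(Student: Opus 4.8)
The plan is to reduce the existence of a filtration-optimal element to a statement about finitely many ``candidate levels'' of the map $A$, using the orthogonalizable structure of the source. First I would fix an orthogonal basis $(e_1,\dots,e_n)$ of $(V_1,\ell_1)$; after rescaling we may assume $\ell_1(e_i)=0$ for all $i$, so that $(V_1,\ell_1)$ is identified with $((\Lambda^{\K,\Gamma})^n,-\vec\nu)$ and every nonzero $y=\sum_i\lambda_i e_i$ satisfies $\ell_1(y)=-\min_i\nu(\lambda_i)$. The quantity to be maximized, $f(y):=\ell_2(Ay)-\ell_1(y)$, is invariant under scaling $y\mapsto \mu y$ for $\mu\in\Lambda^{\K,\Gamma}\setminus\{0\}$, since $\ell_2(A(\mu y))=\ell_2(Ay)-\nu(\mu)$ and $\ell_1(\mu y)=\ell_1(y)-\nu(\mu)$. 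Hence it suffices to bound and attain the supremum of $f$ over the ``normalized'' set $\{y=\sum_i\lambda_i e_i : \min_i\nu(\lambda_i)=0\}$, i.e. $\ell_1(y)=0$, where $f(y)=\ell_2(Ay)$.

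The key step is to show $\sup f<\infty$ and is attained. For the upper bound, write $Ae_i$ in an orthogonal basis of $(V_2,\ell_2)$; by orthogonality $\ell_2(Ay)=\ell_2(\sum_i\lambda_i Ae_i)\leq\max_i\ell_2(\lambda_i Ae_i)=\max_i(\ell_2(Ae_i)-\nu(\lambda_i))\leq\max_i\ell_2(Ae_i)=:B$ whenever $\ell_1(y)=0$ (using $\nu(\lambda_i)\geq 0$), so $f$ is bounded above by $B<\infty$ (finite since $A\neq 0$ forces at least one $Ae_i\neq 0$, and each $\ell_2(Ae_i)\in\R\cup\{-\infty\}$). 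To see the bound is attained, I would pass to the reduction at the $0$-level: choosing $i_0$ with $\ell_2(Ae_{i_0})=B$ and rescaling $e_{i_0}$ so that $\ell_2(Ae_{i_0})=0$ — more cleanly, one can simply test $y=e_{i_0}$ for the index $i_0$ achieving $B$, giving $f(e_{i_0})=\ell_2(Ae_{i_0})-\ell_1(e_{i_0})=B-0=B$. Thus $y_*:=e_{i_0}$ realizes the maximum, and undoing the global rescaling of the $e_i$'s produces the desired $y_*\in V_1\setminus\{0\}$ in the original normalization.

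I expect the main (mild) obstacle to be bookkeeping: one must be careful that $\ell_2\circ A$ can take the value $-\infty$ on individual basis vectors (when $Ae_i=0$ or $Ae_i\in\ker\ell_2$ — though over a genuine filtered vector space $\ell_2$ is real-valued on nonzero vectors, so $Ae_i=0$ is the only way to get $-\infty$), and that the supremum $B=\max_i\ell_2(Ae_i)$ is over a nonempty finite set with at least one finite entry because $A\neq 0$. A second point worth a sentence is why orthogonality of the basis of $(V_1,\ell_1)$ is exactly what lets us decompose an \emph{arbitrary} $y$ and conclude $\ell_2(Ay)\leq\max_i\ell_2(Ae_i)$ on the $\ell_1=0$ locus: without orthogonality the cross-cancellation among the $\lambda_i Ae_i$ could in principle \emph{raise} $f$, and it is precisely the orthogonality property of Definition~2.7 in \cite{UZ15} (applied in $V_1$, together with the triangle-type inequality $\ell_2(\sum)\leq\max\ell_2$ which holds in any filtered space) that rules this out. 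Everything else is the scaling-invariance reduction already sketched.
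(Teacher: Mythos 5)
Your argument is correct and is essentially the paper's proof in streamlined form: both expand $y$ in an orthogonal basis of $(V_1,\ell_1)$, bound $\ell_2(Ay)-\ell_1(y)$ from above using the ultrametric inequality in $V_2$ together with orthogonality in $V_1$, and observe that the resulting bound is attained at one of the basis vectors (the paper optimizes over a double index $(i,j)$ coming from orthogonal bases of both spaces, you over the single index $i$, but the content is identical). The one caveat is that the normalization $\ell_1(e_i)=0$ and the reduction to the locus $\ell_1(y)=0$ presuppose that the filtration levels lie in $\Gamma$ (e.g.\ $\Gamma=\R$), which fails for a general orthogonalizable $\Lambda^{\K,\Gamma}$-space; however, your estimate survives verbatim without normalizing --- for $Ay\neq 0$ pick $i^*$ attaining $\max_i\bigl(\ell_2(Ae_i)-\nu(\lambda_i)\bigr)$ and use $\ell_1(y)\geq \ell_1(e_{i^*})-\nu(\lambda_{i^*})$ to get $\ell_2(Ay)-\ell_1(y)\leq \ell_2(Ae_{i^*})-\ell_1(e_{i^*})$ --- so this is a cosmetic repair rather than a genuine gap.
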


\begin{proof} Because $\ker A$ is a subspace of $V_1$, by Corollary 2.17 and 2.18 in \cite{UZ15}, there exists an orthogonal basis of $V_1$, say $(v_1,\dots, v_r, v_{r+1},\dots, v_n)$ such that $(v_{r+1}, \dots , v_n)$ is an orthogonal basis for $\ker A$ where $r = rank(A)$. Meanwhile, let $(w_1, \dots , w_m)$ be an orthogonal basis for $V_2$. Represent $A$ by an $m \times n$ matrix $\{A_{ij}\}$ over $\Lambda^{\K, \Gamma}$ with respect to these two bases, so that $A v_j = {\textstyle \sum_{1 \leq i \leq m} A_{ij} w_i}$ for each $j \in \{1, \dots , n\}$. For any $y = {\textstyle \sum_{1 \leq j \leq n} \lambda_j v_j}$ in $V$, we have 
\begin{align*}
	\ell_2(Ay) &= \ell_2 \left( \sum_{1\leq i \leq m} \left( \sum_{1 \leq j \leq n} A_{ij} \lambda_j\right) w_i \right)\\
	&= \max_{1 \leq i \leq m} \left( \ell_2(w_i) - \nu\left( \sum_{1 \leq j \leq n} A_{ij}\lambda_j \right) \right) \\
&= \ell_2(w_{i(y)}) - \nu \left( \sum_{1 \leq j \leq n} A_{i(y) j} \lambda_j \right)
\end{align*}
where $i(y) \in \{1, \dots , m\}$ is the index  attaining the maximum in the middle. By the definition of the valuation $\nu$,
\begin{align*}
	-\nu\left( \sum_{1 \leq j \leq n} A_{i(y) j} \lambda_j \right) &\leq \max_{1 \leq j \leq n} \left(-\nu(A_{i(y) j}) - \nu(\lambda_j)\right)\\
	&= - \nu (A_{i(y)j(y)}) - \nu(\lambda_{j(y)})
\end{align*} 
where, again, $j(y) \in \{1, \dots , n\}$ is the index  attaining the maximum in the middle. Also due to the orthogonality of $(v_1, \dots , v_n)$,
\[ \ell_1(y)= \max_{1 \leq j \leq n} (\ell_1(v_j) - \nu(\lambda_j) ) \geq \ell_1(v_{j(y)}) - \nu(\lambda_{j(y)}),\]
so 
\begin{align}\label{opt3}
\ell_2(Ay) - \ell_1(y) &\leq \left(\ell_2(w_{i(y)})- \nu (A_{i(y)j(y)}) - \nu(\lambda_{j(y)})\right) \\
	\nonumber &\quad -  \left(\ell_1(v_{j(y)}) - \nu(\lambda_{j(y)})\right) \\ \nonumber
&= \ell_2(w_{i(y)}) - \nu (A_{i(y)j(y)}) - \ell_1(v_{j(y)}).
\end{align}
Now choose $(i_0, j_0)$ among $(i,j) \in \{1, \dots ,m\} \times \{1, \dots , n\}$ so that 
\begin{equation} \label{optchoice}
\ell_2(w_{i_0}) - \nu(A_{i_0j_0}) - \ell_1(v_{j_0}) \geq \ell_2(w_{i}) - \nu(A_{ij}) - \ell_1(v_{j})
\end{equation} for all $i$ and $j$.  Then due to the orthogonality of $(w_1, \dots , w_m)$, $\ell_2(A v_{j_0}) = \max_{1 \leq i \leq n} (\ell_2(w_i) - \nu(A_{ij_0}))$. So using (\ref{optchoice}), we have 
\begin{align*}
	\ell_2(A v_{j_0}) - \ell_1(v_{j_0}) &= \max_{1 \leq i \leq m} \left( \ell_2(w_{i}) - \nu(A_{ij_0}) - \ell_1(v_{j_0})\right) \\
	&= \ell_2(w_{i_0}) - \nu(A_{i_0j_0}) - \ell_1(v_{j_0})
\end{align*}

Given any $y\in C$, (\ref{optchoice}) holds for $i=i(y), j=j(y)$, so using (\ref{opt3}) we get $\ell_2(Av_{j_0})-\ell_1(v_{j_0})\geq \ell_2(Ay)-\ell_1(y)$.
Therefore $y_0 = v_{j_0}$ obeys the desired optimality property. \end{proof}

\subsection{$p$-cyclic singular value decomposition} \label{sub-p-c-svd}
The idea of proving our main result, Theorem \ref{p-tuples}, goes as follows. 
\[ \xymatrix{
\boxed{\begin{array}{cc} \mbox{eigenspace decomposition} \\ \mbox{of action $\D_{T'}$} \end{array}} \ar[r]^-{(a)} \ar[d]^{(c)}& \boxed{\begin{array}{cc} \mbox{singular value decomposition} \\ \mbox{of $\partial_{co}$ compatible with $\D_{T'}$} \end{array}} \ar[d]^{(b)} \\
\boxed{\begin{array}{cc} \mbox{special care of} \\ \mbox{eigenvalue 1} \end{array} } & \boxed{\begin{array}{cc} \mbox{singular value decomposition} \\\mbox{of $\partial_{co}$ compatible with $\D_{S'}$} \end{array}} }\]

First, since $\D_{T'}^p = {\mathds 1}$, by Example \ref{o-i-ker}, there exists a $\D_{T'}$-invariant orthogonal complement of $\ker(\partial_{co}) = \ker((\partial_{co})_{k+1}) = {\rm Im}((\partial_{co})_{k+2})$ in $Cone_{k+1}(T - \xi_p \cdot {\mathds 1})$ (recall that, by our assumption on homotopy class $\alpha$, homology of the mapping cone is zero), denoted by  $W$. Let's start from $(a)$. Since $\D^p_{T'} = {\mathds 1}$, eigenvalues of $\D_{T'}$ are among $\{1, \xi_p, \dots , \xi_p^{p-1}\}$. By assumption, our working field $\Lambda^{\K, \Gamma}$ contains all the $p$-th root of unity, then $\D_{T'}$ is diagonalizable. Therefore, we have the following eigenspace decompositions (for the part that we need to compute degree-$k$ barcode)
\begin{equation} \label{eign-dec2} 
W = F_0 \oplus F_1 \oplus \cdots \oplus F_{p-1} \,\,\,\,\, \mbox{and}\,\,\,\,\, {\rm{Im}}(\partial_{co}) = G_0 \oplus G_1 \oplus \cdots \oplus G_{p-1}
\end{equation}
where $F_i$ and $G_i$ are eigenspaces of $\D_{T'}$ of eigenvalue $\xi_p^{i}$ for each $i \in \{0, \dots , p-1\}$ (in their corresponding $\D_{T'}$-invariant subspaces). Note that it is possible that some of them are trivial. Study the algebraic relation between different eigenspaces is crucial. From linear algebra, different $F_i$'s  (so are $G_i$'s) are $\Lambda^{\K, \Gamma}$-linearly independent over $\Lambda^{\K, \Gamma}$. Now we will show they are actually mutually orthogonal to each other. This is the following lemma.

\begin{lemma} \label{orth-eigenspace} $\{F_i\}_{i=1}^{p-1}$ are mutually orthogonal to each other, so are $\{G_i\}_{i=1}^{p-1}$.\end{lemma}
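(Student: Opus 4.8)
The plan is to prove the mutual orthogonality of the eigenspaces by reducing everything to the 0-level field $\mathcal{K}$ via the projection $\pi$ from (\ref{proj}), where orthogonality is detected by $\mathcal{K}$-linear independence thanks to Lemma \ref{li-or}. First I would rescale: it suffices to show that whenever $x_{i_1}, \dots, x_{i_k}$ are nonzero elements lying in distinct eigenspaces $E_{i_1}, \dots, E_{i_k}$ (all normalized so that $\ell(x_{i_j}) = 0$), the reduced elements $[x_{i_1}], \dots, [x_{i_k}] \in [V]$ remain $\mathcal{K}$-linearly independent; then Lemma \ref{li-or} upgrades this to $\Lambda^{\mathcal{K},\Gamma}$-orthogonality. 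Since $\mathcal{D}_{R_p}$ exactly preserves filtration (it is a genuine group action of order $p$ here, in the unperturbed case), it has a well-defined reduction $[\mathcal{D}_{R_p}]$ on $[V]$, and by Example \ref{reduce-eigen}, if $\mathcal{D}_{R_p} x_{i_j} = \xi_p^{i_j} x_{i_j}$ then $[\mathcal{D}_{R_p}][x_{i_j}] = [\xi_p^{i_j}][x_{i_j}] = \xi_p^{i_j}[x_{i_j}]$ (noting $\xi_p \in \mathcal{K}$, so $[\xi_p^{i_j}] = \xi_p^{i_j}$). Thus $[x_{i_1}], \dots, [x_{i_k}]$ are eigenvectors of the $\mathcal{K}$-linear operator $[\mathcal{D}_{R_p}]$ with \emph{distinct} eigenvalues $\xi_p^{i_1}, \dots, \xi_p^{i_k}$ — provided each $[x_{i_j}] \neq 0$, which holds because $\ell(x_{i_j}) = 0$ forces a nonzero $0$-level term. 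Eigenvectors with distinct eigenvalues of a linear operator over a field are automatically linearly independent, so $\{[x_{i_j}]\}$ are $\mathcal{K}$-linearly independent, and Lemma \ref{li-or} finishes the argument.

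The one subtlety to address carefully is that Lemma \ref{li-or} as stated concerns reductions of orthogonal/orthonormal \emph{sets}: I want the converse direction (if reductions are $\mathcal{K}$-linearly independent then the original set is $\Lambda^{\mathcal{K},\Gamma}$-orthogonal), which is exactly what the second half of Lemma \ref{li-or} provides, so this is clean. To conclude orthogonality of $E_i$ to $E_j$ as \emph{subspaces} (not just of chosen elements), I would note that orthogonality of subspaces $E_i, E_j$ means $\ell(a + b) = \max\{\ell(a), \ell(b)\}$ for all $a \in E_i$, $b \in E_j$; picking orthogonal bases of each $E_i$ (which exist since each $E_i$ is an orthogonalizable $\Lambda^{\mathcal{K},\Gamma}$-space) and running the above argument on the union of these bases shows the union is orthogonal, which is precisely mutual orthogonality of the collection $\{E_i\}$. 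The identical argument applies verbatim to $\{F_i\}$ inside $W$ and to $\{G_i\}$ inside ${\rm Im}(\partial_{co})$, since $W$ and ${\rm Im}(\partial_{co})$ are $\mathcal{D}_{R_p}$-invariant and $\mathcal{D}_{R_p}$ restricts to an exact-filtration-preserving order-$p$ action on each.

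I expect the main (mild) obstacle to be bookkeeping rather than any real difficulty: namely, making sure that the reduction functor $[\,\cdot\,]$ interacts correctly with eigenspace decompositions — specifically that $[E_i]$ lands inside the $\xi_p^i$-eigenspace of $[\mathcal{D}_{R_p}]$ and that distinct $i$ give genuinely distinct eigenvalues in $\mathcal{K}$ (this uses $\mathrm{char}(\mathcal{K}) \neq p$ from the irreducible condition, so that $\xi_p$ is a \emph{primitive} $p$-th root of unity and $\xi_p^i \neq \xi_p^j$ for $0 \le i \neq j \le p-1$). Once that is pinned down, the linear-independence of distinct-eigenvalue eigenvectors is standard linear algebra and requires no computation. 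Note this argument does not need the full strength of the irreducible condition (no $x^p = \xi_p^q$ equation is invoked) — it only uses that the $p$-th roots of unity are present and distinct, which is why it applies uniformly to all eigenvalues including eigenvalue $1$; the extra care flagged in Remark \ref{fail-1} is needed only later, for the divisibility-by-$p$ conclusion on $E_0$, $F_0$, $G_0$, not for this orthogonality statement.
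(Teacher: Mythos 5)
Your proposal is correct and follows essentially the same route as the paper: normalize to filtration level $0$, reduce via $\pi$ to the $\mathcal K$-space $[V]$ where each $[E_i]$ sits inside the $\xi_p^i$-eigenspace of $[\mathcal D_{R_p}]$, invoke linear independence of distinct eigenspaces over $\mathcal K$, and then apply the converse direction of Lemma \ref{li-or} to upgrade to $\Lambda^{\mathcal K,\Gamma}$-orthogonality. Your added remarks (that only $\mathrm{char}(\mathcal K)\neq p$ and the presence of distinct $p$-th roots of unity are needed, not the full irreducible condition, and that the argument covers eigenvalue $1$) are accurate and consistent with Remark \ref{fail-1}.
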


\begin{proof} Assume that all the basis elements have filtration $0$. $\mathcal D_{T'}$ acting on $F_i$ implies $[(\mathcal D_{T'})] = \mathcal D_{R_p}$ acting on $[F_i]$. Moreover, by Example \ref{reduce-eigen}, eigenspace $F_i$ (of eigenvalue $\xi_p^i$) reduces to $\K$-space $[F_i]$ which is a subspace of eigenspace (of eigenvalue $[\xi_p^i] = \xi_p^i$) of $[\ker(\partial_{co})]$. Because different eigenspaces $[F_i]$ are $\K$-linearly independent, by Lemma \ref{li-or}, $F_i$ is $\Lambda^{\K, \Gamma}$-orthogonal to $F_j$ for all $i \neq j$. A similar argument holds for $\{G_i\}_{i=0}^{p-1}$. \end{proof}

On the other hand, $W$ is isomorphic to ${\rm{Im}}(\partial_{co})$, so in particular, 
\begin{equation} \label{dim-equ}
\dim_{\Lambda^{\K, \Gamma}}(W) = \dim_{\Lambda^{\K, \Gamma}}({\rm{Im}}(\partial_{co})).
\end{equation}
By Corollary \ref{b-commutes}, $\partial_{co}$ and $\D_{T'}$ commutes, which gives rise to an important observation that $\partial_{co}$ brings a $\D_{T'}$-eigenspace into a $\D_{T'}$-eigenspace of the same eigenspace.  In particular, we have $\partial_{co}(F_i) \leq G_i$, so $\dim_{\Lambda^{\K, \Gamma}}(F_i) \leq \dim_{\Lambda^{\K, \Gamma}}(G_i)$. (\ref{dim-equ}) implies the dimensions are actually equal for each $i$. So restrictions 
\[ \partial_{co}|_{F_i}: F_i \to G_i, \]
are isomorphisms between two (smaller) filtered  $\Lambda^{\K, \Gamma}$-vector spaces. Then

\begin{prop} \label{SVD-T}
For each degree $k\in \Z$, there exists a singular value decomposition of $\partial_{co} = (\partial_{co})_{k+1}: Cone_{k+1}(T- \xi_p \cdot {\mathds 1}) \to {\rm{Im}}(\partial_{co})$ such that elements from this singular value decomposition span $\D_{T'}$-eigenspaces of eigenvalues $\xi_p^q$ for $0 \leq q \leq p-1$. 
\end{prop} 
\begin{proof} Because $\ker((\partial_{co})_{k+1}) = {\rm Im}((\partial_{co})_{k+2})$, Lemma \ref{orth-eigenspace} says that the decomposition of corresponding $G_i$'s for ${\rm Im}((\partial_{co})_{k+2})$ provides an orthogonal basis for $\ker(\partial_{co})$ and they form a singular value decomposition of $\partial_{co}|_{\ker(\partial_{co})}$. Meanwhile, by Theorem 3.5 in \cite{UZ15}, there exists a singular value decomposition of each $\partial_{co}|_{F_i}$. Together all $0 \leq i \leq p-1$, they will form a singular value decomposition of $\partial_{co}|_W$ again by Lemma \ref{orth-eigenspace}. \end{proof}

Now for (b), suppose $\D_{S'}$ is defined. By its construction (see Lemma \ref{cancel-per}), $\D_{S'}$ acts on each eigenspace of $\D_{T'}$, and we will work on $\partial_{co}|_{F_i}: F_i \to G_i$ piece by piece for each $1 \leq i \leq p-1$. Explicitly, we will formulate a theoretic process (in accordance with the algorithmic process, Theorem 3.5 in \cite{UZ15} of finding a singular value decomposition) which is the following lemma. The idea of generating a singular value decomposition is by orthogonally cutting down $p$-dimensional subspaces. In general, we have the following result. 

\begin{lemma} \label{cut-p} Let $(V_1, \ell_1)$ and $(V_2, \ell_2)$ be orthogonalizable $\Lambda^{\mathcal K, \Gamma}$-vector spaces with group action $S$ on both spaces and $S^p = \xi_p^q \cdot \I$ for $1 \leq q \leq p-1$. For any non-zero $\Lambda^{\K, \Gamma}$-linear map $A: (V_1, \ell_1) \to (V_2,  \ell_2)$ such that $A \circ S = S \circ A$, there exists a pair of dimension $p$ subspaces in the cyclic span form of $(V_y, V_x)$ where $A(V_y) = V_x$ such that there exists an orthogonal complement pair $(W_1, W_2)$ in the sense that 
\begin{itemize}
\item[(1)] $A(W_1) = W_2$;
\item[(2)] $V_1 = V_y \oplus W_1$ and $W_1$ is orthogonal to $V_y$;
\item[(3)] $V_2 =V_x \oplus W_2$ and $W_2$ is orthogonal to $V_x$. 
\end{itemize}
\end{lemma}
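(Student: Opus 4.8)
The plan is to carry out a single step of the $p$-tuple analogue of the singular value decomposition algorithm of [UZ15]: split off one $p$-dimensional cyclic block on each side of $A$, together with an $S$-invariant orthogonal complement, after which (in the application to $\partial_{co}|_{F_i}$) one iterates on $A|_{W_1}\colon W_1\to W_2$. The three tools are Lemma \ref{opt-ele}, to choose the right generator; Corollary \ref{cyc-ele}, to see that the resulting cyclic spans are genuinely $p$-dimensional and $\Lambda^{\K,\Gamma}$-orthogonal; and the Maschke-type Proposition \ref{inv-comp}, to produce invariant orthogonal complements. Throughout I use that $A$ is $S$-equivariant (as in the application, where $\partial_{co}$ commutes with $\mathcal D_S$). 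For the first step, apply Lemma \ref{opt-ele} to $A$ to obtain $y\in V_1\setminus\{0\}$ maximizing $\ell_2(Ay)-\ell_1(y)$; set $c:=\ell_2(Ay)-\ell_1(y)$, $x:=Ay$, and let $V_y$ be the cyclic span of $S$ by $y$. Since $A$ commutes with $S$ and is injective, $V_x:=A(V_y)$ is the cyclic span of $S$ by $x$; and because $S^p=\xi_p^q\cdot\I$ with $1\le q\le p-1$ and $S$ exactly preserves filtration, Corollary \ref{cyc-ele} makes both $\{y,Sy,\dots,S^{p-1}y\}$ and $\{x,Sx,\dots,S^{p-1}x\}$ orthogonal, so $\dim V_y=\dim V_x=p$ and $A|_{V_y}\colon V_y\to V_x$ is an isomorphism of filtered spaces, which is conclusion (1). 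Using the orthogonality of these two cyclic spans and exact filtration-preservation of $S$, one computes $\ell_2(Az)=\ell_1(z)+c$ for every $z\in V_y$, while optimality of $y$ gives $\ell_2(Az)\le\ell_1(z)+c$ for every $z\in V_1\setminus\{0\}$; these are the only two places optimality is used.

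Next I would build the complements starting from the codomain side. The subspace $V_x$ is $S$-invariant, and $\langle S\rangle$ is a finite cyclic group whose order divides $p^2$ (since $S^{p^2}=(\xi_p^q)^p\cdot\I=\I$), so $\mathrm{char}(\K)\nmid|\langle S\rangle|$ and the argument behind Construction \ref{mskc} and Proposition \ref{inv-comp} applies verbatim to $(V_2,\ell_2)$ with the group $\langle S\rangle$: it produces an $S$-invariant complement $W_2$ of $V_x$ in $V_2$ that is $\ell_2$-orthogonal to $V_x$. Put $W_1:=A^{-1}(W_2)$. Because $A$ is an $S$-equivariant isomorphism, $W_1$ is $S$-invariant, $A(W_1)=W_2$, and $V_1=V_y\oplus W_1$, $V_2=V_x\oplus W_2$. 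So conclusion (1) for the complements and all of (3) hold by construction, and only the $\ell_1$-orthogonality of $W_1$ and $V_y$ in (2) remains to be checked.

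For that last point I would argue by contradiction with the ultrametric inequality, and this is where the Step-1 estimates pay off. Let $w\in W_1$ and $v\in V_y$ be nonzero. If $\ell_1(w)\ne\ell_1(v)$, the ultrametric inequality already gives $\ell_1(w+v)=\max\{\ell_1(w),\ell_1(v)\}$, so assume $\ell_1(w)=\ell_1(v)=:b$ and suppose for contradiction that $\ell_1(w+v)<b$. Note $w+v\ne0$, since $V_y\cap W_1=0$; then the global inequality from Step 1 gives $\ell_2(A(w+v))\le\ell_1(w+v)+c<b+c$. On the other hand $A(w+v)=Aw+Av$ with $Aw\in W_2$ and $Av\in V_x$, so by the $\ell_2$-orthogonality of $W_2$ and $V_x$ together with the identity $\ell_2(Av)=\ell_1(v)+c=b+c$ from Step 1, we get $\ell_2(A(w+v))=\max\{\ell_2(Aw),\,b+c\}\ge b+c$, a contradiction. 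Hence $\ell_1(w+v)=b$, $W_1$ is $\ell_1$-orthogonal to $V_y$, and all of (1)--(3) hold.

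The main obstacle I anticipate is not any single computation but the bookkeeping around which filtration controls which orthogonality: the key realization is to build the invariant complement on the \emph{codomain} side, so that the easy part of Proposition \ref{inv-comp} hands us the hard orthogonality statement (3) for free and optimality of $y$ is then exactly strong enough to force orthogonality (2); one also has to confirm that the cheap generalization of Proposition \ref{inv-comp} to the order-dividing-$p^2$ action $\langle S\rangle$ is legitimate (it is, because $\mathrm{char}(\K)\ne p$). Once the argument is organized this way, the rest is the short ultrametric computation above.
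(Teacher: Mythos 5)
Your proof is correct and follows essentially the same route as the paper's: an optimal element from Lemma \ref{opt-ele} generates the cyclic pair, $W_2$ is taken to be an orthogonal complement of $V_x$ in the codomain and pulled back by $A^{-1}$, and the optimality estimate together with $\ell_2(Av)=\ell_1(v)+c$ on $V_y$ forces $W_1\perp V_y$. The only differences are minor refinements: you make $W_2$ additionally $S$-invariant via the Maschke-type Proposition \ref{inv-comp} (the paper takes an arbitrary orthogonal complement, which suffices for the statement as written), and you verify the orthogonality in (2) in one shot for arbitrary $w\in W_1$, $v\in V_y$ rather than inductively over the cyclic basis as the paper does.
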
 

\begin{proof} By Lemma \ref{opt-ele}, there exists an optimal pair $(x^*,y^*)$ where $A(y^*) = x^*$ such that for all $y \in V_1$, 
\begin{equation} \label{opt}
 \ell_1(y^*) - \ell_2(x^*) \leq \ell_1(y) - \ell_2(Ay).
\end{equation} 
By our assumption, group action $S$ exactly preserves filtrations. Therefore, 
\begin{equation} \label{opt2}
 \ell_1(S^iy^*) - \ell_2(S^ix^*) \leq \ell_1(y) - \ell_2(Ay)
\end{equation}
for all the $i =0, \dots , p-1$. By Corollary \ref{cyc-ele} we know elements from $\{x^*, \dots , S^{p-1}x^*\}$ are orthogonal, so are elements from $\{y^*, \dots , S^{p-1}y^*\}$. Consider the span $V_{x^*}$ and denote its orthogonal complement as $W_2$ and its preimage under $A$ as $W_1 = A^{-1}(W_2)$. Now we only need to show proposition (2) above. We will show this inductively (with finitely many steps). By Lemma 2.9 in \cite{UZ15}, 
\[ W_1 \perp \left<y^*\right> \,\,\mbox{and}\,\, \left<Sy^*\right> \perp W_1 \oplus \left<y^*\right> \,\,\Rightarrow \,\, W_1 \perp \left<y^*, Sy^*\right>. \]
Then 
\[ W_1 \perp \left<y^*, Sy^*\right> \,\,\mbox{and}\,\, \left<S^2y^*\right> \perp W_1 \oplus \left<y^*, Sy^*\right>\,\,\Rightarrow \,\, W_1 \perp \left<y^*, Sy^*, S^2y^*\right>.\]
Inductively, we will get the conclusion. 

Indeed, in order to prove $W_1 \perp \left<y^*\right>$, which is equivalent to the statement that for any $v \in W_1$, $\ell_1(y^*) \leq \ell_1(y^* - v)$, we note since $W_2$ is orthogonal to $\left<x^*\right>$, $\ell_2(A(y^*-v)) \geq \ell_2(x^*)$, so by optimal choice (\ref{opt}) we get the conclusion. Now let us prove $\left<Sy^*\right> \perp W_1 \oplus \left<y^*\right>$. The proof is similar. Again, this is equivalent to show for any $v \in W_1 \oplus \left<y^*\right>$, $\ell_1(Sy^*) \leq \ell_1(Sy^* - v)$. Because $W_2 \oplus \left<x^*\right>$ is orthogonal to $\left<Sx^*\right>$, $\ell_2(A(Sy^* - v)) \geq \ell_2(Sx^*)$. Then by optimal choice (\ref{opt2}), we get the conclusion. in general, the same argument works for the proof of $\left<S^{i+1} y^*\right> \perp W_1 \oplus \left<y^*, \dots , S^iy^* \right>$. \end{proof}

\begin{remark} \label{relax-cond} Note that the condition $S^p = \xi_p^q \cdot {\mathds 1}$ of Lemma \ref{cut-p} is only used to formulate a cyclic span $V_{x^*}$. \end{remark} 

Inductively applying Lemma \ref{cut-p} to $A = \partial_{co}|_{F_i}$, $V_1 = F_i$ and $V_2 = G_i$, we get a singular value decomposition of $\partial_{co}|_{F_i}$ for each $1\leq i \leq p-1$ in $p$-tuple. Together, we get a singular value decomposition of $\partial_{co}|_W$, that is 
\begin{equation} \label{SVD-S}
\begin{array}{cccccc} 
y_{1} & \D_{S'}y_{1} & \cdots & \D_{S'}^{p-1}y_{1}  & y_2 &\cdots   \\
\downarrow & \downarrow & \downarrow & \downarrow & \downarrow & \downarrow\\
x_{1} & \D_{S'}x_{1} & \cdots & \D_{S'}^{p-1} x_{1} & x_2 &\cdots 
\end{array}
\end{equation} 
and this is the required $p$-tuple form. In summary, we get the following proposition. 

\begin{prop} \label{SVD-S-p} Assume $\D_{S'}$ is defined, then there exists a singular value decomposition in $p$-tuple form as (\ref{SVD-S}) of 
\[ \partial_{co}\big|_{\bigoplus_{1 \leq i \leq p-1}  F_i}: \bigoplus_{1 \leq i \leq p-1}  F_i \to \bigoplus_{1 \leq i \leq p-1} G_i\]
which is compatible with action $\D_{S'}$ in the sense that their cyclic spans (under $\D_{S'}$) generate each eigenspace. \end{prop}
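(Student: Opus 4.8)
The plan is to assemble the eigenspace-by-eigenspace constructions carried out just above. Fix $i$ with $1\le i\le p-1$. On one side, since $\D_S=\D_{R_{p^2}}$ satisfies $\D_S^{\,p}|_{E_i}=\D_{R_p}|_{E_i}=\xi_p^i\cdot\I$ with $1\le i\le p-1$, Corollary \ref{cyc-ele} together with Corollary \ref{cyc-orth} lets me peel off $p$-dimensional $\D_S$-cyclic spans $V_x,V_y,\dots$ one at a time, each orthogonal to the previously chosen ones, until they exhaust $E_i$; this yields the orthogonal basis of $E_i$ displayed in (\ref{SVD-S0}). On the other side, $\partial_{co}|_{F_i}\colon F_i\to G_i$ is a filtered isomorphism commuting with $\D_S$, with $\D_S^{\,p}|_{F_i}=\xi_p^i\cdot\I$, so I run Lemma \ref{cut-p} recursively on it: at each stage I split off a cyclic-span pair $(V_y,V_x)$ with $\partial_{co}(V_y)=V_x$ together with an orthogonal complementary pair $(W_1,W_2)$ satisfying $\partial_{co}(W_1)=W_2$, and then repeat on $\partial_{co}|_{W_1}\colon W_1\to W_2$. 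The recursion stays inside the hypotheses of Lemma \ref{cut-p} because the complements can be chosen $\D_S$-invariant: $V_x$ is $\D_S$-invariant, so Proposition \ref{inv-comp} supplies a $\D_S$-invariant orthogonal complement $W_2$ of $V_x$ in $G_i$, and then $W_1=(\partial_{co}|_{F_i})^{-1}(W_2)$ is $\D_S$-invariant since $\partial_{co}$ commutes with $\D_S$; hence $\D_S^{\,p}|_{W_1}=\xi_p^i\cdot\I$ persists and the process terminates. The outcome is a singular value decomposition of $\partial_{co}|_{F_i}$ whose bases of $F_i$ and $G_i$ are partitioned into $\D_S$-cyclic spans, i.e.\ of the shape (\ref{SVD-S}).

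Next I would glue over $i$. By Lemma \ref{orth-eigenspace} the subspaces $\{E_i\}_{i=1}^{p-1}$ are mutually orthogonal, and likewise $\{F_i\}$ and $\{G_i\}$; hence the union over $i$ of the bases of the $E_i$ is an orthogonal basis of $\bigoplus_{i=1}^{p-1}E_i$, and the unions of the $y$-vectors and of the $x$-vectors obtained on the $F_i$ are orthogonal bases of $\bigoplus_i F_i$ and of $\bigoplus_i G_i$ respectively. Since $\partial_{co}$ vanishes on every $E_i$ and restricts to the isomorphism $\partial_{co}|_{F_i}$ sending each $y$-vector to its partner $x$-vector, this data satisfies the first three bullets of Definition \ref{dfn-svd} for the map $\partial_{co}|_{\bigoplus_i(E_i\oplus F_i)}$, with $r=\sum_{i=1}^{p-1}\dim_{\Lambda^{\K,\Gamma}}F_i$ and with the $\bigoplus_i E_i$ basis vectors playing the role of $y_{r+1},\dots,y_n$.

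The only remaining bullet is the monotonicity $\ell_1(y_1)-\ell_2(x_1)\ge\cdots\ge\ell_1(y_r)-\ell_2(x_r)$. Here I would use that $\D_S=\D_{R_{p^2}}$ exactly preserves filtration: inside a cyclic-span block $(y,\D_Sy,\dots,\D_S^{p-1}y)$ mapping onto $(x,\D_Sx,\dots,\D_S^{p-1}x)$ every difference $\ell_1(\D_S^{j}y)-\ell_2(\D_S^{j}x)$ equals the common value $\ell_1(y)-\ell_2(x)$. So reordering the $p$-vector blocks, treated as indivisible units, by decreasing value of this common difference produces a basis meeting Definition \ref{dfn-svd} while keeping the $p$-tuple shape of (\ref{SVD-S0})--(\ref{SVD-S}). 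Compatibility with $\D_{R_p}$ is then automatic: each block either lies entirely inside a single eigenspace $E_i$ of $\D_{R_p}$, or has its domain in $F_i$ and its image in $G_i$, and the cyclic spans exhaust these eigenspaces.

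I do not expect a genuine obstacle here: the substantive content was Corollaries \ref{cyc-ele}, \ref{cyc-orth} and Lemma \ref{cut-p}, and this proposition is their bookkeeping assembly. The one point requiring care is exactly the one flagged above, namely keeping the recursive application of Lemma \ref{cut-p} inside its hypotheses by selecting $\D_S$-invariant orthogonal complements via Proposition \ref{inv-comp}; the block reordering needed to match Definition \ref{dfn-svd} is harmless because $\D_S$ acts isometrically on filtration within each block.
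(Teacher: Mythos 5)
Your proof is correct and follows essentially the same route as the paper: Corollaries \ref{cyc-ele} and \ref{cyc-orth} to peel off orthogonal cyclic spans in each $E_i$, inductive application of Lemma \ref{cut-p} to $\partial_{co}|_{F_i}\colon F_i\to G_i$, and assembly across $i$ via Lemma \ref{orth-eigenspace}. Your explicit use of Proposition \ref{inv-comp} to keep the complements $(W_1,W_2)$ $\D_S$-invariant at each stage of the recursion, together with the block-reordering remark for the monotonicity bullet of Definition \ref{dfn-svd}, makes precise two points the paper leaves implicit, but does not change the argument.
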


Now, we deal with (c). By Remark \ref{fail-1}, we will take a special consideration of eigenspace of eigenvalue $1$. Denote $K_0 = G^+_0 \oplus F_0$ where $G^+_0$ is the $1$-eigenspace from decomposition (\ref{eign-dec2}) of ${\rm Im}((\partial_{co})_{k+2})$. 

\begin{prop} \label{0-0}
The restriction map $\partial_{co}|_{K_0}: K_0 \to G_0$ only contributes $0$-length bars. 
\end{prop}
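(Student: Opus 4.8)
The plan is to recognize the restriction of the self-mapping cone to the eigenvalue-$1$ eigenspace of $\mathcal D_{R_p}$ as the self-mapping cone of an \emph{invertible scalar} operator, and then to observe that such a cone is trivial from the barcode point of view.

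First I would record the precise shape of this subcomplex. Write $L_1^{(j)}\leq CF_j(H^{(p)},J_t)_\alpha$ for the eigenvalue-$1$ eigenspace of $R_p$; this makes sense because $R_p$ is diagonalizable and, in the unperturbed case, $T=R_p$ satisfies $R_p^p=\I$ and $R_p\partial=\partial R_p$, so $L_1:=(L_1^{(*)},\partial|_{L_1})$ is a filtered subcomplex of $CF_*(H^{(p)},J_t)_\alpha$. Since $\mathcal D_{R_p}$ acts diagonally on $Cone_{k+1}(T-\xi_p\cdot\I)=CF_{k+1}(H^{(p)},J_t)_\alpha\oplus CF_k(H^{(p)},J_t)_\alpha$, its eigenvalue-$1$ eigenspace $K_0=E_0\oplus F_0$ is exactly $L_1^{(k+1)}\oplus L_1^{(k)}$, and on $L_1^{(k)}$ the operator $T-\xi_p\cdot\I=R_p-\xi_p\cdot\I$ acts as the scalar $(1-\xi_p)\cdot\I$. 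Hence $\partial_{co}|_{K_0}$ is precisely the boundary operator of the self-mapping cone $Cone_{L_1}\big((1-\xi_p)\cdot\I\big)$, and $G_0=\mathrm{Im}(\partial_{co}|_{K_0})$ equals the kernel of that boundary operator in degree $k$ (a cone of an isomorphism being acyclic). Note that $1-\xi_p\in\mathcal K^{\times}$ has valuation $0$ in $\Lambda^{\mathcal K,\Gamma}$, so $(1-\xi_p)\cdot\I$ is a filtered chain isomorphism whose inverse $(1-\xi_p)^{-1}\cdot\I$ also preserves the filtration.

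Next I would show that this self-mapping cone carries no concise barcode. This is a special case of the statement that the self-mapping cone of any filtered chain isomorphism $A$ whose inverse preserves filtration is $(0,0)$-quasiequivalent to the zero complex: taking $\Phi=\Psi=0$ and $K(x_1,x_2)=(0,A^{-1}x_1)$, a one-line computation using $A^{-1}\partial=\partial A^{-1}$ gives $\partial_{co}K+K\partial_{co}=-I$, while $\ell_{co}(K(x_1,x_2))=\ell(A^{-1}x_1)=\ell(x_1)\leq\ell_{co}(x_1,x_2)$, so the quadruple $(0,0,K,0)$ is a $(0,0)$-quasiequivalence to the zero complex in the sense of Definition \ref{dfn-mc}, whence $d_Q\big(Cone_{L_1}((1-\xi_p)\cdot\I),0\big)=0$. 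By the stability of barcodes under quasiequivalence — the same input as Proposition \ref{lip2}, namely Corollary 8.8 of \cite{UZ15} — every bar in the degree-$k$ verbose barcode of $Cone_{L_1}\big((1-\xi_p)\cdot\I\big)$ has length $\leq 4\,d_Q(\,\cdot\,,0)=0$, so its concise barcode is empty.

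Finally I would connect this with the degree-$k$ barcode of $(\partial_{co})_{k+1}$. Proposition \ref{SVD-T} furnishes a singular value decomposition of $(\partial_{co})_{k+1}$ adapted to the eigenspace decomposition (kernel spanned by orthogonal bases of the $E_i$, image spanned by orthogonal bases of the $G_i$, each block an isomorphism $\partial_{co}|_{F_i}\colon F_i\to G_i$); restricting that data to the $i=0$ block gives a singular value decomposition of $\partial_{co}|_{K_0}\colon K_0\to G_0$ with kernel $E_0$ and image $G_0=\ker(\partial_{co})_k\cap K_0$, i.e.\ precisely a degree-$k$ verbose barcode of the standalone filtered chain complex $Cone_{L_1}\big((1-\xi_p)\cdot\I\big)$. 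By the previous paragraph every such bar has length $0$, which is the assertion of Proposition \ref{0-0}. I expect the only genuinely delicate point to be this last bookkeeping step — checking that the $i=0$ block of the adapted SVD really computes the barcode of the eigenvalue-$1$ subcomplex, including that $\ker(\partial_{co})$ and $\mathrm{Im}(\partial_{co})$ restrict correctly to it — but the explicit identification of $\partial_{co}|_{K_0}$ with a scalar self-mapping cone makes this transparent.
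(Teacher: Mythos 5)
Your argument is correct, but it proceeds quite differently from the paper's. The paper's proof is an explicit computation with generators: it writes down an orthogonal generating set for $K_0$ built from the sums $v_i = x_i(t)+x_i(t)_{p}+\cdots+x_i(t)_{p^2-p}$ and $w_j$ over $R_{p^2}$-orbits, applies $\partial_{co}$ to the elements $(0,v_i)$ to get first component $(\xi_p-1)v_i$ (plus the strictly filtration-lowering $-\partial v_i$ in the second slot), and reads off $\ell_{co}((0,v_i))=\ell_{co}(\partial_{co}(0,v_i))$ directly, simultaneously exhibiting the $p$-tuple grouping that is reused in the perturbed version. You instead make the structural observation that, in the unperturbed case, the eigenvalue-$1$ eigenspace $L_1$ of $R_p$ is a filtered subcomplex of $CF_*(H^{(p)},J_t)_\alpha$ on which $T-\xi_p\cdot\I$ acts as the invertible scalar $(1-\xi_p)\cdot\I$ of valuation zero, so that $\partial_{co}|_{K_0}$ is the boundary operator of $Cone_{L_1}((1-\xi_p)\cdot\I)$; the homotopy $K(x_1,x_2)=(0,(1-\xi_p)^{-1}x_1)$ then gives a filtration-preserving null-homotopy (your computation $\partial_{co}K+K\partial_{co}=-I$ checks out), and stability of barcodes under quasiequivalence forces all bars to have length zero. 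Both arguments are valid for the stated proposition (which, concerning only the concise barcode's triviality on the $K_0$ block, does not require the $p$-tuple structure). Your route is cleaner and more conceptual, and avoids any appeal to the specific form of the egg-beater-type generators; the paper's route has the advantage that its explicit orthogonal $p$-tuples transfer with minimal change to the perturbed case of Proposition \ref{p-0-0}, where $\tilde K_0$ is only a generalized eigenspace of $\D_{T'}$ and the identification with a self-mapping cone of a scalar on a subcomplex of the Floer complex is less immediate. The one bookkeeping point you flag — that the $i=0$ block of the SVD of Proposition \ref{SVD-T} computes the verbose barcode of the standalone complex $Cone_{L_1}((1-\xi_p)\cdot\I)$, using acyclicity to identify $G_0$ with the kernel in degree $k$ — is handled correctly.
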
 
\begin{proof} On the one hand, we know the general form of the generators of $K_0$. Indeed, for each Hamiltonian orbit (for rotation), say $x(t)$\footnote{All these orbits are normalized to be filtration 0 by adjusting cappings.}, denote $x(t)_{(i)} = x\left(t + \frac{i}{p^2} \right)$ where $0 \leq i \leq p^2-1$. Then 
\[ Cone_{k+1}(T - \xi_p \cdot {\mathds 1}) = {\Span}_{\Lambda^{\K, \Gamma}} \left< \begin{array}{cc} (0, x_i(t)),\dots,(0, x_i(t)_{(p^2-1)}) \\ (y_j(t),0),\dots, (y_j(t)_{(p^2-1)},0) \end{array} \right>_{\tiny{\begin{array}{cc} 1 \leq i \leq m\\ 1 \leq j \leq n\end{array}}}\!\!, \]
where $x_i(t)$'s have indices equal to $k$ and $y_j(t)$'s have indices equal to $k+1$. 
Moreover, these (initial) generators are  $\Lambda^{\K, \Gamma}$-orthogonal because homotopy class $\alpha$ is assumed to be primitive. We know explicitly that for unperturbed operator $\D_{R_p}$ the generators of eigenvalue $1$ are,  
\begin{align} \label{dfn-v-w} 
&v_i = x_i(t) + x_i(t)_{p}+ \cdots  + x_i(t)_{p^2-p} \\
\notag &\mbox{and}\,\,\, w_j= y_j(t) + y_j(t)_{p}+ \cdots  + y_j(t)_{p^2-p}.
\end{align}
Since $\D_{T'}$ is a strictly lower filtration perturbation of $\D_{R_p}$, the corresponding generators of eigenvalue $1$ are also strictly lower filtration perturbations of $v_i$ and $w_j$, which will be denoted by  $\tilde{v}_i$ and $\tilde{w}_j$. Then 
\begin{equation} \label{gene-k_0}
K_0 = {\Span}_{\Lambda^{\K, \Gamma}} \left< \begin{array}{cc} (0, \tilde{v}_i), (0, S' \tilde{v}_i), \dots , (0, (S')^{p-1} \tilde{v}_i) \\ (\tilde{w}_j, 0), (S'\tilde{w}_j, 0), \dots , ((S')^{p-1} \tilde{w}_j, 0) \end{array} \right>_{\tiny{\begin{array}{cc} 1 \leq i \leq m\\ 1 \leq j \leq n\end{array}}}\!\!. 
 \end{equation}
Note that generators are all eigenvector of eigenvalue $1$ of $\mathcal D_{T'}$ because $\D_{S'}$ commutes with $\D_{T'}$ and more importantly, they are  pairwise $\Lambda^{\K, \Gamma}$-orthogonal. 

On the other hand, for each $\tilde{w}_j$, 
\[ T \tilde{w}_j = (T' - P_T^{(3)}) \tilde{w}_j = \tilde{w}_j + \left\{\begin{array}{cc} \mbox{strictly lower} \\ \mbox{filtration terms} \end{array} \right\}. \]
Since $[S', T]=0$, 
\[ \partial_{co}(0, (S')^h \tilde{w}_j) =  \left((\xi_p - 1)(S')^h \tilde{w}_j + \left\{\begin{array}{cc} \mbox{strictly lower} \\ \mbox{filtration terms} \end{array} \right\} , - \partial (S')^h \tilde{w}_j  \right). \]
By Lemma \ref{or-per}, orthogonality of $\{(S')^h \tilde{w}_j, 0)\}_{h=0}^{p-1}$ implies the orthogonality of $\{\partial_{co}(0, (S')^h \tilde{w}_j)\}_{h=0}^{p-1}$ which are also eigenvectors of eigenvalue $1$ of $\mathcal D_{T'}$. Meanwhile, we can rewrite 
\begin{equation*}
K_0 = {\Span}_{\Lambda^{\K, \Gamma}} \left< \begin{array}{cc} (0, \tilde{v}_i), (0, S' \tilde{v}_i), \dots , (0, (S')^{p-1} \tilde{v}_i) \\ \partial_{co}(0, \tilde{w}_j), \partial_{co}(0, S' \tilde{w}_j), \dots , \partial_{co}(0, (S')^{p-1} \tilde{w}_j) \end{array} \right>_{\tiny{\begin{array}{cc} 1 \leq i \leq m\\ 1 \leq j \leq n\end{array}}}.
 \end{equation*}
Hence, 
\[ {\rm{Im}}(\partial_{co} |_{K_0}) = {\Span}_{\Lambda^{\K, \Gamma}} \left< \partial_{co}(0, \tilde{v}_i),\dots , \partial_{co}(0, (S')^{p-1}\tilde{v}_i)\right>_{\tiny 1 \leq i \leq m}\]
because $(\partial_{co})^2 =0$ and moreover $\{ \partial_{co}(0, v_i),\dots , \partial_{co}(0, (S')^{p-1}v_i)\}$ are $\Lambda^{\K, \Lambda}$-orthogonal by the same reason as above. In other words, for each degree $k+1$, we get a $p$-tuple singular value decomposition of $\partial_{co}|_{K_0}$ with building block, 
\[ \begin{array}{cccccc} \label{p-0-b}
(0, \tilde{v}_i) & (0, (S')\tilde{v}_i)& \cdots & (0, (S')^{p-1}\tilde{v}_i)   \\
\downarrow & \downarrow & \cdots & \downarrow \\
\partial_{co} (0, \tilde{v}_i) & \partial_{co} (0, (S')\tilde{v}_i) & \cdots & \partial_{co} (0, (S')^{p-1}\tilde{v}_i).
\end{array} \]
Finally, it's easy to check $\ell_{co}((0, \tilde{v}_i)) =\ell_{co}(\partial_{co} (0, \tilde{v}_i))$ for any perturbed $\tilde{v}_i$, so these only contribute $0$-length bars. \end{proof}

\begin{remark} The argument in subsection~5.3 is based on our perturbation of $\D_{T}$ (or $\D_{S}$) being a group action which was developed in subsection~5.1. This gives a diagonalizable operator $\D_{T'}$ (or $\D_{S'}$) with an easy eigenspace decomposition. Here, we remark that without the process of perturbation it is probably impossible to deal with $\D_{T}$ and $\D_{S}$ directly. On the one hand, $\D_{T}$ is not guaranteed to be diagonalizable. This could be overcome by introducing generalized eigenspace decomposition. On the other hand, the most difficulty part is to prove a perturbed version of Proposition \ref{inv-comp} (i.e., $T^p = {\mathds 1} + Q$ and $Q$ strictly lowers filtration), since if we regard $\D_{T}$ and $\D_S$ as perturbations of group action $\D_{R_p}$ and $\D_{R_{p^2}}$, the invariant subspace in Construction~\ref{mskc} easily loses invariance property under perturbation. \end{remark}

\subsection{Proof of Theorem \ref{p-tuples}} 
\begin{proof} Proposition \ref{SVD-S-p} and Proposition \ref{0-0}. \end{proof}

\section{Proof of Proposition \ref{non-dis}}\label{p19}
\begin{proof} The first conclusion directly comes from the Theorem \ref{p-tuples} and definition of $\mathfrak{o}_X(\phi)$ in Definition \ref{dfn-o}. For the second conclusion, suppose $s_{max}$ is the largest multiple of $p$ smaller than $m_k$, the multiplicity of degree-$k$ concise barcode of $Cone(H)_*$. Then $p \nmid m_k$ implies that $\beta_{s_{max} p+1}(\phi_H) \neq 0$ and $\beta_{(s_{max}+1)p}(\phi_H) =0$. Therefore, 
\[ \mathfrak{o}_X(\phi_H)_k \geq \beta_{s_{max} p+1}(\phi_H) - \beta_{(s_{max}+1)p}(\phi_H) =  \beta_{s_{max} p+1}(\phi_H) \geq \beta_{m_k}(\phi_H). \]
By definition again, $\mathfrak{o}_X(\phi) \geq \mathfrak{o}_X(\phi_H)_k \geq \beta_{m_k}(\phi_H)$. \end{proof}

\section{Lipschitz continuity} \label{Lc}
In this section, we will prove all three Lipschitz continuity results mentioned in the introduction. 

\subsection{Algebraic set-up}

\begin{dfn} \label{dfn-d-m} For a $\Lambda^{\K, \Gamma}$-linear map $F: (V, \ell_1) \to (W, \ell_2)$ between two orthogonalizable $\Lambda^{\K, \Gamma}$-space, $F$ is called a {\it $\delta$-morphism} if there exists a $\delta \geq 0$ such that for any $v \in V$, $\ell_2(Fv) \leq \ell_1(v) + \delta$. In particular a filtration preserving map is a $0$-morphism. 
\end{dfn}
Note that by definition, a $\delta$-morphism is automatically a $\eta$-morphism for any $\delta \leq \eta$. 

\begin{dfn} \label{dfn-d-htp} Given two Floer-type complexes 
$$(C_*, \partial_C, \ell_C)\quad \text{and}\quad (D_*, \partial_D, \ell_D),$$
suppose that chain map $\Phi: C_* \to D_*$ is a $\delta_+$-morphism and $\Psi: C_* \to D_*$ is a $\delta_-$-morphism. Then we say that $\Phi$ and $\Psi$ are {\it $(\delta_+, \delta_-, \lambda)$-homotopic} where $\lambda \in \R_{>0}$ if there exists a degree-1 $\lambda(\delta_+ +\delta_-)$-morphism $K: C_* \to D_{*+1}$ such that $\Phi - \Psi =K \circ \partial_C + \partial_D \circ K$, where $K$ is then called a {\it $(\delta_+, \delta_-, \lambda)$-homotopy}. 
\end{dfn}

\begin{ex} \label{alg-quasi} Using Definition \ref{dfn-d-htp}, the definiton of $(\delta_-, \delta_+)$-quasiequivalence between $(C_*, \partial_C, \ell_C)$ and $(D_*, \partial_D, \ell_D)$ (defined in Definition \ref{dfn-mc}) can be rephrased as 
\begin{itemize}
\item{} there exist a $\delta_+$-morphism $\Phi: C_* \to D_*$ and a  $\delta_-$-morphism $\Psi: D_* \to C_*$;
\item{} $\Psi \circ \Phi$ and ${\mathds 1}_{C_*}$ are $(\delta_+, \delta_-, 1)$-homotopic; $\Phi \circ \Psi$ and ${\mathds 1}_{D_*}$ are $(\delta_+, \delta_-, 1)$-homotopic.
\end{itemize}
\end{ex}

Now consider the following algebraic set-up. For two Floer-type complexes $(C_*, \partial_C, \ell_C)$ (simply denoted by  $C$) and $(D_*, \partial_D, \ell_D)$ (simply denoted by  $D$), we will study the following diagram (which is not necessarily commutative!)
\begin{equation*} 
\xymatrixcolsep{5pc} \xymatrix{  C \ar[r]^{S} \ar[d]_{\phi} \ar[rd]^{P} \ar@/_2.0pc/@[black][dd]_{K_C} & C \ar[d]^{\phi } \ar@/^2.0pc/@[black][dd]^{K_C} \\  D \ar[r]^{S'} \ar[d]_{\psi} \ar[rd]^{Q}  \ar@/_2.0pc/@[black][dd]_{K_D} & D\ar[d]^{\psi} \ar@/^2.0pc/@[black][dd]^{K_D} \\ C \ar[r]^S \ar[d]_{\phi} \ar[rd]^P &C \ar[d]^{\phi}\\  D \ar[r]^{S'} &D \\}
\end{equation*}
where
\begin{itemize}
\item{} $S$ and $S'$ $0$-morphism chain maps;
\item{} $\phi$ and $\psi$ induces $(\delta_{+}, \delta_{-})$-quasiequivalence with homotopies $K_C$ and $K_D$;
\item{} $S' \circ \phi$ and $\phi \circ S$ are $(\delta_+, \delta_+, \frac{1}{2})$-homotopic with homotopy $P$;
\item{} $S \circ \psi$ and $\psi \circ S'$ are $(\delta_-, \delta_-, \frac{1}{2})$-homotopic with homotopy $Q$.
\end{itemize}

Here, we give a supporting example from our story to this algebraic set-up. 

\begin{ex} Consider the chain complexes with (almost) rotation actions constructed from different Hamiltonians $H$ and $G$, that is, the following diagram for some continuation maps $C_H$, $C_G$ and $C_{H, G}$,  
\[ \xymatrix{  CF_{k}(H^{(p)}, J_{t})_{\alpha} \ar[r]^{R_p} \ar[d]_{C_{H,G}} & CF_{k}(H^{(p)}, J_{t + \frac{1}{p}})_{\alpha} \ar[r]^{C_H} \ar[d]^{(R_{p})_*(C_{H,G})} & CF_{k}(H^{(p)}, J_t)_{\alpha} \ar[d]^{C_{H,G}} \\    CF_{k}(G^{(p)}, J_{t})_{\alpha} \ar[r]^{R_p} & CF_{k}(G^{(p)}, J_{t + \frac{1}{p}})_{\alpha} \ar[r]^{C_G}  & CF_{k}(G^{(p)}, J_t)_{\alpha}. }\]
The second (right) diagram above is not necessarily commutative (where continuation map commutes with boundary operator but not necessarily commutes with other continuation maps), so the following collapsed diagram is then {\it not} necessarily a commutative diagram, 
\begin{equation} \label{ham-diag}
 \xymatrix{  CF_{k}(H^{(p)}, J_{t})_{\alpha}   \ar[r]^{T^H} \ar[d]_{C_{H,G}} & CF_{k}(H^{(p)}, J_t)_{\alpha} \ar[d]^{C_{H,G}} \\
CF_{k}(G^{(p)}, J_{t})_{\alpha} \ar[r]^{T^G} & CF_{k}(G^{(p)}, J_t)_{\alpha} } 
\end{equation}
where $T^H = C_H \circ R_p$ and $T^G = C_G \circ R_p$. Meanwhile, we can symmetrically form continuation map $C_{G,H}:  CF_{k}(G^{(p)}, J_{t})_{\alpha}  \to CF_{k}(H^{(p)}, J_{t})_{\alpha}$. Likewise, we have a (not necessarily commutative) diagram, 
\begin{equation} \label{ham-diag2}
 \xymatrix{  CF_{k}(G^{(p)}, J_{t})_{\alpha}   \ar[r]^{T^G} \ar[d]_{C_{G,H}} & CF_{k}(G^{(p)}, J_t)_{\alpha} \ar[d]^{C_{G,H}} \\
CF_{k}(H^{(p)}, J_{t})_{\alpha} \ar[r]^{T^H} & CF_{k}(H^{(p)}, J_t)_{\alpha}. } 
\end{equation}
However, we can prove these two diagrams are not far from being commutative. In fact, 
\begin{lemma} \label{htp-diag} Denote 
$$\delta_+ = p\int_0^1 \max_X (H-G) dt\quad \text{and}\quad \delta_- = p\int_0^1 - \min_X (H-G) dt.$$
In diagram (\ref{ham-diag}), 
\[ \mbox{$T^G \circ C_{H,G}$ and $ C_{H,G} \circ T^H$ are $(\delta_+, \delta_+, \frac{1}{2})$-homotopic with a homotopy $P$}.\]
Similarly, in diagram (\ref{ham-diag2}), 
\[ \mbox{$T^H \circ C_{G,H}$ and $C_{G,H} \circ T^G$ are $(\delta_-, \delta_-, \frac{1}{2})$-homotopic with a homotopy $Q$}.\]
\end{lemma}

\begin{proof} The homotopy $P$ can be constructed explicitly by $P = K \circ R_p$ where $K$ is a homotopy between $C_G \circ (R_p)_*(C_{H,G})$ and $C_{H,G} \circ C_H$. Since $R_p$ exactly preserves filtrations, the filtration shift of $P$ is given by $K$ where, by Remark \ref{diff-htps} in Section 2, it is just $\delta_+ = \frac{1}{2} (\delta_+ + \delta_+)$. The other homotopy $Q$ is constructed in a similar way.\end{proof}

Therefore, (\ref{ham-diag}), (\ref{ham-diag2}), Lemma \ref{htp-diag} and Theorem \ref{Floer-energy} together gives rise to the following diagram compatible with the algebraic set-up above. 
\begin{equation*} 
\xymatrixcolsep{5pc} \xymatrixrowsep{3pc} \xymatrix{  CF_{k}(H^{(p)}, J_{t})_{\alpha} \ar[r]^{T^H} \ar[d]_{C_{H,G}} \ar[rd]^{P} \ar@/_4.0pc/@[black][dd]_{K_H} & CF_{k}(H^{(p)}, J_t)_{\alpha} \ar[d]^{C_{H,G}} \ar@/^4.0pc/@[black][dd]^{K_H} \\  CF_{k}(G^{(p)}, J_{t})_{\alpha}  \ar[r]^{T^G} \ar[d]_{C_{G,H}} \ar[rd]^{Q}  \ar@/_4.0pc/@[black][dd]_{K_G} & CF_{k}(G^{(p)}, J_{t})_{\alpha} \ar[d]^{C_{G,H}} \ar@/^4.0pc/@[black][dd]^{K_G} \\ CF_{k}(H^{(p)}, J_{t})_{\alpha} \ar[r]^{T^H} \ar[d]_{C_{H,G}} \ar[rd]^P &CF_{k}(H^{(p)}, J_t)_{\alpha} \ar[d]^{C_{H,G}}\\  CF_{k}(G^{(p)}, J_{t})_{\alpha}  \ar[r]^{T^G} &CF_{k}(G^{(p)}, J_{t})_{\alpha}  \\}
\end{equation*}
\end{ex}

Now we remark that Proposition \ref{lip1} can be easily proved if we can prove the following general algebraic proposition. 

\begin{prop} \label{equ-cone} With the algebraic set-up above, there exist finite constants $\Delta_+$ and $\Delta_-$ such that the self-mapping cones of complex $C$ and complex $D$ with respect to maps $S$ and $S'$, $Cone_C(S)$ and $Cone_D(S')$, are $(\Delta_{+}, \Delta_{-})$-quasiequivalent. Moreover, $\Delta_+ + \Delta_- \leq 6 (\delta_+ + \delta_-)$. \end{prop}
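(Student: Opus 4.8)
The plan is to build explicit quasiequivalence data between $Cone_C(S)$ and $Cone_D(S')$ out of the given data $(\phi, \psi, K_C, K_D, P, Q)$. First I would write down the natural chain maps: define $\Phi_{co}\co Cone_C(S)_* \to Cone_D(S')_*$ by the matrix $\left(\begin{smallmatrix} \phi & 0 \\ 0 & \phi \end{smallmatrix}\right)$ plus a correction term built from $P$ — that is, $\Phi_{co}(x_1,x_2) = (\phi x_1 + P x_2, \phi x_2)$ — and similarly $\Psi_{co}(y_1,y_2) = (\psi y_1 + Q y_2, \psi y_2)$. The point of the off-diagonal $P$, $Q$ terms is precisely that $P$ is the homotopy witnessing $S'\phi - \phi S = P\partial_C + \partial_D P$, so a direct matrix computation with $\partial_{co} = \left(\begin{smallmatrix} \partial & -S \\ 0 & -\partial \end{smallmatrix}\right)$ shows $\Phi_{co}$ commutes with $\partial_{co}$. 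For the filtration bound: $\phi$ is a $\delta_+$-morphism and $P$ is a $(\delta_+,\delta_-)$-morphism, so $\Phi_{co}$ is a $(\delta_+ + \delta_-)$-morphism with respect to $\ell_{co} = \max$; likewise $\Psi_{co}$ is a $(\delta_+ + \delta_-)$-morphism. This already gives $\delta_+' , \delta_-' \le \delta_+ + \delta_-$ for the maps themselves.

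Next I would produce the chain homotopies. The composite $\Psi_{co}\circ\Phi_{co}$ acting on $(x_1,x_2)$ is $(\psi\phi x_1 + \psi P x_2 + Q\phi x_2, \psi\phi x_2)$, and we need a degree-$1$ map $\mathcal K_C\co Cone_C(S)_* \to Cone_C(S)_{*+1}$ with $\Psi_{co}\Phi_{co} - I = \partial_{co}\mathcal K_C + \mathcal K_C\partial_{co}$. The diagonal entries are handled by $K_C$ (the given homotopy for $\psi\phi - I_{C}$), so the ansatz is $\mathcal K_C(x_1,x_2) = (K_C x_1 + R x_2, K_C x_2)$ where $R$ is a new degree-$1$ map that must absorb the discrepancy $\psi P + Q\phi$ coming from the off-diagonal terms, together with the error term $K_C S - S K_C$ that arises because $K_C$ need not commute with $S$. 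The key algebraic fact to exploit is that all of $\psi P$, $Q\phi$, and $K_C S - (\text{appropriate }S)K_C$ are chain-homotopic to things that telescope: one assembles $R$ as a finite sum of composites of the named maps (schematically $R = K_C S K_C + (\text{terms from } \psi P, Q\phi)$ — this is where one imitates the standard proof that $d_Q$ of mapping cones is controlled, e.g.\ the argument behind Corollary 8.8 / the cone comparison in \cite{UZ15}). Each such composite is built from at most a bounded number of the $\delta_\pm$-morphisms, which is where the constant $6$ enters: tracking that $R$ is a composite of at most three pieces each shifting by at most $\delta_+ + \delta_-$ gives $\ell(R x) \le \ell(x) + 3(\delta_+ + \delta_-)$, hence $\mathcal K_C$ is a $(\Delta_+,\Delta_-)$-homotopy with $\Delta_+ + \Delta_- \le 6(\delta_+ + \delta_-)$ after one also accounts for the $\delta_+ + \delta_-$ already present in $\Phi_{co},\Psi_{co}$. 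The symmetric construction gives $\mathcal K_D$ for $\Phi_{co}\Psi_{co} - I_D$.

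I expect the main obstacle to be bookkeeping: verifying that the off-diagonal correction $R$ really does satisfy the homotopy equation on the nose, i.e.\ that the failure of $K_C$ to commute with $S$ and the cross-terms $\psi P$, $Q\phi$ combine into an exact boundary $\partial_{co}\mathcal K_C + \mathcal K_C \partial_{co} - (\text{diagonal part})$ with no leftover. This requires using, in an essential way, all four homotopy relations in the hypotheses (for $K_C$, $K_D$, $P$, $Q$) plus the chain-map property of $\phi,\psi,S,S'$, and it is the step where a sign or an index could easily go wrong. The filtration estimate itself, once $R$ is written down explicitly, is routine: one just reads off from each factor whether it contributes $\delta_+$, $\delta_-$, or $\delta_+ + \delta_-$, and sums, noting that $\ell_{co} = \max\{\ell,\ell\}$ never makes things worse than the worst summand. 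Finally, combining with Theorem \ref{Floer-energy} and Lemma \ref{htp-diag} in the Floer setting yields $d_Q(Cone(H)_*, Cone(G)_*) \le \frac{1}{2}(\Delta_+ + \Delta_-) \le 3(\delta_+ + \delta_-) = 3p\cdot\|H-G\|_H$ after the factor-of-$p$ rescaling of $H^{(p)}$, which is Proposition \ref{lip1}.
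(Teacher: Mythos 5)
Your first half is sound and coincides with what the paper does: the map $\Phi_{co}(x_1,x_2)=(\phi x_1+Px_2,\phi x_2)$ is exactly the morphism of distinguished triangles $h$ that the paper obtains, the homotopy relation $S'\phi-\phi S=P\partial_C+\partial_D P$ is precisely what makes it a chain map, and the $(\delta_++\delta_-)$-bound on its filtration shift is correct. The problem is the second half. To make $\mathcal K_C(x_1,x_2)=(K_Cx_1+Rx_2,-K_Cx_2)$ a homotopy from $\Psi_{co}\Phi_{co}$ to $I$, you need a degree-two map $R$ with
\[
\partial_C R-R\,\partial_C=\psi P+Q\phi+K_CS-SK_C .
\]
One can check that the right-hand side is a cycle in the Hom-complex $\Hom(C_*,C_*)$ (all the terms cancel after substituting the four homotopy relations), but a cycle need not be a boundary: the hypotheses give you no second-order coherence datum relating $\psi P$, $Q\phi$ and $K_C$, and in general $[\psi P x+Q\phi x+K_CSx-SK_Cx]$ can be a nonzero class in homology, in which case no such $R$ exists. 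This is the classical non-functoriality of the mapping cone; your "telescoping'' ansatz $R=K_CSK_C+\cdots$ does not close up (computing $\partial(K_CSK_C)-K_CSK_C\partial$ produces $K_CS-SK_C$ plus uncontrolled terms $\psi\phi SK_C-K_CS\psi\phi$). So the specific pair $(\Phi_{co},\Psi_{co})$ need not be a quasiequivalence at all, and the gap is not mere bookkeeping.

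The paper sidesteps exactly this point. It works in the filtered homotopy category $K(\mathcal F)$, which is triangulated, introduces the shift functor $\Sigma^{\delta}$ to record filtration losses, and runs a Five-Lemma diagram chase (Propositions \ref{equ-cone2} and \ref{inverse}) on the distinguished triangles of $S$ and $S'$, using exactness of $\Hom(\Sigma^{3(\delta_++\delta_-)}Cone_D(S'),-)$ and of the contravariant $\Hom(-,\Sigma^{-2\delta_+-3\delta_-}Cone_C(S))$. This produces one-sided inverses $j,j'$ with $h\circ j=j'\circ h=i^3_{\delta_++\delta_-}$, and then $g=j'\circ h\circ j$ satisfies $g\circ h=h\circ g=i^6_{\delta_++\delta_-}$; the quasi-inverse is $g$, not your $\Psi_{co}$, and it is obtained abstractly rather than by a formula. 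Note also that the factor $6$ arises from composing the two triple shifts $i^3_{\delta_++\delta_-}$, not from counting three $\delta_\pm$-morphisms inside a correction term. If you want to salvage an explicit approach, you would either have to add the missing degree-two homotopy to the hypotheses or reproduce the paper's abstract argument.
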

In other words, by Example \ref{alg-quasi}, we are looking for $\Delta_+$-morphism and $\Delta_-$-morphism with $(\Delta_+, \Delta_-)$-homotopies. Assuming Proposition \ref{equ-cone}, we have 

\begin{proof} [Proof of Proposition \ref{lip1}] Denote 
\[\delta_+ = p \int_0^1 \max_X (H-G) dt \,\,\,\,\mbox{and}\,\,\,\, \delta_- = p \int_0^1 -\min_X (H-G) dt.\]
Let $S = T^H - \xi_p \cdot {\mathds 1}$ and $S' = T^G - \xi_p \cdot {\mathds 1}$. First, they are $0$-morphisms since $T^H$ and $T^G$ preserves filtration. Moreover, they are chain maps since $T^H$ and $T^G$ commutes with boundary operator $\partial$. Second, $C_{G, H} \circ C_{H, G}$ provide a $(\delta_+, \delta_-)$-quasiequivalence by Theorem \ref{Floer-energy}. Third, $S' \circ C_{H,G}$ and $C_{H, G} \circ S$ are $(\delta_+, \delta_+, \frac{1}{2})$-homotopic and $S \circ C_{G,H}$ and $C_{G, H} \circ S'$ are $(\delta_-, \delta_-, \frac{1}{2})$-homotopic by Lemma \ref{htp-diag}. Therefore, all the conditions in the algebraic set-up are satisfied. By Proposition \ref{equ-cone}, there exists a $(\Delta_+, \Delta_-)$-quasiequivalence between $Cone(H)_*$ and $Cone(G)_*$. So we have 
\[ d_Q(Cone(H)_*, Cone(G)_*) \leq \frac{ \Delta_+ + \Delta_-}{2}  \leq 3(\delta_+ + \delta_-) = 3p \cdot||H-G||_H.\]
Thus we get the conclusion. \end{proof}

For the rest of this section, we will focus on the proof of Proposition \ref{equ-cone}. 

\subsection{Homotopy category}

\begin{dfn} Denote by $\mathcal F$ the following category:\footnote{It is routine to check that $\mathcal F$ is an additive category.}
\begin{itemize}
\item{} Object in $\mathcal F$: Floer-type complex $(C_*, \partial_C, \ell_C)$;
\item{} Morphism in $\mathcal F$: $0$-morphism chain map.
\end{itemize}
\end{dfn}
Note that by this definition, any $\delta$-morphism $\phi$ with $\delta>0$ is {\it not} a morphism in $\mathcal F$. On the other hand, fixing a constant $\delta$, we can define a map on $\mathcal F$ itself, denoted by  $\Sigma^{\delta}$ by
\begin{itemize}
\item{} $\Sigma^{\delta}(C_*, \partial_C, \ell_C) = (C_*, \partial_C, \ell_C + \delta)$;
\item{} $\Sigma^{\delta}\phi = \phi$. 
\end{itemize}
It is easy to see this map is a functor. This functor does not change morphism and shifts the filtration of any Floer-type complex by $\delta$. In short notation, $\Sigma^{\delta} C: = \Sigma^{\delta}(C_*, \partial_C, \ell_C)$ and  
\[ \ell_{{\Sigma^{\delta}} C} = \ell_C + \delta.\]
Therefore, for any $\delta$-morphism $\phi: C \to D$, we know that $\Sigma^{\delta} C  \xrightarrow{\phi} D$ is a well-defined morphism in $\mathcal F$ since $\ell_D(\phi(x)) \leq \ell_C(x) + \delta = \ell_{\Sigma^{\delta}C}(x)$ for any $x \in C$. 

\begin{ex} \label{ex-quasi-htp} Suppose $C$ and $D$ are $(\delta_+, \delta_{-})$-quasiequivalent with chain maps $\phi$ and $\psi$. By discussion above, we have well-defined maps in $\mathcal F$, 
\[ \Sigma^{\delta_+ + \delta_-} C \xrightarrow{\phi} \Sigma^{\delta_{-}} D \xrightarrow{\psi} C \,\,\,\,\,\,\,\,\mbox{and} \,\,\,\,\,\,\,\,\, \Sigma^{\delta_+ + \delta_-} D \xrightarrow{\psi} \Sigma^{\delta_{+}} C \xrightarrow{\phi} D. \]
Meanwhile, there is an obvious map, defined by shifting the filtration of {\bf domain}, $i_{\delta_+ + \delta_-}: \Sigma^{\delta_+ + \delta_{-}} C \to C$ which is the identity map on $C$ {\it as a vector space}. We emphasize that in the category $\mathcal F$, 
\[ \Sigma^{\delta_+ + \delta_-} C \neq C, \]
because their filtrations are different. Therefore, $i_{\delta_{+} + \delta_{-}}$ is {\bf not} an identity map. Moreover, by definition of $(\delta_+, \delta_-)$-quasiequivalence, we know
\[ \psi \circ \phi\,\,\,\mbox{is homotopic to} \,\,\, i_{\delta_+ + \delta_{-}} \,\,\,\,\mbox{in} \,\,\,\mathcal F\]
and 
\[ \phi \circ \psi\,\,\,\mbox{is homotopic to} \,\,\, i_{\delta_+ + \delta_{-}} \,\,\,\,\ \mbox{in} \,\,\, \mathcal F.\]
\end{ex}

The following easy lemma will be useful later. 

\begin{lemma} \label{i-commute} For any $\delta \geq 0$, morphism $i_{\delta}$ commutes with any morphism in $\mathcal F$. \end{lemma}

\begin{proof} Suppose $\phi: C \to D$ is defined in $\mathcal F$. We have the following commutative diagram 
\[ \xymatrix{
\Sigma^{\delta} C \ar[r]^{i_{\delta}} \ar[d]_{\Sigma^{\delta}{\phi}} & C \ar[d]^{\phi}\\
\Sigma^{\delta} D \ar[r]_{i_{\delta}} & D,} \]
that is $\phi \circ i_{\delta} = i_{\delta} \circ \Sigma^{\delta} \phi = i_{\delta} \circ \phi$. Here, we want to emphasize that for the third term of this equality, even if we write $i_{\delta} \circ \phi$ for the same notation $\phi$ but this $\phi$'s domain and target are changed, i.e., the filtrations have been shifted by $\delta$. \end{proof}

Motivated by Example \ref{ex-quasi-htp}, we will consider the following more ``concise'' category defined as follows.
\begin{dfn} Define the filtered homotopy category of Floer-type complexes $K(\mathcal F)$:
\begin{itemize}
\item{} Object in $K(\mathcal F)$: the same as object in $\mathcal F$;
\item{} Morphism in $K(\mathcal F)$: filtered homotopy class (in $\mathcal F$). 
\end{itemize}
\end{dfn}

\begin{ex} \label{identity} The condition that $C$ and $D$ are $(\delta_+, \delta_{-})$-quasiequivalent with chain maps $\phi$ and $\psi$ is equivalent to the condition $\psi \circ \phi = i_{\delta_+ + \delta_-}$ and $\phi \circ \psi = i_{\delta_+ + \delta_-}$ in $K(\mathcal F)$. \end{ex} 

Using this language, we transfer the problem of constructing certain chain maps which are (desired) bounded filtration shifting homotopies into an existence question of morphisms in $K(\mathcal F)$ up to a finite filtration shift. Therefore, we can restate Proposition \ref{equ-cone} in this new category as follows.
\begin{prop}\label{equ-cone2}
Suppose that we have the algebraic set-up above. There exist finite positive constants $\Delta_+$ and $\Delta_-$ such that there exist morphism $\Phi$ and $\Psi$ in $\mathcal F$ to form the following sequence of maps, 
\[ \Sigma^{\Delta_+ + \Delta_-} Cone_D(S') \xrightarrow{\Psi} \Sigma^{\Delta_+} Cone_C(S) \xrightarrow{\Phi} Cone_D(S') \xrightarrow{\Psi} \Sigma^{- \Delta_-}Cone_C(S) \]
and 
\[ \Phi \circ \Psi = i_{\Delta_+ + \Delta_-} \,\,\,\,\,\mbox{and}\,\,\,\,\,  \Psi \circ \Phi = i_{\Delta_+ + \Delta_-} \,\,\,\,\, \mbox{in} \,\,\,K(\mathcal F). \]
Moreover, $ \Delta_+ + \Delta_-\leq 6(\delta_+ + \delta_-)$.
\end{prop}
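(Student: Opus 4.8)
The plan is to construct the maps $\Phi$ and $\Psi$ directly from the data $\phi, \psi, P, Q$ of the algebraic set-up, mimicking the standard way one lifts a homotopy commutative square to a map of mapping cones. Concretely, given $\phi\colon C_*\to D_*$ intertwining $S$ and $S'$ only up to the homotopy $P$ (so $S'\circ\phi-\phi\circ S=P\circ\partial_C+\partial_D\circ P$, with $P$ a $(\delta_+,\delta_-)$-morphism), the first step is to define
\[
\Phi\colon Cone_C(S)_*=C_*\oplus C_{*-1}\longrightarrow Cone_D(S')_*=D_*\oplus D_{*-1}
\]
by the upper-triangular formula $\Phi(x_1,x_2)=(\phi x_1 + P x_2,\ \phi x_2)$, and similarly $\Psi(d_1,d_2)=(\psi d_1 + Q d_2,\ \psi d_2)$. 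First I would verify that $\Phi$ is a chain map with respect to the cone differentials $\partial_{co}=\begin{pmatrix}\partial_C&-S\\0&-\partial_C\end{pmatrix}$ and $\begin{pmatrix}\partial_D&-S'\\0&-\partial_D\end{pmatrix}$; this is exactly where the homotopy identity for $P$ gets used, and it is the usual bookkeeping that the off-diagonal term $-S'\phi+\phi S$ is absorbed by $P\partial_C+\partial_D P$. Then I would check the filtration behaviour: since $\ell_{co}$ on a cone is the max of the two components' filtrations, and $\phi$ is a $\delta_+$-morphism while $P$ is a $(\delta_+,\delta_-)$-morphism, $\Phi$ is a $(\delta_++\delta_-)$-morphism on the cone; symmetrically $\Psi$ is a $(\delta_++\delta_-)$-morphism. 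This already explains why the shifts in the statement are governed by $\delta_++\delta_-$ rather than $\delta_\pm$ separately — the homotopy terms $P,Q,K_C,K_D$ all carry the combined shift.

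The second step is to produce the homotopies realizing $\Phi\circ\Psi = i$ and $\Psi\circ\Phi = i$ in $K(\mathcal F)$. Here I would compute $\Psi\circ\Phi$ explicitly as a $2\times 2$ matrix of operators on $C_*\oplus C_{*-1}$: the diagonal entries are $\psi\phi$ (which is filtered homotopic to $i_{\delta_++\delta_-}$ via $K_C$ by hypothesis), and the off-diagonal upper entry is $\psi P + Q\phi$, some $(\delta_++\delta_-)$-morphism that I want to kill up to homotopy. The natural ansatz for a cone-level homotopy $\mathcal K$ between $\Psi\circ\Phi$ and $i_{\Delta_++\Delta_-}$ is again upper-triangular, $\mathcal K(x_1,x_2)=(K_C x_1 + L x_2,\ K_C x_2)$, where $L$ is a new operator to be chosen so that the homotopy equation in the off-diagonal slot reads $\psi P + Q\phi - (\text{correction from }K_C S) = \partial_C L + L\partial_C + (\text{stuff})$. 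Constructing $L$ is the one genuinely fiddly point: one writes down what the mapping-cone homotopy equation demands in the $(1,2)$-entry and solves for $L$ as a combination of $K_C$, $P$, $Q$, $\psi$, $\phi$ composed appropriately; because $K_C$ is a $(\delta_+,\delta_-)$-morphism and the others are $(\delta_+,\delta_-)$- or $\delta_\pm$-morphisms, $L$ ends up being bounded by a small multiple of $\delta_++\delta_-$, which is what forces the factor $6$. I expect this explicit construction of $L$ — keeping careful track of which composite contributes which filtration shift — to be the main obstacle; everything else is formal.

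The third step is purely organizational: set $\Delta_+=\Delta_-=3(\delta_++\delta_-)$ so that $\Delta_++\Delta_-=6(\delta_++\delta_-)$, reinterpret the $(\delta_++\delta_-)$-morphisms $\Phi,\Psi$ and the $(\delta_++\delta_-)$-homotopies as honest morphisms respectively homotopies in $\mathcal F$ after applying the shift functor $\Sigma^{\Delta_\pm}$ (exactly as in the Example preceding the statement, where $\Sigma^{\delta_++\delta_-}C\xrightarrow{\phi}\Sigma^{\delta_-}D$ becomes a morphism in $\mathcal F$), and assemble the displayed four-term sequence $\Sigma^{\Delta_++\Delta_-}Cone_D(S')\xrightarrow{\Psi}\Sigma^{\Delta_+}Cone_C(S)\xrightarrow{\Phi}Cone_D(S')\xrightarrow{\Psi}\Sigma^{-\Delta_-}Cone_C(S)$. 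The equations $\Phi\circ\Psi=\Psi\circ\Phi=i_{\Delta_++\Delta_-}$ in $K(\mathcal F)$ are then precisely the two homotopies built in step two, now read in the shifted complexes. I would close by remarking that Proposition~\ref{equ-cone} follows immediately, since a morphism/homotopy pair in $K(\mathcal F)$ between shifted cones unwinds (via the definition of $\Sigma^\delta$) to a $(\Delta_+,\Delta_-)$-quasiequivalence in the sense of Definition~\ref{dfn-mc}, with $\Delta_++\Delta_-\le 6(\delta_++\delta_-)$ as claimed.
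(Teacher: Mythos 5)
Your first step is sound: the upper-triangular maps $\Phi(x_1,x_2)=(\phi x_1+Px_2,\phi x_2)$ and $\Psi(d_1,d_2)=(\psi d_1+Qd_2,\psi d_2)$ are indeed filtered chain maps between the cones (this is where the homotopy identities for $P$ and $Q$ are used), and your $\Phi$ is in fact a legitimate choice of the map $h$ completing the morphism of distinguished triangles in the paper's diagram. The gap is in your second step. Writing out the homotopy equation for $\Psi\circ\Phi\simeq i$ with the ansatz $\mathcal K=\left(\begin{smallmatrix}K_C & L\\ 0 & -K_C\end{smallmatrix}\right)$, the diagonal entries are handled by $K_C$, but the $(1,2)$-entry demands
\[
\partial_C L - L\partial_C \;=\; \psi P + Q\phi - SK_C + K_CS .
\]
The right-hand side is a cycle in the endomorphism complex (one checks $\partial_C\Theta+\Theta\partial_C=0$ for $\Theta=\psi P+Q\phi-SK_C+K_CS$, using $\partial K_C+K_C\partial=\psi\phi-I$), but there is no reason for it to be a boundary: this is exactly the classical non-functoriality of the mapping cone, and the secondary obstruction it represents need not vanish for the general algebraic set-up in which the proposition is stated. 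Even in situations where it does vanish (e.g.\ when the underlying complexes are acyclic over a field), solvability gives you no a priori bound on the filtration shift of $L$, so the quantitative claim $\Delta_++\Delta_-=6(\delta_++\delta_-)$ would still not follow from this route.

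The paper avoids constructing $L$ altogether. It works in the triangulated homotopy category $K(\mathcal F)$, applies the exact functors ${\Hom}(\Sigma^{3(\delta_++\delta_-)}Cone_D(S'),-)$ and ${\Hom}(-,\Sigma^{-2\delta_+-3\delta_-}Cone_C(S))$ to the morphism of distinguished triangles, and runs a Five-Lemma-style diagram chase (Proposition \ref{inverse}) in which $\phi$ and $\psi$ are only inverse up to $i_{\delta_++\delta_-}$; each of the three steps of the chase costs one factor of $i_{\delta_++\delta_-}$, producing a right inverse $j$ and a left inverse $j'$ of $h$ up to $i^3_{\delta_++\delta_-}$. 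These one-sided inverses are then spliced into the two-sided quasi-inverse $g=j'\circ h\circ j$, satisfying $gh=hg=i^6_{\delta_++\delta_-}$ -- this composition, not any estimate on a homotopy $L$, is where the factor $6$ comes from. So $\Psi$ in the proposition is the abstractly produced $g$, not your explicit matrix; to repair your argument you would either have to prove the obstruction class above vanishes with controlled filtration (which is not available in this generality) or fall back on the paper's categorical argument.
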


\begin{remark} Compared with the category of chain complexes, the advantage of working on homotopy category is that homotopy category $K(\mathcal F)$ is a triangulated category, therefore we can introduce mapping cones to form distinguished triangles (cf. diagram (\ref{dist-tri})) and use various homological algebra results. Importantly, thanks to Theorem B in \cite{UZ15}, even though in this filtered homotopy category ``equality'' relation is represented only by filtered homotopy equivalences (instead of chain isomorphisms), concise barcode is still well-defined. \end{remark}

Consider the following diagram in $K(\mathcal F)$, 
\begin{equation}\label{dist-tri}
  \xymatrix{ \Sigma^{\delta_+} C \ar[r]^{S} \ar[d]^{\phi} & \Sigma^{\delta_+} C \ar[r]^-{\iota}\ar[d]^{\phi} & \Sigma^{\delta_+} Cone_C(S)
    \ar[r]^-{\pi}\ar[d]^{h} & \Sigma^{\delta_+} C[1] \ar[d]^{\phi[1]} \\ D \ar[r]_{S'} & D 
    \ar[r]_-{\iota} & Cone_D(S') \ar[r]_-{\pi}& D[1]}
\end{equation}
where $\iota$ is the inclusion and $\pi$ is the projection. When $\delta_+ = \delta_- = 0$ (back to the classical case), by 4. Corollary in \cite[p. 242]{GM}, if $\phi$ is an isomorphism (invertible), then the middle morphism $h$ is also an isomorphism and existence of this map is given by axiom one (TR1) of triangulated category. The basic tool of proving this is the fact that functors ${\Hom}(Cone_D(S'), -)$ and ${\Hom}(-, Cone_C(S))$ are cohomological functors (so they induce long exact sequences), together with Five Lemma. 

Second, here in our situation $\phi$ and $\psi$ are not necessarily invertible (again because $i_{\delta_++ \delta_-}$ is not the identity map). However, a trick that we will use is (1) composing (or pre-composing) with map $i_{\delta_+ + \delta_-}$, which by definition is shifting the filtration of domains and then (2) using the identity from Example \ref{identity}. We give an example demonstrating this. 

\begin{ex} \label{ex-shift}
Fix an object $A$ in $K(\mathcal F)$ and consider 
\[{\Hom}(A, \Sigma^{\delta_+}C) \xrightarrow{ \phi \circ} {\Hom}(A, D). \]
Taking an element $f \in {\Hom}(A, D)$, it is not necessary that there exists some element $g \in {\Hom}(A, \Sigma^{\delta_+} C)$ such that $f = \phi \circ g$. In other words, existence of preimage of $f$ under $\phi$ is not guaranteed. However, consider $f \circ i_{\delta_+ + \delta_-} \in {\Hom}(\Sigma^{\delta_+ + \delta_-} A, D)$. By Lemma \ref{i-commute}, 
\[ f \circ i_{\delta_+ + \delta_-} = i_{\delta_+ + \delta_-} \circ f = \phi \circ \psi \circ f = \phi \circ (\psi \circ f)\]
and $\psi \circ f \in {\Hom}(\Sigma^{\delta_+ + \delta_-} A, \Sigma^{\delta_+} C)$. So there exists a preimage of ``shifted'' $f$ by $i_{\delta_+ + \delta_-}$. \end{ex}

\begin{prop}\label{inverse}
For $i_{3(\delta_+ + \delta_-)}  \in {\Hom}(\Sigma^{3(\delta_++ \delta_-)} Cone_D(S'), Cone_D(S'))$, there exists an element $j  \in {\Hom}(\Sigma^{3(\delta_++ \delta_-)} Cone_D(S'), \Sigma^{\delta_+} Cone_C(S))$ such that $h \circ j = i_{3(\delta_+ + \delta_-)}$. 
\end{prop}

\begin{proof} 
{\bf Step one}. First consider the following diagram 
\[  \small{
\xymatrix{ {\Hom}(\Sigma^{\delta_++ \delta_-} Cone_D(S'), \Sigma^{\delta_+} Cone_C(S)) \ar[r]^-{\pi \circ} \ar[d]^{h \circ } & {\Hom}(\Sigma^{\delta_++ \delta_-} Cone_D(S'), \Sigma^{\delta_+} C[1]) \ar[d]^-{\phi[1] \circ } \\  
{\Hom}(\Sigma^{\delta_++ \delta_-} Cone_D(S'), Cone_D(S'))\ar[r]^-{\pi \circ } &  {\Hom}(\Sigma^{\delta_++ \delta_-} Cone_D(S'), D[1]).}}\]
For $i_{\delta_+ + \delta_-} \in {\Hom}(\Sigma^{\delta_++ \delta_-} Cone_D(S'), Cone_D(S'))$, 
\[ \pi \circ i_{\delta_+ + \delta_-} = i_{\delta_+ + \delta_-} \circ \pi = \phi[1] \circ \psi[1] \circ \pi = \phi[1] \circ (\psi[1] \circ \pi) \]
where $\psi[1] \circ \pi \in {\Hom}(\Sigma^{\delta_++ \delta_-} Cone_D(S'), \Sigma^{\delta_+} C[1])$. Moreover, we can check 
\[ S[1] \circ \psi[1] \circ \pi = \psi[1] \circ S'[1] \circ \pi = \psi[1] \circ (S'[1] \circ \pi) = 0.\]
Therefore, $\psi[1] \circ \pi \in \ker(S[1] \circ)$. By the exactness of the top row (after applying cohomological functor ${\Hom}(\Sigma^{\delta_+ + \delta_-} Cone_D(S'), \cdot)$), we know 
\begin{equation}\label{exact1}
\pi \circ z = \psi[1] \circ \pi
\end{equation}
for some $z \in {\Hom}(\Sigma^{\delta_++ \delta_-} Cone_D(S'), \Sigma^{\delta_+} Cone_C(S))$. Now consider
\[ \tilde{z} := i_{\delta_+ + \delta_-} \circ z = z \circ i_{\delta_+ + \delta_-}, \]
so $\tilde{z} \in {\Hom}(\Sigma^{2(\delta_++ \delta_-)} Cone_D(S'), \Sigma^{\delta_+} Cone_C(S))$. Moreover, by (\ref{exact1}), 
\begin{equation}\label{exact2}
\pi \circ \tilde{z} = i_{\delta_+ + \delta_-} \circ \psi[1] \circ \pi.
\end{equation} 

{\bf Step two}. Considering the following diagram 
\[  \small{
\xymatrix{  {\Hom}(\Sigma^{2(\delta_++ \delta_-)} Cone_D(S'), \Sigma^{\delta_+} C) \ar[r]^-{\iota \circ} \ar[d]^{\phi \circ} & {\Hom}(\Sigma^{2(\delta_++ \delta_-)} Cone_D(S'), \Sigma^{\delta_+} Cone_C(S)) \ar[d]^{h \circ} \\  {\Hom}(\Sigma^{2(\delta_++ \delta_-)} Cone_D(S'), D)\ar[r]^-{\iota \circ } &  {\Hom}(\Sigma^{2(\delta_++ \delta_-)} Cone_D(S'), Cone_D(S'))}} \]
and element 
\[ h \circ \tilde{z}  - i_{2(\delta_+ + \delta_-)} \in {\Hom}(\Sigma^{2(\delta_++ \delta_-)} Cone_D(S'), Cone_D(S')). \]
By the commutativity of previous diagram and (\ref{exact2}), 
\begin{align*}
\pi \circ (h \circ \tilde{z} - i_{2(\delta_+ + \delta_-)}) &= \pi \circ h \circ \tilde{z} - \pi \circ i_{2(\delta_+ + \delta_-)} \\
& = \phi[1] \circ \pi \circ \tilde{z} - \pi \circ i^2_{(\delta_+ + \delta_-)} \\
& = \phi[1] \circ i_{(\delta_+ + \delta_-)} \circ \psi[1] \circ \pi - \pi \circ i^2_{(\delta_+ + \delta_-)}\\
& = 0. 
\end{align*}
Therefore, $h \circ \tilde{z}  - i_{2(\delta_+ + \delta_-)} \in \ker(\pi \circ)$. By the exactness of the bottom row (after applying cohomological functor ${\Hom}(\Sigma^{2(\delta_+ + \delta_-)} Cone_D(S'), \cdot)$), we know 
\begin{equation} \label{exact3}
h \circ \tilde{z}  - i_{2(\delta_+ + \delta_-)} = \iota \circ w
\end{equation}
for some $w \in {\Hom}(\Sigma^{2(\delta_+ + \delta_-)} Cone_D(S'), D)$. This is exactly the situation that $w$ does not necessarily have any preimage under $\phi \circ$ in 
$${\Hom}(\Sigma^{2(\delta_++ \delta_-)} Cone_D(S'), \Sigma^{\delta_+} C).$$
Now we will use the trick from Example \ref{ex-shift} by considering ``shifted'' relation of (\ref{exact3}), that is 
\begin{equation} \label{exact4}
h \circ (i_{\delta_+ + \delta_-} \circ \tilde{z})  - i_{3(\delta_+ + \delta_-)} = \iota \circ (i_{\delta_+ + \delta_-} \circ w)
\end{equation}
in ${\Hom}(\Sigma^{3(\delta_++ \delta_-)} Cone_D(S'), Cone_D(S'))$. As in Example \ref{ex-shift}, for element $i_{\delta_+ + \delta_-} \circ w \in {\Hom}(\Sigma^{3(\delta_+ + \delta_-)} Cone_D(S'), D)$, we have 
\[ i_{\delta_+ + \delta_-}  \circ w  = \phi \circ (\psi \circ w) \]
where $\psi \circ w \in {\Hom}(\Sigma^{3(\delta_++ \delta_-)} Cone_D(S'), \Sigma^{\delta_+} C)$. 

{\bf Step three}. Consider element 
\[ j : = -\iota \circ \psi \circ w  + i_{\delta_+ + \delta_-} \circ \tilde{z} \]
in ${\Hom}(\Sigma^{3(\delta_++ \delta_-)} Cone_D(S'), \Sigma^{\delta_+} Cone_C(S))$. Then by (\ref{exact4}), it is easy to check 
\begin{align*}
h \circ j & = - h \circ \iota \circ \psi \circ w + h \circ i_{\delta_+ + \delta_-} \circ \tilde{z} \\
& = - \iota \circ \phi \circ \psi \circ w + i_{3(\delta_+ + \delta_-)} + \iota \circ i_{\delta_+ + \delta_-} \circ w \\
& = i_{3(\delta_+ + \delta_-)}.
\end{align*}
This $j$ is the required element. 
 \end{proof}

\begin{proof} [Proof of Proposition \ref{equ-cone2}]
Similarly to the proof of Proposition \ref{inverse}, we can apply contravariant functor ${\Hom}(-, \Sigma^{*}Cone_C(S))$ (for some shift $*$) on the first row of diagram (\ref{dist-tri}) shifted by functor $\Sigma^{\delta_+}$ and we will get a $j' \in {\Hom}(Cone_D(S'), \Sigma^{-2\delta_+ - 3 \delta_-} Cone_C(S))$ such that 
\begin{equation} \label{other-comp}
j'\circ h = i^3_{\delta_+ + \delta_-}. 
\end{equation} 
Combined with Proposition \ref{inverse}, we have the following composition, 
\begin{align} \label{composition2}
	\Sigma^{3 \delta_+ + 3\delta_-} Cone_D(S') &\xrightarrow{j} \Sigma^{\delta_+} Cone_C(S)\\
	\notag &\xrightarrow{h} Cone_D(S') \xrightarrow{j'} \Sigma^{-2\delta_+ - 3\delta_-}Cone_C(S). 
\end{align}
Now $j$ and $j'$ are not necessarily the same since $i^3_{\delta_+ + \delta_-}$ is not an isomorphism. But taking advantage that $i^3_{\delta_+ + \delta_-}$ commutes with any morphism, we have
\[ (j' \circ h \circ j) \circ h = j'\circ (h\circ j)\circ h = j' \circ i^3_{\delta_+ + \delta_-} \circ h = (j' \circ h) \circ i^3_{\delta_+ + \delta_-} = i^6_{\delta_+ + \delta_-} \]
and 
\[ h \circ (j' \circ h \circ j) = h \circ (j' \circ h) \circ j = h \circ i^3_{\delta_+ +\delta_-} \circ j = (h\circ j) \circ i^3_{\delta_+ + \delta_-} = i^6_{\delta_+ + \delta_-}. \]
Denote $g = j' \circ h \circ j$, then we can improve chain of maps (\ref{composition2}) into 
\begin{align}\label{new-composition}
	\Sigma^{6 \delta_+ + 6\delta_-} Cone_D(S') &\xrightarrow{g} \Sigma^{\delta_+} Cone_C(S)\\
	\notag &\xrightarrow{h} Cone_D(S') \xrightarrow{g} \Sigma^{-5\delta_+ - 6\delta_-}Cone_C(S). 
 \end{align}
Hence, we will get the conclusion by setting $\Delta_+ = \delta_+$, $\Delta_- = 5 \delta_+ + 6 \delta_-$ and $\Phi = h$ and $\Psi = g$. \end{proof} 

\subsection{Proofs of Proposition \ref{lip2} and Proposition \ref{lip3}}

\begin{proof}[Proof of Proposition \ref{lip2}] By definition, $(\Delta_+, \Delta_-)$-quasiequivalence is in particular a $(\Delta_+ + \Delta_-, \Delta_+ + \Delta_-)$-quasiequivalence. For any given $\ep>0$, there exists a $(\Delta_+, \Delta_-)$-quasiequivalence between $Cone(H)_*$ and $Cone(G)_*$ with 
\[ \Delta_+ + \Delta_- \leq 2d_Q(Cone(H)_*, Cone(G)_*) + 2\ep \]
Moreover, by Corollary 8.8 in \cite{UZ15}, we know 
\begin{align*}
 |\beta_i(\phi_H) - \beta_i(\phi_G)| \leq 2(\Delta_+ + \Delta_-) & \leq 4 d_Q(Cone(H)_*, Cone(G)_*) + 4\ep.
 \end{align*}
Since $\ep$ is arbitrarily chosen, we get the conclusion. \end{proof}

\begin{proof} [Proof of Proposition \ref{lip3}] Clearly, we only need to prove the following conclusion 
\begin{equation} \label{degree-k dsi}
 \mathfrak{o}_X(\phi)_k -\mathfrak{o}_X(\psi)_k \leq 24p \cdot d_H(\phi, \psi)
 \end{equation}
for each degree $k$. Indeed, for any $\ep>0$, there exists some $k \in \Z$ such that $\mathfrak{o}_X(\phi) \leq \mathfrak{o}_X(\phi)_k + \ep$. Meanwhile, for this $k$, we have $\mathfrak{o}_X(\psi)_k \leq \mathfrak{o}_X(\psi)$. Therefore, (\ref{degree-k dsi}) implies 
\[ \mathfrak{o}_X(\phi) - \mathfrak{o}_X(\psi) \leq \mathfrak{o}_X(\phi)_k - \mathfrak{o}_X(\psi)_k + \ep \leq 24p \cdot d_H(\phi, \psi) + \ep\]
and by symmetry, we get the other direction.

Now suppose $\phi = \phi_H$ and $\psi = \psi_G$. Proposition \ref{lip1} and Proposition \ref{lip2} give
\[ |\beta_i(\phi) - \beta_i(\psi)| \leq 12p \cdot ||H-G||_H. \]
If $\mathfrak{o}_X(\phi)_k$ is realized by some index $s_0 \in \N$, then 
\begin{align*}
\mathfrak{o}_k(\phi) - \mathfrak{o}_k(\psi) &\leq (\beta_{s_0p+1}(\phi) - \beta_{(s_0+1)p}(\phi)) - (\beta_{s_0p+1}(\psi) - \beta_{(s_0+1)p}(\psi)) \\
& \leq (\beta_{s_0p+1}(\phi) - \beta_{s_0p+1}(\psi)) + ( \beta_{(s_0+1)p}(\psi) - \beta_{(s_0+1)p}(\phi))\\
	&\leq 24p \cdot ||H-G||_H.
\end{align*}
So we get the conclusion by taking the infimum over the generating Hamiltonians of Hamiltonian diffeomorphisms $\phi$ and $\psi$. 
\end{proof}

\section{Computation of egg-beater model $\Sigma_g$} \label{egg}
First, we will state the CZ-index formula for generator of $CF_*(\Sigma_g, \phi_{\lambda}^p)_{\alpha_{\lambda}}$ to help us label the indices later in this section. Recall that from the construction of egg-beater model, for each fixed point $z$, it will naturally come up with another $2p-1$ intermediate points, denoted by  $z_1, \dots , z_{2p-1}$ and set $z = z_0$. Using coordinates, we denote 
\[ z_{2j} = (x_{2j}, y_{2j}) \,\,\,\,\,\mbox{and} \,\,\,\,\, z_{2j+1} = (x_{2j+1}, y_{2j+1}). \]
By (41) in \cite{PS14}, we know 
\[ (x_{2j+1}, y_{2j+1}) = (-y_{2j+2}, x_{2j}) \,\,\,\,\,\mbox{and} \,\,\,\,\,\,\, (x_{2p}, y_{2p}) = (x_0, y_0). \]
So we will expect our formula only involves $x$ and $y$ coordinate of {\it even} index intermediate points. 
\begin{theorem}\label{CZ}[Theorem 5.2 in \cite{egg-team}]
In egg-beater model (\ref{egg-beater}), the CZ-index of fixed point $z$ with intermediate points $z_1, \dots, z_{2p-1}$ is given by
\begin{equation} \label{CZ-index}
\mu_{CZ}(z) = 1+ \frac{1}{2} \sum_{j=0}^{p-1} ({\sign}(x_{2j}) - {\sign}(y_{2j})).
\end{equation}
\end{theorem}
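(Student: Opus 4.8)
The statement to prove is Theorem~\ref{CZ}, the Conley--Zehnder index formula for generators of the egg-beater Floer complex $CF_*(\Sigma_g,\phi_\lambda^p)_\alpha$. Since the excerpt attributes this formula to Theorem~5.2 in \cite{egg-team}, strictly speaking the ``proof'' here should be either a citation or a reconstruction of the computation; I will sketch the reconstruction.

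\textbf{Setup and strategy.} The plan is to reduce the Conley--Zehnder index of a non-degenerate fixed point $z$ of $\phi_\lambda^p$ to a sum of local contributions, one for each of the $p$ passes through the egg-beater (equivalently, one for each pair of consecutive even-indexed intermediate points $z_{2j}, z_{2j+2}$). First I would recall that $\phi_\lambda$ is, on the support of the relevant Hamiltonian, a composition of two shear maps supported on the two annuli, and that $\phi_\lambda^p$ therefore decomposes as an alternating product of $2p$ shears. The orbit of $z$ under this decomposition visits the intermediate points $z_0, z_1, \dots, z_{2p-1}$, and by the relation $(x_{2j+1},y_{2j+1}) = (-y_{2j+2}, x_{2j})$ from (41) in \cite{PS14} the odd-indexed points are determined by the even-indexed ones, so only the signs of the $x_{2j}, y_{2j}$ enter.

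\textbf{Key steps in order.} (1) Choose a symplectic trivialization of $T\Sigma_g$ along the reference loop in class $\alpha$, extended over the capping; since the $\phi_\lambda^j z$ all lie in the flat part of the annuli, the linearized dynamics is block-diagonal and constant-coefficient in suitable coordinates, so the trivialization is unambiguous. (2) Compute the linearized return map $d(\phi_\lambda^p)_z$ as a product of $2p$ elementary symplectic shear matrices $\begin{pmatrix}1 & * \\ 0 & 1\end{pmatrix}$ and $\begin{pmatrix}1 & 0 \\ * & 1\end{pmatrix}$, where the off-diagonal entries are large (of order $\lambda$) with signs governed by $\sign(x_{2j})$ and $\sign(y_{2j})$ --- this is where the geometry of which branch of the shear each intermediate point sits on gets encoded. (3) Use the standard crossing-form / Maslov-type recipe for $\mu_{CZ}$ of a path of symplectic matrices (as in \cite{RS93}): for a path built from shears, each shear contributes a half-integer to the index according to the sign of its off-diagonal entry, and these contributions telescope into $\tfrac12\sum_{j=0}^{p-1}(\sign(x_{2j}) - \sign(y_{2j}))$. (4) Fix the additive normalization constant by checking one explicit reference fixed point (e.g. the ``central'' orbit, or by matching against the known computation in a single egg-beater for $p=1$), which pins down the leading $+1$.

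\textbf{Main obstacle.} The delicate part is step (3): correctly bookkeeping the half-integer jumps of the Conley--Zehnder index as the linearized flow passes through each shear, keeping track of when the path crosses the ``Maslov cycle'' $\{\det(A - I) = 0\}$ and with what sign. Because the shear entries are large, the path winds a definite amount, and one must argue that in the large-$\lambda$ regime no spurious crossings occur beyond the ones counted by the signs of the intermediate coordinates; equivalently one must verify that the relevant crossing forms are definite. A clean way to handle this is to deform each shear block to a rotation by a small angle of the appropriate sign (which does not change $\mu_{CZ}$ since it avoids the Maslov cycle) and then read off the index as a sum of rotation numbers, each $\pm\tfrac14$ or $\pm\tfrac12$ per shear, matching the claimed formula after grouping the $2p$ shears into $p$ pairs. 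The normalization bookkeeping (step (4)) is routine once the local model is fixed, as is the verification that the formula is independent of the capping given the constraints built into $\widetilde{\mathcal L_\alpha}(\Sigma_g)$.
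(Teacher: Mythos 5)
This statement is quoted verbatim from Theorem 5.2 of \cite{egg-team}; the paper under review offers no proof of it, only the citation, so there is no internal argument to compare against. Your reconstruction follows the standard (and, as far as the cited source goes, the actual) route: write $d(\phi_\lambda^p)_z$ as a product of $2p$ elementary shear Jacobians whose off-diagonal signs are read off from the coordinates of the intermediate points, then evaluate the Conley--Zehnder index of the resulting path via the Robbin--Salamon crossing form and fix the normalization on a reference orbit. That outline is correct in structure, but note that what you label the ``main obstacle'' --- the definiteness of the crossing forms and the absence of spurious Maslov-cycle crossings in the large-$\lambda$ regime, together with the grouping of the $2p$ half-integer contributions into the $p$ terms $\tfrac12(\sign(x_{2j})-\sign(y_{2j}))\in\{-1,0,1\}$ --- is precisely the content of the cited theorem, and your proposal states it as a plan rather than carrying it out; as submitted it is a citation plus a plausible sketch, not a complete proof.
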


Note that each summand ${\sign}(x_{2j}) - {\sign}(y_{2j})$ only takes value $-2, 0$ and $2$, so the range of index is $[-p+1, p+1]$. 

\begin{cor} \label{large-gap} For each degree $k \in [-p+1, p+1]$, there are $\binom{2p}{k +p-1}$-many fixed points $z$ coming in with their $p$-tuple (\ref{egg-p-tuples}). \end{cor} 
\begin{proof} For the first conclusion, suppose that, within all the possible choices for ${\sign}(x_{2j})$ and $-{\sign}(y_{2j})$, we have $a$-many $1$ and $b$-many $-1$, then by (\ref{CZ-index}), we know $a - b = 2(k-1)$. Meanwhile, $a+b = 2p$, therefore, we need to choose $a = k+p-1$-many $1$'s. \end{proof}

Now we will consider the self-mapping cone of egg-beater model, denoted by  
\[ Cone_{\Sigma_g}(H_{\lambda})_*: = Cone_{CF(\Sigma_g, \phi^p_{\lambda})_{\alpha_{\lambda}}}(T - \xi_p \cdot {\mathds 1})_*. \]
First, by a standard result from Floer theory (see Theorem \ref{Ar-conj-cont}), 
$$ H_*(CF(\Sigma_g, \phi^p_{\lambda})_{\alpha_{\lambda}}) = 0$$
because $\alpha_{\lambda}$ is represented by a family of non-contractible loops,
\begin{equation} \label{hm-cone}
H_*(Cone_{\Sigma_g}(H_{\lambda}))= 0
\end{equation}
by the following long exact sequence 
\begin{align*}
	\cdots &\to H_k(CF(\Sigma_g, \phi^p_{\lambda})_{\alpha_{\lambda}}) \xrightarrow{((T - \xi_p \cdot {\mathds 1})_k)_*} H_k(CF(\Sigma_g, \phi^p_{\lambda})_{\alpha_{\lambda}}) \\
&\xrightarrow{\iota_*} H_{k}(Cone_{\Sigma_g}(H_{\lambda})) \xrightarrow{p_*} H_{k-1}(CF(\Sigma_g, \phi^p_{\lambda})_{\alpha_{\lambda}}) \to \cdots
\end{align*} 
where $\iota$ is inclusion and $p$ is projection. Therefore, in terms of barcode, there are only finite length bars, possibly with zero length. Meanwhile, by rank-nullity theorem, it is easy to check 
\begin{align} \label{dim-ima}
	\dim (\ker(\partial_{co})_k) &=\dim({\rm Im}(\partial_{co})_{k+1})\\
	\notag &=  \dim(CF_k(\Sigma_g,\phi^p_{\lambda})_{\alpha_{\lambda}})= p \cdot \binom{2p}{k+p-1}.
\end{align}
Therefore, the total multiplicity of verbose barcode of $Cone_{\Sigma_g}(H_{\lambda})_*$ is 
\[ \sum_{k} \dim \ker(\partial_{co})_k = \frac{1}{2} \sum_{k} \dim(Cone_{\Sigma_g}(H_{\lambda})_k) = p \cdot 2^{2p}. \]
Therefore, to prove Proposition \ref{multi-egg}, we need to prove there are exactly $(p-1)\cdot 2^{2p}$-many zero length bars in total for self-mapping cone of egg-beater model. The way to prove this is by running the algorithm, Theorem 3.5 in \cite{UZ15}, to get a singular value decomposition so that we can compute barcode in this concrete example. We will demonstrate this by an explicit computational example here. 

\begin{remark} The structure of mapping cone is the key to succeed computing barcode and count its multiplicity. If we work only on $CF_*(\Sigma_g, \phi^p_\lambda)_{\alpha_{\lambda}}$, in general, we don't have enough information of Floer boundary operator of $CF_*(\Sigma_g, \phi^p_\lambda)_{\alpha_{\lambda}}$ to compute the associated barcode. \end{remark}

\subsection{Example of computing barcode} \label{ssec-exe-computing-barcode}
Here we first briefly recall the process of generating a singular value decomposition of linear map $T: (V, \ell_1) \to (W, \ell_2)$ in a special case that $\Gamma$ being trivial (here egg-beater model satisfies). For a general process, see Theorem 3.5 in \cite{UZ15}. Given an ordered orthogonal basis $(v_1, \dots ,v_n)$ for $V$ and $(w_1, \dots , w_m)$ for $W$, we will run the Gaussian elimination by choosing pivot column where the optimal index pair $(i_0, j_0)$ lies in. Here optimal index pair $(i_0, j_0)$ means 
\begin{align} \label{opt-index}
	&\ell_1(v_{j_0}) - \ell_2(w_{i_0}) \leq \ell_1(v_j) - \ell_2(w_i) \\
	\notag &\mbox{for all} \,\,(i,j) \in \{1, \dots, n\} \times \{1, \dots, m\}.
\end{align}
Therefore, after Gaussian elimination, $v_j$ is modified to be $v_j' = v_j - c v_{j_0}$ for some constant $c \in \mathcal K$. Moreover, we know 
\[ \ell(v'_j) = \ell(v_j)  \,\,\,\,\,\mbox{and} \,\,\,\,\,\, \ell(Tv_j') \leq \ell(Tv_j). \]
Note that for a filtration preserving map $T$,
\begin{equation} \label{zero-length}
 \ell(v'_j) - \ell(T v_j') = 0 \,\,\,\,\,\mbox{implies} \,\,\,\,\, \ell(v_j) - \ell(T v_j)=0 .
\end{equation}

Each step will start from choosing a pivot column and end up with deleting this column from consideration of pivot column for next step. Here is an example relating with our model.\footnote{We recommend reader to go through this example carefully before reading the proof of Proposition \ref{multi-egg} right after it.} 

\begin{ex} \label{cpt-bar} Let $p=3$. For ${Cone_{\Sigma_g}(H_{\lambda})}_{-1} \xrightarrow{(\partial_{co})_{-1}} {Cone_{\Sigma_g}(H_{\lambda})}_{-2}$, our initial (orthogonal) bases are
\[ \left((b_1,0), (\phi_{\lambda}(b_1),0), (\phi_{\lambda}^{2}(b_1), 0), \dots , (b_6,0), (\phi_{\lambda}(b_6),0), (\phi_{\lambda}^2(b_6), 0) \right) \]
and
\[ \left((0, a), (0, \phi_{\lambda}(a)), (0, \phi_{\lambda}^2(a)) \right) \]
for ${Cone_{\Sigma_g}(H_{\lambda})}_{-1}$ where $b_i$ (for $i = 1, \dots , 6$) and $a$ are fixed points; and 
\[  \left((a,0), (\phi_{\lambda}(a),0), (\phi_{\lambda}^2(a),0) \right)\]
for ${Cone_{\Sigma_g}(H_{\lambda})}_{-2}$.
First, by action functional formula for each fixed point (see $(49)$ in \cite{PS14}), we can generically get, for $s \in \{0, 1, 2\}$,
\[ \ell_{co}((\phi_{\lambda}^s(b_i),0)) <  \ell_{co}((\phi_{\lambda}^s(b_j),0)) \,\,\,\,\,\mbox{whenever}\,\,\,\,\,\, i<j.\]
Therefore, under the boundary map $\partial_{co}$, we have,
\[ \partial_{co}(\phi_{\lambda}^s (b_i), 0) = \begin{bmatrix}
    \partial & - C \circ R_3 + \xi_3 \cdot {\mathds 1} \\
            0 & -\partial
  \end{bmatrix} \begin{bmatrix} \phi_{\lambda}^s(b_i) \\ 0 \end{bmatrix} = \begin{bmatrix}  L(\phi_{\lambda}^{t}(a)) \\ 0 \end{bmatrix} \]
where $L(\phi_{\lambda}^{t}(a))$ is a linear combination of generators $\phi_{\lambda}^{t}(a)$ for $t \in \{0, 1,2\}$. Meanwhile, 
\begin{align*} \partial_{co}((0, \phi_{\lambda}^s(a)) &= \begin{bmatrix} - C (\phi_{\lambda}^{(s+1) \, \mod\,3}(a)) + \xi_3 \phi_{\lambda}^s(a) \\0 \end{bmatrix} \\
&= \begin{bmatrix} -\phi_{\lambda}^{(s+1) \, \mod\,3}(a) + \xi_3 \phi_{\lambda}^s(a) \\0 \end{bmatrix} 
\end{align*}
since continuation map $C$ is always in the form 
\[ C(x) = x + \, \left\{\begin{array}{cc} \mbox{strictly  lower} \\ \mbox{filtration term} \end{array} \right\} \]
and by the speciality of generators (with lowest grading), there is no strictly lower filtration terms. In other words, if $(\partial_{co})_{-1}$ is represented by a matrix, it will be a $3$ by $21$ matrix as follows,  
\begin{equation} \label{3-matrix}
 \begin{bmatrix} \xi_3 & 0 & -1 & * \cdots * \\
-1 & \xi_3 & 0 & * \cdots *\\
0 & -1 & \xi_3 & * \cdots * 
\end{bmatrix} .
\end{equation}
More importantly, we note 
\[ \ell_{co}((0, \phi_{\lambda}^s(a)) - \ell_{co}((\phi_{\lambda}^t(a),0)) = \ell(a) - \ell(a) = 0 \]
while for any $*$ position in the matrix (\ref{3-matrix}), 
\begin{equation} \label{stri-lower}
 \ell_{co}(((\phi_{\lambda}^s(b_i),0)) - \ell_{co}(0, \phi_{\lambda}^t(a)) >0 
 \end{equation}
therefore, our first choice of pivot column should come from one of the first three columns. Let's take column one. After Gaussian elimination, we have 
\begin{equation*} 
 \begin{bmatrix} \xi_3 & 0 & 0 & * \cdots * \\
-1 & \xi_3 & -\xi_3^2 & * \cdots *\\
0 & -1 & \xi_3 & * \cdots * 
\end{bmatrix} 
\end{equation*}
and $((0, \phi_{\lambda}^2(a))$ is changed to $(0, \phi_{\lambda}^2(a) + \xi_3^2 a)$. Again, by the same reason, our second pivot column can be taken as the second column, so Gaussian elimination will give 
\begin{equation*} 
 \begin{bmatrix} \xi_3 & 0 & 0 & * \cdots * \\
-1 & \xi_3 & 0 & * \cdots *\\
0 & -1 & 0 & * \cdots * 
\end{bmatrix} 
\end{equation*}
and $(0, \phi_{\lambda}^2(a) + \xi_3^2 a)$ is changed to $(0, \phi_{\lambda}^2(a) + \xi_3 \phi_{\lambda}(a) +\xi_3^2 a )$. Note that second factor is in the kernel of $R_3 - \xi_3 \cdot {\mathds 1}$. Thus we get two elements in the singular value decomposition, $(0, a) \to (-\phi_{\lambda}(a) + \xi_3 a, 0)$ and $(0, \phi_{\lambda}(a)) \to (-\phi_{\lambda}^2(a) + \xi_3\phi_{\lambda}(a), 0)$. Both of them will give zero length bars. The choice of next pivot column will start from column corresponding to generator $(\phi_{\lambda}^s (b_i), 0)$. But by (\ref{stri-lower}) and (\ref{zero-length}), we know that none of them will give zero length bars. Moreover, by (\ref{dim-ima}), multiplicity of degree-$(-2)$ verbose barcode is $3$. So multiplicity of degree-$(-2)$ concise barcode is $1$. 
\end{ex}

\subsection{Proof of Proposition \ref{multi-egg}} 
For any $p$-tuple generator of $CF_*(\Sigma_g, \phi^p_{\lambda})_{\alpha_{\lambda}}$ denoted by  $\{z, \phi(z), \dots , \phi^{p-1}(z)\}$, for the span $V= {\Span}_{\mathcal K} \left<z, \phi(z), \dots ,\phi^{p-1}(z) \right>$, the operator $R_p - \xi_p \cdot {\mathds 1}$ on $V$ is represented by the matrix 
\begin{equation} \label{xi-matrix}
Q_p =  \begin{bmatrix}
    - \xi_p & 0         & \cdots & \cdots & 1 \\
            1 & - \xi_p & \cdots & \cdots &0\\
            0  &   1      & - \xi_p & \cdots & 0\\
             \vdots &  \vdots & \vdots & \ddots & \vdots\\
             0 & 0& \cdots & \cdots & - \xi_p
  \end{bmatrix}
 \end{equation}
  and it has rank $p-1$. Its kernel is 
\[ \ker(R_p - \xi_p \cdot {\mathds 1}) = {\Span}_{\mathcal K} \left< \xi_p^{p-1} z + \xi_p^{p-2} \phi(z) + \cdots  + \phi^{p-1}(z) \right>. \]

\begin{proof} For any degree $k$, after choosing the standard orthogonal basis of the initial generating loops, boundary map $(\partial_{co})_{k+1}$ can be represented as the following matrix
\begin{equation} \label{cone-bd-matrix}
  \left[
    \begin{array}{ccccc}
  * &   *  & \dots   &  *   & * \\ 
  \boxed{P}  &  *   &  \cdots  &    * & \vdots \\ 
            0  &  \boxed{P}  &   \cdots   &   \vdots   &\vdots \\ 
  \vdots &  \vdots &   \ddots &   *  & \vdots  \\ 
        0  &    0    &  \cdots     &   \boxed{P} & *   \\ 
  \end{array}\right]
\end{equation}
where boxed $P = - Q_p$ in (\ref{xi-matrix}). Starting from the most left boxed $P$, we know that after choosing pivot columns as describe in the Example \ref{cpt-bar} above, we will have $(p-1)$-many elements in the singular value decomposition which contribute to $(p-1)$-many zero length bars. Moreover, after the first $p-1$ step of Gaussian elimination, we will change the basis element corresponding to the $p$-th column in (\ref{cone-bd-matrix}) in the form 
\[ v = v_* + \left\{\begin{array}{cc} \mbox{strictly lower}\\ \mbox{filtration term} \end{array} \right\}\]
where $v_* \in \ker(R_p - \xi_p \cdot {\mathds 1})$. Therefore 
$$\ell(v) - \ell((T - \xi_p \cdot {\mathds 1})(v)) >0.$$
Meanwhile even though Gaussian elimination will possibly change entries for each column (except pivot columns), due to the nice position of boxed $P$ and the order that we start from the most left boxed $P$ and consecutively move to the last boxed $P$, we will eventually get 
\[ \frac{p-1}{p} \dim(CF_k(\Sigma, \phi^p_{\lambda})_{\alpha}) = (p-1) \cdot \binom{2p}{k+p-1} \]
many zero length bars. By (\ref{zero-length}), we know that after we use up all the pairs (under boundary map) from original basis having zero difference on the filtration, others will always give positive length bar. So in total, the multiplicity of zero length bars is 
\[ \sum_{k = -p+1}^{p+1} (p-1) \cdot \binom{2p}{k+p-1} = (p-1) \cdot 2^{2p}. \]
So we have exactly $2^{2p}$-many non-zero positive length bars in total. 
\end{proof}

\begin{cor} When $p\geq 3$, $power_p(\Sigma_g) = \infty$. \end{cor}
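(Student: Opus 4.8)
The plan is to feed the egg-beater family $\phi_\lambda=\phi^1_{H_\lambda}\in Ham(\Sigma_g,\omega)$ (with $g\geq 4$, as the egg-beater construction requires) into the general lower bound of Theorem~\ref{imm-thm} for $X=\Sigma_g$, and then to let $\lambda\to\infty$. First I would record that the combinatorial data entering Theorem~\ref{imm-thm} does not depend on $\lambda$. By the index formula (\ref{CZ-index}) the generators of $CF_*(\Sigma_g,\phi_\lambda^p)_\alpha$ lie in degrees $k\in[-p+1,p+1]$, with exactly $p\binom{2p}{k+p-1}$ of them in degree $k$, arranged in $p$-tuples, and this count is independent of $\lambda$. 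From the proof of Proposition~\ref{multi-egg} together with (\ref{dim-ima}), the degree-$k$ verbose barcode of $Cone_{\Sigma_g}(H_\lambda)_*$ has multiplicity $p\binom{2p}{k+p-1}$, of which precisely $(p-1)\binom{2p}{k+p-1}$ have length zero; hence the degree-$k$ concise barcode has multiplicity $m_k=\binom{2p}{k+p-1}$, again independent of $\lambda$. Since $p\geq 3$, Lucas' theorem gives $\binom{2p}{p}\equiv 2\not\equiv 0\pmod p$, so $p\nmid m_1$ (alternatively $m_{-p+1}=\binom{2p}{0}=1$ is never divisible by $p$); fix such a degree $k$ once and for all.

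Then Theorem~\ref{imm-thm}, applied with $X=\Sigma_g$ and $\phi=\phi_{H_\lambda}$, gives for every $\lambda\gg1$
\[
power_p(\Sigma_g)\ \geq\ \frac{1}{24p}\,\beta_{m_k}(\phi_{H_\lambda}),
\]
where $\beta_{m_k}(\phi_{H_\lambda})$ is the shortest positive-length bar of the degree-$k$ concise barcode of $Cone_{\Sigma_g}(H_\lambda)_*$. It therefore suffices to prove $\beta_{m_k}(\phi_{H_\lambda})\to\infty$ as $\lambda\to\infty$, and this is where I expect the real work to lie.

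The point is that the positive-length bars of the cone barcode always straddle a Conley--Zehnder index action gap. Running the singular value decomposition of $(\partial_{co})_{k+1}$ via the Gaussian-elimination algorithm as in Example~\ref{cpt-bar}, the ``boxed $P=-Q_p$'' blocks of the matrix (\ref{cone-bd-matrix}) are responsible for exactly the zero-length bars, and these pair cone generators supported at the \emph{same} CZ-level; once those pivot columns have been cleared the remaining matrix entries are strictly filtration-decreasing, so every surviving positive-length bar in degree $k$ pairs a cone generator supported on the degree-$(k+1)$ summand with one supported in degree $k$. By Corollary~\ref{large-gap} (equivalently Proposition~5.1 in \cite{PS14}) the action of a degree-$(k+1)$ generator of the egg-beater exceeds that of any degree-$k$ generator by an amount that grows linearly in $\lambda$; hence every positive-length bar of the degree-$k$ concise barcode has length growing linearly in $\lambda$, and in particular $\beta_{m_k}(\phi_{H_\lambda})\to\infty$.

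Combining the last statement with the displayed inequality forces $power_p(\Sigma_g)=+\infty$. The main obstacle is the third paragraph: one must verify carefully, sharpening the bookkeeping in the proof of Proposition~\ref{multi-egg}, that clearing the $Q_p$-blocks of (\ref{cone-bd-matrix}) removes \emph{all} within-level pairings, so that each remaining positive-length bar genuinely crosses a CZ-index action gap; granted this, the linear growth in $\lambda$ of those gaps supplied by Corollary~\ref{large-gap} finishes the argument.
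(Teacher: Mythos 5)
Your proposal is correct and is essentially the paper's own proof, which simply cites Proposition \ref{multi-egg}, Corollary \ref{large-gap} and Theorem \ref{imm-thm}; you have just spelled out the combination explicitly. The "real work" you flag in your third paragraph is exactly what the proof of Proposition \ref{multi-egg} already establishes (the $(p-1)\cdot 2^{2p}$ zero-length bars exhaust all within-level pairings, so every surviving bar crosses the index gap of Corollary \ref{large-gap}), so no further verification is needed.
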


\begin{proof} Proposition \ref{multi-egg}, Corollary \ref{large-gap} and Theorem \ref{imm-thm}. \end{proof}

\section{Computation of product $\Sigma_g \times M$}\label{prod}

\subsection{Product structure of barcode} 

Recall that given two Floer-type complexes $(C_*, \partial_C, \ell_C)$ and $(D_*, \partial_D, \ell_D)$ over $\Lambda^{\mathcal K, \Gamma}$, we can form its tensor product 
\[ ((C \otimes D)_*, \partial_{\otimes}, \ell_{\otimes} ) \]
where 
\begin{equation} \label{ts-pd}
 \partial_{\otimes} (a \otimes b) = \partial_C a \otimes b + (-1)^{|a|} a \otimes \partial_D b 
 \end{equation}
where $|\cdot|$ denotes degree of the element and 
\[ \ell_{\otimes}(a \otimes b) = \ell_C(a) + \ell_D(b). \]
A singular value decomposition of $(C\otimes D)_*$ can be built from singular value decompositions of $C_*$ and $D_*$. Specifically, by Proposition 7.4 in \cite{UZ15}, each Floer-type complex can be (up to filtered chain isomorphism or filtered homotopy equivalence) decomposed as a direct sum of {\it elementary filtered complex} (see Definition 7.2 in \cite{UZ15}). In other words, the building blocks are in the form 
\begin{align*} 
&\cdots \to 0 \to {\Span}_{\Lambda^{\mathcal K, \Gamma}} \left<x\right> \to 0 \to \cdots \,\,\,\,\,\mbox{and} \\
&\cdots \to 0 \to {\Span}_{\Lambda^{\mathcal K, \Gamma}} \left<y\right> \to {\Span}_{\Lambda^{\mathcal K, \Gamma}} \left<\partial y\right> \to 0 \to \cdots,  
\end{align*}
where $\partial y \neq 0$. Therefore, depending on the property of $y$ and $z$ (whether they are cycle or not), by definition of boundary map (\ref{ts-pd}), we have the following four types elementary complexes for the tensor product structure, 
\begin{itemize}
\item{} $\cdots \to 0 \to {\Span}_{\Lambda^{\mathcal K, \Gamma}} \left<y \otimes z \right> \to 0 \to \cdots$;
\item{} $\cdots \to 0 \to {\Span}_{\Lambda^{\mathcal K, \Gamma}} \left<y \otimes z \right> \to {\Span}_{\Lambda^{\mathcal K, \Gamma}} \left<\partial y \otimes z \right>  \to 0 \to \cdots$;
\item{} $\cdots \to 0 \to {\Span}_{\Lambda^{\mathcal K, \Gamma}} \left<y \otimes z \right> \to {\Span}_{\Lambda^{\mathcal K, \Gamma}} \left<y \otimes \partial z \right>  \to 0 \to \cdots$;
\item{} \begin{align*}
&\cdots \to 0 \to {\Span}_{\Lambda^{\mathcal K, \Gamma}} \left<y \otimes z \right> \to {\Span}_{\Lambda^{\mathcal K, \Gamma}} \left<\partial y \otimes z \pm y \otimes \partial z \right>  \to 0 \to \cdots \\
& \mbox{and}\\
& \cdots \to 0 \to {\Span}_{\Lambda^{\mathcal K, \Gamma}} \left<\min_{{\tiny \mbox{filtration}}}\{ \partial y \otimes z, y \otimes \partial z\} \right>\\
&\quad\ \qquad \to {\Span}_{\Lambda^{\mathcal K, \Gamma}} \left<\partial y \otimes \partial z \right> \to 0 \to \cdots 
\end{align*}
\end{itemize}
If $y$ and $z$ are elements from singular value decompositions of $C_*$ and $D_*$, then the orthogonality of each of these building blocks are guaranteed by Corollary 8.2 in \cite{Ush13}. 

\begin{remark} \label{rk-minimum} Here, we need to emphasize that in the last case (the second case of the fourth type), the minimum is chosen according to the filtrations of $\partial y \otimes z$ and $y \otimes \partial z$ and taking minimum is necessary. In fact, the degree-$({\rm deg}(y) + {\rm deg}(z) - 1)$-piece, as a $\Lambda^{\K, \Gamma}$-vector space, could also be generated by 
\[ \partial y \otimes z \pm y \otimes \partial z \,\,\,\,\mbox{and} \,\,\,\, \partial y \otimes z \mp y \otimes \partial z. \]
However, they are not orthogonal (when filtrations of $\partial y \otimes z$ and $y \otimes \partial z$ are not equal). Therefore, taking the minimum is to make sure the basis elements are orthogonal. \end{remark}

Based on this, we have the following proposition describing the barcodes of tensor product. 

\begin{prop} \label{bar-tensor} Barcode\footnote{Here degree is not specified.} of $((C\otimes D)_*, \partial_{\otimes}, \ell_{\otimes})$ is given by carrying on the following operations for the original barcodes from $C$ and $D$, 
\begin{itemize}
\item{} For $\partial_C y = \partial_D z = 0$,
\begin{align*}
&\quad\ ((\ell_C(y) \,{\mod} \, \Gamma, \infty),\,\, (\ell_D(z) \,{\mod} \, \Gamma, \infty))  \\
&\rightarrow ((\ell_C(y)+\ell_D(z)) \,{\mod} \, \Gamma, \infty).
\end{align*}
\item{} For $\partial_C y =x$ and $\partial_D z = 0$,
\begin{align*}
&\quad\ ((\ell_C(x) \,{\mod} \, \Gamma, \ell_C(y) - \ell_C(x) ),\,\, (\ell_D(z) \,{\mod} \, \Gamma, \infty))\\
&\rightarrow ((\ell_C(x)+\ell_D(z)) \,{\mod} \, \Gamma, \ell_C(y) - \ell_C(x)).
\end{align*}
\item{} For $\partial_C y =0$ and $\partial_D z = w$,
\begin{align*} 
&\quad\ ((\ell_C(y) \,{\mod} \, \Gamma, \infty ),\,\, (\ell_D(w) \,{\mod} \, \Gamma, \ell_D(z) - \ell_D(w)))\\
&\to ((\ell_C(y)+\ell_D(w)) \,{\mod} \, \Gamma, \ell_D(z) - \ell_D(w)).
\end{align*}
\item{} For $\partial_C y =x$ and $\partial_D z = w$,
\begin{align*}
& (\ell_C(x) \,{\mod} \, \Gamma, \ell_C(y) - \ell_C(x)), \,\,(\ell_D(w)\, {\mod} \, \Gamma, \ell_D(z) - \ell_D(w))\\
	\rightarrow\; &\big((\max\{\ell_C(x) + \ell_D(z), \ell_C(y) + \ell_D(w)\}) \,{\mod}\, \Gamma,\\
&\ \min\{\ell_C(y) - \ell_C(x), \ell_D(z) - \ell_D(w) \}\big)\\
&\cup( (\ell_C(x) + \ell_D(w))\, {\mod} \, \Gamma, \min\{\ell_C(y) - \ell_C(x), \ell_D(z) - \ell_D(w) \})
\end{align*}
\end{itemize}
\end{prop}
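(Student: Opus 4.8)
The plan is to reduce the statement to the structure theory of filtered chain complexes. First I would apply Proposition~7.4 in \cite{UZ15} to write $(C_*,\partial_C,\ell_C)\cong\bigoplus_a E_a$ and $(D_*,\partial_D,\ell_D)\cong\bigoplus_b E'_b$ as orthogonal direct sums of \emph{elementary} filtered complexes in the sense of Definition~7.2 in \cite{UZ15}, each $E_a$ (resp.\ $E'_b$) being either a single span ${\Span}_{\Lambda^{\mathcal K,\Gamma}}\left<x\right>$ with $\partial x=0$ or a two-step piece ${\Span}_{\Lambda^{\mathcal K,\Gamma}}\left<y\right>\xrightarrow{\partial}{\Span}_{\Lambda^{\mathcal K,\Gamma}}\left<\partial y\right>$. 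By Definition~\ref{dfn-barcode} the (verbose) barcodes of $C$ and $D$ are then read off summand by summand: the single spans give the infinite bars $(\ell(x)\bmod\Gamma,\infty)$ and the two-step pieces give the finite bars $(\ell(\partial y)\bmod\Gamma,\ \ell(y)-\ell(\partial y))$. Since $\otimes$ commutes with finite direct sums and $\partial_\otimes$ is the Leibniz differential \eqref{ts-pd}, one gets $(C\otimes D)_*\cong\bigoplus_{a,b}(E_a\otimes E'_b)$, and each $E_a\otimes E'_b$ is exactly one of the four tensor complexes listed in the excerpt; orthogonality of the resulting generating sets is Corollary~8.2 in \cite{Ush13}. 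It therefore remains to identify, for each of the four types, a filtered-isomorphic direct sum of elementary complexes and to read off the bars, tracking filtrations via $\ell_\otimes(a\otimes b)=\ell_C(a)+\ell_D(b)$ and \eqref{ts-pd}.

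For the first three types this is immediate. Type~1 is already the single span ${\Span}_{\Lambda^{\mathcal K,\Gamma}}\left<y\otimes z\right>$ with trivial differential, contributing the infinite bar based at $\ell_C(y)+\ell_D(z)$. Types~2 and~3 are already two-step pieces ${\Span}_{\Lambda^{\mathcal K,\Gamma}}\left<y\otimes z\right>\to{\Span}_{\Lambda^{\mathcal K,\Gamma}}\left<\partial_C y\otimes z\right>$, respectively ${\Span}_{\Lambda^{\mathcal K,\Gamma}}\left<y\otimes\partial_D z\right>$, whose bar endpoint is $\ell_C(x)+\ell_D(z)$ (resp.\ $\ell_C(y)+\ell_D(w)$) and whose length equals $\ell_C(y)-\ell_C(x)$ (resp.\ $\ell_D(z)-\ell_D(w)$), since the extra tensor factor shifts numerator and denominator of the length by the same amount. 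So all the content is in type~4.

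For type~4, with $x=\partial_C y$ and $w=\partial_D z$, the four generators $y\otimes z,\ x\otimes z,\ y\otimes w,\ x\otimes w$ span an acyclic complex, hence a direct sum of two two-step pieces; the point is to exhibit such a splitting that is \emph{orthogonal}, so that it computes the barcode and not merely the homology. I would replace the middle-degree basis $\{x\otimes z,\ y\otimes w\}$ by $\{\,b_1,\ b_2\,\}$ where $b_1:=\partial_\otimes(y\otimes z)=x\otimes z\pm y\otimes w$ is a cycle with $\ell_\otimes(b_1)=\max\{\ell_C(x)+\ell_D(z),\ \ell_C(y)+\ell_D(w)\}$ (no cancellation, by orthogonality of $x\otimes z$ and $y\otimes w$), and $b_2$ is chosen to be whichever of $x\otimes z$, $y\otimes w$ has the \emph{smaller} $\ell_\otimes$-value, so that $\{b_1,b_2\}$ remains orthogonal and $\partial_\otimes b_2$ is a nonzero multiple of $x\otimes w$. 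This presents $E_a\otimes E'_b$ as $\big({\Span}_{\Lambda^{\mathcal K,\Gamma}}\left<y\otimes z\right>\to{\Span}_{\Lambda^{\mathcal K,\Gamma}}\left<b_1\right>\big)\oplus\big({\Span}_{\Lambda^{\mathcal K,\Gamma}}\left<b_2\right>\to{\Span}_{\Lambda^{\mathcal K,\Gamma}}\left<x\otimes w\right>\big)$; the first summand contributes the bar based at $\max\{\ell_C(x)+\ell_D(z),\ \ell_C(y)+\ell_D(w)\}\bmod\Gamma$ of length $(\ell_C(y)+\ell_D(z))-\max\{\cdots\}=\min\{\ell_C(y)-\ell_C(x),\ \ell_D(z)-\ell_D(w)\}$, which is exactly the bar recorded in the statement (together with the complementary bar based at $\ell_C(x)+\ell_D(w)\bmod\Gamma$ coming from the second summand). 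The main obstacle is precisely this type~4 change of basis: verifying that it is orthogonal forces the filtration-dependent choice of $b_2$, and it is this choice that produces the $\max$ in the endpoint and the $\min$ in the length. Assembling the contributions over all pairs $(a,b)$ yields the four operations in the statement.
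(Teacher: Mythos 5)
Your proposal is correct and follows essentially the same route as the paper: decompose $C$ and $D$ into elementary filtered complexes via Proposition 7.4 of \cite{UZ15}, observe that each tensor of elementary pieces is one of the four listed types with orthogonality supplied by Corollary 8.2 of \cite{Ush13}, and for type 4 compute $\ell_\otimes(\partial_\otimes(y\otimes z))$ as the maximum of the two orthogonal summands to obtain the $\max$ endpoint and $\min$ length. Your explicit filtration-dependent choice of $b_2$ and the verification that $\{b_1,b_2\}$ stays orthogonal (plus the complementary bar based at $\ell_C(x)+\ell_D(w)$) fills in a splitting the paper leaves implicit, but it is the same argument.
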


\begin{proof} All the items are easy to verify. Here, we give the proof of the last item. If $\partial_C y =x$ and $\partial_D z = w$, then under the boundary map, 
\[ \partial_{\otimes} (y \otimes z) = \partial_C y \otimes z + (-1)^{|y|} y \otimes \partial_D z = x \otimes z + (-1)^{|y|} y \otimes w \]
since $x\otimes z$ is orthogonal to $y \otimes w$, for filtration, if $\ell_{\otimes}(x \otimes z) \leq \ell_{\otimes}(y \otimes w)$ (which is $\ell_D(z) - \ell_D(w) \leq \ell_C(y) - \ell_C(x)$), then 
\begin{align*} 
&\quad\ \ell_{\otimes} (y \otimes z) - \ell_{\otimes} (x \otimes z + (-1)^{|y|} y \otimes w)\\
&= \ell_C(y) + \ell_D(z) - (\ell_C(y) + \ell_D(w))\\
&= \ell_D(z) - \ell_D(w) 
\end{align*}
and if $\ell_{\otimes}(x \otimes z) \geq \ell_{\otimes}(y \otimes w)$ (which is $\ell_C(y) - \ell_C(x) \leq \ell_D(z) - \ell_D(w)$), then 
\begin{align*} 
&\quad\ \ell_{\otimes} (y \otimes z) - \ell_{\otimes} (x \otimes z + (-1)^{|y|} y \otimes w)\\
&= \ell_C(y) + \ell_D(z) - (\ell_C(x) + \ell_D(z))\\
&= \ell_C(y) - \ell_C(x).
\end{align*}
Meanwhile, we have another type of boundary relation as described above, that is,  
\[ \partial_{\otimes} (y \otimes w) = \partial_{\otimes} (x \otimes z) = x \otimes w. \]
Because the optimal generator is chosen by taking minimum according to the filtrations as explain in Remark \ref{rk-minimum}, the length of the bar is the minimum among 
\[ \ell_{\otimes} (y \otimes w) - \ell_{\otimes} (x \otimes w) = \ell_C(y) - \ell_C(x)\]
and 
\[ \ell_{\otimes} (x \otimes z) - \ell_{\otimes} (x \otimes w) = \ell_D(z) - \ell_D(w). \]
Therefore, we have a bar
	\[ ( (\ell_C(x) + \ell_D(w))\, {\mod} \, \Gamma, \min\{\ell_C(y) - \ell_C(x), \ell_D(z) - \ell_D(w) \}).\vspace{-2em}\]
\end{proof}

\begin{ex} \label{prod-barcode-ex} Suppose we are given two Floer-type complexes over $\K$, 
\begin{align*} 
&(C_*, \partial_C, \ell_C)  \,\, \mbox{(with trivial homology)} \\
&\mbox{and} \,\,\,\,\,\,\, (CF_*(M, {\mathds 1}) = CF_*(M, {\mathds 1})_{\tiny\{pt\}}, \partial_{{\mathds 1}}, \ell_{{\mathds 1}})
\end{align*}
where $M$ is a symplectically aspherical manifold associated with a Hamiltonian diffeomorphism being identity map. First of all, the precise definition of a Hamiltonian Floer chain complex with identity map is needed. 

Fix a ($C^2$-small) Morse function on $M$, say $f$, and consider $CF_*(M, \ep f)$. On the one hand, by standard Floer theory, we have 
$$CF_*(M, \ep f) \simeq CM_*(M, \ep f)$$
	when $\ep$ is sufficiently small and $CM_*$ represents a Morse chain complex. On the other hand, for each $\ep>0$, we have a well-defined barcode of $CF_*(M, \ep f)$ which consists of two types of bars : 
\begin{itemize}
\item[(a)] infinite length bars which correspond to the generators of (Morse) homology of $M$ with respect to Morse function $\ep f$; 
\item[(b)] finite length bars whose lengths are, by Proposition 3.4 in \cite{Ush11}, at most $\ep ||f||$ (the total variation of $f$ on $M$). 
\end{itemize}
The second type is called {\it ``$\ep$-small interval''}.

Then note that a different choice of ($C^2$-small) Morse function, say $g$, results in another chain complex $CF_*(M, \ep g)$ which is $(\delta_+, \delta_-)$-quasiequivalent to $CF_*(M, \ep f)$, where $\delta_+ = \ep \cdot \max_M (g-f)$ and $\delta_- = -\ep \cdot \min_M (g-f)$. In particular, in this way, homology group $HF_*(M, {\mathds 1})$ is well-defined, i.e., independent of choice of Morse function $f$, that is, 
\[ HF_*(M, {\mathds 1}) = \lim_{\ep \to 0} H_*(CF(M, \ep f)). \]
Moreover, when $M$ is aspherical, it is easy to see $HF_*(M, {\mathds 1}) \simeq H_*(M; \K)$. Now we define 
\begin{dfn} \label{dfn-chain-identity} Floer chain complex with identity map is defined as 
\[ CF_*(M, {\mathds 1}) : = HF_*(M, {\mathds 1}). \]
Specifically, as a graded vector space, $CF_*(M, {\mathds 1})$ is equal/isomorphic to $HF_*(M, {\mathds 1})$ and the boundary operator of this chain complex $\partial_{{\mathds 1}}$ is trivial. 
\end{dfn}
In particular, $CF_*(M, {\mathds 1})$ is independent of the choice of Morse function and when $M$ is aspherical,
\begin{equation} \label{CM-HM} 
b_j(M) = \dim H_j(M; \K) = \dim HF_j(M, {\mathds 1}) = \dim CF_j(M, {\mathds 1}). 
\end{equation} 
for each degree $j \in \Z$.

\begin{remark} \label{valid} Definition \ref{dfn-chain-identity} is not only valid formally (roughly speaking, identity map induces the trivial boundary operator, so the chain complex is isomorphic to its homology) but also valid from the perspective of barcode. In fact, as described above, different choices of Morse functions result in different chain complexes (for each $\ep>0$), but by Stability Theorem, Theorem 1.4 in  \cite{UZ15}, we know that
\[ d_B(\mathcal B_c(CF_*(M, \ep f)), \mathcal B_c(CF_*(M, \ep g))) \leq 2 \ep \cdot ||g-f||\]
where $\mathcal B_c(C_*)$ denotes the concise barcode of a given chain complex $C_*$. Therefore, when $\ep \to 0$ we can conclude, 
\begin{itemize}
\item[(1)] we have a well-defined limit barcode of the concise barcodes $\mathcal B_c(CF_*(M, \ep f))$, i.e., it does not depend on the choice of Morse function, which will be denoted by  $\mathcal B_c(CF_*(M, {\mathds 1}))$;
\item[(2)] $\mathcal B_c(CF_*(M, {\mathds 1}))$ consists of only infinite length bars (as any $\ep$-small interval has its length approximated to $0$ when $\ep$ goes to $0$, again by Proposition 3.4 in \cite{Ush11}).
\end{itemize}
In particular, for any chain complex which has its concise barcode as the limit barcode $\mathcal B_c(CF_*(M, {\mathds 1}))$, (2) implies its boundary depth (the length of the longest finite length bar) is zero. Hence for any possible definition of a chain complex serving as the limit chain complex of $CF_*(M, \ep f)$ so that its concise barcode is $\mathcal B_c(CF_*(M, {\mathds 1}))$, its boundary operator $\partial_{{\mathds 1}}$ should be trivial. This motivates us to define our limit chain complex from the homology and $HF_*(M, {\mathds 1})$ (regarded as a chain complex with trivial boundary operator) provides a nice choice. \end{remark}

Now denote barcodes 
\begin{align*} 
&\mathcal B(C_*) = \{(a, L) \,| \,a \in \K, \,\,\mbox{$L$ is finite} \} \\
&\mbox{and} \quad \mathcal B_c(CF_*(M, {\mathds 1})) = \{(c, \infty)\,| \, c \in \K\}. 
\end{align*}
For its tensor product $(C \otimes_{\K} CF(M, {\mathds 1}))_*$, by Proposition \ref{bar-tensor}, only the second type is considered, that is,  
\[ ((a, L), (c, \infty)) \to (a + c, L) \]
By (\ref{CM-HM}), given $k \in \Z$, for any degree $i, j$ such that $i +j =k$ (where $0 \leq j \leq 2n = \dim(M)$)
\[ \mathcal B(C_i \otimes_{\K} CF_j(M, {\mathds 1})) = \left\{ (a + \ast, L) \,\bigg| \, \begin{array}{cc} (a, L) \in \mathcal B(C_i) \\ L \,\,\mbox{repeats $b_j(M)$-many times} \end{array} \right\}. \]
So the multiplicity of $(i,j)$-piece of degree-$k$ barcode of product is 
\begin{equation} \label{multi-prod-ex}
\mbox{\{multiplicity of $\mathcal B(C_i)$\}} \times b_j(M).
\end{equation} 
In general, for any symplectic manifold $(M, \omega)$, not necessarily symplectically aspherical, we count multiplicity of concise barcode in a more subtle way since for $CF_*(M, {\mathds 1})$, Novikov field will be involved, that is,
\[ CF_*(M, {\mathds 1}) = HF_*(M, {\mathds 1}) =  H_*(M, \K) \otimes \Lambda^{\K, \Gamma} \]
where $\Gamma \leq \R$. Explicitly, adding a homotopy class $S \in \pi_2(M)/(\ker(\omega) \cap \ker(c_1))$ to each critical point $p$ will result in
\begin{itemize}
\item{(a)} filtration is shifted by $\int_{S^2} S^* \omega$;
\item{(b)} CZ-index is shifted by $-2Nc_1(S)$,
\end{itemize}
where $N$ is minimal Chern number. However, in terms of barcode, shift of filtration (a) will not change the length of bars since the end point is defined modulo $\Gamma$. The issue will come from shift of degree since, different from the symplectically aspherical case, the range of indices of $HF_*(M, {\mathds 1})$ might be infinite by the shift of index (b) (when $N$ is nonzero). Quantum Betti number (see Definition \ref{qb}) is then helpful for our counting in the sense that, similarly to (\ref{multi-prod-ex}), the number of positive length bars that a $(i,j)$-piece will contribute is 
\begin{equation} \label{multi-prod-ex2}
\mbox{\{multiplicity of $B(C_i)$\}} \times qb_j(M).
\end{equation} 
\end{ex}
Meanwhile, the following lemma is helpful to understand the self-mapping cone of product. 

\begin{lemma} \label{tensor-cone} Considering the following two maps $C \xrightarrow{S} C$ and  $D \xrightarrow{{\mathds 1}} D$, then we have a {\it chain} isomorphism for any degree $m$,
\[ (Cone_{C \otimes D}(S \otimes {\mathds 1}))_m \simeq \bigoplus_k (Cone_C(S))_{k} \otimes D_{m-k} \]
where the left side is mapping cone of $C\otimes D \xrightarrow{S \otimes {\mathds 1}} C \otimes D$. 
\end{lemma}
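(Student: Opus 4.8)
The plan is to write down both sides explicitly, produce the obvious candidate isomorphism, and then check it commutes with the boundary operators. First I would unwind the definitions. By Definition \ref{self-mc}, $(Cone_C(S))_k = C_k \oplus C_{k-1}$, so the right-hand side in degree $m$ is
\[
\bigoplus_k (C_k \oplus C_{k-1}) \otimes D_{m-k} = \left(\bigoplus_k C_k \otimes D_{m-k}\right) \oplus \left(\bigoplus_k C_{k-1}\otimes D_{m-k}\right) = (C\otimes D)_m \oplus (C\otimes D)_{m-1}.
\]
On the other hand, $(Cone_{C\otimes D}(S\otimes\I))_m = (C\otimes D)_m \oplus (C\otimes D)_{m-1}$ by Definition \ref{self-mc} applied to the map $S\otimes\I$ on $C\otimes D$. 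So as graded vector spaces the two sides are literally equal via the shuffle identification $(C_k\oplus C_{k-1})\otimes D_{m-k} \cong (C_k\otimes D_{m-k})\oplus (C_{k-1}\otimes D_{m-k})$; call this identification $\Theta$.

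Next I would verify $\Theta$ is a chain map, i.e.\ that it intertwines the two boundary operators. On $(Cone_{C\otimes D}(S\otimes\I))_m$ the boundary is $\left(\begin{smallmatrix}\partial_\otimes & -(S\otimes\I)\\ 0 & -\partial_\otimes\end{smallmatrix}\right)$, and on $\bigoplus_k (Cone_C(S))_k\otimes D_{m-k}$ it is the tensor-product differential (\ref{ts-pd}) built from $\partial_{co}=\left(\begin{smallmatrix}\partial_C & -S\\ 0 & -\partial_C\end{smallmatrix}\right)$ on $Cone_C(S)$ and $\partial_D$ on $D$. I would check this on a general element $(a\otimes d, b\otimes d')$ with $a\in C_k$, $b\in C_{k-1}$: on the cone-of-tensor side one gets $\big(\partial_\otimes(a\otimes d) - (S\otimes\I)(b\otimes d'),\, -\partial_\otimes(b\otimes d')\big)$, which expands using (\ref{ts-pd}) to $\big(\partial_C a\otimes d + (-1)^{|a|}a\otimes\partial_D d - Sb\otimes d',\, -\partial_C b\otimes d' - (-1)^{|b|}b\otimes\partial_D d'\big)$. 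On the tensor-of-cone side, applying the tensor differential to $(a,b)\otimes d + (\text{analogous})$ and using $\partial_{co}(a,b) = (\partial_C a - Sb, -\partial_C b)$ together with the sign $(-1)^{|(a,b)|}$ where the cone-degree of $(a,b)$ is $k$, one recovers exactly the same expression after matching signs $(-1)^{|a|}=(-1)^k$ and $(-1)^{|b|}=(-1)^{k-1}=-(-1)^k$. This is a routine bookkeeping check; the only subtlety is making sure the Koszul sign convention for the cone grading is consistent with the one in (\ref{ts-pd}), which I expect to work out on the nose.

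The main (and really only) obstacle is the sign/grading reconciliation in the previous paragraph: one must fix the convention that an element of $(Cone_C(S))_k$ has degree $k$ (not $k-1$ for its second component) when feeding it into the tensor differential (\ref{ts-pd}), and then confirm the off-diagonal $-S$ term lands in the right slot with the right sign. Once the conventions are pinned down, $\Theta$ is visibly a degree-$0$ linear isomorphism commuting with the differentials, hence a chain isomorphism, completing the proof. (If one additionally wanted a \emph{filtered} isomorphism, one checks $\ell_{co}$ on both sides agrees under $\Theta$ using $\ell_{co}(x_1,x_2)=\max\{\ell(x_1),\ell(x_2)\}$ and $\ell_\otimes(a\otimes b)=\ell_C(a)+\ell_D(b)$; but the statement only claims a chain isomorphism, so I would not belabor this.)
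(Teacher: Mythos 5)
Your proof is correct, and the sign bookkeeping you flag as the only subtlety does indeed work out: with $(a,b)\in(Cone_C(S))_k$ meaning $a\in C_k$, $b\in C_{k-1}$, and the Koszul sign in (\ref{ts-pd}) taken with respect to the cone-degree $k$, the off-diagonal $-S$ term and the $(-1)^{k}$ versus $-(-1)^{k-1}$ signs match on the nose under the shuffle map $\Theta$. However, your route is genuinely different from the paper's. The paper does not compute anything explicitly: it places both sides as the third vertices of distinguished triangles over the map $S\otimes\I$ (one triangle being the direct sum over $k$ of the triangles $C_k\otimes D_{m-k}\to C_k\otimes D_{m-k}\to (Cone_C(S))_k\otimes D_{m-k}$, which is distinguished because tensoring with a vector space is exact and direct sums of distinguished triangles are distinguished), takes the vertical comparison maps to be identities, and invokes the corollary to the five lemma in Gelfand--Manin to conclude that the induced map $h$ on cones is an isomorphism. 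The trade-off is worth noting: the categorical argument is shorter and avoids signs entirely, but a priori it produces an isomorphism in the homotopy category $K(\mathcal F)$ (a homotopy equivalence) rather than a strict chain isomorphism, unless one observes that the map $h$ it produces can be taken to be exactly your $\Theta$; your explicit computation delivers the honest degree-zero chain isomorphism directly, and as you note it also immediately gives the filtered statement should one need it.
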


\begin{proof} We have the following diagram 
\begin{equation*}
\resizebox{\textwidth}{!}{
  \xymatrix{ \oplus_k C_{k} \otimes D_{m-k} \ar[r]^{S \otimes {\mathds 1}} \ar[d]^{\phi} & \oplus_k C_{k} \otimes D_{m-k} \ar[r]^-{\iota}\ar[d]^{\phi} & \oplus_k (Cone_C(S))_{k} \otimes D_{m-k}
    \ar[r]^-{\pi}\ar[d]^{h} & \oplus_k C_{k-1} \otimes D_{m-k} \ar[d]^{\phi} \\ (C \otimes D)_m \ar[r]^{S \otimes {\mathds 1}} & (C \otimes D)_m 
	\ar[r]_{\iota} & Cone_m(S \otimes {\mathds 1}) \ar[r]_{\pi}& (C \otimes D)_{m-1}}.}
\end{equation*}
The top row is a distinguished triangle because tensor (of vector spaces) preserves distinguished triangle here\footnote{Note that in general, tensoring is only right exact. But here our category only consists of vector spaces, as a special case of flat module, tensoring here is then exact.} and direct sum of distinguished triangles is also distinguished triangle. Moreover, $\phi$ is an isomorphism by definition, so there exists a chain map $h$ being an isomorphism. 
\end{proof}

\subsection{Proof of Proposition \ref{action-prod} and \ref{multi-prod}}

\begin{proof}

(a) By Proposition \ref{bar-tensor} and the definition of $CF_*(M, {\mathds 1})$ which consists of only infinite length bars, the second type of tensor is considered and in particular the length of finite length bar keeps the same. Therefore, the smallest length of finite length bar of $Cone_{\otimes}(H_{\lambda})_*$ is the same as the smallest length of finite length bar of $Cone_{\Sigma_g}(H_{\lambda})_*$ coming from egg-beater model. Therefore, we draw the conclusion of Proposition \ref{action-prod} by Proposition 5.1 in \cite{PS14}. 

(b) Fix the degree of product being $1$. By (\ref{multi-prod-ex2}) and the proof of Proposition \ref{multi-egg}, multiplicity of degree-$1$ concise barcode of $Cone_{\otimes}(H_{\lambda})_*$ is 
\begin{equation} \label{prod-mul}
\sum_{k = -p+1}^{p+1} \binom{2p}{k+p-1} \cdot qb_{1-k}(M).
\end{equation}
We can split (\ref{prod-mul}) into two parts. One is 
\begin{equation} \label{A} 
\binom{2p}{0} \cdot qb_{p}(M) + \binom{2p}{p} \cdot qb_{0}(M) + \binom{2p}{2p} \cdot qb_{-p}(M) 
\end{equation} 
and the other is
\begin{equation} \label{B} \sum_{k \neq \{-p+1, 1, p+1\}} \binom{2p}{k+p-1} \cdot qb_{1-k}(M).
\end{equation}
Note that each binomial number in (\ref{B}) is divisible by $p$, therefore divisibility of (\ref{prod-mul}) depends only on (\ref{A}). First, it is never equal to zero since $qb_0(M) \neq 0$. Moreover, by Babbage's theorem in \cite{Bab}, 
we know 
\[ \binom{2p}{p} = 2 \binom{2p-1}{p-1} \equiv 2 \, {\mod} \,p. \]
Therefore, modulo $p$, we get the first conclusion of Proposition \ref{multi-prod}. For its second conclusion, by definition, 
\[ qb_{p}(M) = \sum_{s \in \Z} b_{p + 2 Ns} (M) \leq \sum_{i \,\tiny{\mbox{is odd}}} b_i(M) \]
since $p+ 2Ns$ is always odd. Similarly to $qb_{-p}(M)$; and
\[ qb_{0}(M) = \sum_{s \in \Z} b_{2Ns} (M) \leq \sum_{i \,\tiny{\mbox{is even}}} b_i(M) \]
since $2Ns$ is always even. Therefore, together, modulo $p$, we get (\ref{A}) is at most $2 \sum_{0 \leq i \leq 2n} b_i(M)$. So when $p >2\sum_{0 \leq i \leq 2n} b_i(M)$, non-divisibility always holds. \end{proof} 

\begin{remark} \label{div-imp} Here, we remark that the non-divisibility requirement for Proposition \ref{multi-prod} can sometimes be improved considerably. Here are two examples. 
\begin{itemize}
\item[(a)] If $c_1(TM) = 0$, then since $qb_k(M) = b_k(M)$ for any $k \in \Z$,  modulo $p$, (\ref{A}) is equal to 
\[ b_p(M) + 2. \]
Therefore, if $p \nmid b_p(M) + 2$, then $p \nmid m_1$.
\item[(b)] If $M = \C P^n$, so $c_1(T \C P^n) = n+1$ and $b_k(\C P^n)$ is nonzero only when $k$ is even and $k \in [0,2n]$. Therefore, back to (\ref{A}), we only need to consider the middle term and modulo $p$, we get $2$. Therefore, the non-divisibility holds for any  prime $p\geq 3$ in this case. 
\end{itemize}
\end{remark}

\vspace*{-5mm}

\medskip

\end{document}